\newtheorem{theorem}{Theorem} 	      	      	                              
\newtheorem{corollary}[theorem]{Corollary}     	      	      	      	      
\newtheorem{lemma}[theorem]{Lemma}     	       	      	      	      	      
\newtheorem{proposition}[theorem]{Proposition} 	      	      	      	      
\newtheorem*{remark}{Remark}                                                  
\newcommand{\ol}[1]{\overline{#1}{}}                                          
\newcommand{\ul}[1]{\underline{#1}{}}                                         
\newcommand{\mf}[1]{\mathfrak{#1}}                                            
\newcommand{\mc}[1]{\mathcal{#1}}                                             
\newcommand{\paren}[1]{\left(#1\right)}                                       
\newcommand{\brak}[1]{\left[#1\right]}                                        
\newcommand{\brac}[1]{\left\{#1\right\}}                                      
\newcommand{\brakparen}[1]{\left[#1\right)}                                   
\newcommand{\abs}[1]{\left|#1\right|}                                         
\newcommand{\nabs}[1]{\left\|#1\right\|}                                      
\newcommand{\valat}[1]{\left.#1\right|}                                       
\newcommand{\R}{\mathbb{R}}                                                   
\newcommand{\Sph}{\mathbb{S}}                                                 
\DeclareMathOperator{\trace}{tr}                                              
\DeclareMathOperator{\grad}{grad}                                             
\newcommand{\lapl}{\Delta}                                                    
\newcommand{\nasla}{/\mspace{-11mu}\nabla}                                    
\newcommand{\onasla}{/\mspace{-11mu}\ol{\nabla}}                              
\newcommand{\lasl}{/\mspace{-11mu}\lapl}                                      
\DeclareMathOperator{\ric}{Ric}                                               
\newcommand{\ass}[2]{({\bf #1})${}_{#2}$}                                     
\newcommand{\sss}[2]{#1^{\scriptscriptstyle (#2)}{}}                          
\begin{document}

\title[Breakdown Criteria]{On Breakdown Criteria for Nonvacuum Einstein Equations}
\author{Arick Shao}
\address{Department of Mathematics, Princeton University, Princeton NJ 08544}
\email{aricks@math.princeton.edu}
\subjclass[2000]{35Q76 (Primary) 83C05, 83C22, 35L05 (Secondary)}

\begin{abstract}
The recent ``breakdown criterion" result \cite{kl_rod:bdc} of S. Klainerman and I. Rodnianski stated roughly that an Einstein-vacuum spacetime, given as a CMC foliation, can be further extended in time if the second fundamental form and the derivative of the lapse of the foliation are uniformly bounded.
This theorem and its proof were extended to Einstein-scalar and Einstein-Maxwell spacetimes in the thesis \cite{shao:bdc_nv}.
In this paper, we state the main results of \cite{shao:bdc_nv}, and we summarize and discuss their proofs.
In particular, we will discuss the various issues resulting from nontrivial Ricci curvature and the coupling between the Einstein and the field equations.
\end{abstract}

\maketitle

\section{Introduction}

The general breakdown/continuation problem for PDE is the following:
\begin{itemize}
\item[] {\it Under what conditions can an existing local solution of an evolution equation on a finite interval $[T_0, T)$ be further continued past $T$?}
\end{itemize}
This can be equivalently posed as the breakdown of such a solution at a finite time implying the violation of such conditions.
The determination of such \emph{breakdown criteria} can be a potentially useful step toward characterizing the blowup of solutions.
Furthermore, in some instances, such breakdown conditions can be critical tools for proving global existence results.

In this paper, we will consider this breakdown problem for the Einstein-scalar (E-S) and Einstein-Maxwell (E-M) equations in the CMC gauge.
The results and proofs described in this paper are extensions of those in \cite{kl_rod:bdc}, which established the analogous breakdown criterion for the Einstein-vacuum (E-V) case.
The matter field present in these nonvacuum cases presents additional issues to be addressed.
The full details of this work can be found in the original report \cite{shao:bdc_nv}.

\subsection{Classical Results}

Consider, as a model example, the initial value problem for the following nonlinear wave equation on $\R^{1+3}$:
\begin{equation}\label{eq.bdc_model_wave} \square \phi = \paren{\partial_t \phi}^2 \text{,} \qquad \valat{\phi}_{t = 0} = \phi_0 \text{, } \valat{\partial_t \phi}_{t = 0} = \phi_1 \text{.} \end{equation}
From classical theory, cf. \cite[Thm. 22]{selb:wave_eq}, we have the following results:
\begin{itemize}
\item Given initial data $(\phi_0, \phi_1) \in Z_s = H^s(\R^3) \times H^{s-1}(\R^3)$, where $s > 5/2$, then a unique solution to \eqref{eq.bdc_model_wave} exists in the space
\[ X_{s, T^\prime} = C\paren{\brak{0, T^\prime}; H^s\paren{\R^3}} \cap C^1\paren{\brak{0, T^\prime}; H^{s-1}\paren{\R^3}} \]
for sufficiently small $T^\prime > 0$ depending on the $Z_s$-norm of the initial data.

\item The maximal time of existence $T$, i.e., the supremum of such $\tau$'s for which a solution exists to time $\tau$, depends on the $Z_s$-norm of the initial data.
\end{itemize}

A continuation result follows naturally as a companion to local well-posedness:
\begin{itemize}
\item[] {\it Suppose a solution $\phi$ on a finite time interval $[0, T)$ to \eqref{eq.bdc_model_wave} is in $X_{s, T^\prime}$ for every $0 < T^\prime < T$, where $s > 5/2$.
If $\phi$ also satisfies the criterion
\begin{equation}\label{eq.bdc_model_wave_bdc} \nabs{\partial \phi}_{L^\infty\paren{\brakparen{0, T} \times \R^3}} < \infty \text{,} \end{equation}
then $\phi$ can be extended past time $T$ as a solution of \eqref{eq.bdc_model_wave}.
Moreover, this extension is an element of $X_{s, T + \epsilon}$ for small $\epsilon > 0$.}
\end{itemize}
In terms of breakdown, this can be equivalently stated as follows:
\begin{itemize}
\item[] {\it Suppose a solution $\phi$ on a finite time interval $[0, T)$ to \eqref{eq.bdc_model_wave} is in $X_{s, T^\prime}$ for every $0 < T^\prime < T$, where $s > 5/2$.
Then, if $\phi$ breaks down at time $T$, i.e., if $\phi$ cannot be extended as above, then $\partial \phi \not\in L^\infty([0, T) \times \R^3)$.}
\end{itemize}

The main idea behind the result is the following observation: if \eqref{eq.bdc_model_wave_bdc} holds, then we can uniformly bound the $H^s(\R^3) \times H^{s-1}(\R^3)$-norms of $(\phi, \partial_t \phi)$ on each timeslice $\{\tau\} \times \R^3$, where $0 < \tau < T$.
Therefore, we can apply the previous local well-posedness result to generate local solutions existing for a fixed time $\epsilon > 0$ with each of the above cross-sections as the initial data.
By uniqueness, we can patch these local solutions into a solution which exists on the interval $[0, T + \epsilon)$.

\begin{remark}
The condition \eqref{eq.bdc_model_wave_bdc} is certainly not optimal, since slightly weaker iterated norm conditions are also known to be sufficient.
\end{remark}

A multitude of breakdown results have been established for other evolution equations.
For example, consider the incompressible $3$-dimensional Euler equations
\begin{equation}\label{eq.euler_eq} \partial_t u + u \cdot \nabla u + \nabla p \equiv 0 \text{,} \qquad \nabla \cdot u \equiv 0 \text{,} \end{equation}
where $u: \R^{1 + 3} \rightarrow \R^3$ represents the velocity and $p: \R^{1 + 3} \rightarrow \R$ represents the pressure.
In addition, define the \emph{vorticity} of $u$ to be the curl $\omega = \nabla \times u$ of $u$.

A well-known result of Beale, Kato, and Majda in \cite{be_ka_maj:bdc_eul} established that if a local finite-time solution has its vorticity bounded in the $L^1_t L^\infty_x$-norm, then the solution can be extended further in time.
\footnote{The $L^1_t L^\infty_x$-norm of $u$ is the $L^1$-norm of the function $t \mapsto \|u(t)\|_{L^\infty(\R^3)}$.}
An important point here is that unlike the condition \eqref{eq.bdc_model_wave_bdc}, we need not bound all components of the derivative of the solution.
The proof is in principle like that of the nonlinear wave equation; we use this $L^1_t L^\infty_x$-bound on the vorticity in order to derive uniform energy bounds related to the local well-posedness theory of the equations \eqref{eq.euler_eq}.

Another example of a breakdown condition lies in the paper \cite{ea_mo:g_ymh} of Eardley and Moncrief on the Yang-Mills equation in $\R^{1+3}$.
\footnote{For simplicity, we neglect the Higgs field, which was also discussed in \cite{ea_mo:g_ymh}.}
In this setting, a sufficient continuation criterion is an $L^\infty$-bound on the Yang-Mills curvature $F$.
Using the standard representation formula for the wave equation, however, one can demonstrate that such a uniform bound always holds.
The immediate consequence, then, is a global existence result.
Furthermore, Chru\'sciel and Shatah, in \cite{chru_sh:ym_curv}, generalized this result to globally hyperbolic $(1+3)$-dimensional Lorentzian manifolds using mostly the same principles, but applying instead the representation formula of \cite{fr:wv_eq}.

\subsection{Results in General Relativity}

In general relativity, a number of breakdown results have been established for the E-V equations.
For example, by solving the equations in the standard fashion by imposing favorable gauge conditions, such as wave coordinates, one can show that given a solution in which an $L^\infty$-bound holds for $\partial \mf{G}$, where $\mf{G}$ is the spacetime metric and $\partial$ refers to the gauge coordinate derivatives, then the solution can be further continued.
Examples of such results include \cite{cb:le_einst} and, more recently, \cite{and_mo:gr_cmc}.
Although such a condition is quite analogous to the model case of the nonlinear wave equation, it is also non-geometric, as it depends on the specific choice of coordinates.
Moreover, the condition requires bounds on all components of the derivative of the metric.

Another more geometric breakdown result for the E-V equations was given by M. Anderson in \cite{and:gr_geom}.
Here, the continuation criterion is an $L^\infty$-bound on the curvature of the spacetime.
While this is clearly geometric, it does have the added disadvantage of depending on essentially two derivatives of the spacetime metric.

The next point of discussion is the improved breakdown result of S. Klainerman and I. Rodnianski for E-V spacetimes, presented in \cite{kl_rod:bdc}.
Their main result, stated in \cite[Thm. 1.1]{kl_rod:bdc}, can be summarized as follows:

\begin{theorem}\label{thm.bdc_vacuum}
Suppose $(M, g)$ is an E-V spacetime, given as a CMC foliation
\[ M = \bigcup_{t_0 < \tau < t_1} \Sigma_\tau \text{,} \qquad t_0 < t_1 < 0 \text{,} \]
where each $\Sigma_\tau$ is a compact spacelike hypersurface of $M$ satisfying the constant mean curvature condition $\trace k \equiv \tau$, and where $k$ denotes the second fundamental form of $\Sigma_\tau$ in $M$.
In addition, let $n$ denote the lapse of the $\Sigma_\tau$'s, and assume the following breakdown criterion holds:
\begin{equation}\label{eq.bdc_vacuum} \nabs{k}_{L^\infty\paren{M}} + \nabs{\nabla \paren{\log n}}_{L^\infty\paren{M}} < \infty \text{.} \end{equation}
Then, the spacetime $(M, g)$ can be extended as an E-V spacetime in the CMC gauge to some time $t_1 + \epsilon$ for some $\epsilon > 0$.
\end{theorem}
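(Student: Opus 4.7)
The plan follows the template from the classical model case: use \eqref{eq.bdc_vacuum} to obtain uniform control, as $\tau \nearrow t_1$, of a sufficiently strong Sobolev norm of $(g, k)$ on the slices $\Sigma_\tau$, then invoke CMC local well-posedness to extend the spacetime past $t_1$ and patch the extension to the original solution by uniqueness in CMC gauge. Concretely, the target is a uniform bound for $(g, k)$ in $H^s \times H^{s-1}$ with some $s > 5/2$, together with the corresponding control of the lapse $n$ obtained from elliptic estimates for the CMC lapse equation $-\lapl n + \abs{k}^2 n = 1$.

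First, I would use the pointwise control on $k$ and $\nabla \log n$ to tame the background geometry of the foliation: uniform equivalence of the induced metrics on neighboring slices (via the metric evolution equation driven by $n$ and $k$), uniform two-sided bounds on $n$, and lower bounds on the injectivity radius, volume, and diameter of each $\Sigma_\tau$. These follow from elliptic estimates for the lapse equation together with Riccati-type ODEs in $\tau$ for the metric and its first derivatives.

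The heart of the argument is an $L^2$ bound for the spacetime Riemann curvature $R$ on each $\Sigma_\tau$. The natural tool in vacuum is the Bel-Robinson tensor $Q[R]$, which is symmetric, divergence free, and positive on timelike vectors. Contracting $Q[R]$ with the timelike unit normal and integrating over the slab between $\Sigma_{t_0}$ and $\Sigma_\tau$ yields an energy identity whose error terms involve only the deformation tensor of the normal, hence only $k$ and $\nabla \log n$; under \eqref{eq.bdc_vacuum} a Grönwall argument then controls the flux of $Q[R]$. Commuting with the Lie derivative along the normal and iterating, combined with Bianchi-type elliptic estimates for $(g, k)$ on the slices, upgrades this to uniform bounds on the higher Sobolev norms targeted above.

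The main obstacle is closing this energy hierarchy: the Bel-Robinson identity and its commuted analogues contain trilinear error terms in $R$ that cannot be absorbed by $L^2$ control alone, and naive Sobolev embedding on three-manifolds would require estimating too many derivatives of $R$, producing an unacceptable loss. This is the essential difficulty resolved in \cite{kl_rod:bdc} through a geometric Littlewood-Paley calculus and sharp Strichartz and trace estimates on null hypersurfaces: one constructs a parametrix for the tensorial wave equation $\square R = R \cdot R$ satisfied by the curvature in vacuum, and the resulting bilinear and trilinear estimates control the error terms from only an $L^2$ curvature input. Once these estimates are in hand, the $L^2$ energy propagates, the higher-order energies follow by commuting, the targeted uniform bounds near $t_1$ are established, and the desired CMC extension past $t_1$ is produced.
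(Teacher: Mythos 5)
Your overall architecture is the right one and matches the strategy of \cite{kl_rod:bdc} that this paper follows: use the breakdown criterion to obtain uniform control of the foliation (lapse bounds via the lapse equation, uniform ellipticity of transported coordinates, volume radius and Sobolev constants), derive uniform $L^2$ curvature bounds on the slices via an energy-momentum tensor argument whose error terms involve only the deformation tensor $\pi$ of $T$, upgrade to the higher-order Sobolev norms required by the CMC local well-posedness theorem, and patch the local extensions together by uniqueness. Using the Bel-Robinson tensor for the lowest-order curvature energy is exactly what \cite{kl_rod:rin, kl_rod:bdc} do in vacuum (the present paper substitutes the computationally simpler Maxwell-type EMT $Q_m[R]$).

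Your description of the central difficulty, however, is off in a way that matters. The lowest-order Bel-Robinson energy identity closes by Gr\"onwall using only $\nabs{\pi}_{L^\infty(M)} \lesssim 1$; its error terms are not the obstruction. The obstruction appears in the \emph{higher-order} energy inequalities (for $DR$ and $D^2R$, derived from the wave equation $\square_g R = R \ast R$), whose error integrands carry a factor of $\nabs{R}_{L^\infty(\Sigma_\tau)}^2$. Closing the hierarchy therefore requires a uniform $L^\infty$ bound on $R$ itself, and this is obtained not by Strichartz estimates (which play no role in this proof) but by the Kirchhoff--Sobolev representation formula of \cite{kl_rod:ksp}, which expresses $R$ at a point $p$ as integrals over the regular past null cone $\mc{N}^-(p)$. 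For that formula to be usable one must first (i) bound the past null injectivity radius from below, and (ii) control the Ricci coefficients $\trace\chi$, $\hat{\chi}$, $\zeta$ and the mass aspect function of $\mc{N}^-(p)$ using only the $L^2$ curvature flux supplied by the local (null-cone) version of the energy estimate. It is step (ii) --- not the slab energy identity --- that forces the geometric Littlewood--Paley theory, the Besov embeddings, and the sharp trace theorem. Your outline gestures at these tools but attaches them to the wrong step, and it omits the null injectivity radius bound entirely; without that bound the parametrix has no domain of validity, so the $L^\infty$ estimate, and with it the whole higher-order hierarchy, does not close.
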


We make note of the following important features of Theorem \ref{thm.bdc_vacuum}:
\begin{itemize}
\item This result is more geometric than the previous coordinate breakdown condition, whose statement required the choice of an entire coordinate system.
Theorem \ref{thm.bdc_vacuum}, on the other hand, relies only on the (constant mean curvature) time foliation to state the breakdown criterion.

\item Both $k$ and $\nabla (\log n)$ in \eqref{eq.bdc_vacuum} reside at the level of one derivative of $g$.
Also, these quantities do not represent all components of one derivative of $g$.
\end{itemize}

The proof of Theorem \ref{thm.bdc_vacuum} is a complicated affair which in its totality spans several papers: \cite{kl_rod:cg, kl_rod:glp, kl_rod:stt, kl_rod:ksp, kl_rod:rin, kl_rod:bdc, wang:cg, wang:cgp}.
One important point of the proof is that although the associated local well-posedness problem applies to objects on the timeslices, much of the technical work revolves around \emph{spacetime} objects, in particular the spacetime curvature.
The relevant spacetime and timeslice objects can then be related using standard elliptic estimates.

Using mostly the same set of ideas as in \cite{kl_rod:bdc}, D. Parlongue, in \cite{parl:bdc}, proved an analogous breakdown criterion in the setting of a maximal time foliation with asymptotically flat timeslices.
In addition, the breakdown condition was weakened from an $L^\infty$-bound to an iterated $L^2_t L^\infty_x$-bound:
\[ \nabs{k}_{L^2_t L^\infty_x \paren{M}} + \nabs{\nabla \paren{\log n}}_{L^2_t L^\infty_x \paren{M}} < \infty \text{.} \]
Furthermore, there have been additional results by Q. Wang toward an improved $L^1_t L^\infty_x$-criterion in the original CMC setting; see \cite{wang:ibdc, wang:tbdc}.

\subsection{Nonvacuum Spacetimes}

The main question posed here is the following:
\begin{itemize}
\item[] {\it Do breakdown results analogous to that of Theorem \ref{thm.bdc_vacuum} hold for Einstein-scalar and Einstein-Maxwell spacetimes?}
\end{itemize}
In this paper, we will answer this question affirmatively.

The breakdown criteria in these nonvacuum cases remain largely the same as in the vacuum case.
We retain the uniform bounds on the time foliation quantities $k$ and $\nabla (\log n)$.
However, we must also impose a uniform bound on the now nontrivial matter field: the scalar field in the E-S setting, or the spacetime Maxwell $2$-form in the E-M setting.
This will be the only additional condition.

In the vacuum case of \cite{kl_rod:bdc}, the breakdown criterion in conjunction with the CMC gauge imply a wide range of priori controls with respect to the time foliation, all of which are essential for proving Theorem \ref{thm.bdc_vacuum}.
These include, for example, uniform $L^2$-bounds for curvature along the timeslices, as well as uniform bounds for Sobolev constants on the timeslices.
In the E-S and E-M cases, we will also need uniform bounds on the matter fields in order to achieve the a priori controls mentioned above.
This will justify the modified breakdown criterion which we shall adopt.

We note that many of the elements which made the vacuum case so difficult will also make our nonvacuum cases similarly demanding.
For instance, both \cite{kl_rod:bdc} and the main result here are essentially ``large data" results.
Therefore, the only a priori ``energy estimates" we will have are lower-order $L^2$-bounds on the curvature and the matter field.
This greatly complicates the process behind controlling the local null geometry, since both the null injectivity radius and the Ricci coefficients will need to be controlled by these $L^2$-quantities.
In particular, this will necessitate the use of the geometric Littlewood-Paley theory and the associated Besov estimates developed in \cite{kl_rod:glp} and further discussed in \cite[Sec. 2.2]{shao:bdc_nv}.

The addition of matter fields also introduces a number of new difficulties:
\begin{itemize}
\item The spacetime Ricci curvature is now nontrivial.

\item We must also deal with the coupling between the curvature and the matter field.
An unavoidable consequence of this is that both the curvature and the matter field must be estimated concurrently.

\item In the E-M case, there exist first-order terms in the wave equations satisfied by the curvature and Maxwell field which were not present in \cite{kl_rod:bdc}.
These terms cannot be fully treated using only the techniques of \cite{kl_rod:bdc} and its subsidiary papers.
In particular, we will require a generalized representation formula for tensor wave equations; see \cite[Ch. 5]{shao:bdc_nv} and \cite{shao:ksp}.
\end{itemize}

Another unfortunate reality is that the assumption of an E-V spacetime was pervasive throughout the entire proof of Theorem \ref{thm.bdc_vacuum}.
\footnote{Notable exceptions include the geometric Littlewood-Paley theory of \cite{kl_rod:glp} and the Kirchhoff-Sobolev parametrix of \cite{kl_rod:ksp} (and its generalization in \cite[Ch. 5]{shao:bdc_nv} and \cite{shao:ksp}).}
As a consequence, most of the elements of the proof in the E-V case must in principle be redone.
In particular, this includes controlling the local null geometry.
In both the vacuum case of \cite{kl_rod:bdc} and the nonvacuum cases presented here, the work on the null geometry comprises the most lengthy and technically involved portion of the overall proof.
We will omit this portion of the proof from this paper, since it would more than double the length of this text.
For details in this area, see \cite{shao:bdc_nv}, as well as its predecessors in the vacuum setting: \cite{kl_rod:cg, kl_rod:stt, kl_rod:rin, parl:bdc, wang:cg, wang:cgp}.

The following statement summarizes the main theorem of this paper:

\begin{theorem}\label{thm.bdc_nv_pre}
Suppose $(M, g, \Phi)$ is an E-S or E-M spacetime, given by
\[ M = \bigcup_{t_0 < \tau < t_1} \Sigma_\tau \text{,} \qquad t_0 < t_1 < 0 \text{,} \]
where each $\Sigma_\tau$ is a compact spacelike hypersurface of $M$ satisfying the constant mean curvature condition $\trace k \equiv \tau$, and where $\Phi$ denotes the matter field:
\begin{itemize}
\item In the E-S case, $\Phi$ denotes the scalar field $\phi$.

\item In the E-M case, $\Phi$ denotes the Maxwell $2$-form $F$.

\item Let $k$ and $n$ be as before in the statement of Theorem \ref{thm.bdc_vacuum}.
\end{itemize}
In addition, let the quantity $\mf{F}$ denote the following:
\begin{itemize}
\item The spacetime covariant differential $D \phi$ of $\phi$, in the E-S case.

\item The Maxwell $2$-form $F$, in the E-M case.
\end{itemize}
Assume the following breakdown criterion holds:
\begin{equation}\label{eq.bdc_nv_pre} \nabs{k}_{L^\infty\paren{M}} + \nabs{\nabla \paren{\log n}}_{L^\infty\paren{M}} + \nabs{\mf{F}}_{L^\infty\paren{M}} < \infty \text{.} \end{equation}
Then, $(M, g, \Phi)$ can be extended as an E-S or E-M (resp.) spacetime in the CMC gauge to some time $t_1 + \epsilon$ for some $\epsilon > 0$.
\end{theorem}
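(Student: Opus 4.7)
The overall strategy mirrors that of Theorem \ref{thm.bdc_vacuum}: extract from the hypothesis \eqref{eq.bdc_nv_pre} uniform a priori control of the geometry and matter up to time $t_1$, then feed the induced data at the slices $\Sigma_\tau$ with $\tau \nearrow t_1$ into a local well-posedness statement for the CMC-reduced Einstein-scalar or Einstein-Maxwell system. Uniqueness will then patch the resulting local solutions into a single spacetime extending past $t_1$. The crux is thus to propagate enough regularity of the coupled pair $(g,\Phi)$ from the bounds on $k$, $\nabla(\log n)$, and $\mf{F}$.

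As a first step I would upgrade \eqref{eq.bdc_nv_pre} to low-order geometric control on each $\Sigma_\tau$. The CMC condition together with $\nabs{k}_{L^\infty}$ and $\nabs{\nabla(\log n)}_{L^\infty}$ yields, via the lapse equation, uniform $L^\infty$ bounds on $n$ and $n^{-1}$, while the $L^\infty$ bound on $\mf{F}$ feeds the Hamiltonian and momentum constraints to bound the slice scalar curvature. Combined with the resulting time comparability of the metrics $g|_{\Sigma_\tau}$, this gives uniform volume, diameter, timeslice injectivity radius, and Sobolev constant bounds. The key novelty over the vacuum case is already present here: the matter contributes nontrivial source terms to $\ric$ and to the constraints, but the $L^\infty$ control of $\mf{F}$ renders these contributions harmless at this level.

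The main effort then goes into coupled $L^2$ energy estimates for the spacetime Riemann tensor $R$ and for one derivative of $\mf{F}$. In the vacuum setting one uses a Bel-Robinson-type tensor for the Weyl curvature; here the Bianchi equations for $R$ carry sources involving $D\mf{F}$ (and, in the E-M case, quadratic in $F$), while $\mf{F}$ satisfies a wave or Maxwell system whose sources depend on $R$ itself. The two quantities must therefore be estimated simultaneously through a Gr\"onwall-type bootstrap, in which the uniform $L^\infty$ bound on $\mf{F}$ is precisely what closes the coupling. Transferring these spacetime $L^2$ bounds to the slices via elliptic estimates and the deformation tensor of $\partial_t$ then yields uniform $H^1$ control of $k$, $\nabla^2(\log n)$, and the appropriate derivatives of $\mf{F}$ on each $\Sigma_\tau$. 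In parallel, one must control the local null geometry: a lower bound on the null injectivity radius along outgoing cones and Besov/trace bounds on the Ricci coefficients, using the geometric Littlewood-Paley calculus of \cite{kl_rod:glp}. This is the most technically involved portion, and as the paper indicates it is largely imported from the vacuum work, with matter contributing additional but manageable Ricci source terms.

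The step I expect to be the principal new obstacle is the propagation of higher regularity in the E-M case. Commuting the tensorial wave equations for $R$ and $F$ with derivatives produces first-order terms linear in $R$ and $F$ which are genuinely absent from \cite{kl_rod:bdc} and cannot be absorbed by the classical parametrix used there. To handle them I would apply the generalized Kirchhoff-Sobolev parametrix for tensor wave equations developed in \cite[Ch.~5]{shao:bdc_nv} and \cite{shao:ksp}, invoked in tandem for $R$ and $\mf{F}$ along short null cones; the coefficients of the offending first-order terms are controlled by the Besov and $L^2$ quantities already established, so the resulting error integrals close. The uniform two-derivative bounds on $(R,\mf{F})$ obtained this way translate into uniform $H^s$-type control of the initial data on every $\Sigma_\tau$, enabling the local well-posedness and patching argument described at the outset.
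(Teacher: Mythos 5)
Your proposal follows essentially the same route as the paper: reduce the extension to uniform control of the local well-posedness parameters on the slices, derive the lapse/volume/Sobolev a priori bounds from \eqref{eq.bdc_nv_pre}, run coupled Gr\"onwall-type energy estimates for $R$ and the matter field via generalized energy-momentum tensors, control the null geometry with the geometric Littlewood--Paley machinery, and close the higher-order estimates with the generalized Kirchhoff--Sobolev parametrix to handle the E-M first-order terms. The only small discrepancy is that the timeslice injectivity radius bound is not available at the first stage as you claim --- in the paper it follows only at the very end, once $\|\mc{R}\|_{L^\infty(\Sigma_\tau)}$ is controlled via the higher-order elliptic estimates --- but this does not affect the overall structure of the argument.
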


A more technically precise version is stated as Theorem \ref{thm.bdc}, after a sufficient amount of background and notations have been developed.

\subsection{Extensions and Open Problems}

Finally, we discuss some possible extensions of Theorem \ref{thm.bdc_nv_pre}, as well as some related open problems.
The most immediate question is whether Theorem \ref{thm.bdc_nv_pre} can be similarly adapted to other related classes of nonvacuum spacetimes, such as Einstein-Klein-Gordon and Einstein-Yang-Mills spacetimes.
The answer is affirmative for the above two spacetimes in this same CMC gauge framework.
The class of techniques presented in this text can be applied directly to the Einstein-Klein-Gordon case.
This is also mostly true for the Einstein-Yang-Mills setting; however, in this case, we will also require vector bundle generalizations of many of the tools used here to handle the Yang-Mills curvature both in a covariant and in a gauge-invariant fashion.
\footnote{In particular, this includes the vector bundle extension of the generalized Kirchhoff-Sobolev parametrix; see \cite[Sec. 5.3]{shao:bdc_nv} and \cite[Sec. 5]{shao:ksp}.}

Whether analogous breakdown criteria can be stated and proved for less similar classes of nonvacuum spacetimes, such as the Einstein-Euler and the Einstein-Vlasov models, remains an open question.
For such matter fields, one would likely require other tools besides the representation formula for tensor wave equations in order to derive higher-order energy inequalities.

Another variation of Theorem \ref{thm.bdc_nv_pre} which should be possible is the case of a maximal foliation with asymptotically flat timeslices for various nonvacuum spacetimes.
The adaptations to the theorem statement and proof should then be analogous to \cite{parl:bdc}.
Note that one must also amend the energy norms for the curvature and the matter fields in this setting along the lines of \cite{parl:bdc}.

One more potential direction of investigation involves the possible weakening of the $L^\infty$-bounds in the breakdown criterion to slightly weaker iterated bounds, such as the $L^2_t L^\infty_x$ bounds found in \cite{parl:bdc}.
A further improvement to $L^1_t L^\infty_x$-bounds along the lines of \cite{wang:ibdc, wang:tbdc} is also expected to be achievable.

A more difficult open question related to the topic at hand is the well-known CMC conjecture.
In this case, the desired conclusion is that the spacetimes of Theorems \ref{thm.bdc_vacuum} and \ref{thm.bdc_nv_pre} can in fact be continued up to time $0$.
Finally, another related conjecture is the bounded $L^2$-curvature conjecture, which in principle states that a local well-posed theory exists based on control of the $L^2$-norms of the curvature.
There has been significant recent progress in resolving this question.

\subsection*{Acknowledgements}

The author wishes to thank Professor Sergiu Klainerman for suggesting this problem and for hours of discussions throughout the preparation of this work.
Thanks also extends to Professor Igor Rodnianski for his insights.
In addition, the author thanks Qian Wang for helpful technical discussions.

\section{Tensorial Notations}

In this section, we construct some notations for the various types of tensor fields we will encounter.
The main objective of these notations is to highlight the covariant structures present in our setting.
\footnote{By covariant structures, we mean vector bundles with metrics and compatible connections.}
Although such structures were also pertinent to the analogous vacuum problem of \cite{kl_rod:bdc}, as well as other related works (for example, \cite{chr_kl:stb_mink, kl_rod:cg, parl:bdc}), they were described more implicitly in those texts.
We develop the new and more explicit notations here in the hopes of promoting a more conceptual outlook on the objects of our analysis.
The conventions used here will be an abridged version of that of \cite{shao:bdc_nv}.

\subsection{Tensor Fields}

Let $M$ denote an arbitrary smooth manifold.
First, we set the notations for standard tensor bundles over $M$:
\begin{itemize}
\item Let $T^r_s M$ denote the rank-$(r, s)$ tensor bundle of $M$.

\item Let $S^s M$ denote the bundle of all fully symmetric elements of $T^0_s M$.

\item Let $\Lambda^s M$ denote the bundle of all fully antisymmetric elements of $T^0_s M$, i.e., the exterior bundle of degree $s$ on $M$.
\end{itemize}

In general, for a smooth vector bundle $\mc{V}$ over $M$, we define the following:
\begin{itemize}
\item Given $p \in M$, let $\mc{V}_p$ denote the fiber of $\mc{V}$ at $p$.
The lone exception is for tangent spaces, for which we use the more standard notation
\[ T_p M = \paren{T^1_0 M}_p \text{.} \]

\item Let $\Gamma \mc{V}$ denote the smooth sections of $\mc{V}$.
For example, $\Gamma T^r_s M$ and $\Gamma \Lambda^s$ are the spaces of rank-$(r, s)$ tensor fields and $s$-forms on $M$, respectively.
\end{itemize}
For convenience, we also define the abbreviations
\begin{equation}\label{eq.tensor_field_sp} \mf{X}\paren{M} = \Gamma T^1_0 M \text{,} \qquad \mf{X}^\ast\paren{M} = \Gamma T^0_1 M \text{,} \qquad \Gamma \mc{T} M = \bigcup_{r, s \geq 0} \Gamma T^r_s M \text{,} \end{equation}
i.e., the space of vector fields on $M$, the space of $1$-forms on $M$, and the space of all tensor fields on $M$, respectively.

When convenient, we will adopt standard index notation to represent tensor fields with respect to local frames and dual coframes.
In many instances, these frames satisfy additional conditions, e.g., coordinate, orthonormal, or null frames.
In accordance with Einstein summation notation, indices repeated in both superscript and subscript represent a summation of components, i.e., a contraction.
In addition, we will use capital letters to denote collections of indices.
Moreover, repeated capital letters signify summations over all represented indices.

Furthermore, when writing in index notation, we let $\mf{A}_{\alpha\beta}[ \cdot ]$ denote an unnormalized antisymmetrization of the indices $\alpha$ and $\beta$ of the indexed quantity within the bracket.
For example, if $S \in \Gamma T^1_1 M$ and $T \in \Gamma T^0_2 M$, then
\begin{equation}\label{eq.index_antisym} \mf{A}_{\beta\gamma}\brak{S^\alpha{}_\beta T_{\gamma\delta}} = S^\alpha{}_\beta T_{\gamma\delta} - S^\alpha{}_\gamma T_{\beta\delta} = 2 S^\alpha{}_{[\beta} T_{\gamma]\delta} \text{.} \end{equation}
Although the standard bracket notation on the right-hand side of \eqref{eq.index_antisym} achieves the same effect, the $\mf{A}$-notation will be useful for larger expressions with multiple antisymmetries, in which the bracket notation would be confusing or ambiguous.

In the case that $M$ is Riemannian or Lorentzian, with metric $g \in \Gamma S^2 M$, then contravariant and covariant components are equivalent, and we will often write $T^{r+s} M$ for $T^r_s M$.
\footnote{For example, the Riemann curvature $R$ of $(M, g)$ is an element of $\Gamma T^4 M$.}
Moreover, given a field $T \in \Gamma T^r M$, we define $R[T] \in \Gamma T^{r + 2} M$ to be the covariant derivative commutator
\begin{equation}\label{eq.curv_gen} R_{\alpha\beta}\brak{T}_I = D_{\alpha\beta} T_I - D_{\beta\alpha} T_I \text{.} \end{equation}
We will also follow these standard traditions within index notation:
\begin{itemize}
\item We denote both the Ricci and scalar curvatures of $(M, g)$ by $R$, but with the appropriate number of indices.

\item If $M$ is oriented, then the volume form is denoted $\epsilon$.
\end{itemize}

Finally, if $(M, g)$ is Riemannian, we can define natural tensor norms by
\begin{equation}\label{eq.tensor_norm} \abs{\cdot}: \Gamma \mc{T} M \rightarrow C^\infty\paren{M} \text{,} \qquad \abs{\Psi} = \paren{\Psi^I \Psi_I}^\frac{1}{2} \text{.} \end{equation}
We can then define the obvious $L^p$-norms, $1 \leq p \leq \infty$, for such tensor fields.

\subsection{Foliations}\label{sec.fol}

Next, we briefly discuss a rather general situation of a $1$-parameter foliation of Riemannian manifolds.
Let $(M, g)$ denote a Lorentzian manifold, and let $N$ denote a smooth submanifold of $M$.
Let $f \in C^\infty(N)$, mapping onto a possibly infinite open interval $\mc{I}$, with $df$ nonvanishing.
For each $x \in \mc{I}$, we assume
\[ \Sigma_x = \brac{p \in M \mid f\paren{p} = x} \]
is a Riemannian submanifold of $M$.
In other words, we can write
\[ N = \bigcup_{x \in \mc{I}} \Sigma_x \]
as a $1$-parameter foliation of Riemannian submanifolds.

In future sections, this situation will arise in the following instances:
\begin{itemize}
\item \emph{Time foliations of spacetime:} In this case, $N = M$, and $f$ is a time function on $M$ which assigns to each a point a time value; see Sec. \ref{sec.tfol}.

\item \emph{Spherical foliations of regular null cones:} In this case, $N$ is a smooth portion of a past null cone in $M$, and $f$ foliates $N$ into spherical cross-sections; see Sec. \ref{sec.nfol}.
In particular, $N$ is null and hence is not pseudo-Riemannian.
\end{itemize}
Since it will be convenient to have some common notations for both cases, we commit to this abstract development here.

We begin by defining the following bundles over $N$:
\begin{itemize}
\item A tensor $w \in (T^r M)_q$ at $q \in \Sigma_x$ is \emph{horizontal} iff $w$ is tangent to $\Sigma_x$.

\item We denote by $\ul{T}^k N$ the \emph{horizontal bundle} over $N$ of all horizontal tensors of total rank $k$ at every $q \in N$.

\item We denote by $\ol{T}^l N$ the \emph{extrinsic bundle} over $N$ of all tensors in $M$ of total rank $l$ at every $q \in N$, i.e., the restriction of $T^l M$ to $N$.

\item The \emph{mixed bundle} $\ol{T}^l \ul{T}^k N$ over $N$ is defined to be the tensor product bundle
\begin{equation}\label{eq.mixed_bundle} \ul{T}^k \ol{T}^l N = \ul{T}^k N \otimes \ol{T}^l N \text{.} \end{equation}
\end{itemize}
For convenience, we adopt the notations
\begin{equation}\label{eq.fol_bundle_sp} \Gamma \ul{\mc{T}} N = \bigcup_{k \geq 0} \Gamma \ul{T}^k N \text{,} \qquad \Gamma \ol{\mc{T}} N = \bigcup_{l \geq 0} \Gamma \ol{T}^l N \text{,} \qquad \Gamma \ul{\mc{T}} \ol{\mc{T}} N = \bigcup_{k, l \geq 0} \Gamma \ul{T}^k \ol{T}^l N \text{,} \end{equation}
i.e., the spaces of horizontal, extrinsic, and mixed tensor fields on $N$.
We also define
\begin{equation}\label{eq.fol_bundle_vf} \ul{\mf{X}}\paren{N} = \Gamma \ul{T}^1 N \text{,} \qquad \ol{\mf{X}}\paren{N} = \Gamma \ol{T}^1 N \text{.} \end{equation}

Moreover, we adopt the following general indexing conventions:
\begin{itemize}
\item Horizontal indices will be denoted using Latin letters.

\item Extrinsic indices will be denoted using Greek letters.

\item Collections of extrinsic indices will be denoted using capital Latin letters.
\end{itemize}

For example, the pullback to $N$ of $g$ is an element of $\Gamma \ol{T}^2 N$, while the induced metrics $\gamma$ on the $\Sigma_x$'s is in $\Gamma \ul{T}^2 N$.
Note that $\gamma$ induces positive-definite bundle metrics $\langle \cdot, \cdot \rangle$ on the horizontal bundles $\ul{T}^k N$ via full metric contraction.
Similarly, the restriction of $g$ to $N$ induces bundle metrics $\langle \cdot, \cdot \rangle$ on the extrinsic bundles $\ol{T}^l N$.
These in turn naturally induce ``product" bundle metrics on $\ul{T}^k \ol{T}^l N$.

Next, let $D$ denote the Levi-Civita connection on $(M, g)$.
Recall from standard theory that $D$ induces a connection $\ol{D}$ on the extrinsic bundles $\ol{T}^l N$.
Moreover, $\ol{D}_X g \equiv 0$ by definition, hence $\ol{D}$ is compatible with the extrinsic bundle metrics.
\footnote{Technically, by $\ol{D}_X g$, we mean $\ol{D}_X$ acting on the restriction of $g$ to $N$.}

We can also naturally define a connection $\nabla$ on the horizontal bundles via projections.
In particular, given $X \in \mf{X}(N)$, then $\nabla$ satisfies the following properties:
\begin{itemize}
\item If $f \in C^\infty(N)$, then $\nabla_X f = Xf$, as usual.

\item If $Y \in \ul{\mf{X}}(N)$, then $\nabla_X Y$ is the projection onto the $\Sigma_x$'s of $\ol{D}_X Y$.

\item If $A \in \Gamma \ul{T}^k N$ is fully covariant, and $Y_1, \ldots, Y_k \in \ul{\mf{X}}(N)$, then
\begin{align}
\label{eq.hor_conn_gen} \nabla_X A\paren{Y_1, \ldots, Y_k} &= X\brak{A\paren{Y_1, \ldots, Y_k}} - A\paren{\nabla_X Y_1, Y_2, \ldots, Y_k} \\
\notag &\qquad - \ldots - A\paren{Y_1, \ldots, Y_{k-1}, \nabla_X Y_k} \text{.}
\end{align}
\end{itemize}
This definition of $\nabla$ generalizes the usual Levi-Civita connections on the $\Sigma_x$'s to also include non-horizontal derivatives tangent to $N$.
Since $\nabla_X \gamma \equiv 0$ for any $X \in \mf{X}(N)$, then $\nabla$ remains compatible with the horizontal bundle metrics.

We can also define mixed connections $\ol{\nabla}$ on the mixed bundles using the above connections $\nabla$ and $\ol{D}$.
On $\Gamma \ul{T}^k \ol{T}^l N$, we define $\ol{\nabla}$ to be the unique connection satisfying the following Leibniz identity for decomposable fields:
\[ \ol{\nabla}_X \paren{A \otimes B} = \nabla_X A \otimes B + A \otimes \ol{D}_X B \text{,} \qquad X \in \mf{X}\paren{N} \text{, } A \in \Gamma \ul{T}^k N \text{, } B \in \Gamma \ol{T}^l N \text{.} \]
In other words, $\ol{\nabla}$ behaves like $\nabla$ and $\ol{D}$ on horizontal and extrinsic components, respectively.
Clearly, $\ol{\nabla}$ is compatible with the above mixed bundle metrics.

\begin{remark}
More explicitly, if $A \in \Gamma \ul{T}^k \ol{T}^l N$; $X \in \mf{X}(N)$; $Y_1, \ldots, Y_k \in \ul{\mf{X}}(N)$; and $Z_1, \ldots, Z_l \in \ol{\mf{X}}(N)$; then $\ol{\nabla}_X A(Y_1, \ldots, Y_k; Z_1, \ldots, Z_l)$ is assigned the value
\begin{align}
\label{eq.mixed_conn_gen} &X\brak{A\paren{Y_1, \ldots, Y_k; Z_1, \ldots, Z_l}} - A\paren{\nabla_X Y_1, Y_2, \ldots, Y_k; Z_1, \ldots Z_l} - \ldots \\
\notag &\qquad - A\paren{Y_1, \ldots, Y_{k-1}, \nabla_X Y_k; Z_1, \ldots, Z_l} - A\paren{Y_1, \ldots, Y_k; \ol{D}_X Z_1, Z_2, \ldots Z_l} \\
\notag &\qquad - \ldots - A\paren{Y_1, \ldots, Y_k; Z_1, \ldots, Z_{l-1}, \ol{D}_X Z_l} \text{.}
\end{align}
\end{remark}

We also set the following notations:
\begin{itemize}
\item For $A \in \Gamma \ul{T}^k N$, define $\nabla A \in \Gamma \ul{T}^{k+1} N$ to map $X \in \ul{\mf{X}}(N)$ to $\nabla_X A$.

\item For $A \in \Gamma \ul{T}^k \ol{T}^l N$, define $\ol{\nabla} A \in \Gamma \ul{T}^{k+1} \ol{T}^l N$ to map $X \in \ul{\mf{X}}(N)$ to $\ol{\nabla}_X A$.

\item Horizontal and mixed Laplacians are defined in the usual fashion:
\begin{equation}\label{eq.lapl} \lapl = \gamma^{ab} \nabla_{ab} \text{,} \qquad \ol{\lapl} = \gamma^{ab} \ol{\nabla}_{ab} \text{.} \end{equation}
\end{itemize}
For further details involving the above constructions, see \cite[Sec. 1.2]{shao:bdc_nv}.

\begin{remark}
In contrast, the wave operator $g^{\alpha\beta} D_{\alpha\beta}$ on $(M, g)$ is denoted $\Box$.
\end{remark}

The fundamental properties are the Leibniz rules satisfied by the connections and the compatibility between the connections and metrics.
These justify integration by parts operations involving $\ol{\nabla}$-derivatives, seen in \cite{kl_rod:ksp, kl_rod:bdc, shao:bdc_nv, shao:ksp} and later in this paper.
This was used implicitly in \cite{kl_rod:ksp, kl_rod:bdc} and was discussed in detail in \cite{shao:bdc_nv, shao:ksp}.

\subsection{Normal Transport}\label{sec.nt}

Assume the same foliation setting as before.
For each point $p \in \Sigma_x \subseteq N$, there is a unique direction in $N$ which is normal to $\Sigma_x$.
Thus, we can define the ``normalized normal" vector field $Z \in \mf{X}(N)$ such that $Z$ points in the direction in $N$ normal to the $\Sigma_x$'s and satisfies $Z f \equiv 1$.

We can naturally define quantities via transport along the integral curves of $Z$.
For instance, given a locally defined function on some $\Sigma_y$, we can define a corresponding local function on another $\Sigma_x$ via the diffeomorphisms induced by the above transport.
In particular, given a coordinate system $(y^1, y^2)$ in $\Sigma_y$, we can define a \emph{transported coordinate system} on the other $\Sigma_x$'s.

Similarly, we can transport horizontal tensors along the integral curves of $Z$.
This induces the standard definition of ``Lie derivatives" of horizontal tensor fields.
More explicitly, for any $A \in \Gamma \ul{\mc{T}} N$, its ``normal Lie derivative" $\mc{L}_f A$ is defined
\begin{equation}\label{eq.lie_deriv_normal} \valat{\mc{L}_f A}_p = \lim_{\delta \rightarrow 0} \frac{f^\ast_{x + \delta, x}\paren{\valat{A}_{f_{x, x + \delta}\paren{p}}} - \valat{A}_p}{\delta} \text{,} \qquad p \in \Sigma_x \subseteq N \text{,} \end{equation}
where $f_{x + \delta, x}$ is the flow from $\Sigma_{x + \delta}$ to $\Sigma_x$ via the integral curves of $Z$, and where $f_{x + \delta, x}^\ast$ denotes the ``push-forward" of tensors through $f_{x + \delta, x}$.

\begin{remark}
The development described here differs from that of \cite{kl_rod:bdc, shao:bdc_nv}, which expressed similar notions entirely in terms of transported coordinate systems.
Notice that the normal Lie derivatives of \eqref{eq.lie_deriv_normal}, which are tensorial and invariant, coincide with the $f$-coordinate derivatives for such transported coordinate systems.
\end{remark}

Finally, we define the analogous ``normalized normal" covariant derivatives: for fields $A \in \Gamma \ul{\mc{T}} N$ and $B \in \Gamma \ul{\mc{T}} \ol{\mc{T}} N$, we define
\begin{equation}\label{eq.cov_deriv_normal} \nabla_f A = \nabla_Z A \text{,} \qquad \ol{\nabla}_f B = \ol{\nabla}_Z B \text{.} \end{equation}

\subsection{The Einstein Equations}

Assume now that $(M, g)$ is a $(1+3)$-dimensional connected, oriented, and time-oriented Lorentzian manifold, with Levi-Civita connection $D$ and Riemann curvature $R$.
In addition, let $\mf{f}$ denote a collection of prescribed ``matter fields" on $M$, satisfying the equations of their respective theories.
The \emph{Einstein equations} on $M$ are given in index notation by
\begin{equation}\label{eq.einstein} G_{\alpha\beta} = R_{\alpha\beta} - \frac{1}{2} \cdot R \cdot g_{\alpha\beta} = Q_{\alpha\beta} \text{,} \end{equation}
where $Q \in \Gamma S^2 M$ denotes the \emph{energy-momentum tensor} of the matter fields.
Recall that the Ricci curvature can be expressed in terms of the matter fields:
\begin{equation}\label{eq.einstein_ric} R_{\alpha\beta} = Q_{\alpha\beta} - \frac{1}{2} g_{\alpha\beta} g^{\mu\nu} Q_{\mu\nu} \text{.} \end{equation}
Moreover, recall that $Q$ is both symmetric and divergence-free.

In this text, we will consider the following settings for matter fields:
\begin{itemize}
\item {\it Vacuum:} There are no matter fields, so both $Q$ and $\ric$ vanish.

\item {\it Scalar field}: In this model case, $\mf{f}$ is given as a \emph{scalar field} $\phi \in C^\infty(M)$, which satisfies the linear homogeneous covariant wave equation
\begin{equation}\label{eq.scalar_field} \square_g \phi = 0 \text{.} \end{equation}
The energy-momentum tensor and Ricci curvature are given by
\begin{equation}\label{eq.es_emt} Q_{\alpha\beta} = D_\alpha \phi D_\beta \phi - \frac{1}{2} g_{\alpha\beta} D^\mu \phi D_\mu \phi \text{,} \qquad R_{\alpha\beta} = D_\alpha \phi D_\beta \phi \text{.} \end{equation}

\item {\it Maxwell field}: In the model electromagnetic case, $\mf{f}$ is given as a \emph{Maxwell field} $F \in \Gamma \Lambda^2 M$, which satisfies the linear homogeneous Maxwell equations 
\begin{equation}\label{eq.maxwell} D^\beta F_{\alpha\beta} \equiv 0 \text{,} \qquad D_\alpha F_{\beta\gamma} + D_\beta F_{\gamma\alpha} + D_\gamma F_{\alpha\beta} \equiv 0 \text{.} \end{equation}
The energy-momentum tensor and Ricci curvature are given by
\begin{equation}\label{eq.em_emt} Q_{\alpha\beta} = R_{\alpha\beta} = F_{\alpha\mu} F^\beta{}^\mu - \frac{1}{4} g_{\alpha\beta} F^{\mu\nu} F_{\mu\nu} \text{.} \end{equation}
\end{itemize}

We remark that in the above settings, $Q$ satisfies both the \emph{positive energy condition} and the \emph{strong energy condition}, i.e., $Q(X, Y) \geq 0$ and $\ric(X, Y) \geq 0$ for all future causal $X, Y \in \mf{X}(M)$.
Moreover, by direct calculations, we can compute
\begin{align}
\label{eq.curv_divg_ev} D^\alpha R_{\alpha\beta\gamma\delta} &\equiv 0 \text{,} \\
\label{eq.curv_divg_es} D^\alpha R_{\alpha\beta\gamma\delta} &= - \mf{A}_{\gamma\delta} \brak{D_\gamma \phi D_{\delta\beta} \phi} \text{,} \\
\label{eq.curv_divg_em} D^\alpha R_{\alpha\beta\gamma\delta} &= - F_\beta{}^\mu D_\mu F_{\gamma\delta} - \mf{A}_{\gamma\delta} \brak{F_\gamma{}^\mu D_\delta F_{\beta\mu} + \frac{1}{2} g_{\beta\gamma} F^{\mu\lambda} D_\delta F_{\mu\lambda}}
\end{align}
in the E-V, E-S, E-M settings, respectively.
\footnote{See \eqref{eq.index_antisym}.}

\section{The CMC Breakdown and Cauchy Problems}

For the breakdown problem at hand, we are given an existing solution of the Einstein equations, i.e., an E-S or E-M spacetime ``existing on a finite time interval".
Our goal will be to ``extend the solution further in time" given that certain criteria hold on this existing solution.
In this section, we will make precise the above informal expressions.
This essentially requires smoothly assigning to each point of the spacetime a ``time value" and then foliating the spacetime into ``timeslices", that is, hypersurfaces of constant time value.

This objective will be achieved by defining global time functions and the various fundamental objects associated with its resulting foliation.
In addition, for the breakdown problem, we will adopt the CMC gauge condition as well.

Throughout the remainder of this paper, we will always let $(M, g)$ denote a $(1+3)$-dimensional connected, oriented, and time-oriented Lorentzian manifold, with Levi-Civita connection $D$ and curvature $R$.

\subsection{Time Foliations}\label{sec.tfol}

We define a \emph{time function} on $M$ to be a map $t \in C^\infty(M)$ satisfying the following conditions:
\begin{itemize}
\item The spacetime gradient of $t$ is everywhere past timelike.

\item Every nonempty level set of $t$ is a Cauchy hypersurface of $M$, i.e., every inextendible causal curve in $M$ intersects each level set of $t$ exactly once. 
\end{itemize}
From the first property, we see $t$ is strictly increasing along all future directions.
Moreover, the second property implies that $(M, g)$ is globally hyperbolic.

For $\tau \in \R$ and an interval $I \subseteq \R$, define
\[ \Sigma_\tau = \brac{z \in M \mid t\paren{z} = \tau} \text{,} \qquad \Sigma_I = \brac{z \in M \mid t\paren{z} \in I} \text{.} \]
The family $\{\Sigma_\tau\}$ of spacelike Cauchy hypersurfaces defines a \emph{time foliation} of $M$,
\begin{equation}\label{eq.tfol} M = \bigcup_{\tau \in \mc{I}} \Sigma_\tau \text{,} \end{equation}
where $\mc{I}$ is an interval in $\R$, and where $t$ maps onto $\mc{I}$.
We will also impose the following assumptions: the interval $\mc{I}$ is finite, and the $\Sigma_\tau$'s are compact.

\begin{remark}
Any two slices $\Sigma_{\tau_1}$, $\Sigma_{\tau_2}$, where $\tau_1, \tau_2 \in \mc{I}$, are in fact diffeomorphic, since any $p_1 \in \Sigma_{\tau_1}$ can be canonically identified with the point $p_2 \in \Sigma_{t_2}$ on the integral curve of $\grad t$ through $p_1$.
\end{remark}

We adopt the notations developed in the Section \ref{sec.fol} to describe the foliation \eqref{eq.tfol}, with $N = M$ and $f = t$.
In addition, we adopt the following indexing conventions:
\begin{itemize}
\item Greek letters refer to all components in $M$, ranging from $0$ to $3$.

\item Latin letters refer to horizontal components, ranging from $1$ to $3$.

\item For implicit index summations, repeating Greek indices are summed from $0$ to $3$, while repeating Latin indices are summed from $1$ to $3$.
\end{itemize}
Also, let $\gamma$, $\nabla$, and $\mc{R}$ denote the horizontal metrics, connections, and curvatures.

Next, we define the following basic objects of interest:
\begin{itemize}
\item Let $n \in C^\infty(M)$ denote the lapse function, given by
\begin{equation}\label{eq.lapse_def} n = \abs{g\paren{\grad t, \grad t}}^{-\frac{1}{2}} > 0 \text{.} \end{equation}

\item Let $T \in \mf{X}(M)$ denote the future unit normal to the $\Sigma_\tau$'s, i.e.,
\begin{equation}\label{eq.T} T = -n \cdot \grad t \text{.} \end{equation}

\item Let $k \in \Gamma \ul{T}^2 M$ be the future second fundamental form of the $\Sigma_\tau$'s:
\begin{equation}\label{eq.sff} k\paren{X, Y} = -g\paren{D_X T, Y} \text{,} \qquad X, Y \in \ul{\mf{X}}\paren{M} \text{.} \end{equation}
\end{itemize}
Recall that $k$ is symmetric and can be decomposed into trace and traceless parts:
\begin{equation}\label{eq.sff_tr} \trace k = \gamma^{ij} k_{ij} \in C^\infty\paren{M} \text{,} \qquad \hat{k} = k - \frac{1}{3} \paren{\trace k} \gamma \in \Gamma \ul{T}^2 M \text{.} \end{equation}
Recall also that $\trace k$ corresponds to the mean curvature of the $\Sigma_\tau$'s in $M$.

We can relate spatial derivatives of $\mc{R}$ and $k$ to spacetime derivatives of $R$.
First, the Gauss equation, expressed in index notation in terms of $\mc{R}$ and $k$, becomes
\begin{equation}\label{eq.gauss_tf} R_{ijlm} = \mc{R}_{ijlm} - k_{jl} k_{im} + k_{il} k_{jm} \text{.} \end{equation}
Similarly, the Codazzi equations and a direct calculation imply
\begin{equation}\label{eq.codazzi_tf} \nabla^j k_{ij} = R_{\alpha i} T^\alpha + \nabla_i \paren{\trace k} \text{,} \qquad \nabla_{[i} k_{j]l} = R_{\alpha l ij} T^\alpha \text{.} \end{equation}

In this setting, the ``normalized normal" vector field $Z$ is given by $Z = n T$.
\footnote{See Section \ref{sec.nt}.}
In addition, the quantities $\gamma$ and $k$ satisfy the evolution equations
\begin{align}
\label{eq.dt_met} \mc{L}_t \gamma &= -2 n k \text{,} \\
\label{eq.dt_sff} \mc{L}_t k_{ij} &= -\nabla_{ij} n + n R_{i \alpha j \beta} T^\alpha T^\beta - n k_{il} k_j{}^l \text{,}
\end{align}
where $\mc{L}_t$ denotes the normal Lie derivative of \eqref{eq.lie_deriv_normal}.

We can naturally define an associated Riemannian metric $h$ on $M$ by
\begin{equation}\label{eq.tf_rmet} h\paren{X, Y} = g\paren{X, Y} + 2 g\paren{T, X} g\paren{T, Y} \text{,} \qquad X, Y \in \mf{X}\paren{M} \text{.} \end{equation}
In other words, we define $h$ to act like $g$ on the horizontal components, and we simply invert the sign of the normal timelike component.
It is easy to see that $h$ is Riemannian, and that $h$ agrees with $\gamma$ for horizontal vector fields.
From now on, tensor norms on $M$ will be defined with respect to $h$.

The following proposition states that the Cauchy-Schwarz inequality continues to hold with respect to contractions by $g$:

\begin{proposition}\label{thm.norm_prod_tf}
Let $\Phi, \Psi \in \Gamma \mc{T} M$, and let $\Phi \cdot \Psi \in \Gamma \mc{T} M$ denote a tensor field obtained by taking zero or more contractions and $g$-contractions of $\Phi \otimes \Psi$.
Then, $|\Phi \cdot \Psi| \lesssim |\Phi| |\Psi|$, where the constant depends on the number of contractions.
\footnote{By $A \lesssim B$, we mean $A \leq CB$ for some positive constant $C$.}
\end{proposition}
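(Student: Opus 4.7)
The plan is to leverage the explicit relationship between the Lorentzian metric $g$ and the auxiliary Riemannian metric $h$ provided by \eqref{eq.tf_rmet}. Writing $\tilde{T}_\alpha = g_{\alpha\beta} T^\beta$ for the $g$-dual of $T$, the definition of $h$ immediately yields the pointwise identity $h_{\alpha\beta} = g_{\alpha\beta} + 2 \tilde{T}_\alpha \tilde{T}_\beta$, and dualizing gives $h^{\alpha\beta} = g^{\alpha\beta} + 2 T^\alpha T^\beta$. Hence $g$ and $g^{-1}$ differ from $h$ and $h^{-1}$ by rank-one corrections involving $T$. Moreover, $h(T,T) = g(T,T) + 2 g(T,T)^2 = -1 + 2 = 1$, so $T$ has unit $h$-norm, and these rank-one corrections are cheap to control.

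The first step is a reduction to a single contraction. Since the product bundle metric on $T^{r_1+r_2}_{s_1+s_2} M$ induced by $h$ satisfies $\abs{\Phi \otimes \Psi} = \abs{\Phi} \abs{\Psi}$, it suffices to prove that for any $\Theta \in \Gamma \mc{T} M$, each individual contraction and each individual $g$-contraction applied to $\Theta$ produces a tensor whose $h$-norm is bounded pointwise by a constant multiple of $\abs{\Theta}$. The full proposition then follows by induction on the number of contraction operations.

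For the atomic case, I would work in a local pointwise $h$-orthonormal frame $\{e_0, e_1, e_2, e_3\}$ adapted to the foliation, with $e_0 = T$ and $\{e_1, e_2, e_3\}$ a $\gamma$-orthonormal frame on the timeslice. In such a frame $h$ has components $\delta_{\alpha\beta}$, so $\abs{\Theta}^2$ is just the sum of squares of all components of $\Theta$. For an ordinary (upper-lower) contraction $\Theta^{I\alpha}_{J\alpha}$, a single application of Cauchy-Schwarz in the summed index yields the desired pointwise bound. For a $g$-contraction, I would use the decomposition $g^{\alpha\beta} = h^{\alpha\beta} - 2 T^\alpha T^\beta$ (or the dual version $g_{\alpha\beta} = h_{\alpha\beta} - 2 \tilde{T}_\alpha \tilde{T}_\beta$ when contracting two upper indices) to split the operation into an $h$-contraction and a contraction against $T \otimes T$. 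The $h$-contraction is controlled by the standard Cauchy-Schwarz inequality for the Riemannian metric $h$, and the $T$-contraction is controlled by $\abs{T}^2 = 1$ times $\abs{\Theta}$.

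I do not anticipate a serious obstacle: the argument is essentially linear algebra carried out pointwise, and the only conceptual ingredient is the structural identity \eqref{eq.tf_rmet} relating $g$, $h$, and $T$. Iterating the atomic bound over the finite list of contractions and $g$-contractions prescribed by $\Phi \cdot \Psi$ produces $\abs{\Phi \cdot \Psi} \lesssim \abs{\Phi} \abs{\Psi}$ with an implicit constant depending only on the number of contractions, exactly as claimed.
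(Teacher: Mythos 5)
Your proposal is correct and is essentially the same argument as the paper's (the paper defers the proof to \cite{shao:bdc_nv}, where the key point is likewise the decomposition $g_{\alpha\beta} = h_{\alpha\beta} - 2\,g(T,\cdot)_\alpha\, g(T,\cdot)_\beta$ together with $|T|_h = 1$ and pointwise Cauchy--Schwarz in an $h$-orthonormal frame). The reduction to a single atomic contraction and the induction on the number of contractions are exactly as intended, and the constant indeed depends only on how many contractions are performed.
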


\begin{proof}
See \cite[Prop. 4.1]{shao:bdc_nv}.
\end{proof}

Let $\pi = {}^{(T)} \pi \in \Gamma S^2 M$ denote the deformation tensor of $T$:
\begin{equation}\label{eq.dft} \pi\paren{X, Y} = \mc{L}_T g\paren{X, Y} = g\paren{D_X T, Y} + g\paren{D_Y T, X} \text{,} \qquad X, Y \in \mf{X}\paren{M} \text{.} \end{equation}
By direct computation, we can relate $\pi$ to $n$ and $k$: for any $X, Y \in \ul{\mf{X}}(M)$,
\begin{equation}\label{eq.dft_cp} \pi\paren{T, T} \equiv 0 \text{,} \qquad \pi\paren{X, Y} = -2 k\paren{X, Y} \text{,} \qquad \pi\paren{T, X} = X\paren{\log n} \text{.} \end{equation}
In other words, $\pi$ can be thought of as a spacetime object which contains precisely the same information as the horizontal objects $k$ and $\nabla (\log n)$.

One reason for the importance of the deformation tensor is that it bounds a number of essential quantities related to the time foliation, the most important of which are demonstrated by \eqref{eq.dft_cp}.
We also have from \cite[Prop. 4.2]{shao:bdc_nv} the bounds
\begin{equation}\label{eq.dft_bound} \abs{D T} \lesssim \abs{\pi} \text{,} \qquad \abs{D h} \lesssim \abs{\pi} \text{.} \end{equation}
For instance, the following calculus estimate is a direct consequence of \eqref{eq.dft_bound}:

\begin{proposition}\label{thm.norm_power_rule_tf}
The following calculus estimate holds:
\[ \ol{\nabla} \abs{\Psi}^p \lesssim \abs{\Psi}^{p-1} \abs{\ol{\nabla} \Psi} + \abs{\pi} \abs{\Psi}^p \text{,} \qquad p \geq 1 \text{, } \Psi \in \Gamma \ul{\mc{T}} \ol{\mc{T}} \Sigma_\tau \text{.} \]
\end{proposition}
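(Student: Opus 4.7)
The plan is to reduce the $p$-th power estimate to the case $p=2$, where the square of the norm can be differentiated explicitly in index notation. At points where $\abs{\Psi}>0$, the chain rule gives
\[
\ol{\nabla}\abs{\Psi}^p = \frac{p}{2}\abs{\Psi}^{p-2}\,\ol{\nabla}\abs{\Psi}^2,
\]
so it suffices to establish the bound $\abs{\ol{\nabla}\abs{\Psi}^2} \lesssim \abs{\Psi}\,\abs{\ol{\nabla}\Psi} + \abs{\pi}\,\abs{\Psi}^2$. Points where $\Psi$ vanishes are handled by the standard regularization $(\abs{\Psi}^2+\epsilon^2)^{p/2}$ followed by $\epsilon\to 0^+$, using $p\geq 1$ to keep the regularized quantities uniformly controlled.

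Next, I would expand $\abs{\Psi}^2$ in index notation as a full contraction of $\Psi\otimes\Psi$ using $\gamma$ on horizontal indices and the ambient Riemannian metric $h$ on extrinsic indices, since the tensor norm on $M$ is defined with respect to $h$. Applying $\ol{\nabla}_X$ via the Leibniz rule produces three kinds of terms. First, derivatives falling on horizontal contractions vanish, since $\nabla\gamma\equiv 0$. Second, derivatives landing on one of the two copies of $\Psi$ contribute, after summing the symmetric pair, at most $2\abs{\Psi}\,\abs{\ol{\nabla}_X\Psi}$ via the Cauchy--Schwarz inequality for the positive-definite $h$-based bundle inner product. Third, and crucially, derivatives may fall on the extrinsic contraction factors $h^{\alpha\beta}$.

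The key observation for this third group is that $\ol{\nabla}$ is compatible with $g$, not $h$, on the extrinsic factors. Using the decomposition $h_{\alpha\beta}=g_{\alpha\beta}+2T_\alpha T_\beta$ from \eqref{eq.tf_rmet} together with $Dg\equiv 0$, the derivatives of $h$ reduce to derivatives of $T$, and the bound $\abs{Dh}\lesssim\abs{\pi}$ from \eqref{eq.dft_bound} controls them directly. Combined with Proposition~\ref{thm.norm_prod_tf}, applied to the resulting contraction of $\ol{\nabla} h$ against $\Psi\otimes\Psi$, this third group contributes at most a constant times $\abs{\pi}\,\abs{\Psi}^2$. Summing the three groups establishes the $p=2$ bound, and multiplying through by $\frac{p}{2}\abs{\Psi}^{p-2}$ yields the stated inequality.

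The main conceptual obstacle, and the origin of the $\abs{\pi}$ correction, is the mismatch between the Riemannian norm on $M$ (defined via $h$) and the Lorentzian metric $g$ with which $\ol{\nabla}$ is compatible on extrinsic factors. Once this mismatch is recognized and quantified by \eqref{eq.dft_bound}, no further analytic difficulty arises; the rest is routine Leibniz bookkeeping and Cauchy--Schwarz.
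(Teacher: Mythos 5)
Your proposal is correct and follows essentially the same route as the paper: differentiate $h(\Psi,\Psi)^{p/2}$ by the chain and Leibniz rules, use Cauchy--Schwarz (Proposition \ref{thm.norm_prod_tf}) on the terms where the derivative hits $\Psi$, and absorb the terms where the derivative hits the $h$-contraction via $\abs{Dh}\lesssim\abs{\pi}$ from \eqref{eq.dft_bound}. The extra details you supply --- the regularization near zeros of $\Psi$ and the explicit identification of the $g$-versus-$h$ compatibility mismatch as the source of the $\abs{\pi}$ term --- are correct refinements of the same argument.
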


\begin{proof}
Recalling the Leibniz and metric compatibility properties of the mixed covariant differential $\ol{\nabla}$, along with Proposition \ref{thm.norm_prod_tf}, we compute
\begin{align*}
\ol{\nabla} \abs{\Psi}^p &= \frac{p}{2} \brak{h\paren{\Psi, \Psi}}^\frac{p-2}{2} \ol{\nabla} \brak{h\paren{\Psi, \Psi}} \\
&\leq p \abs{\Psi}^{p-2} \cdot \abs{\Psi} \abs{\ol{\nabla} \Psi} + \frac{p}{2} \abs{\Psi}^{p-2} \abs{D h} \abs{\Psi}^2 \text{.}
\end{align*}
The proof is completed by applying \eqref{eq.dft_bound}.
\end{proof}

Lastly, we note the following coarea formula:

\begin{proposition}\label{thm.coarea_tf}
If $\Omega$ is an open subset of $M$ and $\mc{J}$ is a subinterval of $\mc{I}$, then
\[ \int_{\Sigma_{\mc{J}} \cap \Omega} \phi = \int_{\mc{J}} \paren{\int_{\Sigma_\tau \cap \Omega} n \cdot \phi} d\tau \]
for any integrable $\phi \in C^\infty(\Omega)$.
\end{proposition}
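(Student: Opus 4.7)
The plan is to reduce the identity to Fubini's theorem applied within the transported coordinates of Section~\ref{sec.nt}. First observe that the ``normalized normal'' field for the time foliation is $Z = nT$: the vector $T$ is the $g$-unit normal to each $\Sigma_\tau$, and since $g(\grad t, \grad t) = -n^{-2}$, one computes $T(t) = -n\, g(\grad t, \grad t) = 1/n$, so $nT$ is normal to the slices with $(nT)(t) \equiv 1$.

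Next, I fix $\tau_0 \in \mc{J}$ and, using compactness of $\Sigma_{\tau_0}$, cover it by finitely many coordinate charts $(y^1, y^2, y^3)$. Transporting each such chart along the integral curves of $Z$ as in Section~\ref{sec.nt} produces coordinate systems $(t, y^1, y^2, y^3)$ whose domains cover all of $\Sigma_\mc{J}$; note that the integral curves of $Z$ reach every $\Sigma_\tau$ with $\tau \in \mc{J}$ because they reparametrize inextendible timelike curves, each of which crosses every Cauchy slice exactly once. By construction of the transport, $\partial_t = Z = nT$ while each $\partial_{y^i}$ is horizontal. Combined with $g(T,T) = -1$ and $g(T, \partial_{y^i}) = 0$, this yields the ADM-type factorization with vanishing shift,
\[ g = -n^{2}\, dt^{2} + \gamma_{ij}(t, y)\, dy^{i}\, dy^{j} \text{,} \]
so the spacetime volume element equals $n\sqrt{\det \gamma}\, dt\, dy^{1}\, dy^{2}\, dy^{3} = n\, dt \wedge dA_{\Sigma_\tau}$, where $dA_{\Sigma_\tau}$ denotes the induced volume on $(\Sigma_\tau, \gamma)$.

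Finally, I would apply Fubini's theorem within a single such chart to obtain the identity for integrable $\phi$ compactly supported in that chart, and globalize via a partition of unity subordinate to the finite cover of $\Sigma_\mc{J}$ constructed above. Summing the chart-wise contributions, restricted to $\Omega$, gives the claimed identity on $\Sigma_\mc{J} \cap \Omega$. The only non-bookkeeping ingredient is the ADM-type metric decomposition in the displayed line, which is immediate from the orthogonality of $nT$ to the horizontal bundle together with $g(T,T) = -1$; I anticipate no genuine analytic obstacle in the proof.
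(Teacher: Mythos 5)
Your proof is correct and is essentially the argument the paper defers to (the paper only cites \cite[Prop. 4.4]{shao:bdc_nv}): the zero-shift ADM factorization $g = -n^2\,dt^2 + \gamma_{ij}\,dy^i\,dy^j$ in coordinates transported along $Z = nT$ gives $\sqrt{-\det g} = n\sqrt{\det\gamma}$, and Fubini plus a partition of unity finishes it. No gaps.
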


\begin{proof}
See \cite[Prop. 4.4]{shao:bdc_nv}.
\end{proof}

\subsection{The Main Theorem}

In terms of describing the Einstein equations as a system of partial differential equations, the above prescription of a time function and foliation leaves us with still another degree of freedom: a gauge condition satisfied by the foliation.
In this text, we will adopt the \emph{constant mean curvature}, or \emph{CMC}, gauge, as was done in \cite{kl_rod:bdc}.

To be much more precise, we say that the spacetime $(M, g)$, along with a time function $t \in C^\infty(M)$ and negative real numbers $t_0 < t_1 < 0$, satisfy the condition \ass{CMC}{t, t_0, t_1} iff the following conditions hold:
\begin{itemize}
\item The total timespan $\mc{I} = t(M)$ is precisely the interval $(t_0, t_1)$.

\item The timeslices $\Sigma_\tau$ are compact.

\item The CMC condition holds, that is, $\trace k = t$ everywhere on $M$.
\end{itemize}

We are now ready to state the precise main theorem of this text:

\begin{theorem}\label{thm.bdc}
Suppose $(M, g, \Phi)$ is an Einstein-scalar or an Einstein-Maxwell spacetime, where $\Phi$ denotes the matter field $\phi \in C^\infty(M)$ or $F \in \Gamma \Lambda^2 M$, corresponding to the E-S and E-M cases, respectively.
In addition, suppose the following hold:
\begin{itemize}
\item The condition \ass{CMC}{t, t_0, t_1} holds for $(M, g)$.

\item The following ``breakdown criterion" holds for some constant $C_0 > 0$,
\begin{equation}\label{eq.bdc} \nabs{k}_{L^\infty\paren{M}} + \nabs{\nabla \paren{\log n}}_{L^\infty\paren{M}} + \nabs{\mf{F}}_{L^\infty\paren{M}} \leq C_0 \text{,} \end{equation}
where $\mf{F}$ denotes either $D \phi$ or $F$ in the E-S and E-M cases, respectively.
\end{itemize}
Then, $(M, g, \Phi)$ can be extended past time $t_1$ as a CMC foliation.
In other words, there is an Einstein-scalar or Einstein-Maxwell (resp.) spacetime $(M_\star, g_\star, \Phi_\star)$ satisfying \ass{CMC}{t_\star, t_0, t_1 + \epsilon}, where $t_\star$ is a time function on $M_\star$, $\epsilon > 0$, and $t_1 + \epsilon < 0$, such that the following statements hold:
\begin{itemize}
\item There exists an isometric imbedding $\mf{i}$ from $(M, g)$ into $(M_\star, g_\star)$.

\item The maps $t$ and $t_\star$ correspond with respect to $\mf{i}$, i.e., for every $t_0 < \tau < t_1$,
\[ \mf{i}\paren{\Sigma_\tau} = \brac{q \in M_\star \mid t_\star\paren{q} = \tau} \text{,} \qquad \mf{i}\paren{M} = \brac{q \in M_\star \mid t_0 < t_\star\paren{q} < t_1} \text{.} \]
In particular, we have $t = t_\star \circ \mf{i}$.

\item The matter fields $\Phi$ and $\Phi_\star$ also correspond with respect to $\mf{i}$, i.e., $\Phi = \mf{i}^\ast \Phi_\star$.
\end{itemize}
\end{theorem}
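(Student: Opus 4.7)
The plan follows the template of \cite{kl_rod:bdc} adapted to the matter-coupled setting: derive sufficiently strong a priori Sobolev estimates on each timeslice so that a local well-posedness result for the CMC Einstein-matter system produces extensions of a uniform length $\epsilon>0$, and then patch these by uniqueness of the Cauchy problem.

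\emph{Step 1 (pointwise translation of the criterion).} By \eqref{eq.dft_cp}, the hypothesis \eqref{eq.bdc} is equivalent to an $L^\infty$ bound on the deformation tensor $\pi$ of $T$. Combined with \eqref{eq.einstein_ric}, \eqref{eq.es_emt}, and \eqref{eq.em_emt}, the uniform bound on $\mf{F}$ moreover gives pointwise control of $\ric$. Standard elliptic estimates on each CMC slice $\Sigma_\tau$---the Codazzi system \eqref{eq.codazzi_tf}, the Gauss identity \eqref{eq.gauss_tf}, and the lapse equation arising from tracing \eqref{eq.dt_sff} against $\gamma$ together with the CMC condition $\trace k = t$---upgrade these to $L^p$ and lower-order Sobolev control of $n$, $k$, and the induced Riemann curvature $\mc{R}$ of $\Sigma_\tau$, with constants uniform in $\tau\in(t_0,t_1)$.

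\emph{Step 2 (coupled higher-order energy estimates and null geometry).} The spacetime Riemann tensor $R$ and the matter field $\mf{F}$ each satisfy inhomogeneous tensor wave equations whose source terms are bilinear in $R$ and $\mf{F}$; see \eqref{eq.curv_divg_es} and \eqref{eq.curv_divg_em}. I would estimate $R$ and $\mf{F}$ concurrently via a bootstrap, using Bel--Robinson-type energies for $R$ (controlled by commuting with $T$ and paying error terms in $\pi$, bounded by Step 1) and analogous conserved quantities for $\mf{F}$, with Proposition \ref{thm.coarea_tf} providing slab/slice conversions and Proposition \ref{thm.norm_power_rule_tf} handling commutators with $\ol{\nabla}$. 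This yields uniform $L^2$ bounds on $R$, $\mf{F}$, and a controlled number of their $\ol{\nabla}$-derivatives on each $\Sigma_\tau$. The main obstacle enters at top order: closing the estimates requires uniform lower bounds on the null injectivity radius and $L^\infty$-type bounds on the null Ricci coefficients of truncated past null cones in $M$, given only $L^2$ control of $R$ and $\mf{F}$. In the nonvacuum setting, one must redo the trace and Besov estimates of \cite{kl_rod:cg, kl_rod:glp, kl_rod:stt, kl_rod:rin, wang:cg, wang:cgp} in the presence of nonzero $\ric$ and the coupling to $\mf{F}$, and additionally invoke the generalized Kirchhoff--Sobolev parametrix of \cite[Ch.\ 5]{shao:bdc_nv} to absorb the first-order source terms peculiar to the E-M case. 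This is by far the most delicate portion of the argument, and per the introduction its details are relegated to \cite{shao:bdc_nv}.

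\emph{Step 3 (extension and patching).} Steps 1 and 2 together yield uniform $H^s\times H^{s-1}$ bounds, for some $s>5/2$, on the data $(\gamma, k, n, \Phi, \mc{L}_t \Phi)|_{\Sigma_\tau}$ as $\tau\nearrow t_1$. A local well-posedness result for the CMC Einstein-scalar or Einstein-Maxwell Cauchy problem---obtained by imposing a hyperbolic gauge such as wave coordinates in the spirit of \cite{cb:le_einst, and_mo:gr_cmc}, solving the coupled reduced system on a short time interval, and recovering the CMC slicing by solving an elliptic lapse equation---then produces, from the data on each $\Sigma_\tau$ with $\tau$ sufficiently close to $t_1$, an E-S or E-M CMC extension of lifespan at least some fixed $\epsilon>0$ independent of $\tau$. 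Uniqueness of the Cauchy problem lets these local extensions be glued to the original $(M,g,\Phi)$ along the respective $\Sigma_\tau$, producing $(M_\star, g_\star, \Phi_\star)$ together with the required isometric embedding $\mf{i}$, matching time functions $t = t_\star\circ\mf{i}$, and matching matter fields $\Phi = \mf{i}^\ast\Phi_\star$.
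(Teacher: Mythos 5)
Your proposal follows essentially the same route as the paper: translate the breakdown criterion into uniform bounds on $\pi$ and $\ric$, derive lower- and higher-order energy estimates for $R$ and the matter field concurrently (with the null-geometry control and the generalized Kirchhoff--Sobolev parametrix carrying the top-order burden), convert to slice Sobolev bounds by elliptic estimates, and conclude via uniform-lifespan local well-posedness plus uniqueness and patching. The only cosmetic difference is that the paper replaces the Bel--Robinson tensor by a simpler ``Maxwell-type'' energy-momentum tensor for $R$, which it notes is interchangeable with the Bel--Robinson route you suggest.
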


The goal of the remainder of the paper will be to prove Theorem \ref{thm.bdc}.

\subsection{The Cauchy Problem}

We now give an explicit formulation of the Cauchy problems for the E-S and E-M equations in the CMC gauge.
We will then state a basic local well-posedness result for this problem.

A $4$-tuple $(\Sigma_0, \gamma_0, k_0, \mf{f}_0)$ will be called an ``admissible initial data set" for the Einstein-scalar or Einstein-Maxwell equations iff the following hold:
\begin{itemize}
\item $(\Sigma_0, \gamma_0)$ is a $3$-dimensional compact oriented Riemannian manifold.

\item The field $k_0$ is an element of $\Gamma S^2 \Sigma_0$.
\footnote{$k_0$ is to be interpreted as a ``second fundamental form" for $\Sigma_0$.}

\item The ``mean curvature" $\trace k_0$ has a constant value $\tau_0 < 0$ on all of $\Sigma_0$.

\item The field $\mf{f}_0$ corresponds to initial data for the given matter field.
In the E-S case, this is expressed as a pair $\phi_0, \phi_1 \in C^\infty(\Sigma_0)$, while in the E-M case, this is given as a pair $E_0, H_0 \in \mf{X}^\ast(\Sigma_0)$.

\item Letting $\nabla$ and $\mc{R}_0$ denote the Levi-Civita connection and the curvature for $\Sigma_0$, respectively, then in the E-S case, the following constraints hold,
\begin{equation}\label{eq.constraint_init_es} -\phi_1 \nabla_i \phi_0 = \nabla^j \paren{k_0}_{ij} \text{,} \qquad \abs{\nabla \phi_0}^2 + \phi_1^2 = \mc{R}_0 - \abs{k_0}^2 + \tau_0^2 \text{,} \end{equation}
while in the E-M case, the following constraints hold,
\begin{equation}\label{eq.constraint_init_em} -\epsilon_i{}^{jl} \paren{E_0}_j \paren{H_0}_l = \nabla^j \paren{k_0}_{ij} \text{,} \qquad \abs{E_0}^2 + \abs{H_0}^2 = \mc{R}_0 - \abs{k_0}^2 + \tau_0^2 \text{.} \end{equation}

\item In the E-M case, the ``matter field" $\mf{f}_0$ solves the additional constraints
\begin{equation}\label{eq.constraint_init_maxwell} \nabla^i \paren{E_0}_i \equiv 0 \text{,} \qquad \nabla^i \paren{H_0}_i \equiv 0 \text{.} \end{equation}
\end{itemize}

\begin{remark}
We make the following remarks about the above definition:
\begin{itemize}
\item The symbols $|\cdot|$ in \eqref{eq.constraint_init_es}, \eqref{eq.constraint_init_em} denote the $\gamma_0$-tensor norm.
The volume form $\epsilon_{ijk}$ in \eqref{eq.constraint_init_em} is that of $(\Sigma_0, \gamma_0)$, with respect to a chosen orientation of $\Sigma_0$.

\item Note that \eqref{eq.constraint_init_es} and \eqref{eq.constraint_init_em} correspond to the E-S and E-M (resp.) constraint equations in the CMC gauge.
Similarly, \eqref{eq.constraint_init_maxwell} corresponds to the standard constraints for the Maxwell equations.

\item If $\phi$ and $F$ are the desired ``spacetime fields" for which we wish to solve, then $\phi_0$ and $\phi_1$ correspond to the values $\phi$ and $\mc{L}_t \phi$ on $\Sigma_0$, while $E_0$ and $H_0$ correspond to the electromagnetic decomposition of $F$ on $\Sigma_0$.
\end{itemize}
\end{remark}

Now, if we are given an admissible initial data set $(\Sigma_0, \gamma_0, k_0, \mf{f}_0)$ in either the E-S or the E-M setting, then the goal will be to solve for a triple $(M, g, \Phi)$, along with maps $\mf{i}: \Sigma_0 \rightarrow M$ and $t \in C^\infty(M)$, where the following hold:
\begin{itemize}
\item $(M, g)$ is a $(1 + 3)$-dimensional globally hyperbolic Lorentzian manifold.

\item There exists a time function $t$ on $M$, along with constants $t_0, t_1 \in \R$, where $t_0 < \tau_0 < t_1 < 0$, such that $(M, g)$ satisfies the condition \ass{CMC}{t, t_0, t_1}.

\item The map $\mf{i}$ is an isometric imbedding of $\Sigma_0$ into $M$, and $\mf{i}(\Sigma_0)$ is precisely the level set $\Sigma_{\tau_0}$ of $t$.

\item The element $\Phi$ represents the matter field on $M$: in the E-S setting, then $\Phi = \phi \in C^\infty(M)$, while in the E-M setting, we have $\Phi = F \in \Gamma \Lambda^2 M$.

\item Both the Einstein equations \eqref{eq.einstein} and the appropriate field equations for $\Phi$ (either \eqref{eq.scalar_field} or \eqref{eq.maxwell}) are satisfied on $(M, g)$.

\item The field $k_0$ corresponds to the future second fundamental form of $\Sigma_{\tau_0}$.
In other words, $k_0$ coincides with the pullback $\mf{i}^\ast k$.

\item The field $\mf{f}_0$ corresponds with the restriction of $\Phi$ to $\Sigma_0$.
To be more precise, in the E-S setting, this means
\[ \phi_0 = \mf{i}^\ast \phi \text{,} \qquad \phi_1 = \mf{i}^\ast \mc{L}_t \phi \text{.} \]
In the E-M setting, if $E, H$ is the electromagnetic decomposition of $F$, then
\[ E_0 = \mf{i}^\ast E \text{,} \qquad H_0 = \mf{i}^\ast H \text{.} \]
\end{itemize}
If all the above conditions hold, then $(M, g, \Phi)$, along with $\mf{i}$ and $t$, will be called a solution of the (Einstein-scalar or Einstein-Maxwell) CMC Cauchy problem corresponding to the initial data set $(\Sigma_0, \gamma_0, k_0, \mf{f}_0)$.

Our next task is to state a local well-posedness theorem for both of the above CMC Cauchy problems.
This will be a straightforward modification of the vacuum analogue stated in \cite[Prop. 6.1]{kl_rod:bdc}.

\begin{theorem}\label{thm.lwp_cmc}
Let $(\Sigma_0, \gamma_0, k_0, \mf{f}_0)$ be an admissible initial data set for the E-S or E-M equations, and let $\tau_0 = \trace k_0 < 0$.
Then, there exists a solution $(M, g, \Phi)$ of the E-S/E-M (resp.) CMC Cauchy problem corresponding to the above initial data set, with $(M, g)$ satisfying \ass{CMC}{t, t_0, t_1} for some time function $t$ on $M$ and $t_0 < \tau_0 < t_1 < 0$.
Furthermore, the solution is unique up to isometric imbedding, and the time of existence $t_1 - \tau_0$ depends continuously on the following parameters:
\begin{itemize}
\item The initial mean curvature $\tau_0$.

\item The diameter and injectivity radius of $\Sigma_0$.

\item The following Sobolev norm for $k_0$:
\[ \mf{K}_0 = \nabs{k_0}_{L^4\paren{\Sigma_0}} + \nabs{\nabla k_0}_{L^2\paren{\Sigma_0}} + \nabs{\nabla^2 k_0}_{L^2\paren{\Sigma_0}} + \nabs{\nabla^3 k_0}_{L^2\paren{\Sigma_0}} \text{.} \]

\item The following Sobolev norm for the curvature $\mc{R}_0$ of $\Sigma_0$:
\[ \mf{R}_0 = \nabs{\mc{R}_0}_{L^2\paren{\Sigma_0}} + \nabs{\nabla \mc{R}_0}_{L^2\paren{\Sigma_0}} + \nabs{\nabla^2 \mc{R}_0}_{L^2\paren{\Sigma_0}} \text{.} \]

\item In the E-S case, the following Sobolev norm for $\mf{f}_0 = (\phi_0, \phi_1)$:
\begin{align*}
\mf{F}_0 &= \nabs{\nabla \phi_0}_{L^4\paren{\Sigma_0}} + \nabs{\nabla^2 \phi_0}_{L^2\paren{\Sigma_0}} + \nabs{\nabla^3 \phi_0}_{L^2\paren{\Sigma_0}} + \nabs{\nabla^4 \phi_0}_{L^2\paren{\Sigma}} \\
&\qquad + \nabs{\phi_1}_{L^4\paren{\Sigma_0}} + \nabs{\nabla \phi_1}_{L^2\paren{\Sigma_0}} + \nabs{\nabla^2 \phi_1}_{L^2\paren{\Sigma_0}} + \nabs{\nabla^3 \phi_1}_{L^2\paren{\Sigma_0}} \text{,}
\end{align*}

\item In the E-M case, the following Sobolev norm for $\mf{f}_0 = (E_0, H_0)$:
\begin{align*}
\mf{F}_0 &= \nabs{E_0}_{L^4\paren{\Sigma_0}} + \nabs{\nabla E_0}_{L^2\paren{\Sigma_0}} + \nabs{\nabla^2 E_0}_{L^2\paren{\Sigma_0}} + \nabs{\nabla^3 E_0}_{L^2\paren{\Sigma_0}} \\
&\qquad + \nabs{H_0}_{L^4\paren{\Sigma_0}} + \nabs{\nabla H_0}_{L^2\paren{\Sigma_0}} + \nabs{\nabla^2 H_0}_{L^2\paren{\Sigma_0}} + \nabs{\nabla^3 H_0}_{L^2\paren{\Sigma_0}} \text{.}
\end{align*}
\end{itemize}
\end{theorem}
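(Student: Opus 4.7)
The plan is to follow the approach of \cite[Prop. 6.1]{kl_rod:bdc} for the vacuum case, with appropriate modifications to accommodate the matter fields. The proof proceeds in three stages: first, establish local existence for the coupled Einstein-matter system in a favorable gauge; second, construct a CMC foliation of the resulting spacetime; third, verify the regularity and continuous dependence claims.

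For the first stage, I would adopt the wave coordinate (harmonic) gauge, in which the Einstein equations reduce to a quasilinear system of hyperbolic wave equations for the metric components $g_{\alpha\beta}$. Coupled to this system are the matter field equations: in the E-S case, the linear covariant wave equation \eqref{eq.scalar_field}; in the E-M case, Maxwell's equations \eqref{eq.maxwell}, which reduce to a wave equation for a vector potential $A$ with $F = dA$ after imposing the Lorenz gauge $D^\alpha A_\alpha = 0$. The constraint equations \eqref{eq.constraint_init_es}--\eqref{eq.constraint_init_maxwell} encode exactly the Gauss--Codazzi conditions together with the Maxwell constraints, so that the resulting Cauchy development is consistent; the standard Bianchi/divergence-free arguments (applied to the energy-momentum tensor $Q$) show that both the gauge conditions and the constraints propagate. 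Classical local well-posedness for such quasilinear wave systems, cf. \cite{cb:le_einst}, then yields a solution $(M, g, \Phi)$ on some time interval $(t_0, t_1^\prime)$ whose length depends continuously on the $H^s \times H^{s-1}$-norms of the reduced data for $s$ slightly above $5/2$; the norms $\mf{K}_0$, $\mf{R}_0$, $\mf{F}_0$ together with the diameter and injectivity radius of $\Sigma_0$ control these reduced-data norms.

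The second stage converts this solution to the CMC gauge. Starting from the given CMC slice $\Sigma_0$ with $\trace k_0 \equiv \tau_0$, I would construct a CMC time function $t$ near $\Sigma_0$ by solving, for each $\tau$ slightly above $\tau_0$, the prescribed mean curvature equation $\trace k = \tau$ for a graph of a function $u_\tau$ over $\Sigma_0$ (or over the previously constructed slice). This is a quasilinear elliptic problem solved by an implicit function argument in Sobolev spaces, where the linearization is precisely the lapse-type operator $-\lapl + (|k|^2 + \ric(T,T))$, which is invertible on the compact slices by the strong energy condition noted after \eqref{eq.em_emt}. Iterating along the $\tau$-direction yields a CMC foliation on a possibly smaller interval $(t_0, t_1)$, with corresponding lapse $n$ determined by the elliptic lapse equation and uniformly bounded on the time of existence in terms of the same data parameters.

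The third stage collects uniqueness and continuous dependence. Uniqueness up to isometric imbedding is a standard consequence of uniqueness in the wave coordinate gauge together with the gauge-fixing argument for the CMC time function. Continuous dependence of $t_1 - \tau_0$ on the listed parameters follows by tracking the constants in both the hyperbolic existence argument and the elliptic CMC construction. The step I expect to be the main obstacle, and the one that forces the specific shape of the norms $\mf{K}_0, \mf{R}_0, \mf{F}_0$, is closing the energy estimates at the right Sobolev level: the curvature divergence identities \eqref{eq.curv_divg_es}--\eqref{eq.curv_divg_em} couple one derivative of $\mf{F}$ to one derivative of $R$, so that the matter field and curvature must be controlled simultaneously at matching regularity, and the three-derivative $L^2$-control of $k_0$, $\mc{R}_0$, and $\mf{f}_0$ is dictated by precisely this coupling together with the Sobolev embedding $H^2 \hookrightarrow L^\infty$ on three-dimensional slices required to handle the quadratic nonlinearities.
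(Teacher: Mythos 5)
Your proposal is sound in outline but takes a genuinely different route from the paper. As the remark following the theorem makes explicit, the paper (following \cite[Prop.~6.1]{kl_rod:bdc} and \cite[Sec.~6.2]{shao:bdc_nv}) solves the Einstein equations \emph{directly in the CMC gauge with transported coordinate systems}: the lapse equation \eqref{eq.lapse} is solved elliptically on each slice, while $\gamma$ and $k$ are evolved via \eqref{eq.dt_met}--\eqref{eq.dt_sff}, yielding a coupled elliptic--hyperbolic system whose time of existence is read off directly in terms of the CMC-adapted norms $\mf{K}_0$, $\mf{R}_0$, $\mf{F}_0$. You instead run the classical two-step argument: harmonic-gauge local existence \`a la \cite{cb:le_einst}, followed by a re-foliation into CMC slices via the implicit function theorem applied to the prescribed mean curvature equation. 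Your route buys access to off-the-shelf quasilinear wave theory and a cleaner uniqueness statement, but it makes the continuous dependence of $t_1 - \tau_0$ on the \emph{listed} parameters more indirect: you must verify both that the harmonic-gauge data norms are controlled by $\mf{K}_0$, $\mf{R}_0$, $\mf{F}_0$ together with the diameter and injectivity radius (true, since these amount to roughly $H^4$-control of $\gamma_0$ and $H^3$-control of $k_0$ and the matter data), and that the CMC re-foliation step does not shrink the existence interval by an amount depending on anything beyond these quantities. The direct approach avoids this bookkeeping, which is presumably why the paper adopts it.

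One concrete point to repair: in the E-M case you reduce Maxwell's equations to a Lorenz-gauge wave equation for a global potential $A$ with $F = dA$. Since $\Sigma_0$ is a compact $3$-manifold with possibly nontrivial second cohomology, the magnetic part of the data may represent a nonzero class in $H^2(\Sigma_0)$, so a global potential need not exist. Either evolve $F$ directly as a first-order symmetric-hyperbolic system (or via the tensorial wave equation \eqref{eq.em_wave0}), or split off the harmonic part of $F$ before introducing a potential. With that adjustment, and with the constraint-propagation argument you already invoke, the argument closes.
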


The ideas behind the proof of Theorem \ref{thm.lwp_cmc} are standard.
The main points of the proof are summarized in further detail in \cite[Sec. 6.2]{shao:bdc_nv}.

\begin{remark}
Both Theorem \ref{thm.lwp_cmc} and \cite[Prop. 6.1]{kl_rod:bdc} are derived by solving the Einstein equations in the CMC gauge along with transported coordinate systems.
This, however, is by no means an optimal result.
For a local well-posedness result requiring less differentiability in the CMC gauge in the vacuum setting, see \cite{and_mo:gr_cmc}.
\end{remark}

\subsection{Outline of the Proof of Theorem \ref{thm.bdc}}\label{sec.outline}

Like the other breakdown results discussed in the introduction, the proof of Theorem \ref{thm.bdc} is at its highest level intimately tied to the corresponding local well-posedness result of Theorem \ref{thm.lwp_cmc}.
Our strategy for proving Theorem \ref{thm.bdc} will be analogous to that of the model breakdown problem for the nonlinear wave equation \eqref{eq.bdc_model_wave}.
Indeed, we aim to bound the parameters on the timeslices in our spacetime which control the time of existence in Theorem \ref{thm.lwp_cmc}.

Assume now the hypotheses of Theorem \ref{thm.bdc}.
For each $t_0 < \tau < t_1$, we define:
\begin{align}
\label{eq.bdc_energy_KR} \mf{K}\paren{\tau} &= \nabs{k}_{L^4\paren{\Sigma_\tau}} + \nabs{\nabla k}_{L^2\paren{\Sigma_\tau}} + \nabs{\nabla^2 k}_{L^2\paren{\Sigma_\tau}} + \nabs{\nabla^3 k}_{L^2\paren{\Sigma_\tau}} \text{,} \\
\notag \mf{R}\paren{\tau} &= \nabs{\mc{R}}_{L^2\paren{\Sigma_\tau}} + \nabs{\nabla \mc{R}}_{L^2\paren{\Sigma_\tau}} + \nabs{\nabla^2 \mc{R}}_{L^2\paren{\Sigma_\tau}} \text{,}
\end{align}
as well as a corresponding matter field energy $\mf{f}(\tau)$.
In the E-S case, we define
\begin{align}
\label{eq.bdc_energy_fes} \mf{f}\paren{\tau} &= \nabs{\nabla \phi}_{L^4\paren{\Sigma_\tau}} + \nabs{\nabla^2 \phi}_{L^2\paren{\Sigma_\tau}} + \nabs{\nabla^3 \phi}_{L^2\paren{\Sigma_\tau}} + \nabs{\nabla^4 \phi}_{L^2\paren{\Sigma_\tau}} \\
\notag &\qquad + \nabs{\mc{L}_t \phi}_{L^4\paren{\Sigma_\tau}} + \nabs{\nabla \paren{\mc{L}_t \phi}}_{L^2\paren{\Sigma_\tau}} \\
\notag &\qquad + \nabs{\nabla^2 \paren{\mc{L}_t \phi}}_{L^2\paren{\Sigma_\tau}} + \nabs{\nabla^3 \paren{\mc{L}_t \phi}}_{L^2\paren{\Sigma_\tau}} \text{,}
\end{align}
while in the E-M case, we define
\begin{align}
\label{eq.bdc_energy_fem} \mf{f}\paren{\tau} &= \nabs{E}_{L^4\paren{\Sigma_\tau}} + \nabs{\nabla E}_{L^2\paren{\Sigma_\tau}} + \nabs{\nabla^2 E}_{L^2\paren{\Sigma_\tau}} + \nabs{\nabla^3 E}_{L^2\paren{\Sigma_\tau}} \\
\notag &\qquad + \nabs{H}_{L^4\paren{\Sigma_\tau}} + \nabs{\nabla H}_{L^2\paren{\Sigma_\tau}} + \nabs{\nabla^2 H}_{L^2\paren{\Sigma_\tau}} + \nabs{\nabla^3 H}_{L^2\paren{\Sigma_\tau}} \text{,}
\end{align}
where $E, H \in \ul{\mf{X}}^\ast(M)$ is the electromagnetic decomposition of $F$.

For convenience, we also fix an ``initial time"
\[ \max\paren{t_0, 2 t_1} < \tau_0 < t_1 \text{,} \]
and we treat $\Sigma_{\tau_0}$ as the ``initial timeslice" in $M$.
From now on, we will only be concerned with the timespan $[\tau_0, t_1)$ in this proof.
For convenience, we also define
\begin{equation}\label{eq.future} M_+ = \Sigma_{\brakparen{\tau_0, t_1}} = \brac{q \in M \mid \tau_0 \leq t\paren{q} < t_1} \text{.} \end{equation}

The mean curvatures of the $\Sigma_\tau$'s, $\tau_0 \leq \tau < t_1$, are trivially comparable to $|t_1|$.
Suppose we can also uniformly control the following:
\begin{itemize}
\item The diameters and the injectivty radii of the $\Sigma_\tau$'s, $\tau_0 \leq \tau < t_1$.

\item The quantities $\mf{K}(\tau)$, $\mf{R}(\tau)$, and $\mf{f}(\tau)$ for all $\tau_0 \leq \tau < t_1$.
\end{itemize}
Then, applying Theorem \ref{thm.lwp_cmc} to each of the timeslices of $M$, we obtain roughly that there exists some sufficiently small $0 < \epsilon < |t_1|$ such that for every $\tau_0 \leq \tau < t_1$:
\begin{itemize}
\item A solution to the Cauchy problem exists with initial data given by $\Sigma_\tau$.

\item The solution exists on a time interval including $[\tau, \tau + \epsilon)$.

\item The solution is unique up to isometric imbedding.
\end{itemize}
By combining the above solutions, we obtain the desired continuation of $(M, g, \Phi)$ to the time interval $(t_0, t_1 + \epsilon)$, which completes the proof of Theorem \ref{thm.bdc}.

As a result, the main objectives will be that of controlling the diameters, the injectivity radii, and the energy quantities $\mf{K}(\tau)$, $\mf{R}(\tau)$, $\mf{f}(\tau)$.
The remainder of this paper will be dedicated to establishing these estimates.
Of these, the energy bounds will be the primary task.
The diameter and injectivity radius bounds, on the other hand, will be trivial consequences by the end of the proof.

Rather than directly controlling the quantities associated with $\mf{K}(\tau)$, $\mf{R}(\tau)$, and $\mf{f}(\tau)$, which are horizontal fields on the timeslices, we instead control norms of corresponding \emph{spacetime} quantities on these timeslices.
These include the spacetime Riemann curvature $R$ and the spacetime matter field, i.e., either $\phi$ or $F$.
The necessary estimates for $\mf{K}(\tau)$, $\mf{R}(\tau)$, and $\mf{f}(\tau)$ can then be derived using standard elliptic estimates.
For example, $R$ can be related to $\mc{R}$ and $k$ using the Gauss equations \eqref{eq.gauss_tf} and the Codazzi equations \eqref{eq.codazzi_tf}.

To control $R$ and the matter field $\Phi$, we apply variations of standard energy-momentum tensor techniques.
In particular, we take advantage of the observations that both $R$ and $\Phi$ satisfy covariant tensorial wave and Maxwell-type equations.
We then construct ``generalized energy-momentum tensors" based on these relations, and we apply these in the standard fanshion to derive basic energy inequalities.

The above suffices for ``lower-order" a priori energy estimates.
In order to derive ``higher-order" energy estimates, however, we will also need uniform bounds for $R$ and for quantities derived from $\Phi$.
\footnote{More specifically, $D^2 \phi$ in the E-S case, and $D F$ in the E-M case.}
Since these quantities satisfy a system of covariant tensorial wave equations, we can then apply the representation formula presented in \cite[Thm. 7]{shao:ksp} (and also in \cite[Thm. 5.1]{shao:bdc_nv}).

The representation formula mentioned above is only valid on the ``regular" portion of past null cones, i.e., prior to the null injectivity radius.
Thus, in order to obtain satisfactory estimates using this formula, we must also control the geometry of past null cones.
In particular, we must control the null injectivity radius and various connection quantities on these cones by other quantities which can be controlled a priori.
In our case, these include $L^2$ ``flux" quantities for $R$ and $\Phi$ on these cones, and other a priori bounds relating to the time foliation.

Unfortunately, this task is exceedingly difficult and is responsible for a vast portion of the technical work behind Theorem \ref{thm.bdc}.
For instance, this involves constructing a geometric tensorial Littlewood-Paley theory, cf. \cite{kl_rod:glp} and \cite[Sec. 2.2]{shao:bdc_nv}, and applying it in a massive bootstrap argument.
Within this argument are various Besov estimates as well as an elaborate sharp trace estimate for regular null cones.
As a result, we omit a majority of this development from this paper.
For details regarding this portion of the argument, see \cite{shao:bdc_nv}.
\footnote{Earlier work in this area for the vacuum case were done in \cite{kl_rod:cg, kl_rod:glp, kl_rod:stt, kl_rod:rin, parl:bdc, wang:cg, wang:cgp}.}

Using the above control on the local null geometry along with the representation formula of \cite{shao:ksp}, we can derive all the necessary estimates for $R$ and $\Phi$.
As a result, we can then estimate $\mf{K}(\tau)$, $\mf{R}(\tau)$, and $\mf{f}(\tau)$, as mentioned before, and hence Theorem \ref{thm.bdc} is proved.
In the remaining sections of the paper, we will provide more detailed discussions on the steps described in this outline.

\section{Regular Past Null Cones}

As mentioned before, both the local energy estimates in this paper and the representation formula for covariant tensor wave equations in \cite{shao:ksp} are essential components of the proof of Theorem \ref{thm.bdc}.
Moreover, both depend heavily on the local null geometry of $(M, g)$.
More specifically, both are applicable to our setting only on ``regular" past null cones, where the null exponential map remains a diffeomorphism and its image retains a smooth structure.

We shall provide in this section some preliminaries on such regular past null cones.
We mainly follow the development given in \cite[Ch. 3]{shao:bdc_nv} and \cite{shao:ksp}, but we restrict ourselves in this paper to the special case of time foliated null cones.
\footnote{This case was discussed in \cite[Ch. 4]{shao:bdc_nv}.}
This eliminates much of the minor technical irritations present in \cite{shao:bdc_nv, shao:ksp} that arose from dealing with more general foliating functions.

\subsection{Regular Past Null Cones}

Assume the spacetime $(M, g, \Phi)$, along with the time foliation of $M$ given by the time function $t$, as expressed in the statement of Theorem \ref{thm.bdc}.
Fix $p \in M$, and consider the \emph{null exponential map} $\ul{\exp}_p$ about $p$, i.e., the exponential map about $p$ restricted to the past null cone $\mf{N}$ of the tangent space $T_p M$.
\footnote{For convenience, we assume $\mf{N}$ does not include the origin of $T_p M$.}
Define the past null cone $N^-(p)$ of $p$ to be the image of $\ul{\exp}_p$.

By definition, $N^-(p)$ is ruled by the past inextendible null geodesics beginning at $p$.
Recall that $\ul{\exp}_p$ is a diffeomorphism between a sufficiently small neighborhood of $0$ in $\mf{N}$ and its image, which is then a smooth null hypersurface of $M$.
We refer to such regions of $N^-(p)$ as ``regular".
A loss of such regularity can occur at a \emph{terminal point} $z$ of $N^-(p)$, where one of the following scenarios hold:
\begin{itemize}
\item The point $z$ is a \emph{cut locus point}, that is, distinct past null geodesics from $p$ intersect at $z$.
In other words, the map $\ul{\exp}_p$ fails to be one-to-one at $z$.

\item The pair $p$ and $z$ are \emph{past null conjugate points}.
In other words, the map $\ul{\exp}_p$ fails to be nonsingular at $z$.
\end{itemize}

Define the function $t_p$ on $N^-(p)$ by $t_p(q) = t(p) - t(q)$, i.e., the difference in time between the vertex $p$ and the given point $q$.
We also define the following:
\begin{itemize}
\item Let $\mf{s}(p)$ be the infimum of all values $t_p(q)$ for which $q \in N^-(p)$ is a past null conjugate point.
We call $\mf{s}(p)$ the \emph{past null conjugacy radius} of $p$ (with respect to the $t$-foliation).
In the case that $N^-(p)$ has no conjugate points, we define $\mf{s}(p) = t(p) - t_0$.
Note that $\mf{s}(p)$ indicates the largest $v > 0$ such that $N^-(p)$ encounters no conjugate points before $t_p$-value $v$.

\item Let $\mf{l}(p)$ be the infimum of all values $t_p(q)$ for which $q \in N^-(p)$ is a cut locus point.
On the other hand, if $N^-(p)$ has no cut locus points, we define $\mf{l}(p) = t(p) - t_0$.
Then, $\mf{l}(p)$ indicates the largest $v > 0$ such that $N^-(p)$ encounters no cut locus points before $t_p$-value $v$.

\item Define $\mf{i}(p) = \min(\mf{s}(p), \mf{l}(p))$.
We call this the \emph{past null injectivity radius} of $p$ (with respect to the $t$-foliation).
\end{itemize}

We will refer to the region
\[ \mc{N}^-\paren{p} = \brac{q \in N^-\paren{p} \mid t_p\paren{q} < \mf{i}\paren{p}} \]
as the \emph{regular past null cone} of $p$.
Then, the null exponential map $\ul{\exp}_p$ is a diffeomorphism between a neighborhood in $\mf{N}$ and $\mc{N}^-(p)$, and $\mc{N}^-(p)$ is indeed a smooth null hypersurface of $M$.
From now on, we will only refer to the regular null cone $\mc{N}^-(p)$, as $N^-(p)$ is in general too irregular for our use.

In addition, for any $0 < v \leq \mf{i}(p)$, we define the null cone segment
\[ \mc{N}^-\paren{p; v} = \brac{q \in \mc{N}^-\paren{p} \mid t_p\paren{q} < v} \text{,} \]

\subsection{Normalization and Foliation}\label{sec.nfol}

Since tangent null vectors in $\mc{N}^-(p)$ have vanishing Lorentzian ``length" and are orthogonal to $\mc{N}^-(p)$, they cannot be normalized without introducing vectors transversal to $\mc{N}^-(p)$.
Consequently, in our treatment, we will require an additional choice of a future timelike unit vector $\mf{t} \in T_pM$ at $p$.
For this, we use the most natural choice based on the problem at hand: the value of the future unit normal $T$ to the $\Sigma_\tau$'s at $p$.

We define the \emph{null generators} of $\mc{N}^-(p)$ (or of $N^-(p)$) to be the inextendible past null geodesics $\gamma$ on $M$ which satisfy $\gamma(0) = p$ and $g(\gamma^\prime(0), T|_p) = 1$.
We can smoothly parametrize these generators by $\Sph^2$ using the following process.
If we choose an orthonormal basis $e_0, \ldots, e_3$ of $T_p M$, with $e_0 = T|_p$, then we can identify each $\omega \in \Sph^2$ with the null generator $\gamma_\omega$ satisfying $\gamma_\omega^\prime(0) = -e_0 + \omega^k e_k$.
For convenience, we assume such a parametrization of the null generators of $\mc{N}^-(p)$, and we denote by $\gamma_\omega$ the null generator corresponding to $\omega$.

\begin{remark}
The objects on $\mc{N}^-(p)$ that we will discuss are of course defined independently of any parametrization of the null generators.
However, for ease of notation, we will work explicitly with $\Sph^2$.
\end{remark}

In addition, we define $L \in \mf{X}(\mc{N}^-(p))$ to be the tangent vector fields of the null generators of $\mc{N}^-(p)$, i.e., we define $L|_{\gamma_\omega(v)} = \gamma_\omega^\prime(v)$ for any $\omega \in \Sph^2$ and $0 < v < \mf{i}(p)$.
We note in particular that $L$ is a geodesic vector field.

As in previous works, e.g., \cite{kl_rod:cg, parl:bdc, shao:bdc_nv, shao:ksp, wang:cg}, we would like to express $\mc{N}^-(p)$ as a foliation of spherical cross-sections.
This was done abstractly in \cite[Sec. 3.1]{shao:bdc_nv} and \cite[Sec. 2]{shao:ksp} for the sake of the generalized representation formula for wave equations.
Here, we can simplify our presentation by considering only the special case of the foliating function $t_p$, the most natural choice for the current setting.

Define the \emph{null lapse} function $\vartheta$ by the formula
\begin{equation}\label{eq.null_lapse} \vartheta = \paren{L t_p}^{-1} \in C^\infty\paren{\mc{N}^-\paren{p}} \text{.} \end{equation}
Note that $\vartheta$ is everywhere strictly positive, and $\vartheta$ satisfies the initial limits
\begin{equation}\label{eq.null_lapse_init} \lim_{v \searrow 0} \valat{\vartheta}_{\gamma_\omega\paren{v}} = n\paren{p} \text{.} \end{equation}
The positivity of $\vartheta$ implies $dt_p$ is nonvanishing, so the level sets of $t_p$, denoted
\[ \mc{S}_v = \brac{q \in \mc{N}^-\paren{p} \mid t_p\paren{q} = v} \text{,} \qquad v > 0 \text{,} \]
form a family of hypersurfaces of $\mc{N}^-(p)$.
Since $L$ represents the unique null direction tangent to $\mc{N}^-(p)$, and the positivity of $\vartheta$ implies $L$ is transverse to each $\mc{S}_v$, we can conclude that each $\mc{S}_v$ is spacelike, i.e., Riemannian.
Furthermore, the definition of $\mf{i}(p)$ implies $\mc{S}_v$ is diffeomorphic to $\Sph^2$ for every $0 < v < \mf{i}(p)$.

We adopt the conventions of Section \ref{sec.fol} to discuss this foliation of $\mc{N}^-(p)$, with $N = \mc{N}^-(p)$ and $f = t_p$.
In order to distinguish this from the time foliation of $M$, we denote the induced horizontal metrics and connections on the $\mc{S}_v$'s by $\lambda$ and $\nasla$, respectively.
Mixed connections on $\mc{N}^-(p)$ are denoted by $\onasla$, and the Gauss curvatures of the $\mc{S}_v$'s are denoted by $\mc{K} \in C^\infty(\mc{N}^-(p))$.
In addition, the ``normalized normal" vector field in this case is given by $Z = \vartheta L$, i.e., $\onasla_{t_p} = \vartheta \onasla_L$.

Since $\mc{N}^-(p)$ is null, we have no volume form on $\mc{N}^-(p)$ with respect to which we can integrate scalar functions.
However, we can still provide a canonical definition for integrals of functions over $\mc{N}^-(p)$.
Indeed, we define this integral by
\begin{equation}\label{eq.pnc_int} \int_{\mc{N}^-(p)} \phi = \int_0^\infty \paren{\int_{\mc{S}_v} \vartheta \cdot \phi} dv \end{equation}
for any $\phi \in C^\infty(\mc{N}^-(p))$ for which the right-hand side is well-defined.
We can similarly define integrals over any open subset of $\mc{N}^-(p)$, in particular for $\mc{N}^-(p; v)$.

\begin{remark}
We can show using the change of variables formula that \eqref{eq.pnc_int} is in fact independent of the foliating function of $\mc{N}^-(p)$; see \cite[Prop. 3.4]{shao:bdc_nv}.
Later, we will further justify \eqref{eq.pnc_int} in terms of general local energy estimates.
\end{remark}

\subsection{Parametrizations and Null Frames}

We can parametrize $\mc{N}^-(p)$ using $t_p$ and a spherical value.
For any $0 < v < \mf{i}(p)$ and $\omega \in \Sph^2$, we can identify the pair $(v, \omega)$ with the unique point $q$ on both the null generator $\gamma_\omega$ and $\mc{S}_v$.
As a result, we can naturally treat any $\phi \in C^\infty(\mc{N}^-(p))$ as a smooth function on the cylinder $(0, \mf{i}(p)) \times \Sph^2$.
For any such $\phi$, we denote by $\phi|_{(v, \omega)}$ the value of $\phi$ at the point $q$ corresponding to the parameters $(v, \omega)$.
We will freely use this $(v, \omega)$-notation throughout future sections without further elaboration.

In general, null frames are local frames $\hat{l}, \hat{m}, e_1, e_2$ which satisfy
\begin{align*}
g\paren{\hat{l}, \hat{l}} = g\paren{\hat{m}, \hat{m}} \equiv 0 \text{, } &\qquad g\paren{\hat{l}, \hat{m}} \equiv -2 \text{,} \\
g\paren{\hat{l}, e_a} = g\paren{\hat{m}, e_a} \equiv 0 \text{, } &\qquad g\paren{e_a, e_b} = \delta_{ab} \text{,}
\end{align*}
Here, we construct null frames which are adapted to our $t_p$-foliation of $\mc{N}^-(p)$.

Each point of $\mc{S}_v$ is normal to exactly two null directions, one of which is represented by $L$.
We define $\ul{L} \in \ol{\mf{X}}(\mc{N}^-(p))$, called the \emph{conjugate null vector field}, to be the vector field in the other normal null direction, subject to the normalization $g(L, \ul{L}) \equiv -2$.
A direct calculation yields the following explicit formula for $\ul{L}$:
\begin{equation}\label{eq.nvc_tfol} \ul{L} = - n^{-2} \vartheta^2 L - 2 n^{-1} \vartheta T \text{.} \end{equation}
Next, we append to $L$ and $\ul{L}$ a local orthonormal frame $e_1, e_2$ on the $\mc{S}_v$'s.
Then, $\{L, \ul{L}, e_1, e_2\}$ defines a natural null frame for $\mc{N}^-(p)$.
We adopt the following indexing conventions for adapted null frames.
\begin{itemize}
\item Horizontal indices $1, 2$ correspond to the directions $e_1$ and $e_2$.

\item $\ul{L}$ corresponds to the index $3$, while $L$ corresponds to the index $4$.
\end{itemize}

Next, if we define the vector field
\[ N = \frac{1}{2} \paren{n^{-1} \vartheta L - n \vartheta^{-1} \ul{L}} \in \ol{\mf{X}}\paren{\mc{N}^-\paren{p}} \text{,} \]
then \eqref{eq.nvc_tfol} implies $g(N, T) \equiv 0$ and $g(N, N) \equiv 1$.
In other words, $N$ is the outer unit normal to each $\mc{S}_v$ in $\Sigma_{t\paren{p} - v}$.
Using \eqref{eq.nvc_tfol} again, we can derive the following:
\begin{equation}\label{eq.nv_tllb} L = - n \vartheta^{-1} \paren{T - N} \text{,} \qquad \ul{L} = - n^{-1} \vartheta \paren{T + N} \text{.} \end{equation}
With this, we can compute the ($h$-)norms of $L$ and $\ul{L}$,
\begin{equation}\label{eq.nv_norm} \abs{L}^2 = 2 n^2 \vartheta^{-2} \text{,} \qquad \abs{\ul{L}}^2 = 2 n^{-2} \vartheta^2 \text{.} \end{equation}

\subsection{Ricci Coefficients}\label{sec.rc}

We will make use of the following connection quantities:
\begin{itemize}
\item Define the null second fundamental forms $\chi, \ul{\chi} \in \Gamma \ul{T}^2 \mc{N}^-(p)$ by
\[ \chi\paren{X, Y} = g\paren{\ol{D}_X L, Y} \text{,} \qquad \ul{\chi}\paren{X, Y} = g\paren{\ol{D}_X \ul{L}, Y} \text{,} \qquad X, Y \in \ul{\mf{X}}\paren{\mc{N}^-\paren{p}} \text{.} \]
Both $\chi$ and $\ul{\chi}$ are symmetric, since both $L$ and $\ul{L}$ are normal to the $\mc{S}_v$'s.
We often decompose $\chi$ into its trace and traceless parts:
\[ \trace \chi = \lambda^{ab} \chi_{ab} \text{,} \qquad \hat{\chi} = \chi - \frac{1}{2} \paren{\trace \chi} \lambda \text{.} \]
We also use an analogous decomposition for $\ul{\chi}$.

\item Define $\zeta, \ul{\eta} \in \Gamma \ul{T}^1 \mc{N}^-(p)$ by
\[ \zeta\paren{X} = \frac{1}{2} g\paren{\ol{D}_X L, \ul{L}} \text{,} \qquad \ul{\eta}\paren{X} = \frac{1}{2} g\paren{X, \ol{D}_L \ul{L}} \text{,} \qquad X \in \ul{\mf{X}}\paren{\mc{N}^-\paren{p}} \text{.} \]
From \cite[Prop. 2.7]{kl_rod:cg}, we have the following relation between $\zeta$ and $\ul{\eta}$:
\begin{equation}\label{eq.null_torsion} \ul{\eta} = -\zeta + \nasla \paren{\log \vartheta} \text{.} \end{equation}
\end{itemize}
The quantities $\trace \chi$, $\hat{\chi}$, and $\zeta$ are called the expansion, shear, and torsion of $\mc{N}^-(p)$.
We refer to the collection of fields $\trace \chi$, $\hat{\chi}$, $\trace \ul{\chi}$, $\hat{\ul{\chi}}$, $\zeta$, $\ul{\eta}$ as the \emph{Ricci coefficients} of $\mc{N}^-(p)$ (with respect to the $t_p$-foliation).

From \eqref{eq.nvc_tfol} and direct calculations, we see that $\ul{\chi}$ and $\ul{\eta}$ are intimately tied to the time foliation of $M$ via the following formulas:
\begin{align}
\label{eq.eta_tfol} \ul{\eta}_a &= k_{ia} N^i + \nasla_a \paren{\log n} \text{,} \\ 
\label{eq.chib_tfol} \ul{\chi}_{ab} &= -n^{-2} \vartheta^2 \cdot \chi_{ab} + 2 n^{-1} \vartheta \cdot k_{ab} \text{.}
\end{align}

Lastly, we define the \emph{mass aspect function} $\mu \in C^\infty(\mc{N}^-(p))$ by
\begin{equation}\label{eq.maf} \mu = \nasla^a \zeta_a - \frac{1}{2} \hat{\chi}^{ab} \hat{\ul{\chi}}_{ab} + \abs{\zeta}^2 + \frac{1}{4} R_{4343} - \frac{1}{2} R_{43} \text{.} \end{equation}
This quantity is present in the representation formula for wave equations, and it plays a crucial role in controlling the local past null geometry.

\subsection{Initial Values}

We now briefly examine the initial values of various quantities on $\mc{N}^-(p)$, that is, we look at the limits of these quantities at $p$ along null generators.
This is of importance in the problem of controlling the local null geometry, since our goal will be to show that these values differ little from the initial value in some specific sense.
Moreover, these initial value computations were applied in the derivations of the representation formulas of \cite{kl_rod:ksp, shao:ksp}.

We opt in this paper to skip the technical details on this topic, since most of them are rather distant from the heart of Theorem \ref{thm.bdc}.
For instance, some of the derivations require applications of convex geometry and bitensor fields.
A complete exposition in the case of general foliating functions can be found in \cite[Sec. 3.3]{shao:bdc_nv}; the case of the $t_p$-foliation is covered in \cite[Sec. 4.2]{shao:bdc_nv}.
An earlier account for the geodesic foliation case, which is needed for the general case, is presented in \cite{wang:cg}.

The first task is to examine the horizontal metrics $\lambda$ at the initial limit.
Note that a coordinate system $(U, \varphi)$ in $\Sph^2$ generates a transported coordinate system on each $\mc{S}_v$ by mapping the point with parameters $(v, \omega)$ to $\varphi(\omega)$.

\begin{proposition}\label{thm.met_pnc_init}
Let $\lambda_0$ denote the Euclidean metric on $\Sph^2$, and fix a coordinate system $(U, \varphi)$ on $\Sph^2$.
Index $\lambda_0$ using $\varphi$-coordinates, and index $\lambda$ on each $\mc{S}_v$ using the associated transported coordinate system.
Then, for any $\omega \in U$,
\[ \lim_{v \searrow 0} v^{-2} \valat{\lambda_{ij}}_{\paren{v, \omega}} = \valat{n^2}_p \cdot \valat{\paren{\lambda_0}_{ij}}_\omega \text{,} \qquad 1 \leq i, j \leq 2 \text{.} \]
\end{proposition}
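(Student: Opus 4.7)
The plan is to carry out a direct near-vertex Taylor expansion in Riemann normal coordinates about $p$, with the factor $n\paren{p}^2$ in the limit emerging entirely from the change-of-parametrization between the affine parameter of the null generators and the foliation parameter $v = t_p$.

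I would first introduce Riemann normal coordinates $x^0, \ldots, x^3$ about $p$ aligned with the chosen orthonormal frame $e_0, \ldots, e_3$, where $e_0 = \valat{T}_p$. In such coordinates, $g_{\alpha\beta}\paren{p} = \eta_{\alpha\beta}$ and each null generator has the exact expression $x^\alpha\paren{\gamma_\omega\paren{s}} = s L_\omega^\alpha$, with $L_\omega^0 = -1$ and $L_\omega^k = \Omega^k\paren{\omega}$, where $\Omega : \Sph^2 \hookrightarrow \R^3$ is the standard embedding and $s$ denotes the affine parameter. Since $\valat{\grad t}_p = -n\paren{p}^{-1} e_0$, a Taylor expansion of $t$ yields $t_p\paren{\gamma_\omega\paren{s}} = s / n\paren{p} + O\paren{s^2}$. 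Inverting gives $s\paren{v, \omega} = v \cdot n\paren{p} + O\paren{v^2}$, where the correction is smooth in $\omega$; in particular $\partial_i s = O\paren{v^2}$.

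Next, I would compute the transported coordinate vector fields on $\mc{S}_v$. These arise by differentiating the parametrization $\omega \mapsto \gamma_\omega\paren{s\paren{v, \omega}}$ in the $\varphi$-coordinates $\omega^i$ at fixed $v$, giving (in normal coordinates)
\[ \partial_i^\alpha = \paren{\partial_i s} L_\omega^\alpha + s \paren{\partial_i L_\omega}^\alpha = v \cdot n\paren{p} \cdot \partial_i \Omega^k \cdot \delta_k^\alpha + O\paren{v^2} \text{,} \]
using that $\partial_i L_\omega^0 = 0$ and that $\partial_i s$ is of smaller order in $v$ than the retained term. Since $g_{\alpha\beta} = \eta_{\alpha\beta} + O\paren{\abs{x}^2}$ in normal coordinates and $\abs{x} = O\paren{v}$ on $\mc{S}_v$, contracting then produces
\[ \valat{\lambda_{ij}}_{\paren{v, \omega}} = g\paren{\partial_i, \partial_j} = v^2 \cdot n\paren{p}^2 \cdot \partial_i \Omega^k \partial_j \Omega^l \delta_{kl} + O\paren{v^3} \text{.} \]
Dividing by $v^2$ and passing to the limit finishes the proof, since $\partial_i \Omega^k \partial_j \Omega^l \delta_{kl}$ is precisely $\valat{\paren{\lambda_0}_{ij}}_\omega$.

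The main obstacle here is organizational rather than analytic: one must cleanly separate the affine parametrization of the null generators from their $t_p$-parametrization, because the lapse $n\paren{p}^2$ appears in the limit solely via the linear-in-$v$ Jacobian of that change of parameter. No curvature quantity contributes, since the deviation of $g$ from $\eta$ in normal coordinates is $O\paren{\abs{x}^2} = O\paren{v^2}$, which is killed by the $v^{-2}$ rescaling; the content of the proposition is really just that the null cone looks Minkowskian near its vertex, modulated only by the lapse conversion between the null frame and the time-foliation frame at $p$.
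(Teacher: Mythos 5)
Your proposal is correct and follows essentially the same route as the paper, whose proof is only a pointer to \cite[Prop.\ 3.15]{shao:bdc_nv} with the remark that one relates the $(v,\omega)$-parametrization of $\mc{N}^-(p)$ to normal coordinates — exactly the expansion you carry out. The key steps (normal coordinates aligned with $e_0 = T|_p$, the change of parameter $s = n(p)\,v + O(v^2)$ coming from $g(\grad t, \gamma_\omega'(0)) = -1/n(p)$, and the $O(|x|^2)$ deviation of $g$ from $\eta$ being annihilated by the $v^{-2}$ rescaling) are all in order.
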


\begin{proof}
The proof involves relating the ``$(v, \omega)$-parametrization" of $\mc{N}^-(p)$ with normal coordinates; see \cite[Prop. 3.15]{shao:bdc_nv}.
\end{proof}

Proposition \ref{thm.met_pnc_init} implies the following integral limit:

\begin{corollary}\label{thm.int_pnc_init}
Let $\phi \in C^\infty(\mc{N}^-(p))$, let $\phi_0 \in C^\infty(\Sph^2)$, and suppose
\[ \lim_{v \searrow 0} \valat{\phi}_{\paren{v, \omega}} = \phi_0\paren{\omega} \text{,} \qquad \omega \in \Sph^2 \text{.} \]
Then, the following integral limit holds:
\[ \lim_{v \searrow 0} v^{-2} \int_{\mc{S}_v} \phi = \valat{n^2}_p \cdot \int_{\Sph^2} \phi_0 \text{.} \]
\end{corollary}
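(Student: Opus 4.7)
The plan is to rewrite the left-hand side as an integral over the fixed manifold $\Sph^2$ via the $(v,\omega)$-parametrization, rescale by $v^{-2}$, and then pass to the limit using Proposition \ref{thm.met_pnc_init} applied to the determinant of $\lambda$. A partition of unity on $\Sph^2$ subordinate to a covering by the charts $(U,\varphi)$ of Proposition \ref{thm.met_pnc_init} reduces the problem to the case in which $\phi_0$ (and hence, for small $v$, $\phi|_{(v,\omega)}$) is supported in a single coordinate patch. This is a cosmetic but necessary first step, since the statement of Proposition \ref{thm.met_pnc_init} is local in nature.

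On such a chart, working with the transported coordinates $(v,\omega^1,\omega^2)$, the volume form on $\mc{S}_v$ is $\sqrt{\det \lambda}\, d\omega^1 d\omega^2$, so
\[ v^{-2} \int_{\mc{S}_v} \phi = \int_U \phi\big|_{\paren{v,\omega}} \cdot v^{-2} \sqrt{\det \lambda\big|_{\paren{v,\omega}}} \, d\omega^1 d\omega^2. \]
Since $\lambda$ is a $2 \times 2$ matrix, $v^{-2} \sqrt{\det \lambda} = \sqrt{\det (v^{-2} \lambda)}$, and Proposition \ref{thm.met_pnc_init} gives
\[ \lim_{v \searrow 0} v^{-2} \sqrt{\det \lambda\big|_{\paren{v,\omega}}} = \sqrt{\det \brak{\valat{n^2}_p \cdot \lambda_0\big|_\omega}} = \valat{n^2}_p \cdot \sqrt{\det \lambda_0\big|_\omega}. \]
Combined with the hypothesis $\phi|_{(v,\omega)} \to \phi_0(\omega)$, the integrand in the display above converges pointwise on $U$ to $\valat{n^2}_p \cdot \phi_0(\omega) \sqrt{\det \lambda_0|_\omega}$, which is precisely the local expression for the integrand of $\valat{n^2}_p \int_{\Sph^2} \phi_0$.

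The only remaining point is to upgrade pointwise convergence to convergence of the integrals. Since $\phi$ is smooth on $\mc{N}^-(p)$ and the $(v,\omega)$-parametrization, together with the rescaled metric components $v^{-2} \lambda_{ij}$, extends smoothly down to $v=0$ on $U$ (this smooth extension is exactly the content behind Proposition \ref{thm.met_pnc_init} and its proof via normal coordinates), the convergence is actually uniform on $\ol{U}$ for any compactly contained $U$. Dominated convergence then finishes the argument. The main obstacle is not analytic but notational: one must be careful that the ``$v^{-2}$'' rescaling factor corresponds exactly to the $2$-dimensional determinant rescaling, so that it cancels against the $v^2$ degeneracy of $\lambda$ along the null generator. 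Once this dimensional bookkeeping is aligned, the corollary follows directly from Proposition \ref{thm.met_pnc_init}.
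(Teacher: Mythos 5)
Your proof is correct and is exactly the argument the paper has in mind: the corollary is stated as an immediate consequence of Proposition \ref{thm.met_pnc_init}, and your computation with $v^{-2}\sqrt{\det\lambda}=\sqrt{\det(v^{-2}\lambda)}$ together with the interchange of limit and integral is precisely the omitted verification. The only point worth noting is that the stated hypothesis is pointwise convergence along generators, so the passage to the limit under the integral sign does require the locally uniform control you invoke (which comes from the smooth extension of the rescaled quantities to the vertex in normal coordinates, as in the proof of Proposition \ref{thm.met_pnc_init}), and you have correctly identified and addressed that step.
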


The next proposition deals with the effects of derivatives on initial limits:

\begin{proposition}\label{thm.pnc_init_D}
Let $A \in \Gamma \ul{\mc{T}} \mc{N}^-(p)$, and suppose
\[ \lim_{v \searrow 0} \valat{\abs{A}}_{\paren{v, \omega}} = 0 \text{,} \qquad \omega \in \Sph^2 \text{.} \]
Then, for any integer $k > 0$, we also have
\[ \lim_{v \searrow 0} v^k \valat{\abs{\nasla^k A}}_{\paren{v, \omega}} = 0 \text{,} \qquad \omega \in \Sph^2 \text{.} \]
\end{proposition}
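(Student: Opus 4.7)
The plan is to transfer the statement into transported coordinates on $\mc{N}^-(p)$ induced by a chart on $\Sph^2$ and then exploit the scaling of the induced metric $\lambda$ near the vertex $p$.

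First, fix a coordinate chart $(U,\varphi)$ on $\Sph^2$ and pass to the associated transported coordinates on each $\mc{S}_v$. Proposition~\ref{thm.met_pnc_init}, strengthened via the analysis surrounding the null exponential map developed in \cite[Sec.~4.2]{shao:bdc_nv}, upgrades to the statement that the rescaled metric $v^{-2}\lambda_{ij}(v,\omega)$ extends smoothly in $(v,\omega)$ up to $v=0$ with boundary value $n(p)^2(\lambda_0)_{ij}(\omega)$. Consequently $\lambda_{ij}\asymp v^2$, $\lambda^{ij}\asymp v^{-2}$, and the Christoffel symbols $\Gamma^c_{ab}$ of $\lambda$ in these coordinates (which coincide with those of $v^{-2}\lambda$, as $v$ is constant on each $\mc{S}_v$) are smooth and uniformly bounded up to $v=0$. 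For a rank-$m$ horizontal covariant tensor $A$ with components $A_I$ in transported coordinates one therefore has $|A|^2 \asymp v^{-2m}\sum_I (A_I)^2$, and the hypothesis translates into $v^{-m} A_I(v,\omega)\to 0$ pointwise for each $I$.

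The core step is to upgrade this pointwise control of the components to the analogous statement for all iterated angular coordinate derivatives, namely $v^{-m}\partial_{\omega^{\alpha_1}}\cdots\partial_{\omega^{\alpha_j}} A_I(v,\omega)\to 0$ for every multi-index $\alpha$. For this one pulls $A$ back through the null exponential map $\ul{\exp}_p$, which equips the rescaled components $v^{-m} A_I$ with a continuous extension to $[0,\mf{i}(p))\times U$; pointwise vanishing at $v=0$ then forces this extension to vanish identically on $\{0\}\times U$, so that $v^{-m} A_I = v\cdot h_I(v,\omega)$ with $h_I$ bounded and smooth in $\omega$ together with all its angular derivatives. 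Expanding $\nasla^k A$ in transported coordinates as an iterated angular derivative of $A_I$ plus bounded polynomial expressions in $\Gamma$ and its derivatives contracted against lower-order angular derivatives of $A$, every summand is $o(v^m)$. The norm scaling $|\nasla^k A|^2 \asymp v^{-2(m+k)}\sum |(\nasla^k A)_{l_1\cdots l_k I}|^2$ then yields $v^k|\nasla^k A| = v^{-m}\cdot o(v^m)=o(1)$, which is the desired conclusion, uniformly for all $k$ rather than by induction on $k$.

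The main obstacle is the smooth-extension claim in the previous paragraph. It does \emph{not} follow from mere continuity of $|A|$ at the vertex (the one-variable example $v\sin(1/v)$ shows that continuous extension by zero at $0$ alone is compatible with arbitrarily wild derivatives), and one must rely on the detailed analysis of horizontal tensors near the vertex developed through the exponential map in \cite[Sec.~3.3 and Sec.~4.2]{shao:bdc_nv}. Once that ingredient is in place, the remainder of the proof is the coordinate computation sketched above.
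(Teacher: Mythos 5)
Your overall strategy --- transported coordinates, the rescaled metric $v^{-2}\lambda$, and a component-by-component reduction --- is the right shape and is indeed how the cited reference proceeds. The problem is the step you yourself flag as the ``main obstacle'': you assert that pulling $A$ back through $\ul{\exp}_p$ ``equips the rescaled components $v^{-m}A_I$ with a continuous (indeed smooth) extension'' to $v=0$, and the entire argument rests on this. But $\ul{\exp}_p$ is a diffeomorphism from a punctured neighborhood of the origin in $\mf{N}$ onto $\mc{N}^-(p)$, which by construction excludes the vertex; pulling back through it cannot manufacture regularity at the deleted point. For a general $A\in\Gamma\ul{\mc{T}}\mc{N}^-(p)$ satisfying only the stated hypothesis the extension claim is false, and so is the conclusion: take the scalar $\phi(v,\omega)=v\sin(f(\omega)/v)$ with $f$ smooth and nonconstant. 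This is smooth on the punctured cone and satisfies $\valat{\abs{\phi}}_{(v,\omega)}\leq v\rightarrow 0$ along every generator, yet $v\valat{\abs{\nasla\phi}}_{(v,\omega)}$ is comparable to $\abs{\cos(f(\omega)/v)}\,\abs{df(\omega)}_{\lambda_0}$ and has no limit wherever $df\neq 0$. The proposition therefore tacitly carries an additional regularity-at-the-vertex hypothesis (the rescaled components of $A$, read through $\ul{\exp}_p$, extend smoothly to $v=0$ --- true for all the geometric quantities to which it is actually applied). Your proof both requires this input and defers it wholesale to the reference; as written it is a reduction of the proposition to its essential content, not a proof of it.

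Two smaller points. Even granting a continuous extension vanishing at $v=0$, the factorization $v^{-m}A_I=v\,h_I$ with $h_I$ bounded needs the extension to be $C^1$ in $v$ up to the boundary, not merely continuous; your ``so that'' elides this. In fact the factorization is unnecessary: once $F=v^{-m}A_I$ is smooth up to $\{v=0\}$ with $F(0,\cdot)\equiv 0$, every angular derivative $\partial_\omega^j F$ is continuous up to $v=0$ and vanishes there, which is all the conclusion requires. Similarly, boundedness of the Christoffel symbols of $\lambda$ up to $v=0$ needs the strengthening of Proposition~\ref{thm.met_pnc_init} to first angular derivatives of $v^{-2}\lambda$ --- true, and in the reference, but another place where the quoted statement alone does not suffice.
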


\begin{proof}
See \cite[Prop. 3.17]{shao:bdc_nv}.
\end{proof}

Finally, we consider the Ricci coefficients:

\begin{proposition}\label{thm.pnc_init}
The following limits hold for each $\omega \in \Sph^2$.
\begin{itemize}
\item We have the following limits for $\chi$:
\begin{equation}\label{eq.pnc_init_chi} \lim_{v \searrow 0} \valat{\abs{\vartheta \paren{\trace \chi} - 2 t_p^{-1}}}_{\paren{v, \omega}} = 0 \text{,} \qquad \lim_{v \searrow 0} \valat{\abs{\hat{\chi}}}_{\paren{v, \omega}} = 0 \text{.} \end{equation}

\item We have the following limits for $\zeta$ and $\ul{\eta}$:
\begin{equation}\label{eq.pnc_init_zeta} \lim_{v \searrow 0} \valat{\abs{\zeta}}_{\paren{v, \omega}} = \lim_{v \searrow 0} \valat{\abs{\ul{\eta}}}_{\paren{v, \omega}} \lesssim \valat{\abs{\pi}}_p \text{.} \end{equation}
\end{itemize}
\end{proposition}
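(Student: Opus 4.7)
\emph{Strategy.} The plan is to compute each limit by reducing to an explicit calculation on the flat null cone in $T_p M$ via the null exponential map, using the bitensor Taylor expansions of the null frame and foliation data at $p$ to second order. Leading-order information is already furnished by Proposition \ref{thm.met_pnc_init}; what is new here is a second-order refinement of the expansions of $\lambda$, $\vartheta$, and the null frame $\{L, \ul{L}, e_a\}$, obtained by the normal-coordinate and bitensor analysis developed for general foliating functions in \cite[Sec.~3.3, 4.2]{shao:bdc_nv} and \cite{wang:cg}. Deriving these refinements is the main technical burden; once they are in hand, the three limits follow by direct evaluation at $p$ and routine algebraic bookkeeping.

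\emph{Limits for $\chi$ and $\hat{\chi}$.} In a transported spherical coordinate system $(v, x^1, x^2)$, the coordinate field $\partial_v = \vartheta L$ Lie-commutes with the horizontal coordinate fields, and $g(L, \partial_a) \equiv 0$; metric compatibility then gives $\partial_v \lambda_{ab} = 2\vartheta\chi_{ab}$, and tracing produces
\[ \vartheta\,\trace\chi \;=\; \tfrac{1}{2}\,\partial_v \log\det\lambda. \]
Combining Proposition \ref{thm.met_pnc_init} with the second-order refinement $\lambda_{ab} = n(p)^2 v^2 (\lambda_0)_{ab} + v^4 C_{ab}(\omega) + o(v^4)$, with $C_{ab}$ curvature-dependent, I obtain $\partial_v \log\det\lambda = 4/v + O(v)$, so that $\vartheta\,\trace\chi - 2/t_p = O(v) \to 0$ (using $t_p = v$ on $\mc{S}_v$). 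The same expansion gives $\vartheta\,\hat{\chi}_{ab} = \vartheta\chi_{ab} - \tfrac{1}{2}(\vartheta\,\trace\chi)\lambda_{ab} = O(v^3)$, and contracting against $\lambda^{ac}\lambda^{bd} = O(v^{-4})$ yields $|\hat{\chi}|^2 = O(v^2) \to 0$.

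\emph{Limits for $\zeta$ and $\ul{\eta}$.} For these I would evaluate at $p$ using the initial values of the null frame, namely $L|_p = -T|_p + \omega^k e_k$ and $\ul{L}|_p = -T|_p - \omega^k e_k$ with respect to the orthonormal basis $e_0 = T|_p, e_1, e_2, e_3$ of $T_p M$. Equation \eqref{eq.eta_tfol} reads $\ul{\eta}_a = k_{ia}N^i + \nasla_a(\log n)$; as $v \searrow 0$ the horizontal frame on $\mc{S}_v$ converges to the $2$-plane in $T_p M$ orthogonal to $T|_p$ and $\omega^k e_k$, and both summands evaluate at $p$ to components of $k|_p$ and $\nabla(\log n)|_p$, each pointwise $\lesssim |\pi|_p$ by \eqref{eq.dft_cp}. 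For $\zeta$, the definition $\zeta_a = \tfrac{1}{2}g(\ol{D}_a L, \ul{L})$ combined with the bitensor expansions of $L$ and $\ul{L}$ expresses its vertex limit as an analogous $\ol{D} T|_p$-controlled quantity, bounded by $|\pi|_p$ through \eqref{eq.dft_bound}. Matching the two vertex expressions (or equivalently evaluating the torsion identity \eqref{eq.null_torsion} in the limit) then verifies the coincidence $\lim|\zeta| = \lim|\ul{\eta}|$.
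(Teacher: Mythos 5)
Your proposal is correct and follows essentially the same route as the paper, which simply defers the $\chi$-limits to the normal-coordinate/bitensor expansion of \cite[Prop. 3.18]{shao:bdc_nv}, reads off the $\ul{\eta}$-limit from \eqref{eq.eta_tfol}, and cites \cite[Prop. 4.9]{shao:bdc_nv} for $\zeta$; your identity $\vartheta\,\trace\chi = \tfrac{1}{2}\partial_v\log\det\lambda$ is a clean way to unpack the first citation. Just be aware that the two places where you defer to the bitensor analysis are exactly the load-bearing steps: the correction to $\lambda_{ab}$ must be $O(v^4)$ (a $v^3$ term would leave a nonzero constant in $\vartheta\,\trace\chi - 2t_p^{-1}$), and the claimed equality $\lim|\zeta| = \lim|\ul{\eta}|$ via \eqref{eq.null_torsion} requires $|\nasla(\log\vartheta)| \to 0$ at the vertex, not merely $\vartheta \to n(p)$.
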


\begin{proof}
Equation \eqref{eq.pnc_init_chi} is an immediate consequence of \cite[Prop. 3.18]{shao:bdc_nv}.
The limit for $\ul{\eta}$ in \eqref{eq.pnc_init_zeta} is an immediate consequence of \eqref{eq.eta_tfol}, while the derivation for $\zeta$ in \eqref{eq.pnc_init_zeta} can be found in \cite[Prop. 4.9]{shao:bdc_nv}.
\end{proof}

\begin{remark}
We can also obtain analogous initial value properties for both $\ul{\chi}$ and $\mu$.
For details, see \cite[Sec. 3.3, Sec. 4.2]{shao:bdc_nv}.
\end{remark}

\section{A Priori Estimates}\label{sec.a_priori}

In this section, we derive a priori estimates which are consequences of the breakdown criterion \eqref{eq.bdc}.
These are fundamental to the proof of Theorem \ref{thm.bdc}, since they determine the nature of the null geometry estimates and the higher-order estimates which we must prove.
In other words, we must control the null geometry and the higher-order energy norms by quantities which can be controlled a priori.

We now introduce the notion of ``fundamental constants", i.e., values on which all of our ``universal" constants will depend.
More explicitly, whenever we write $A \lesssim B$, we mean $A \leq C B$ for some constant $C$ depending only on these fundamental constants.
Similarly, if we write $A \simeq B$, then we mean  $C^{-1} B \leq A \leq C B$ for some constant $C$ depending only on these fundamental constants.

The complete list of such fundamental constants is given below:
\begin{itemize}
\item The ``breakdown time" $t_1$.

\item The ``breakdown criterion" constant $C_0$ in \eqref{eq.bdc}.

\item The intrinsic and extrinsic geometries of the ``initial timeslice" $\Sigma_{\tau_0}$, and the values of the fields defined on $\Sigma_{\tau_0}$.
These include derivatives of $R$, $\Phi$, $k$, $n$, etc., restricted to $\Sigma_{\tau_0}$, as well as the volume $V(\Sigma_{\tau_0})$ of $\tau_0$.
\end{itemize}

\begin{remark}
We can be more explicit on the exact properties of $\Sigma_{\tau_0}$ and values on $\Sigma_{\tau_0}$ for this dependence; see \cite[Sec. 6.3]{shao:bdc_nv}.
However, the main idea is that we can uniformly control a sufficient amount of quantities (as dictated in Section \ref{sec.outline}) on all the $\Sigma_\tau$'s, $\tau_0 \leq \tau < t_1$, by properties of only $\Sigma_{\tau_0}$.
\end{remark}

\subsection{Regularity of the Time Foliation}

Our first task will be to establish some basic control for our time foliation.
From the breakdown criterion \eqref{eq.bdc} along with \eqref{eq.einstein_ric}, \eqref{eq.es_emt}, \eqref{eq.em_emt}, \eqref{eq.dft_cp}, we can immediately obtain the following:
\begin{equation}\label{eq.bdc_lemma} \nabs{\pi}_{L^\infty\paren{M}} \lesssim 1 \text{,} \qquad \nabs{Q}_{L^\infty\paren{M}} + \nabs{\ric}_{L^\infty\paren{M}} \lesssim 1 \text{.} \end{equation}
Here, $\pi$ is the deformation tensor of $T$, while $Q$ and $\ric$ denote the energy-momentum tensor of the matter field and the Ricci curvature of $(M, g)$.

Contracting \eqref{eq.dt_sff} and recalling the CMC gauge condition yields the \emph{lapse equation}
\begin{equation}\label{eq.lapse} \lapl n = n \brak{\abs{k}^2 + \ric\paren{T, T}} - 1 \text{.} \end{equation}
Moreover, the strong energy condition implies that $\ric(T, T) \geq 0$ in both the E-S and E-M settings.
As a result, we can derive the following bounds:

\begin{proposition}\label{thm.cmc_bound}
On each $\Sigma_\tau$, $\tau_0 \leq \tau < t_1$, we have the comparisons
\begin{equation}\label{eq.cmc_bound_ex} \abs{n} \simeq 1 \text{,} \qquad V\paren{\Sigma_\tau} \simeq 1 \text{.} \end{equation}
\end{proposition}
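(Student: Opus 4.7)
The plan is to read both bounds off the lapse equation \eqref{eq.lapse} and the volume evolution implied by \eqref{eq.dt_met}, using as the only inputs the CMC gauge, the strong energy condition, and the $L^\infty$ bounds recorded in \eqref{eq.bdc_lemma}.

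For $|n|\simeq 1$, I would work at a point $p_\tau\in\Sigma_\tau$ where $n$ attains its maximum and a point $q_\tau\in\Sigma_\tau$ where it attains its minimum (both exist since $\Sigma_\tau$ is compact). At $p_\tau$ one has $\lapl n(p_\tau)\leq 0$, and at $q_\tau$ one has $\lapl n(q_\tau)\geq 0$. Substituting into \eqref{eq.lapse} gives
\[
n(p_\tau)\brak{\abs{k}^2+\ric(T,T)}(p_\tau)\leq 1\leq n(q_\tau)\brak{\abs{k}^2+\ric(T,T)}(q_\tau)\text{.}
\]
The left-hand inequality produces the \emph{upper} bound on $n$: by the CMC condition $\trace k\equiv\tau$ and the elementary bound $\abs{k}^2\geq (\trace k)^2/3=\tau^2/3$, together with $\ric(T,T)\geq 0$ (the strong energy condition noted after \eqref{eq.em_emt}) and the fact that $\tau_0\leq\tau<t_1<0$ forces $\tau^2\gtrsim t_1^2$, we conclude $n(p_\tau)\lesssim t_1^{-2}\simeq 1$. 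The right-hand inequality yields the \emph{lower} bound: by \eqref{eq.bdc_lemma} we have $\abs{k}^2+\ric(T,T)\lesssim 1$ uniformly on $M$, so $n(q_\tau)\gtrsim 1$. Combining gives $n\simeq 1$ on each $\Sigma_\tau$.

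For $V(\Sigma_\tau)\simeq 1$, I would differentiate the volume along the foliation. Taking the $\gamma$-trace of \eqref{eq.dt_met} gives $\mc{L}_t(d\mu_\gamma)=-n(\trace k)\,d\mu_\gamma=-n\tau\,d\mu_\gamma$ on $\Sigma_\tau$, and hence
\[
\frac{d}{d\tau}V(\Sigma_\tau)=-\tau\int_{\Sigma_\tau} n\,d\mu_\gamma\text{.}
\]
By the previous step $n\simeq 1$, and $|\tau|$ is bounded by a fundamental constant on $[\tau_0,t_1)$, so the right-hand side is comparable to $V(\Sigma_\tau)$ in absolute value. Grönwall's inequality (applied both forward and backward in $\tau$ from the initial slice) therefore gives $V(\Sigma_\tau)\simeq V(\Sigma_{\tau_0})$. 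Since $V(\Sigma_{\tau_0})$ is a fundamental constant, this completes the proof.

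There is no serious obstacle here; the only point that needs any thought is the lower bound on $\abs{k}^2+\ric(T,T)$ required for the upper bound on $n$, which is precisely where the CMC gauge enters decisively together with the sign of $\ric(T,T)$ provided by the strong energy condition in both the E-S and E-M settings.
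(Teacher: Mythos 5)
Your proposal is correct and follows essentially the same route as the paper: the lower bound on $n$ from the minimum point of the lapse equation via \eqref{eq.bdc_lemma}, the upper bound from the maximum point via the CMC condition $\abs{k}^2 \geq \tau^2/3$ and the strong energy condition, and the volume comparison from $\frac{d}{d\tau}V(\Sigma_\tau) = -\tau\int_{\Sigma_\tau} n$ plus Gr\"onwall. The only cosmetic difference is that the paper uses the explicit bound $n \leq 3t^{-2}$ and the monotonicity $\frac{d}{d\tau}V \geq 0$ in the volume step, whereas you invoke $n \simeq 1$ directly; both are equally valid.
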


\begin{proof}
At a minimum point $p$ of $n$ on $\Sigma_\tau$, we have from \eqref{eq.lapse} that
\[ \valat{n \brak{\ric\paren{T, T} + \abs{k}^2}}_p \geq 1 \text{.} \]
By \eqref{eq.bdc_lemma}, we obtain a lower bound for $n$ depending only on the fundamental constants.
Next, at a maximum point $p$ of $n$ on $\Sigma_\tau$, we have from \eqref{eq.lapse} that
\[ n\paren{p} \leq \brak{\ric\paren{T, T} + \abs{\hat{k}}^2 + \frac{\tau^2}{3}}^{-1} \leq \frac{3}{\tau^2} \lesssim 1 \text{,} \]
where we also applied the strong energy condition on $\ric(T, T)$.

For a volume form $V_\tau$ on $\Sigma_\tau$, we have
\[ \mc{L}_t V_\tau = -n \paren{\trace k} V_\tau = -n t \cdot V_\tau \geq 0 \text{.} \]
By the above explicit bound $n \leq 3 t^{-2}$, then
\[ 0 \leq \frac{d}{d\tau} V\paren{\Sigma_\tau} = - \tau \int_{\Sigma_\tau} n \leq -3 \tau^{-1} V\paren{\tau} \text{.} \]
From Gr\"onwall's inequality, we obtain $V(\tau) \simeq \tau^{-3} \simeq 1$, as desired.
\end{proof}

Furthermore, from \eqref{eq.bdc} and Proposition \ref{thm.cmc_bound}, we have proved the uniform bounds
\begin{equation}\label{eq.cond_T0} \nabs{n}_{L^\infty\paren{M_+}} + \nabs{n^{-1}}_{L^\infty\paren{M_+}} + \nabs{\nabla n}_{L^\infty\paren{M_+}} + \nabs{k}_{L^\infty\paren{M_+}} \lesssim 1 \text{,} \end{equation}
where $M_+$ is as defined in \eqref{eq.future}.

Next, we examine some coordinate regularity properties satisfied by the timeslices.
To begin with, since $\Sigma_{\tau_0}$ is compact, we can find a constant $C > 1$ and a finite covering of $\Sigma_{\tau_0}$ by local coordinate systems $(U_1, \varphi_1), \ldots, (U_m, \varphi_m)$ such that for each $1 \leq i \leq m$, we have the uniform ellipticity bound
\begin{equation}\label{eq.unif_ell} C^{-1} \abs{\xi}^2 \leq \valat{\gamma_{jl}}_p \xi^j \xi^l \leq C \abs{\xi}^2 \text{,} \qquad p \in U_i \text{, } \xi \in \R^3 \text{,} \end{equation}
where $\gamma$ is indexed with respect to the $\varphi_i$-coordinates.
We next show that this uniform ellipticity condition holds uniformly for all $\Sigma_\tau$'s, $\tau_0 \leq \tau < \tau_1$.

\begin{proposition}\label{thm.unif_ell_prop}
Let $(U_i, \varphi_i)$, $1 \leq i \leq m$, be as given above, and, for each $\tau$, we let $(U^\tau_i, \varphi^\tau_i)$ denote the coordinate system on $\Sigma_\tau$ obtained by transporting $(U_i, \varphi_i)$.
Then, there exists a constant $C_\flat > 1$, depending on the fundamental constants, such that for any $\tau_0 \leq \tau < t_1$ and $1 \leq i \leq m$, we have the estimates
\begin{equation}\label{eq.unif_ell_prop} C_\flat^{-1} \abs{\xi}^2 \leq \valat{\gamma_{jl}}_p \xi^j \xi^l \leq C_\flat \abs{\xi}^2 \text{,} \qquad p \in U^\tau_i \text{, } \xi \in \R^3 \text{,} \end{equation}
where $\gamma$ is indexed with respect to the $\varphi^\tau_i$-coordinates.
In other words, the uniform ellipticity condition \eqref{eq.unif_ell} holds uniformly on all the $\Sigma_\tau$'s.
\end{proposition}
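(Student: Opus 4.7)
The plan is to use the evolution equation \eqref{eq.dt_met} for the horizontal metric together with Gr\"onwall's inequality, with the constraint that the fields $n$ and $k$ are already under $L^\infty$-control by \eqref{eq.cond_T0}. The key observation is that in a transported coordinate system, the normal Lie derivative $\mc{L}_t$ acts on the coordinate components of a horizontal tensor field as the ordinary partial derivative $\partial_\tau$ (cf.\ the remark following \eqref{eq.lie_deriv_normal}). Thus, denoting by $\gamma^\tau_{jl}$ and $k^\tau_{jl}$ the components of $\gamma$ and $k$ in the transported coordinates $\varphi^\tau_i$, the evolution equation \eqref{eq.dt_met} becomes the pointwise component identity
\[
\partial_\tau \gamma^\tau_{jl} = -2\, n\, k^\tau_{jl}.
\]

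Fix a point $p_0 \in U_i \subseteq \Sigma_{\tau_0}$ and a vector $\xi \in \R^3$, let $p(\tau) \in U^\tau_i$ denote the image of $p_0$ under transport along $Z = nT$, and set
\[
f(\tau) = \valat{\gamma^\tau_{jl}}_{p(\tau)}\, \xi^j \xi^l, \qquad \tau_0 \leq \tau < t_1.
\]
Then $f'(\tau) = -2\, n(p(\tau))\, k^\tau_{jl}|_{p(\tau)}\, \xi^j \xi^l$. The main computational step is a pointwise bound $|k_{jl}(p) \xi^j \xi^l| \leq |k|(p) \cdot \gamma_{jl}(p) \xi^j \xi^l$, which follows by diagonalizing the symmetric endomorphism $k^i{}_j = \gamma^{il} k_{lj}$: its operator norm is bounded by its Frobenius norm, which is exactly the tensor norm $|k|$. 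Combining this with the a priori control $|n|, |k| \lesssim 1$ on $M_+$ from \eqref{eq.cond_T0} yields
\[
\abs{f'(\tau)} \leq C\, f(\tau),
\]
for a constant $C$ depending only on the fundamental constants.

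Since $f(\tau) > 0$ for all $\tau$ and $\xi \neq 0$ (as $\gamma$ is Riemannian on each $\Sigma_\tau$), integrating the bound $|(\log f)'| \leq C$ from $\tau_0$ to $\tau$ gives
\[
e^{-C(\tau - \tau_0)}\, f(\tau_0) \leq f(\tau) \leq e^{C(\tau - \tau_0)}\, f(\tau_0).
\]
Since $\tau - \tau_0 < t_1 - \tau_0$ is bounded by a fundamental constant, the exponentials are uniformly controlled. Combining with the initial uniform ellipticity \eqref{eq.unif_ell} on each $U_i$ and taking the maximum over the finitely many charts $1 \leq i \leq m$ produces the desired constant $C_\flat$ and establishes \eqref{eq.unif_ell_prop}. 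There is no real obstacle here; the main point requiring care is the interpretation of $\mc{L}_t$ as the coordinate $\partial_\tau$ in transported coordinates, which is precisely what allows the evolution equation to be applied componentwise without picking up additional terms.
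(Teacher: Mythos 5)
Your proposal is correct and follows essentially the same route as the paper: both arguments set $f(\tau)=\gamma_{jl}\xi^j\xi^l$ along a transport curve (the paper writes this as $|X|^2$ with $X=\xi^i\partial_i$), use \eqref{eq.dt_met} to get $\mc{L}_t|X|^2=-2n\,k(X,X)$, bound the right-hand side by the $L^\infty$-norms of $n$ and $k$ (or $\pi$), and close with Gr\"onwall plus the initial ellipticity \eqref{eq.unif_ell}. The only presentational difference is that you make explicit the pointwise bound $|k(X,X)|\leq|k|\,|X|^2$ via diagonalization, which the paper uses implicitly.
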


We sketch the proof of Proposition \ref{thm.unif_ell_prop} below.
For further details, we refer the reader to \cite[Prop. 4.6]{shao:bdc_nv}, as well as \cite[Prop. 4.1]{kl_rod:rin} and \cite[Prop. 2.4]{kl_rod:bdc}.

\begin{proof}
Fix a coordinate system $(U, \varphi) = (U_i, \varphi_i)$ of $\Sigma_{\tau_0}$, and consider its transported systems $(U^\tau, \varphi^\tau) = (U^\tau_i, \varphi^\tau_i)$; we will index with respect to these coordinates.
Fix $\xi \in \R^3$, and let $X = \xi^i \partial_i$. 
By \eqref{eq.dt_met}, we have $\mc{L}_t |X|^2 = -2 n \cdot k(X, X)$, so
\begin{equation}\label{eql.unif_ell_prop_1} -2 \nabs{n}_{L^\infty\paren{\Sigma_\tau}} \nabs{k}_{L^\infty\paren{\Sigma_\tau}} \abs{X}^2 \leq \mc{L}_t \abs{X}^2 \leq 2 \nabs{n}_{L^\infty\paren{\Sigma_\tau}} \nabs{k}_{L^\infty\paren{\Sigma_\tau}} \abs{X}^2 \text{.} \end{equation}
Integrating \eqref{eql.unif_ell_prop_1} along the integral curves of $Z = nT$ (or equivalently, of $T$) and applying Gr\"onwall's inequality, we obtain for each $p \in U$ the comparisons
\[ \bar{C}^{-1} \valat{\abs{X}^2}_p \leq \valat{\abs{X}^2}_{p^\tau} \leq \bar{C} \valat{\abs{X}^2}_p \text{,} \qquad \tau \in \mc{I} \text{,} \]
where $p^\tau$ is the normal transport of $p$ along $Z$ to $\Sigma^\tau$, and where
\[ \bar{C} = \exp\brak{2 \paren{t_1 - \tau_0} \nabs{n}_{L^\infty\paren{M_+}} \nabs{\pi}_{L^\infty\paren{M_+}}} \lesssim 1 \text{.} \]
Defining $C_\flat = C \bar{C}$, where $C$ is as in \eqref{eq.unif_ell}, varying over all $\xi \in \R^3$, $1 \leq i \leq m$, and $p \in \Sigma_{\tau_0}$, and recalling the bound \eqref{eq.unif_ell}, then we obtain \eqref{eq.unif_ell_prop}, as desired.
\end{proof}

By considering the family of transported coordinate systems from Proposition \ref{thm.unif_ell_prop}, we can deduce some rudimentary Riemannian geometric estimates on the $\Sigma_\tau$'s.
For a point $x \in \R^3$ and $r > 0$, we let $\mc{B}(x, r)$ denote the Euclidean ball in $\R^3$ about $x$ of radius $r$.
The property given in the subsequent proposition is an immediate consequence of Proposition \ref{thm.unif_ell_prop} and the compactness of the $\Sigma_\tau$'s.

\begin{proposition}\label{thm.min_radius}
Let $(U^\tau_i, \varphi^\tau_i)$ be defined as in Proposition \ref{thm.unif_ell_prop}, for all $\tau_0 \leq \tau < t_1$ and $1 \leq i \leq m$.
There exists $r_0 > 0$, depending on the fundamental constants, such that for any $\tau_0 \leq \tau < t_1$ and $p \in \Sigma_\tau$, there exists $1 \leq i \leq m$ such that $p \in U^\tau_i$, and
\[ \varphi^\tau_i\paren{U^\tau_i} \supseteq \mc{B}\paren{\varphi^\tau_i\paren{p}, r_0} \text{.} \]
\end{proposition}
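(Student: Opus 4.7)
The key observation is that the transported coordinate system $(U^\tau_i, \varphi^\tau_i)$ on $\Sigma_\tau$ is defined by pulling back $(U_i, \varphi_i)$ through the normal transport diffeomorphism $f_{\tau_0, \tau}: \Sigma_\tau \to \Sigma_{\tau_0}$, so that if $p \in U^\tau_i$ corresponds to $p^0 = f_{\tau_0, \tau}(p) \in U_i$, then $\varphi^\tau_i(p) = \varphi_i(p^0)$. In particular, $\varphi^\tau_i(U^\tau_i) = \varphi_i(U_i) \subseteq \R^3$ as subsets of Euclidean space, independently of $\tau$. Thus the desired conclusion depends only on the fixed coordinate images in $\R^3$, not on the evolving metric geometry.

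The plan, then, is to establish a Lebesgue-number-type statement for the single compact manifold $\Sigma_{\tau_0}$. Since $\{U_i\}_{1 \leq i \leq m}$ is a finite open cover of the compact set $\Sigma_{\tau_0}$, for every $q \in \Sigma_{\tau_0}$ there is some index $i(q)$ with $q \in U_{i(q)}$, and hence some $r(q) > 0$ with $\mc{B}(\varphi_{i(q)}(q), r(q)) \subseteq \varphi_{i(q)}(U_{i(q)})$ (since $\varphi_{i(q)}(U_{i(q)})$ is open in $\R^3$ and contains $\varphi_{i(q)}(q)$). A straightforward compactness argument, combined with the continuity of each $\varphi_i$, then produces a single $r_0 > 0$ that works uniformly: for every $q \in \Sigma_{\tau_0}$, there is some $i$ such that $q \in U_i$ and $\mc{B}(\varphi_i(q), r_0) \subseteq \varphi_i(U_i)$.

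Given this, the general case follows by transporting: for any $\tau_0 \leq \tau < t_1$ and $p \in \Sigma_\tau$, apply the previous step to $p^0 = f_{\tau_0, \tau}(p) \in \Sigma_{\tau_0}$ to obtain an index $i$ with $p^0 \in U_i$ and $\mc{B}(\varphi_i(p^0), r_0) \subseteq \varphi_i(U_i)$. By definition of the transported chart, $p \in U^\tau_i$, $\varphi^\tau_i(p) = \varphi_i(p^0)$, and $\varphi^\tau_i(U^\tau_i) = \varphi_i(U_i)$, so the same ball inclusion gives $\mc{B}(\varphi^\tau_i(p), r_0) \subseteq \varphi^\tau_i(U^\tau_i)$, as required. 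The role of Proposition \ref{thm.unif_ell_prop} is implicit but essential: it guarantees that each transported chart remains a genuine coordinate system on $\Sigma_\tau$ (the pulled-back metric components stay uniformly elliptic with constants independent of $\tau$), so the construction above makes sense throughout $M_+$.

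There is no real obstacle to overcome here; the content of the proposition is a Lebesgue-number lemma on $\Sigma_{\tau_0}$, promoted to all later timeslices by the observation that normal transport preserves the Euclidean coordinate images.
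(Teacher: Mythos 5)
Your proposal is correct and is essentially the argument the paper intends: the paper presents the proposition as an immediate consequence of the compactness of the timeslices together with the transported-coordinate construction of Proposition \ref{thm.unif_ell_prop} (deferring details to \cite[Lemma 2.2]{kl_rod:rin} and \cite[Prop. 4.6]{shao:bdc_nv}), and your reduction to a Lebesgue-number statement on $\Sigma_{\tau_0}$ via the observation that $\varphi^\tau_i(U^\tau_i) = \varphi_i(U_i)$ is exactly that. The only point worth stating carefully is that the compactness step uses lower semicontinuity (not continuity) of $q \mapsto \max_i \operatorname{dist}(\varphi_i(q), \R^3 \setminus \varphi_i(U_i))$, since each distance function vanishes as $q$ leaves $U_i$; this is standard and does not affect the conclusion.
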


\begin{proof}
See \cite[Lemma 2.2]{kl_rod:rin} and \cite[Prop. 4.6]{shao:bdc_nv}.
\end{proof}

Next, for any $\tau_0 \leq \tau < t_1$, $p \in \Sigma_\tau$, and $\rho > 0$, we let $B_\tau(p, \rho)$ denote the geodesic ball in $\Sigma_\tau$ of radius $\rho$ about $p$.
In addition, for $\tau$ and $p$ as above, we define
\[ r_\tau\paren{p} = \inf_{\sigma \leq 1} \frac{V\paren{B_\tau\paren{p, \sigma}}}{\sigma^3} \text{,} \]
i.e., the \emph{volume radius} at $p$ (in $\Sigma_\tau$) with scale $1$.
By considering the transported coordinate systems in the statement of Proposition \ref{thm.unif_ell_prop}, we can derive the following uniform lower bound for the above volume radii.

\begin{proposition}\label{thm.vol_radius}
There exists $\rho > 0$, depending on the fundamental constants, such that $r_\tau(p) \geq \rho$ for every $\tau_0 \leq \tau < t_1$ and $p \in \Sigma_\tau$.
\end{proposition}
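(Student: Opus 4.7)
The plan is to use the two previous results (Propositions \ref{thm.unif_ell_prop} and \ref{thm.min_radius}) to transfer the volume lower bound question to a Euclidean calculation in a transported coordinate chart, where everything is uniformly comparable to the flat metric. The key idea is that uniform ellipticity of $\gamma$ against the coordinate Euclidean metric $\delta$ forces both the geodesic balls and the volume element to be comparable to their Euclidean counterparts, and once both are pinned down the bound $V(B_\tau(p,\sigma))\gtrsim \sigma^3$ follows by direct computation.

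First, I would fix $\tau$ and $p$ and choose, via Proposition \ref{thm.min_radius}, an index $i$ so that $p\in U^\tau_i$ and $\varphi^\tau_i(U^\tau_i)\supseteq \mc{B}(\varphi^\tau_i(p),r_0)$ for a uniform $r_0>0$. By Proposition \ref{thm.unif_ell_prop}, inside this chart the components of $\gamma$ satisfy $C_\flat^{-1}\delta_{jl}\leq\gamma_{jl}\leq C_\flat \delta_{jl}$, which gives two consequences I would record: (i) any $\gamma$-length of a curve is at most $C_\flat^{1/2}$ times its Euclidean length, so in coordinates the Euclidean ball $\mc{B}(\varphi^\tau_i(p),\sigma/C_\flat^{1/2})$ is contained in the geodesic ball $B_\tau(p,\sigma)$, provided this Euclidean ball still lies in $\varphi^\tau_i(U^\tau_i)$; (ii) the volume element $\sqrt{\det\gamma}$ is bounded below by $C_\flat^{-3/2}$. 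Combining these for any $\sigma\leq \sigma_\ast:=\min(1,\,C_\flat^{1/2} r_0)$ yields
\[
V\paren{B_\tau\paren{p,\sigma}}\;\geq\;C_\flat^{-3/2}\cdot \frac{4\pi}{3}\paren{\sigma/C_\flat^{1/2}}^3\;=\;\frac{4\pi}{3}\,C_\flat^{-3}\,\sigma^3,
\]
so the ratio $V(B_\tau(p,\sigma))/\sigma^3$ is bounded below by $\tfrac{4\pi}{3}C_\flat^{-3}$ on $(0,\sigma_\ast]$.

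It remains to handle $\sigma\in(\sigma_\ast,1]$, which only arises if $\sigma_\ast<1$. Here I would use monotonicity of geodesic balls: $V(B_\tau(p,\sigma))\geq V(B_\tau(p,\sigma_\ast))\geq \tfrac{4\pi}{3}C_\flat^{-3}\sigma_\ast^3$, and since $\sigma\leq 1$, $V(B_\tau(p,\sigma))/\sigma^3\geq \tfrac{4\pi}{3}C_\flat^{-3}\sigma_\ast^3$. Setting
\[
\rho\;=\;\frac{4\pi}{3}\,C_\flat^{-3}\,\min\paren{1,\,C_\flat^{1/2} r_0}^3
\]
gives a uniform lower bound independent of $\tau\in[\tau_0,t_1)$ and $p\in\Sigma_\tau$, which depends only on the fundamental constants since $C_\flat$ and $r_0$ do.

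There is no real obstacle here; the only subtle point is the containment $\mc{B}(\varphi^\tau_i(p),\sigma/C_\flat^{1/2})\subseteq B_\tau(p,\sigma)$, which must be verified carefully. One has to observe that for $y\in\mc{B}(\varphi^\tau_i(p),\sigma/C_\flat^{1/2})$ the straight-line segment from $\varphi^\tau_i(p)$ to $y$ lies inside the chart image by Proposition \ref{thm.min_radius}, and its $\gamma$-length is at most $C_\flat^{1/2}\cdot |y-\varphi^\tau_i(p)|_E<\sigma$, so $d_\gamma(p,\varphi^\tau_i{}^{-1}(y))<\sigma$. The rest is bookkeeping.
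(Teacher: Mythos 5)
Your proposal is correct and follows essentially the same route as the paper, which proves this proposition precisely by using the uniform ellipticity of Proposition \ref{thm.unif_ell_prop} together with the minimal coordinate radius of Proposition \ref{thm.min_radius} to compare geodesic balls and the volume element with their Euclidean counterparts in the transported charts. The containment argument and the split into $\sigma\leq\sigma_\ast$ and $\sigma_\ast<\sigma\leq 1$ are handled correctly, and the resulting $\rho$ depends only on $C_\flat$ and $r_0$, hence only on the fundamental constants.
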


\begin{proof}
See the proof of \cite[Cor. 2.1]{shao:bdc_nv}.
The main idea is that the uniform ellipticity condition \eqref{eq.unif_ell} implies that geodesic balls are comparable to Euclidean balls.
\end{proof}

\subsection{Sobolev Inequalities}

Proposition \ref{thm.unif_ell_prop} also plays an instrumental role in deriving uniform Sobolev estimates on the $\Sigma_\tau$'s.
Indeed, we can derive such first-order Sobolev inequalities on each $\Sigma_\tau$ by using a partition of unity argument and applying the corresponding Euclidean Sobolev inequality to each coordinate system $(U^\tau_i, \varphi^\tau_i)$.
As a result of this, we obtain the following scalar Sobolev inequalities:

\begin{lemma}\label{thm.sob_scalar}
The following hold for any $f \in C^\infty(M)$, $\tau_0 \leq \tau < t_1$, and $q > 3$:
\begin{align}
\label{eq.sob_scalar} \nabs{f}_{L^\frac{3}{2}\paren{\Sigma_\tau}} \lesssim \nabs{\nabla f}_{L^1\paren{\Sigma_\tau}} + \nabs{f}_{L^1\paren{\Sigma_\tau}} \text{,} \\
\label{eq.sobs_scalar} \nabs{f}_{L^\infty\paren{\Sigma_\tau}} \lesssim \nabs{\nabla f}_{L^q\paren{\Sigma_\tau}} + \nabs{f}_{L^q\paren{\Sigma_\tau}} \text{.}
\end{align}
In \eqref{eq.sobs_scalar}, the constant of the inequality depends also on $q$.
\end{lemma}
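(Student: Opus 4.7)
The plan is to reduce both inequalities to the classical Euclidean Sobolev embeddings via a partition-of-unity argument, using the transported coordinate atlas provided by Proposition \ref{thm.unif_ell_prop} to render every constant uniform in $\tau$.

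Concretely, I would fix the finite cover $(U_i, \varphi_i)$, $1 \leq i \leq m$, of $\Sigma_{\tau_0}$ from \eqref{eq.unif_ell} and choose a smooth partition of unity $\{\chi_i\}_{i=1}^m$ on $\Sigma_{\tau_0}$ subordinate to it. Both the charts and the $\chi_i$'s transport along the integral curves of $Z = nT$ to yield, for each $\tau_0 \leq \tau < t_1$, a cover $(U_i^\tau, \varphi_i^\tau)$ and functions $\chi_i^\tau$ whose coordinate expressions in $\varphi_i^\tau$ agree pointwise with those of $\chi_i$ in $\varphi_i$. Given $f \in C^\infty(M)$, I would then decompose $f|_{\Sigma_\tau} = \sum_{i=1}^m \chi_i^\tau f$ and estimate each piece in $\varphi_i^\tau$-coordinates: the Gagliardo--Nirenberg--Sobolev embedding $W^{1,1}(\R^3) \hookrightarrow L^{3/2}(\R^3)$ applied to the Euclidean representative of $\chi_i^\tau f$ yields \eqref{eq.sob_scalar}, while the Morrey embedding $W^{1,q}(\R^3) \hookrightarrow L^\infty(\R^3)$ for $q > 3$ yields \eqref{eq.sobs_scalar}. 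Summing over the fixed finite collection of charts and invoking the triangle inequality completes the derivation.

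The only point requiring care is that every constant produced above must be independent of $\tau$, and this is precisely the content of Proposition \ref{thm.unif_ell_prop}: its uniform ellipticity bound pins down both the volume element $\sqrt{\det \gamma}$ and the inverse metric in each chart, so that $L^p$-norms on $(\Sigma_\tau, \gamma)$ restricted to $U_i^\tau$ are uniformly equivalent to Euclidean $L^p$-norms on $\varphi_i^\tau(U_i^\tau)$, and the tensorial gradient norms $|\nabla(\chi_i^\tau f)|$ are uniformly equivalent to Euclidean coordinate gradient norms $|\partial(\chi_i^\tau f)|$. The $C^1$-bounds on the transported cutoffs $\chi_i^\tau$ are likewise uniform in $\tau$, since their coordinate expressions are literally the $\tau$-independent $\chi_i$'s. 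Thus no genuine obstacle arises, and the lemma reduces to a completely standard Euclidean-patching argument; the substantive work has already been carried out in establishing Proposition \ref{thm.unif_ell_prop}.
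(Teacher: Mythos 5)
Your argument is correct and is essentially the approach the paper itself describes: a partition of unity subordinate to the transported atlas of Proposition \ref{thm.unif_ell_prop}, with the uniform ellipticity bound \eqref{eq.unif_ell_prop} converting the Euclidean embeddings $W^{1,1}(\R^3) \hookrightarrow L^{3/2}(\R^3)$ and $W^{1,q}(\R^3) \hookrightarrow L^\infty(\R^3)$ into the stated estimates with $\tau$-independent constants. The paper's cited proof additionally invokes the minimal-radius property of Proposition \ref{thm.min_radius} for \eqref{eq.sobs_scalar}, but your extension-by-zero of the compactly supported pieces $\chi_i^\tau f$ legitimately substitutes for that step, since the Morrey constant on all of $\R^3$ depends only on $q$.
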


\begin{proof}
See \cite[Lemma 2.1]{shao:bdc_nv}.
\footnote{Also essential to the proof of \eqref{eq.sobs_scalar} is the ``minimal radius" property of Proposition \ref{thm.min_radius}.}
\end{proof}

\begin{remark}
The significance of Lemma \ref{thm.sob_scalar} is that the constants of \eqref{eq.sob_scalar} and \eqref{eq.sobs_scalar}, i.e., the ``Sobolev constants", are uniformly bounded over all the $\Sigma_\tau$'s, $\tau_0 \leq \tau < t_1$.
In particular, they are controlled by the fundamental constants.
\end{remark}

In addition, we can apply Lemma \ref{thm.sob_scalar} along with the preceding remark in order to derive tensorial versions of Sobolev inequalities.

\begin{proposition}\label{thm.sobolev_tf}
Let $\tau_0 \leq \tau < t_1$.
\begin{itemize}
\item For any $2 \leq p \leq 6$, $q > 3$, and $\Psi \in \Gamma \ul{\mc{T}} \ol{\mc{T}} M$,
\begin{align}
\label{eq.sob} \nabs{\Psi}_{L^p\paren{\Sigma_\tau}} &\lesssim \nabs{\ol{\nabla} \Psi}_{L^2\paren{\Sigma_\tau}} + \nabs{\Psi}_{L^2\paren{\Sigma_\tau}} \text{,} \\
\label{eq.sobs} \nabs{\Psi}_{L^\infty\paren{\Sigma_\tau}} &\lesssim \nabs{\ol{\nabla} \Psi}_{L^q\paren{\Sigma_\tau}} + \nabs{\Psi}_{L^q\paren{\Sigma_\tau}} \text{.}
\end{align}
The constants of the inequalities depend also on $p$ and $q$, respectively.

\item In addition, for any $\Psi \in \Gamma \ul{\mc{T}} \ol{\mc{T}} M$ and $\Phi \in \Gamma \mc{T} M$,
\begin{align}
\label{eq.sob_2h} \nabs{\Psi}_{L^\infty\paren{\Sigma_\tau}} &\lesssim \nabs{\ol{\nabla}^2 \Psi}_{L^2\paren{\Sigma_\tau}} + \nabs{\Psi}_{L^2\paren{\Sigma_\tau}} \text{,} \\
\label{eq.sob_2e} \nabs{\Phi}_{L^\infty\paren{\Sigma_\tau}} &\lesssim \nabs{D^2 \Phi}_{L^2\paren{\Sigma_\tau}} + \nabs{D \Phi}_{L^2\paren{\Sigma_\tau}} + \nabs{\Phi}_{L^2\paren{\Sigma_\tau}} \text{.}
\end{align}
\end{itemize}
\end{proposition}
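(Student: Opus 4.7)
The plan is to reduce each of the four tensorial inequalities to the corresponding scalar statements in Lemma \ref{thm.sob_scalar} by applying the scalar version to the function $|\Psi|$, and to control the error between $|\nabla|\Psi||$ and $|\overline{\nabla}\Psi|$ by the power-rule bound in Proposition \ref{thm.norm_power_rule_tf}. The key absorption mechanism is that \eqref{eq.cond_T0} gives $\|\pi\|_{L^\infty(M_+)} \lesssim 1$ uniformly, so whenever Proposition \ref{thm.norm_power_rule_tf} yields a factor of $|\pi|$ it may be swallowed into the implicit constant.

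For \eqref{eq.sob} I would first promote \eqref{eq.sob_scalar} to the standard $W^{1,2}\!\hookrightarrow\! L^6$ embedding by applying \eqref{eq.sob_scalar} to $|\Psi|^2$ and using H\"older; the general $2\le p\le 6$ case then follows by interpolating between this and the trivial $L^2$ bound. Applied to $f=|\Psi|$ with Proposition \ref{thm.norm_power_rule_tf} (case $p=1$) and the $L^\infty$ bound on $\pi$, this yields \eqref{eq.sob}. The statement \eqref{eq.sobs} is analogous, starting directly from the scalar embedding \eqref{eq.sobs_scalar} with $q>3$.

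For \eqref{eq.sob_2h} I would chain the two inequalities just proved: pick any $q\in(3,6]$ and estimate
\[ \nabs{\Psi}_{L^\infty(\Sigma_\tau)} \lesssim \nabs{\ol{\nabla}\Psi}_{L^q(\Sigma_\tau)} + \nabs{\Psi}_{L^q(\Sigma_\tau)} \lesssim \nabs{\ol{\nabla}^2\Psi}_{L^2(\Sigma_\tau)} + \nabs{\ol{\nabla}\Psi}_{L^2(\Sigma_\tau)} + \nabs{\Psi}_{L^2(\Sigma_\tau)} \text{,} \]
after which the intermediate first-order term is absorbed via the interpolation inequality
\[ \nabs{\ol{\nabla}\Psi}_{L^2(\Sigma_\tau)}^2 \lesssim \nabs{\Psi}_{L^2(\Sigma_\tau)} \nabs{\ol{\nabla}^2\Psi}_{L^2(\Sigma_\tau)} + \nabs{\Psi}_{L^2(\Sigma_\tau)}^2 \text{,} \]
obtained by integration by parts against a fixed component of $\overline{\nabla}\Psi$, using the Leibniz rule and metric compatibility of $\overline{\nabla}$ together with Proposition \ref{thm.norm_prod_tf} and the $L^\infty$ bound on $\pi$ to handle the discrepancy between the mixed bundle metric and the reference Riemannian metric $h$. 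Finally, \eqref{eq.sob_2e} is obtained by viewing $\Phi|_{\Sigma_\tau}$ as an element of $\Gamma\overline{\mc{T}}\Sigma_\tau$, applying \eqref{eq.sob_2h}, and noting that $|\overline{\nabla}\Phi|\le|D\Phi|$ (as $\overline{\nabla}$ picks up only directions tangent to $\Sigma_\tau$) while $|\overline{\nabla}^2\Phi|\lesssim |D^2\Phi|+|\pi||D\Phi|$ by \eqref{eq.dft_bound}.

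I expect the main technical irritation to be the integration-by-parts step underlying the interpolation used in \eqref{eq.sob_2h}: the $L^2$ norms are defined with respect to $h$, whereas $\overline{\nabla}$ is compatible with the mixed bundle metric, so the identification of the two produces $\pi$-error terms through \eqref{eq.dft_bound} which must be carefully tracked and then absorbed via the uniform bound \eqref{eq.cond_T0}. The remaining steps are fairly routine once the scalar inequalities of Lemma \ref{thm.sob_scalar} and the power rule are in place.
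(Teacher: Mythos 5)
Your proposal is correct and follows essentially the same route as the paper: reduce each tensorial inequality to the scalar Sobolev inequalities of Lemma \ref{thm.sob_scalar} applied to powers of $\abs{\Psi}$, using Proposition \ref{thm.norm_power_rule_tf} and the uniform bound $\nabs{\pi}_{L^\infty} \lesssim 1$ to absorb the error between $\nabla\abs{\Psi}$ and $\ol{\nabla}\Psi$, and then obtain \eqref{eq.sob_2h}--\eqref{eq.sob_2e} by chaining \eqref{eq.sobs} with \eqref{eq.sob}. Your explicit integration-by-parts interpolation to eliminate the intermediate $\nabs{\ol{\nabla}\Psi}_{L^2}$ term in \eqref{eq.sob_2h} is a genuine (and correct) detail that the paper's one-line justification leaves implicit, since the stated \eqref{eq.sob_2h}, unlike \eqref{eq.sob_2e}, omits the first-order term.
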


\begin{proof}
The inequalities \eqref{eq.sob} and \eqref{eq.sobs} are direct adaptations of the proof of the first part of \cite[Prop. 4.7]{shao:bdc_nv}; see also \cite[Prop. 2.3]{shao:bdc_nv}, \cite[Prop. 2.4]{shao:bdc_nv}, and \cite[Cor. 2.7]{kl_rod:bdc}.
The main idea is to apply Lemma \ref{thm.sob_scalar} to the scalar quantities $|\Psi|^q$, $q > 1$.
\footnote{We norm mixed tensor fields with respect to $h$ and $\gamma$.}

Next, \eqref{eq.sob_2h} is proved by applying \eqref{eq.sobs} and \eqref{eq.sob} in succession.
We can also obtain \eqref{eq.sob_2e} by applying \eqref{eq.sobs} and \eqref{eq.sob} in the following manner:
\begin{align*}
\nabs{\Psi}_{L^\infty\paren{\Sigma_\tau}} &\lesssim \nabs{D \Psi}_{L^4\paren{\Sigma_\tau}} + \nabs{\Psi}_{L^4\paren{\Sigma_\tau}} \\
&\lesssim \nabs{D^2 \Psi}_{L^2\paren{\Sigma_\tau}} + \nabs{D \Psi}_{L^2\paren{\Sigma_\tau}} + \nabs{\Psi}_{L^2\paren{\Sigma_\tau}} \text{.}
\end{align*}
Note that for $\Phi \in \Gamma \mc{T} M = \Gamma \ol{\mc{T}} M$, we clearly have $|\ol{\nabla} \Phi| \lesssim |D \Phi|$.
\end{proof}

\subsection{Generalized Energy-Momentum Tensors}\label{sec.emt}

We now describe some general methods for obtaining global and local energy bounds for various spacetime quantities.
Our focus will be on two general classes of ``energy-momentum" tensor fields, abbreviated as \emph{EMTs}, based on covariant wave-type and Maxwell-type equations.

The ``wave equation" EMT was previously applied in \cite{chru_sh:ym_curv, kl_rod:bdc} in order to obtain higher-order estimates.
In particular, in \cite{kl_rod:bdc}, this was used to bound the $L^2$-norms of covariant derivatives of the curvature in a vacuum spacetime.
We will apply these tensor fields with analogous intentions.
The ``Maxwell" EMT is an adaptation of this process to solutions of ``Maxwell-type" equations.
We will use this tensor field to derive a priori energy estimates for the curvature $R$, thereby avoiding the more precise but more computationally complex Bel-Robinson tensor field.

Throughout this section, we will assume the following fields:
\begin{itemize}
\item Let $U, V \in \Gamma T^r M$ satisfy the covariant tensor wave equation
\begin{equation}\label{eq.cov_wave_eq} \Box_g U = V \text{,} \end{equation}
where $\Box_g$ is the tensorial wave operator $\Box_g = g^{\alpha\beta} D_{\alpha\beta}$.

\item Let $A \in \Gamma T^{r + 2} M$, $B \in \Gamma T^{r + 1} M$, and $C \in \Gamma T^{r + 3} M$, with $A$ antisymmetric in its first two components and satisfing the Maxwell-type equations
\begin{equation}\label{eq.gen_maxwell_eq} D^\alpha A_{\alpha\beta I} = B_{\beta I} \text{,} \qquad D_\gamma A_{\alpha\beta I} + D_\alpha A_{\beta\gamma I} + D_\beta A_{\gamma\alpha I} = C_{\gamma\alpha\beta I} \text{.} \end{equation}
\end{itemize}

We begin by describing the ``wave equation" case as in \cite{kl_rod:bdc}.
Define the EMT $Q_w[U] \in \Gamma S^2 M$ and its associated current $P_w[U] \in \mf{X}^\ast(M)$ by
\begin{align*}
Q_w\brak{U}_{\alpha\beta} &= h\paren{D_\alpha U, D_\beta U} - \frac{1}{2} g_{\alpha\beta} g^{\mu\nu} \cdot h\paren{D_\mu U, D_\nu U} \text{,} \\
P_w\brak{U}\paren{X} &= Q_w\brak{U}\paren{X, T} \text{,}
\end{align*}
where $h(\cdot, \cdot)$ denotes a full $h$-metric contraction of the unindexed components.
Note the resemblance to the energy-momentum tensor \eqref{eq.es_emt} for the scalar field.
In addition, we define the corresponding divergence quantities $D \cdot Q_w[U] \in \mf{X}^\ast(M)$ and $D \cdot P_w[U] \in C^\infty(M)$ in the expected manner:
\[ D \cdot Q_w\brak{U}_\alpha = D^\beta Q_w\brak{U}_{\alpha\beta} \text{,} \qquad D \cdot P_w\brak{U} = D^\alpha P_w\brak{U}_\alpha \text{.} \]
The main properties of this wave EMT are listed below.

\begin{proposition}\label{thm.emt_wave}
The EMT $Q_w[U]$ satisfies the following properties:
\begin{itemize}
\item The following estimates hold:
\begin{align}
\label{eq.emt_wave_norm} \abs{Q_w\brak{U}} &\lesssim \abs{D U}^2 \text{,} \\
\label{eq.emt_wave_divg} \abs{D \cdot Q_w\brak{U}} &\lesssim \abs{D U} \paren{\abs{V} + \abs{R\brak{U}}} + \abs{\pi} \abs{D U}^2 \text{,} \\
\notag \abs{D \cdot P_w\brak{U}} &\lesssim \abs{D U} \paren{\abs{V} + \abs{R\brak{U}}} + \abs{\pi} \abs{D U}^2 \text{.}
\end{align}

\item For future causal $X, Y \in \mf{X}(M)$,
\begin{equation}\label{eq.emt_wave_pos} Q_w\brak{U}\paren{X, Y} \geq 0 \text{,} \end{equation}
In particular,
\begin{equation}\label{eq.emt_wave_tt} Q_w\brak{U}\paren{T, T} = \frac{1}{2} \abs{D U}^2 \text{.} \end{equation}
\end{itemize}
\end{proposition}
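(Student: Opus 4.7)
The norm estimate \eqref{eq.emt_wave_norm} is immediate from Proposition \ref{thm.norm_prod_tf}, since $Q_w[U]$ is formed from $h$- and $g$-contractions of $DU \otimes DU$. The identity \eqref{eq.emt_wave_tt} is likewise direct: in a $g$-orthonormal frame $\{e_0 = T, e_1, e_2, e_3\}$, the restriction of $h$ to the tangent bundle coincides with the Euclidean metric, so that $|DU|^2 = |D_T U|_h^2 + \sum_i |D_{e_i} U|_h^2$, while $g^{\mu\nu}h(D_\mu U, D_\nu U) = -|D_T U|_h^2 + \sum_i |D_{e_i} U|_h^2$; substituting into the definition of $Q_w[U]$ at $(T,T)$ gives the claim.

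The main work is the divergence bound \eqref{eq.emt_wave_divg}. I would expand $D^\beta Q_w[U]_{\alpha\beta}$ via the Leibniz rule, being careful that although $D$ annihilates $g$, it does not annihilate $h$. Three kinds of terms arise: (i) a contribution $h(D_\alpha U, \Box_g U) = h(D_\alpha U, V)$, bounded by $|DU||V|$; (ii) a commutator term $h^{IJ}(D^\beta D_\alpha - D_\alpha D^\beta)(U)_I (D_\beta U)_J$, which by \eqref{eq.curv_gen} reduces to a contraction of $R[U]$ with $DU$ and is hence bounded by $|DU||R[U]|$; and (iii) terms linear in $Dh$, controlled via \eqref{eq.dft_bound} by $|\pi||DU|^2$. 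Summing yields \eqref{eq.emt_wave_divg}. The current divergence then follows from another application of Leibniz: $D \cdot P_w[U] = (D \cdot Q_w[U])(T) + Q_w[U]^{\alpha\beta} D_\alpha T_\beta$, the second term being bounded by $|Q_w[U]||DT| \lesssim |DU|^2 |\pi|$ using \eqref{eq.emt_wave_norm} and \eqref{eq.dft_bound}.

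For the positivity statement \eqref{eq.emt_wave_pos}, the key observation is that $S_{\alpha\beta} := h(D_\alpha U, D_\beta U)$ admits a pointwise decomposition as a sum of rank-one positive semi-definite forms. At $p$, choose an $h$-orthonormal basis $\{\xi_r\}$ of the relevant fiber and write $D_\alpha U|_p = \sum_r u_\alpha^r \xi_r$; then $S|_p = \sum_r u^r_\alpha u^r_\beta$, and $Q_w[U]|_p$ correspondingly decomposes as a finite sum of classical scalar wave stress-energy tensors. Each such scalar tensor satisfies the dominant energy condition, which is standard in scalar wave equation theory, so the same holds for $Q_w[U]|_p$.

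The principal obstacle is the bookkeeping in the divergence calculation: one must carefully track which contractions are performed with the parallel metric $g$ and which with $h$, and correctly identify the curvature commutator acting on a general rank-$r$ tensor via \eqref{eq.curv_gen}. Modulo these essentially algebraic subtleties, all five conclusions follow without invoking techniques beyond what has already been developed in the excerpt.
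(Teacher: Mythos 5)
Your proposal is correct and follows exactly the route the paper intends: the paper's proof is a one-line deferral to direct computations in \cite[Prop. 4.13]{shao:bdc_nv} and \cite{kl_rod:bdc}, and your outline (norm bound via Proposition \ref{thm.norm_prod_tf}, the $(T,T)$ identity by orthonormal-frame expansion, the divergence via Leibniz with the $Dh$ and curvature-commutator terms controlled by \eqref{eq.dft_bound} and \eqref{eq.curv_gen}, and positivity by decomposing $h(D_\alpha U, D_\beta U)$ into rank-one scalar-field stress tensors) supplies precisely those computations. No gaps.
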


\begin{proof}
These follow immediately from a series of direct computations.
For further details, consult \cite[Prop. 4.13]{shao:bdc_nv} and \cite{kl_rod:bdc}.
\end{proof}

The ``Maxwell-type" EMT is constructed analogously.
Indeed, we define the EMT $Q_m[A] \in \Gamma S^2 M$ and its associated current $P_m[A] \in \mf{X}^\ast(M)$ by
\begin{align*}
Q_m\brak{A}_{\alpha\beta} &= h\paren{A_{\alpha\mu}, A_\beta{}^\mu} - \frac{1}{4} g_{\alpha\beta} h\paren{A_{\mu\nu}, A^{\mu\nu}} \text{,} \\
P_m\brak{A}\paren{X} &= Q_m\brak{A}\paren{X, T} \text{,}
\end{align*}
and define $D \cdot Q_m[A] \in \mf{X}^\ast(M)$, $D \cdot P_m[A] \in C^\infty(M)$ by
\label{pg.emt_maxwell_divg}
\[ D \cdot Q_m\brak{A}_\alpha = D^\beta Q_m\brak{A}_{\alpha\beta} \text{,} \qquad D \cdot P_m\brak{A} = D^\alpha P_m\brak{A}_\alpha \text{.} \]

\begin{proposition}\label{thm.emt_maxwell}
The EMT $Q_m[A]$ satisfies the following properties:
\begin{itemize}
\item The following estimates hold:
\begin{align}
\label{eq.emt_maxwell_norm} \abs{Q_m\brak{A}} &\lesssim \abs{A}^2 \text{,} \\
\label{eq.emt_maxwell_divg} \abs{D \cdot Q_m\brak{A}} &\lesssim \abs{A} \paren{\abs{B} + \abs{C}} + \abs{\pi} \abs{A}^2 \text{,} \\
\notag \abs{D \cdot P_m\brak{A}} &\lesssim \abs{A} \paren{\abs{B} + \abs{C}} + \abs{\pi} \abs{A}^2 \text{.}
\end{align}

\item For future causal $X, Y \in \mf{X}(M)$,
\begin{equation}\label{eq.emt_maxwell_pos} Q_m\brak{A}\paren{X, Y} \geq 0 \text{.} \end{equation}
In particular,
\begin{equation}\label{eq.emt_maxwell_tt} Q_m\brak{A}\paren{T, T} = \frac{1}{4} \abs{A}^2 \text{.} \end{equation}
\end{itemize}
\end{proposition}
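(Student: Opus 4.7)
The plan is to mirror the structure of Proposition \ref{thm.emt_wave}: the four claims split naturally into an algebraic norm bound, a divergence identity, and pointwise positivity plus a $T$--$T$ computation. Throughout, extra ``bundle'' indices (the $I$ in \eqref{eq.gen_maxwell_eq}) play only the role of spectators contracted by $h$, so the arguments parallel the scalar $2$-form case up to error terms generated by $Dh$, which are controlled via \eqref{eq.dft_bound}.

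For the norm bound \eqref{eq.emt_maxwell_norm}, I would simply observe that $Q_m[A]$ is built out of full $g$- and $h$-contractions of $A \otimes A$, so Proposition \ref{thm.norm_prod_tf} immediately yields $|Q_m[A]| \lesssim |A|^2$. For \eqref{eq.emt_maxwell_divg}, the plan is to expand $D^\beta Q_m[A]_{\alpha\beta}$ by Leibniz. The contribution that sees the divergence of $A$ on its first two indices is converted to $A_{\alpha I\mu} B^{\mu I}$ by the first equation of \eqref{eq.gen_maxwell_eq}. The remaining piece takes the schematic form $A^{\beta I \mu} D_\beta A_{\alpha I \mu} - \tfrac{1}{2} A^{\beta I \mu} D_\alpha A_{\beta I \mu}$; using the antisymmetry of $A$ in its first two indices together with the cyclic identity of \eqref{eq.gen_maxwell_eq}, these two terms combine into $\tfrac{1}{2} A^{\beta I \mu} C_{\alpha \mu \beta I}$, precisely the classical Maxwell identity. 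The ``bundle'' covariant derivatives of $h$ produce additional terms of size $|Dh|\,|A|^2 \lesssim |\pi|\,|A|^2$ by \eqref{eq.dft_bound}. Together these give \eqref{eq.emt_maxwell_divg}. The bound on $D \cdot P_m[A]$ then follows from $D^\alpha(Q_m[A]_{\alpha\beta} T^\beta) = (D \cdot Q_m[A])_\beta T^\beta + Q_m[A]_{\alpha\beta} D^\alpha T^\beta$, the symmetry of $Q_m$, and the bounds $|T|\lesssim 1$ and $|DT| \lesssim |\pi|$ from \eqref{eq.dft_bound}.

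For the positivity \eqref{eq.emt_maxwell_pos} and the identity \eqref{eq.emt_maxwell_tt}, the plan is to reduce to the classical scalar case. At a fixed point $p$, choose an $h$-orthonormal basis $\{b_K\}$ of the bundle fiber in which the spectator indices $I$ live; this expresses $A|_p = \sum_K A^{(K)}_{\alpha\beta}\, b_K$ with each $A^{(K)}$ an ordinary antisymmetric $2$-form, and one checks $Q_m[A]|_p = \sum_K Q_m[A^{(K)}]|_p$. The claim is therefore reduced to the corresponding statement for scalar $2$-forms. For this I would adopt an $h$-orthonormal frame $e_0, e_1, e_2, e_3$ at $p$ with $e_0 = T|_p$, decompose each $A^{(K)}$ into its electric part $E_i^{(K)} = A^{(K)}_{0i}$ and magnetic part $H_i^{(K)} = \tfrac{1}{2} \epsilon_{ijk} (A^{(K)})^{jk}$, and compute directly that $Q_m[A^{(K)}](T,T) = \tfrac{1}{2}(|E^{(K)}|^2 + |H^{(K)}|^2)$, matching $\tfrac{1}{4}|A^{(K)}|^2$ since $h$ is Euclidean in this frame. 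Summing over $K$ gives \eqref{eq.emt_maxwell_tt}. The full positivity \eqref{eq.emt_maxwell_pos} is then the standard dominant energy argument for the Maxwell stress-energy tensor applied to each $A^{(K)}$ and summed.

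I expect the main obstacle to be the bookkeeping in the divergence identity: the combination of the cyclic equation, antisymmetry in the first two indices of $A$, and the $h$-contractions over the spectator bundle indices requires careful tracking of signs and of where $Dh \neq 0$ produces error terms. Once the clean algebraic Maxwell identity is isolated, everything else is either a direct reduction to the scalar case or an application of the uniform bounds \eqref{eq.dft_bound} on $DT$ and $Dh$.
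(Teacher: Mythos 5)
Your proposal is correct and follows essentially the same route as the paper, which proves this by direct computation in analogy with Proposition \ref{thm.emt_wave} (deferring details to the thesis): the Leibniz expansion of $D^\beta Q_m[A]_{\alpha\beta}$ combined with the two equations of \eqref{eq.gen_maxwell_eq} and the antisymmetry of $A$ yields the $|A|(|B|+|C|)$ terms, the $Dh$ and $DT$ contributions give the $|\pi||A|^2$ terms via \eqref{eq.dft_bound}, and positivity reduces to the classical electromagnetic decomposition after diagonalizing the spectator indices in an $h$-orthonormal fiber basis. Your index bookkeeping for the cyclic identity and the $(T,T)$ computation both check out.
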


\begin{proof}
The proofs are analogous to those of Proposition \ref{thm.emt_wave}; see \cite[Prop. 4.14]{shao:bdc_nv}.
\end{proof}

\subsection{Global Energy Estimates}

Next, we apply these generalized EMTs in order to establish both global and local energy estimates.
By ``global energy estimates", we mean $L^2$-estimates on entire timeslices, while by ``local energy estimates", we refer to $L^2$-estimates on past null cones, i.e., ``flux estimates".

We begin with general global estimates.
Recall that Killing vector fields can be associated via Noether's theorem with conservation laws for quantities derived from (standard) energy-momentum tensors.
Since we have no Killing fields in our setting, we have no hope for achieving such energy conservation.
We can, however, think of $T$ as an ``almost Killing" vector field, since its deformation tensor $\pi$ is uniformly bounded.
In fact, by slightly modifying the process behind Noether's theorem, we can derive energy \emph{inequalities} by taking advantage of this ``almost Killing" condition for $T$ along with the generalized EMTs $Q_w[U]$ and $Q_m[A]$.

\begin{lemma}\label{thm.emt_est_pre}
Assume $U, V, A, B, C$ as before.
If $\tau_0 \leq \tau_1 < \tau_2 < t_1$, then
\begin{align}
\label{eq.emt_wave_est_pre} \nabs{D U}_{L^2\paren{\Sigma_{\tau_2}}}^2 &\lesssim \nabs{D U}_{L^2\paren{\Sigma_{\tau_1}}}^2 + \int_{\tau_1}^{\tau_2} \int_{\Sigma_\tau} \paren{\abs{D U}^2 + \abs{V}^2 + \abs{R\brak{U}}^2} d\tau \text{,} \\
\label{eq.emt_maxwell_est_pre} \nabs{A}_{L^2\paren{\Sigma_{\tau_2}}}^2 &\lesssim \nabs{A}_{L^2\paren{\Sigma_{\tau_1}}}^2 + \int_{\tau_1}^{\tau_2} \int_{\Sigma_\tau} \paren{\abs{A}^2 + \abs{B}^2 + \abs{C}^2} d\tau \text{.}
\end{align}
\end{lemma}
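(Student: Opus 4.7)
The plan is to derive both bounds by applying the divergence theorem to the currents $P_w[U]$ and $P_m[A]$ over the spacetime slab $\Sigma_{[\tau_1, \tau_2]}$, and then controlling the resulting bulk integrals via the divergence estimates recorded in Propositions \ref{thm.emt_wave} and \ref{thm.emt_maxwell}, together with the uniform a priori bounds of Section \ref{sec.a_priori}.

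For the wave-type estimate \eqref{eq.emt_wave_est_pre}, the first step is to integrate $D \cdot P_w[U]$ over the slab $\Sigma_{[\tau_1, \tau_2]}$. Since $T$ is the future unit timelike normal to each leaf $\Sigma_\tau$, the divergence theorem identifies the boundary contribution with $\int_{\Sigma_{\tau_i}} Q_w[U](T, T)$, which by \eqref{eq.emt_wave_tt} equals $\tfrac{1}{2} \|D U\|_{L^2(\Sigma_{\tau_i})}^2$. Rearranging gives
\[ \tfrac{1}{2} \nabs{DU}_{L^2(\Sigma_{\tau_2})}^2 \leq \tfrac{1}{2} \nabs{DU}_{L^2(\Sigma_{\tau_1})}^2 + \int_{\Sigma_{[\tau_1, \tau_2]}} \abs{D \cdot P_w[U]}. \]
Next, I would invoke the divergence estimate in \eqref{eq.emt_wave_divg} together with the a priori bound $\|\pi\|_{L^\infty(M_+)} \lesssim 1$ from \eqref{eq.bdc_lemma}, and apply Young's inequality to the cross term $|DU|(|V| + |R[U]|)$, producing
\[ \abs{D \cdot P_w[U]} \lesssim \abs{DU}^2 + \abs{V}^2 + \abs{R[U]}^2. \]
Finally, the coarea formula (Proposition \ref{thm.coarea_tf}), combined with the comparison $n \simeq 1$ on $M_+$ from \eqref{eq.cond_T0}, rewrites the spacetime integral as $\int_{\tau_1}^{\tau_2} \int_{\Sigma_\tau}(\abs{DU}^2 + \abs{V}^2 + \abs{R[U]}^2)\, d\tau$, which yields \eqref{eq.emt_wave_est_pre}.

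The Maxwell-type estimate \eqref{eq.emt_maxwell_est_pre} is proved in exactly the same way with $Q_w[U]$ replaced by $Q_m[A]$: the boundary terms from the divergence theorem become $\tfrac{1}{4} \|A\|_{L^2(\Sigma_{\tau_i})}^2$ via \eqref{eq.emt_maxwell_tt}, the bulk integrand is controlled by \eqref{eq.emt_maxwell_divg} and the same Young's inequality trick, and the coarea formula delivers the final form.

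The only mild technical subtlety I anticipate is the bookkeeping around orientations and induced measures when invoking the divergence theorem on a Lorentzian slab bounded by spacelike leaves — in particular, verifying that the induced boundary current really evaluates to the Riemannian $L^2$-norm of $DU$ (resp.\ $A$) against $T$. This is handled cleanly here because both $Q_w[U](T,T)$ and $Q_m[A](T,T)$ are nonnegative by \eqref{eq.emt_wave_pos} and \eqref{eq.emt_maxwell_pos}, so any sign ambiguity on the incoming boundary can be absorbed into a one-sided inequality. Note that no Gr\"onwall-type argument is needed at this stage; the $|DU|^2$ term on the right-hand side of \eqref{eq.emt_wave_est_pre} is legitimately present and is closed up only later, when this pre-estimate is iterated together with higher-order identities.
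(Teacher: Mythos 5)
Your proposal is correct and follows essentially the same route as the paper: integrate $D \cdot P_w[U]$ (resp.\ $D \cdot P_m[A]$) over the slab, apply the divergence theorem and the coarea formula of Proposition \ref{thm.coarea_tf}, use \eqref{eq.emt_wave_tt}, \eqref{eq.emt_wave_divg} (resp.\ \eqref{eq.emt_maxwell_tt}, \eqref{eq.emt_maxwell_divg}) together with the uniform bounds on $n$ and $\pi$, and close with Young's inequality on the cross term. Your closing remark that no Gr\"onwall argument is needed at this stage is also consistent with the paper, which defers that step to Proposition \ref{thm.e1_est}.
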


\begin{proof}
First, for \eqref{eq.emt_wave_est_pre}, we integrate $D \cdot P_w[U]$ over the slab $\Sigma_{[\tau_1, \tau_2]}$.
\footnote{Recall that $\Sigma_{[\tau_1, \tau_2]} = \{p \in M \mid \tau_1 \leq t(p) \leq \tau_2\}$.}
Applying the divergence theorem along with Proposition \ref{thm.coarea_tf}, we obtain the identity
\[ \int_{\Sigma_{\tau_1}} Q_w\brak{U}\paren{T, T} - \int_{\Sigma_{\tau_2}} Q_w\brak{U}\paren{T, T} = \int_{\tau_1}^{\tau_2} \brak{\int_{\Sigma_\tau} n \cdot \paren{D \cdot P_w\brak{U}}} d\tau \text{.} \]
By \eqref{eq.cond_T0}, \eqref{eq.emt_wave_divg}, \eqref{eq.emt_wave_tt}, then
\begin{align*}
\nabs{D U}_{L^2\paren{\Sigma_{\tau_2}}}^2 &\lesssim \nabs{D U}_{L^2\paren{\Sigma_{\tau_1}}}^2 + \int_{\tau_1}^{\tau_2} \int_{\Sigma_\tau} \brak{\abs{D U} \paren{\abs{V} + \abs{R\brak{U}}} + \abs{D U}^2} d\tau \\
&\lesssim \nabs{D U}_{L^2\paren{\Sigma_{\tau_1}}}^2 + \int_{\tau_1}^{\tau_2} \int_{\Sigma_\tau} \paren{\abs{D U}^2 + \abs{V}^2 + \abs{R\brak{U}}^2} d\tau \text{.}
\end{align*}
For \eqref{eq.emt_maxwell_est_pre}, we repeat the above steps with $D \cdot P_m[A]$ in the place of $D \cdot P_w[U]$.
\end{proof}

To obtain the specific a priori estimates we desire, we will apply Lemma \ref{thm.emt_est_pre} to the spacetime curvature $R$ and matter field: $D^2 \phi$ in the E-S case, or $D F$ in the E-M case.
For this, we will need to take advantage of the following facts:
\begin{itemize}
\item The curvature $R$ satisfies Maxwell-type equations.
This includes
\begin{equation}\label{eq.bianchi} D_{[\mu} R_{\alpha\beta]\gamma\delta} \equiv 0 \text{,} \end{equation}
i.e., the Bianchi idenitities, along with the divergence relations \eqref{eq.curv_divg_es} and \eqref{eq.curv_divg_em}, which hold in the E-S and E-M settings, respectively.

\item In the E-S case, $D \phi$ satisfies the following wave equation:
\begin{equation}\label{eq.es_wave0} \square_g D_\alpha \phi = R \cdot D_\alpha \phi \text{.} \end{equation}
This can be derived by differentiating \eqref{eq.scalar_field} and commuting derivatives.

\item In the E-M case, $F$ satisfies the following wave equation:
\begin{equation}\label{eq.em_wave0} \square_g F_{\alpha\beta} = 2 R_\alpha{}^{\lambda\mu}{}_\beta F_{\lambda\mu} + R^\lambda{}_\alpha F_{\beta\lambda} - R^\lambda{}_\beta F_{\alpha\lambda} \text{.} \end{equation}
This is derived by differentiating the Bianchi identity for $F$ in \eqref{eq.maxwell}.
\end{itemize}

For every $\tau_0 \leq \tau < t_1$, we define the global spacetime energy quantities
\[ \mc{E}^1\paren{\tau} = \nabs{R}_{L^2\paren{\Sigma_\tau}} + \nabs{D^2 \phi}_{L^2\paren{\Sigma_\tau}} \text{,} \qquad \mc{E}^1\paren{\tau} = \nabs{R}_{L^2\paren{\Sigma_\tau}} + \nabs{D F}_{L^2\paren{\Sigma_\tau}} \]
in the E-S and E-M settings, respectively.
\footnote{In later sections, we will also define and estimate the higher-order analogues $\mc{E}^2(\tau)$ and $\mc{E}^3(\tau)$.  We adopt the abbreviation $\mc{E}^1(\tau)$ here in order to maintain consistency of notation.}

\begin{proposition}\label{thm.e1_est}
In both the E-S and E-M cases, we have the energy inequality
\[ \mc{E}^1\paren{\tau} \lesssim 1 \text{,} \qquad \tau_0 \leq \tau < t_1 \text{.} \]
\end{proposition}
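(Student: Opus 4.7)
The plan is to apply the two halves of Lemma \ref{thm.emt_est_pre} in tandem to the pair consisting of the spacetime curvature $R$ and the matter-field quantity of interest ($D\phi$ in the E-S case, $F$ in the E-M case), and then to close a coupled Grönwall inequality using the a priori bounds \eqref{eq.cond_T0} and \eqref{eq.bdc_lemma} that follow from the breakdown criterion \eqref{eq.bdc}.

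For the curvature, I would first observe that the Bianchi identity \eqref{eq.bianchi} together with the divergence relations \eqref{eq.curv_divg_es} or \eqref{eq.curv_divg_em} exhibit $R$ as a solution of the Maxwell-type system \eqref{eq.gen_maxwell_eq} with $C \equiv 0$ and with $|B|$ pointwise bounded by $|\mf{F}|\,|D^2\phi|$ in the E-S case and by $|\mf{F}|\,|DF|$ in the E-M case. Invoking \eqref{eq.emt_maxwell_est_pre} and using $\nabs{\mf{F}}_{L^\infty(M)} \lesssim 1$, this yields
\[
\nabs{R}_{L^2(\Sigma_{\tau_2})}^2 \lesssim \nabs{R}_{L^2(\Sigma_{\tau_1})}^2 + \int_{\tau_1}^{\tau_2} \mc{E}^1(\tau)^2\,d\tau.
\]

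Next, for the matter field I would apply \eqref{eq.emt_wave_est_pre} with $U = D\phi$ in the E-S setting, where \eqref{eq.es_wave0} identifies the source as $V = R \cdot D\phi$, and with $U = F$ in the E-M setting, where \eqref{eq.em_wave0} exhibits $V$ as a schematic expression of the form $R \cdot F$. In both cases the pointwise sizes of $V$ and of the commutator $R[U]$ appearing in \eqref{eq.emt_wave_divg} are bounded by $|R|\,|\mf{F}| \lesssim |R|$, so Lemma \ref{thm.emt_est_pre} gives
\[
\nabs{D^2\phi}_{L^2(\Sigma_{\tau_2})}^2 \lesssim \nabs{D^2\phi}_{L^2(\Sigma_{\tau_1})}^2 + \int_{\tau_1}^{\tau_2} \mc{E}^1(\tau)^2\,d\tau,
\]
and the analogous estimate with $DF$ in place of $D^2\phi$ in the E-M case. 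Adding the curvature bound to the matter-field bound yields a closed integral inequality of the form $\mc{E}^1(\tau)^2 \lesssim \mc{E}^1(\tau_0)^2 + \int_{\tau_0}^{\tau} \mc{E}^1(\sigma)^2\,d\sigma$, and Grönwall's inequality together with the fact that $\mc{E}^1(\tau_0) \lesssim 1$ (the data on $\Sigma_{\tau_0}$ being part of the fundamental constants) completes the argument.

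The main, rather modest, obstacle is the coupling between curvature and matter: the curvature divergence is sourced by derivatives of $\mf{F}$ while the matter wave equations are sourced by $R$, so the two quantities cannot be estimated in isolation. It is precisely the uniform $L^\infty$-bound on $\mf{F}$ from the breakdown hypothesis that downgrades these cross terms to being linear in the individual components of $\mc{E}^1(\tau)$, allowing the coupled Grönwall scheme to close. No finer null-geometry input is needed for this first-order a priori bound; those tools will only be required when higher-order norms and pointwise bounds for $R$ and $D\mf{F}$ are sought later in the paper.
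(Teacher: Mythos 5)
Your proposal is correct and follows essentially the same route as the paper: the curvature is treated via the Maxwell-type EMT estimate \eqref{eq.emt_maxwell_est_pre} using the Bianchi identity \eqref{eq.bianchi} and the divergence relations \eqref{eq.curv_divg_es}, \eqref{eq.curv_divg_em}, the matter quantity via the wave EMT estimate \eqref{eq.emt_wave_est_pre} with the wave equations \eqref{eq.es_wave0}, \eqref{eq.em_wave0}, and the coupled system is closed by summing and applying Gr\"onwall, with the breakdown bound on $\mf{F}$ rendering the cross terms linear in $\mc{E}^1$. This matches the paper's proof in both structure and detail.
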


\begin{proof}
We begin with the E-S case.
To handle the curvature, we apply \eqref{eq.emt_maxwell_est_pre}, with $A = R$, and with $B$ and $C$ given by \eqref{eq.curv_divg_es} and \eqref{eq.bianchi}, in order to obtain
\[ \nabs{R}_{L^2\paren{\Sigma_{\tau}}}^2 \lesssim \nabs{R}_{L^2\paren{\Sigma_{t_0}}}^2 + \int_{t_0}^\tau \int_{\Sigma_{\tau^\prime}} \paren{\abs{R}^2 + \abs{D \phi}^2 \abs{D^2 \phi}^2} d\tau^\prime \text{.} \]
By \eqref{eq.emt_wave_est_pre}, with $U = D \phi$, along with \eqref{eq.es_wave0}, we obtain the bound
\[ \nabs{D^2 \phi}_{L^2\paren{\Sigma_{\tau}}}^2 \lesssim \nabs{D^2 \phi}_{L^2\paren{\Sigma_{t_0}}}^2 + \int_{t_0}^\tau \int_{\Sigma_{\tau^\prime}} \paren{\abs{D^2 \phi}^2 + \abs{D \phi}^2 \abs{R}^2} d\tau^\prime \text{.} \]
Summing the above equations, recalling the uniform bound \eqref{eq.bdc} for $D \phi$, and applying Gr\"onwall's inequality to the result yields the desired bound.

The E-M case is derived in a completely analogous manner.
We apply \eqref{eq.emt_wave_est_pre} and \eqref{eq.emt_maxwell_est_pre} as before, along with \eqref{eq.curv_divg_em}, \eqref{eq.bianchi}, \eqref{eq.em_wave0}, and we obtain the bounds
\begin{align*}
\nabs{R}_{L^2\paren{\Sigma_{\tau}}}^2 \lesssim \nabs{R}_{L^2\paren{\Sigma_{t_0}}}^2 + \int_{t_0}^\tau \int_{\Sigma_{\tau^\prime}} \paren{\abs{R}^2 + \abs{F}^2 \abs{D F}^2} d\tau^\prime \text{,} \\
\nabs{D F}_{L^2\paren{\Sigma_{\tau}}}^2 \lesssim \nabs{D F}_{L^2\paren{\Sigma_{t_0}}}^2 + \int_{t_0}^\tau \int_{\Sigma_{\tau^\prime}} \paren{\abs{D F}^2 + \abs{F}^2 \abs{R}^2} d\tau^\prime \text{.}
\end{align*}
Summing and applying Gr\"onwall's inequality yields the desired inequality.
\end{proof}

\subsection{Local Energy Estimates}\label{sec.lest}

Local energy estimates are derived in a manner similar to the global estimates of Lemma \ref{thm.emt_est_pre}.
The main difference is that we integrate over the interior of a past null cone rather than over a time slab.

Fix $p \in M_+$; we normalize and foliate $\mc{N}^-(p)$ as before using $T|_p$ and $t_p$.
Fix also a time value $\tau_0 \leq \tau < t(p)$ such that $t(p) - \tau < \min(\mf{i}(p), 1)$.
\footnote{The stipulation $t(p) - \tau < 1$ is purely a matter of convenience, in order to eliminate the dependence of various inequalities on the timespan $t(p) - \tau$.}
We define
\[ \mc{N}_\tau = \mc{N}^-\paren{p; t\paren{p} - \tau} \text{,} \qquad \mc{I}_{\tau} = \brac{q \in I^-\paren{p} \mid t\paren{q} > \tau} \text{,} \qquad \Sigma^p_\tau = I^-\paren{p} \cap \Sigma_\tau \text{,} \]
where $I^-(p)$ is the chronological past of $p$, i.e., the set of points $q \in M$ reachable from $p$ via past timelike paths.
Note that $\mc{I}_\tau$ is the region bounded by $\mc{N}_\tau$ and $\Sigma_\tau$, while $\Sigma^p_\tau$ is the part of $\Sigma_\tau$ in the interior of $\mc{N}^-(p)$.

The following lemma provides the main technical calculation for deriving local energy inequalities and justifies our previous definition of integration on $\mc{N}^-(p)$.

\begin{lemma}\label{thm.int_parts_loc_energy}
If $\omega \in \mf{X}^\ast(M)$, then
\[ \int_{\mc{I}_\tau} D^\alpha \omega_\alpha = \int_{\Sigma^p_\tau} \omega\paren{T} + \int_{\mc{N}_\tau} \omega\paren{L} \text{.} \]
\end{lemma}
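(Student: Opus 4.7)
The plan is to apply the divergence theorem on $\mc{I}_\tau$, or equivalently Stokes' theorem applied to the $3$-form $\iota_{\omega^\sharp} dV_g$, exploiting the identity $d(\iota_{\omega^\sharp} dV_g) = (D^\alpha \omega_\alpha)\, dV_g$. The boundary $\partial \mc{I}_\tau$ decomposes, modulo the single vertex $p$, into the spacelike piece $\Sigma^p_\tau$ and the null piece $\mc{N}_\tau$. The vertex causes no difficulty: by Corollary \ref{thm.int_pnc_init} the area of $\mc{S}_v$ is $O(v^2)$ as $v \searrow 0$, so a standard exhaustion argument — cutting off a small neighborhood of $p$ and passing to the limit — renders the decomposition rigorous.

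For the spacelike piece, the outward conormal direction is $-T$; in a Lorentz-orthonormal frame adapted to the time foliation, the pullback of $\iota_{\omega^\sharp} dV_g$ to $\Sigma^p_\tau$, carried with its induced orientation, reduces to $\omega(T)\, dV_\gamma$. This is the classical Lorentzian divergence-theorem computation at a spacelike boundary, and it produces the first term on the right-hand side.

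For the null piece, I would work in a null frame $L, \ul{L}, e_1, e_2$ adapted to the $t_p$-foliation as in Section \ref{sec.nfol}, with $g(L, \ul{L}) = -2$. Writing $dV_g = 2\,\theta^0 \wedge \theta^1 \wedge \theta^2 \wedge \theta^3$ in the associated dual coframe, expanding $\iota_{\omega^\sharp}$, and noting that $\theta^1$ — the coframe element dual to $\ul{L}$ — pulls back to zero on $\mc{N}_\tau$ (since $\mc{N}_\tau$ is tangent only to $L, e_1, e_2$), the only surviving term is the one proportional to $\omega(L)\,\theta^0 \wedge \theta^2 \wedge \theta^3$. Using $L = \vartheta^{-1}\partial_v$ on $\mc{N}_\tau$, so that $\theta^0 = \vartheta\, dv$, together with $\theta^2 \wedge \theta^3 = d\sigma_{\mc{S}_v}$, the pullback becomes $\omega(L)\,\vartheta\, dv \wedge d\sigma_{\mc{S}_v}$, whose integral is by definition \eqref{eq.pnc_int} equal to $\int_{\mc{N}_\tau} \omega(L)$.

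The main subtlety is the null boundary: a null hypersurface has no canonical induced volume form, so one must fix a transverse null direction and a boundary orientation before Stokes can even be applied, and the usual signs in the divergence theorem require careful bookkeeping. The factor $\vartheta$ and the specific appearance of $\omega(L)$ — rather than some other contraction of $\omega$ with a transverse vector — are forced by the normalization $g(L, \ul{L}) = -2$ together with the choice of foliating function $t_p$. In effect, the identity claimed here is precisely the statement that makes the integration convention \eqref{eq.pnc_int} geometrically natural, as foreshadowed in the remark following \eqref{eq.pnc_int}.
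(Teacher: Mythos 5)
Your proposal is correct and is essentially the argument the paper defers to its references for (the paper's own "proof" is only a citation to \cite[Lemma 4.1]{shao:bdc_nv}): Stokes applied to $\iota_{\omega^\sharp} dV_g$ on the causal diamond, with the vertex removed by exhaustion and the null boundary contribution computed in an adapted null coframe, where the normalization $g(L,\ul{L})=-2$ and the relation $\theta^0|_{\mc{N}_\tau}=\vartheta\,dv$ produce exactly the density $\vartheta\,\omega(L)$ matching the convention \eqref{eq.pnc_int}. The null-frame computation and the sign bookkeeping you outline check out, so no gap to report.
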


\begin{proof}
See \cite[Lemma 4.1]{shao:bdc_nv} or \cite{kl_rod:rin}.
\end{proof}

We are now prepared to derive the general local estimate.

\begin{lemma}\label{thm.emt_lest_pre}
Assume $U, V, A, B, C$ as before.
Then,
\begin{align}
\label{eq.emt_wave_lest_pre} -\int_{\mc{N}_\tau} Q_w\brak{U}\paren{T, L} &\lesssim \nabs{D U}_{L^2\paren{\Sigma^p_\tau}}^2 \\
\notag &\qquad + \int_\tau^{t\paren{p}} \int_{\Sigma^p_\sigma} \paren{\abs{D U}^2 + \abs{V}^2 + \abs{R\brak{U}}^2} d\sigma \text{,} \\
\label{eq.emt_maxwell_lest_pre} -\int_{\mc{N}_\tau} Q_m\brak{A}\paren{T, L} &\lesssim \nabs{A}_{L^2\paren{\Sigma^p_\tau}}^2 \\
\notag &\qquad + \int_\tau^{t\paren{p}} \int_{\Sigma^p_\sigma} \paren{\abs{A}^2 + \abs{B}^2 + \abs{C}^2} d\sigma \text{.}
\end{align}
\end{lemma}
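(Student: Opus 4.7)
The plan is to mirror the proof of Lemma \ref{thm.emt_est_pre}, but applying the divergence-type identity of Lemma \ref{thm.int_parts_loc_energy} over the past chronological region $\mc{I}_\tau$ in place of the time slab $\Sigma_{[\tau_1,\tau_2]}$. For the wave case, I would set $\omega = P_w[U] \in \mf{X}^\ast(M)$ and apply Lemma \ref{thm.int_parts_loc_energy} to obtain
\[
\int_{\mc{I}_\tau} D \cdot P_w[U] = \int_{\Sigma^p_\tau} Q_w[U](T,T) + \int_{\mc{N}_\tau} Q_w[U](T,L).
\]
Rearranging, the null flux through $\mc{N}_\tau$ becomes
\[
-\int_{\mc{N}_\tau} Q_w[U](T,L) = \int_{\Sigma^p_\tau} Q_w[U](T,T) - \int_{\mc{I}_\tau} D \cdot P_w[U].
\]
Note that since $T$ is future timelike and $L$ is past null (tangent to the past null cone), the pair $(T,-L)$ is future causal, so by \eqref{eq.emt_wave_pos} the left-hand side is manifestly non-negative, which is what makes this inequality meaningful.

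Next, I would use \eqref{eq.emt_wave_tt} to identify $Q_w[U](T,T) = \tfrac{1}{2}|DU|^2$, yielding the desired $\|DU\|_{L^2(\Sigma^p_\tau)}^2$ boundary term. For the bulk term, \eqref{eq.emt_wave_divg} gives
\[
|D \cdot P_w[U]| \lesssim |DU|\bigl(|V| + |R[U]|\bigr) + |\pi| |DU|^2,
\]
and combining the a priori bound $\|\pi\|_{L^\infty(M_+)} \lesssim 1$ from \eqref{eq.bdc_lemma} with Cauchy--Schwarz in the form $2ab \leq a^2 + b^2$ produces the pointwise estimate $|D \cdot P_w[U]| \lesssim |DU|^2 + |V|^2 + |R[U]|^2$. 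Finally, I would invoke the coarea formula of Proposition \ref{thm.coarea_tf} (applied to the open set $I^-(p) \cap \Sigma_{(\tau, t(p))}$ and the interval $(\tau, t(p))$) together with $\|n\|_{L^\infty(M_+)} \lesssim 1$ from \eqref{eq.cond_T0} to write
\[
\int_{\mc{I}_\tau} \bigl(|DU|^2 + |V|^2 + |R[U]|^2\bigr) \lesssim \int_\tau^{t(p)} \int_{\Sigma^p_\sigma} \bigl(|DU|^2 + |V|^2 + |R[U]|^2\bigr) \, d\sigma,
\]
which produces \eqref{eq.emt_wave_lest_pre}. The Maxwell estimate \eqref{eq.emt_maxwell_lest_pre} follows by the identical argument with $\omega = P_m[A]$, invoking \eqref{eq.emt_maxwell_divg} and \eqref{eq.emt_maxwell_tt} in place of their wave counterparts.

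There is no deep obstacle here: the argument is essentially the Noether-type calculation already employed in the global case, with the only new ingredient being the geometric content of Lemma \ref{thm.int_parts_loc_energy}, which supplies the correct null-cone boundary term. The one subtlety worth double-checking is the sign convention on the flux. Because $\mc{N}_\tau$ appears as a past null boundary of $\mc{I}_\tau$ and we have chosen $L$ to be past-directed (tangent to the past-inextendible null generators), the natural positivity statement \eqref{eq.emt_wave_pos}/\eqref{eq.emt_maxwell_pos} gives $Q_{w/m}(T,L) \leq 0$, so the minus sign in front of $\int_{\mc{N}_\tau} Q(T,L)$ is precisely what is needed to make the flux a non-negative quantity that controls $|DU|$ (resp. $|A|$) on $\mc{N}_\tau$ in the subsequent applications.
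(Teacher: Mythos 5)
Your proposal is correct and follows essentially the same route as the paper: integrate $D \cdot P_w[U]$ (resp. $D \cdot P_m[A]$) over $\mc{I}_\tau$, apply Lemma \ref{thm.int_parts_loc_energy} and Proposition \ref{thm.coarea_tf} to produce the boundary identity, then invoke \eqref{eq.emt_wave_tt}, \eqref{eq.emt_wave_divg}, the uniform bounds on $n$ and $\pi$, and Cauchy--Schwarz. Your sign discussion also matches the paper's remark that $-Q_w[U](T,L)$ and $-Q_m[A](T,L)$ are nonnegative.
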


\begin{proof}
Integrate $D \cdot P_w[U]$ over $\mc{I}_\tau$ and apply Lemma \ref{thm.int_parts_loc_energy} and Proposition \ref{thm.coarea_tf} to obtain
\[ \int_{\Sigma^p_\tau} Q_w\brak{U}\paren{T, T} + \int_{\mc{N}_\tau} Q_w\brak{U}\paren{T, L} = \int_\tau^{t\paren{p}} \paren{\int_{\Sigma^p_\sigma} n \cdot D \cdot P_w\brak{U}} d\sigma \text{.} \]
By \eqref{eq.cond_T0}, \eqref{eq.emt_wave_divg}, and \eqref{eq.emt_wave_tt}, we obtain \eqref{eq.emt_wave_lest_pre}.
Repeating the above steps, but with $P_m[A]$ in place of $P_w[U]$ and \eqref{eq.emt_maxwell_divg}, \eqref{eq.emt_maxwell_tt} in place of \eqref{eq.emt_wave_divg}, \eqref{eq.emt_wave_tt}, we obtain \eqref{eq.emt_maxwell_lest_pre}.
\end{proof}

\begin{remark}
Note that both $-Q_w[U](T, L)$ and $-Q_m[A](T, L)$ are nonnegative.
\end{remark}

Next, we define the lower-order flux densities
\begin{align*}
\rho^1\paren{p, \tau} &= -Q_m\brak{R}\paren{T, L} - Q_w\brak{D \phi}\paren{T, L} \text{,} \\
\rho^1\paren{p, \tau} &= -Q_m\brak{R}\paren{T, L} - Q_w\brak{F}\paren{T, L}
\end{align*}
on $\mc{N}_\tau$ in the E-S and E-M cases, respectively, and we define the corresponding flux
\[ \mc{F}^1\paren{p, \tau} = \int_{\mc{N}_\tau} \rho^1\paren{p, \tau} \text{.} \]
We can now apply Lemma \ref{thm.emt_lest_pre} directly to obtain the desired local energy estimates.

\begin{proposition}\label{thm.e1_lest}
In both the E-S and E-M cases, we have the local energy inequality
\[ \mc{F}^1\paren{p, \tau} \lesssim 1 \text{.} \]
\end{proposition}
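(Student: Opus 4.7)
The plan is to apply the local energy inequality of Lemma \ref{thm.emt_lest_pre} once for each summand of the flux density, and then absorb the right-hand sides using the global energy bound of Proposition \ref{thm.e1_est} together with the pointwise matter-field control from \eqref{eq.bdc}. Throughout, write $\mf{F} = D\phi$ in the E-S case and $\mf{F} = F$ in the E-M case, so that in both cases
$\mc{F}^1(p,\tau) = -\int_{\mc{N}_\tau} Q_m[R](T,L) - \int_{\mc{N}_\tau} Q_w[\mf{F}](T,L)$.

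First I would invoke \eqref{eq.emt_maxwell_lest_pre} with $A = R$. The Bianchi identity \eqref{eq.bianchi} forces $C = 0$, while \eqref{eq.curv_divg_es} and \eqref{eq.curv_divg_em} give the divergence source schematically as $\abs{B} \lesssim \abs{\mf{F}}\,\abs{D\mf{F}}$ in either case. This controls $-\int_{\mc{N}_\tau} Q_m[R](T,L)$ by $\nabs{R}_{L^2\paren{\Sigma^p_\tau}}^2$ plus a spacetime integral of $\abs{R}^2 + \abs{\mf{F}}^2\abs{D\mf{F}}^2$ over $\mc{I}_\tau$. Next, I would apply \eqref{eq.emt_wave_lest_pre} with $U = \mf{F}$: the equations \eqref{eq.es_wave0} and \eqref{eq.em_wave0} give $\abs{V} \lesssim \abs{R}\,\abs{\mf{F}}$, and the curvature commutator $R[\mf{F}]$ obeys the same schematic pointwise bound. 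This controls $-\int_{\mc{N}_\tau} Q_w[\mf{F}](T,L)$ by $\nabs{D\mf{F}}_{L^2\paren{\Sigma^p_\tau}}^2$ plus a spacetime integral of $\abs{D\mf{F}}^2 + \abs{R}^2\abs{\mf{F}}^2$ over $\mc{I}_\tau$.

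The key observation closing the argument is $\nabs{\mf{F}}_{L^\infty\paren{M}} \lesssim 1$ from \eqref{eq.bdc}, which lets every nonlinear coupling in the two right-hand sides collapse to a multiple of $\abs{R}^2 + \abs{D\mf{F}}^2$. After summing, the boundary terms $\nabs{R}_{L^2\paren{\Sigma^p_\tau}}^2 + \nabs{D\mf{F}}_{L^2\paren{\Sigma^p_\tau}}^2$ are dominated by $\mc{E}^1(\tau)^2 \lesssim 1$ since $\Sigma^p_\tau \subseteq \Sigma_\tau$, while the spacetime pieces reduce to $\int_\tau^{t\paren{p}} \mc{E}^1(\sigma)^2 \, d\sigma \lesssim t\paren{p} - \tau < 1$. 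Combining these estimates yields $\mc{F}^1(p,\tau) \lesssim 1$.

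There is no serious obstacle here; the proposition is a direct local counterpart of Proposition \ref{thm.e1_est}, with the flux on $\mc{N}_\tau$ kept as a nonnegative boundary term rather than Gr\"onwalled away. The only point requiring care is the bookkeeping of which factors in the nonlinear sources are controlled \emph{a priori} in $L^\infty$ (the matter field $\mf{F}$) versus in $L^2$ (the curvature and the derivative $D\mf{F}$); once this is sorted out, no ingredient beyond Lemma \ref{thm.emt_lest_pre}, Proposition \ref{thm.e1_est}, and \eqref{eq.bdc} is needed.
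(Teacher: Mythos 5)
Your proposal is correct and follows essentially the same route as the paper: apply Lemma \ref{thm.emt_lest_pre} with $A = R$ (Maxwell-type, using \eqref{eq.bianchi} and \eqref{eq.curv_divg_es}/\eqref{eq.curv_divg_em}) and with $U = D\phi$ or $F$ (wave-type, using \eqref{eq.es_wave0}/\eqref{eq.em_wave0}), then absorb the right-hand sides via the $L^\infty$ bound on $\mf{F}$ from \eqref{eq.bdc} and the global energy bound of Proposition \ref{thm.e1_est}. The bookkeeping of which nonlinear factors are controlled in $L^\infty$ versus $L^2$ is exactly the point the paper relies on as well.
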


\begin{proof}
For the E-S case, we first apply \eqref{eq.emt_maxwell_lest_pre} with $A = R$ to obtain
\[ -\int_{\mc{N}_\tau} Q_m\brak{R}\paren{T, L} \lesssim \nabs{R}_{L^2\paren{\Sigma_\tau}} + \int_{\tau}^{t\paren{p}} \paren{\nabs{R}_{L^2\paren{\Sigma_\sigma}} + \nabs{D^2 \phi}_{L^2\paren{\Sigma_\sigma}}} d\sigma \lesssim 1 \text{,} \]
where we also used Proposition \ref{thm.e1_est}.
Similarly, applying \eqref{eq.emt_wave_lest_pre} with $U = D \phi$ yields
\[ -\int_{\mc{N}_\tau} Q_w\brak{D \phi}\paren{T, L} \lesssim 1 \text{.} \]
The E-M case is handled analogously.
\end{proof}

\begin{remark}
Estimates analogous to Proposition \ref{thm.e1_lest} hold for similar flux quantities associated with future regular null cones on the time interval $[\tau_0, t_1)$.
\end{remark}

As of now, we have uniform control on the integrals over $\mc{N}_\tau$ of some quadratic quantities in $R$ and either $D^2 \phi$ or $D F$, given explicitly by $-Q_m[R](T, L)$, $-Q_w[D \phi](T, L)$, and $-Q_w[F](T, L)$.
The final step here is to determine exactly which components of $R$ and the matter field are controlled by Proposition \ref{thm.e1_lest}.

\begin{proposition}\label{thm.null_flux}
On $\mc{N}_\tau$, we have the comparisons
\begin{align*}
-Q_w\brak{U}\paren{T, L} &\simeq \abs{\onasla_4 U}^2 + \abs{\onasla U}^2 \text{,} \\
-Q_m\brak{A}\paren{T, L} &\simeq \abs{A_{43}}^2 + \sum_{a = 1}^2 \abs{A_{4a}}^2 + \sum_{a, b = 1}^2 \abs{A_{ab}}^2 \text{,}
\end{align*}
where we have indexed with respect to adapted null frames on $\mc{N}^-(p)$, and where $A_{\alpha\beta} \in \Gamma T^r M$ is defined $(A_{\alpha\beta})_I = A_{\alpha\beta I}$.
\end{proposition}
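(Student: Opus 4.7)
My plan is to verify both comparisons by direct expansion in the adapted null frame $\{L, \ul{L}, e_1, e_2\}$ at an arbitrary point of $\mc{N}_\tau$. The starting point is \eqref{eq.nv_tllb}, which gives the decomposition $T = -\tfrac{1}{2}(n^{-1}\vartheta L + n\vartheta^{-1} \ul{L})$ and consequently $g(T, L) = n\vartheta^{-1}$. Since the ``matrix'' of $g$ in the null frame is explicit (only $g_{34} = -2$ and $g_{ab} = \delta_{ab}$ are nonzero), all contractions with $g$ or $g^{-1}$ reduce to component-by-component algebra, and the entire proof is essentially careful bookkeeping.

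For the wave case, I would expand $Q_w\brak{U}(T, L) = h(D_T U, D_L U) - \tfrac{1}{2}g(T,L)\, g^{\mu\nu} h(D_\mu U, D_\nu U)$ using the above decomposition of $T$. The first piece produces the term $-\tfrac{1}{2}n^{-1}\vartheta\,\abs{D_L U}^2$ together with a cross term proportional to $h(D_{\ul{L}} U, D_L U)$; the null-frame expansion of the $g$-trace produces $-h(D_L U, D_{\ul{L}} U) + \sum_a \abs{D_{e_a} U}^2$, which after multiplication by $-\tfrac{1}{2} g(T, L) = -\tfrac{1}{2}n\vartheta^{-1}$ supplies an exactly opposing cross term. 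I expect these cross terms to cancel, leaving
\[ -Q_w\brak{U}(T, L) = \tfrac{1}{2} n^{-1}\vartheta\,\abs{\onasla_4 U}^2 + \tfrac{1}{2} n\vartheta^{-1}\,\abs{\onasla U}^2 \text{.} \]
Since $n \simeq 1$ by \eqref{eq.cond_T0} and the null lapse $\vartheta$ is $\simeq 1$ by the a priori null-geometry control developed elsewhere, the first comparison follows.

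The Maxwell case proceeds analogously but with more bookkeeping. I would first enumerate the nonvanishing null-frame components of $A_L{}^\mu$ and $A_{T\mu}$, using $A_{LL} = A_{\ul{L}\ul{L}} = 0$ and the antisymmetry of $A$ in its first two slots, to rewrite $-h(A_{T\mu}, A_L{}^\mu)$ as a combination of $\abs{A_{43}}^2$, $\abs{A_{4a}}^2$, and a cross term of the shape $h(A_{4a}, A_{3a})$. Separately, I would raise both indices of $A_{\mu\nu}$ through the null-frame $g^{-1}$ to compute $h(A^{\mu\nu}, A_{\mu\nu})$, which will produce contributions from $\abs{A_{43}}^2$, the same cross term $h(A_{4a}, A_{3a})$, and $\abs{A_{ab}}^2$. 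After multiplying the latter by $\tfrac{1}{4}g(T, L) = \tfrac{1}{4}n\vartheta^{-1}$ and summing with $-h(A_{T\mu}, A_L{}^\mu)$, I expect the sign-indeterminate $h(A_{4a}, A_{3a})$ terms to cancel exactly, leaving a positive-coefficient combination of $\abs{A_{43}}^2$, $\sum_a\abs{A_{4a}}^2$, and $\sum_{a,b}\abs{A_{ab}}^2$. Once again, the comparison then follows from $n, \vartheta \simeq 1$.

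The main obstacle is the Maxwell bookkeeping, specifically verifying that the cross terms $h(A_{4a}, A_{3a})$ cancel \emph{exactly} rather than merely being controlled. Without this cancellation the proposition would be false as stated, because the right-hand side is a sum of nonnegative terms and a residual cross term of indeterminate sign could in principle drive the quadratic form through zero. The cancellation hinges on the precise coefficient $\tfrac{1}{4}$ in the definition of $Q_m\brak{A}$, on the antisymmetry of $A$ in its first two slots, and on the identity $g(T, L) = n\vartheta^{-1}$ supplied by \eqref{eq.nv_tllb}; verifying it thus doubles as a consistency check that the Maxwell EMT has been normalized in accordance with the conventions used throughout the paper.
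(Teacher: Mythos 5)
Your approach is the same as the paper's --- a direct expansion of both EMTs in the adapted null frame --- and the algebra does come out as you anticipate: the cross terms $h(A_{3a}, A_{4a})$ arising from $h(A_{T\mu}, A_L{}^\mu)$ and from $-\frac{1}{4}g(T,L)\,h(A_{\mu\nu}, A^{\mu\nu})$ cancel exactly, leaving $-Q_m\brak{A}(T,L) = \frac{1}{8}\varphi\abs{A_{43}}^2 + \frac{1}{2}\varphi^{-1}\sum_a\abs{A_{4a}}^2 + \frac{1}{4}\varphi\sum_{a,b}\abs{A_{ab}}^2$ with $\varphi = g(T,L) = n\vartheta^{-1}$, and the wave case likewise reduces to $\frac{1}{2}\varphi^{-1}\abs{\onasla_4 U}^2 + \frac{1}{2}\varphi\abs{\onasla U}^2$ after the cross terms $h(D_3U, D_4U)$ cancel.

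The gap is in the final step, where you dismiss $\vartheta \simeq 1$ as coming from ``a priori null-geometry control developed elsewhere.'' No such control is available at this stage: the comparison $\vartheta \simeq 1$ on $\mc{N}_\tau$ is exactly Corollary \ref{thm.nl_bound}, which the paper deduces \emph{from} the proof of this proposition, and the substantive null-geometry results (Theorem \ref{thm.nc}) are proved only later and themselves rely on these a priori flux estimates --- so invoking them here would be circular. A priori, $\vartheta = (Lt_p)^{-1}$ is only known to tend to $n(p)$ at the vertex, and nothing yet rules out its degeneration along the cone. What is actually needed (and what the paper supplies) is a short transport argument for $\varphi$: since $L$ is geodesic, $L\varphi = g(D_L T, L)$, so \eqref{eq.dft_bound} and \eqref{eq.nv_norm} give $\abs{L\varphi} \lesssim \abs{\pi}\,\abs{L}^2 \lesssim \varphi^2$, hence $\abs{\mc{L}_{t_p}\varphi} = n\varphi^{-1}\abs{L\varphi} \lesssim \varphi$; integrating along each null generator from the vertex, where $\varphi \to 1$, over the timespan $t(p) - \tau < 1$ and applying Gr\"onwall yields $\varphi \simeq 1$. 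This is the step that closes the proof, and it hands you $\vartheta = n\varphi^{-1} \simeq 1$ as a consequence rather than a hypothesis.
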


In other words, $-Q_w[U](T, L)$ controls all the components of $D U$ except for those of the form $D_3 U_I$, i.e., derivatives transverse to $\mc{N}_\tau$.
Similarly, $-Q_m[A](T, L)$ controls all the components of $A$ except those of the form $A_{3aI}$.
This coincides with standard results for scalar and Maxwell fields in Minkowski space.

\begin{proof}
We begin by defining the quantity $\varphi = g(T, L)$, which satisfies
\[ \lim_{v \searrow 0} \valat{\varphi}_{\paren{v, \omega}} = 1 \text{,} \qquad \abs{\mc{L}_{t_p} \varphi} = n \varphi^{-1} \cdot \abs{L \varphi} \lesssim n \varphi^{-1} \abs{\pi} \abs{L}^2 \lesssim \varphi \text{,} \]
where we have applied \eqref{eq.nv_norm}.
Integrating the above along the null generators of $\mc{N}^-(p)$ and applying Gr\"onwall's inequality yields the comparison $\varphi \simeq 1$.
\footnote{Here, we also take advantage of the assumption $t(p) - \tau < 1$.}

Now, the proof is simply a matter of expanding $-Q_w[U](T, L)$ and $-Q_m[A](T, L)$ in terms of adapted null frames.
From these computations, we obtain
\begin{align*}
-Q_w\brak{U}\paren{T, L} &= \frac{1}{2} \varphi^{-1} \abs{\onasla_4 U}^2 + \frac{1}{2} \varphi \abs{\onasla U}^2 \text{,} \\
-Q_m\brak{A}\paren{T, L} &= \frac{1}{8} \varphi \abs{A_{43}}^2 + \frac{1}{2} \varphi^{-1} \sum_{a = 1}^2 \abs{A_{4a}}^2 + \frac{1}{4} \varphi \sum_{a, b = 1}^2 \abs{A_{ab}}^2 \text{.}
\end{align*}
For some more details on these computations, see \cite[Prop. 4.17]{shao:bdc_nv}.
The desired results now follow from the comparison $\varphi \simeq 1$.
\end{proof}

We also note the following simple consequence of the proof of Proposition \ref{thm.null_flux}, which will be essential in later estimates involving past null cones.

\begin{corollary}\label{thm.nl_bound}
On $\mc{N}_\tau$, the null lapse $\vartheta$ satisfies $\vartheta \simeq 1$.
\end{corollary}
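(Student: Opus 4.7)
The plan is to derive an explicit formula relating the null lapse $\vartheta$ to the quantity $\varphi = g(T, L)$ already controlled in the proof of Proposition \ref{thm.null_flux}, and then combine this with the uniform bound on the time foliation lapse $n$ from Proposition \ref{thm.cmc_bound}.

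The computation is short. By definition \eqref{eq.null_lapse} we have $\vartheta = (L t_p)^{-1} = -(L t)^{-1}$, since $t_p = t(p) - t$ along each past null generator. Using the defining relation $T = -n \cdot \grad t$ from \eqref{eq.T}, I rewrite
\[ L t = g(\grad t, L) = -n^{-1} g(T, L) = -n^{-1} \varphi \text{,} \]
so that $\vartheta = n \varphi^{-1}$. This identity is also consistent with the initial limit \eqref{eq.null_lapse_init} since $\varphi \to 1$ at the vertex $p$.

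From Proposition \ref{thm.cmc_bound} we have $n \simeq 1$ on $M_+$, and the proof of Proposition \ref{thm.null_flux} established $\varphi \simeq 1$ on $\mc{N}_\tau$ via a Gr\"onwall argument on the null generators. Combining these two comparisons with $\vartheta = n \varphi^{-1}$ yields the desired $\vartheta \simeq 1$ on $\mc{N}_\tau$.

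There is no genuine obstacle here: every ingredient has already been produced in the preceding text, and the result is purely a rearrangement of the definitions together with the two comparability statements. The only small point of care is keeping track of the sign in $t_p$ versus $t$ and recalling that the formula holds only up to $\tau$ (where the assumption $t(p) - \tau < 1$ was used in the proof of Proposition \ref{thm.null_flux} to absorb the timespan into the Gr\"onwall constant).
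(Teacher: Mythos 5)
Your proof is correct and follows the paper's own route: the paper likewise writes $\vartheta = n\varphi^{-1}$ with $\varphi = g(T,L)$ as in the proof of Proposition \ref{thm.null_flux} and concludes from $\varphi \simeq 1$ together with the lapse bounds (cited there as \eqref{eq.cond_T0}). Your explicit derivation of the identity $\vartheta = n\varphi^{-1}$ from \eqref{eq.null_lapse} and \eqref{eq.T} is a welcome detail the paper omits.
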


\begin{proof}
Since $\vartheta = n \varphi^{-1}$, where $\varphi$ is defined as in the proof of Proposition \ref{thm.null_flux}, then the desired estimate follows from \eqref{eq.cond_T0} and the estimate $\varphi \simeq 1$.
\end{proof}

\begin{remark}
As a result, the quantity $\vartheta$ will be negligible in estimates.
In particular, the quantity $\onasla_4 U$ in Proposition \ref{thm.null_flux} can be replaced by $\onasla_{t_p} U$.
\end{remark}

Lastly, we use Proposition \ref{thm.null_flux} to see which null frame components of $R$ and the matter field are controlled using Proposition \ref{thm.e1_lest}.
First, for $R$, Proposition \ref{thm.null_flux} implies that the components which are not controlled are those of the form $R_{3a \alpha\beta}$.
However, using the symmetry $R_{3a \alpha\beta} = R_{\alpha\beta 3a}$, we see in fact that the uncontrolled components are those of the form $R_{3a3b}$.
More explicitly, if $\psi$ denotes a null frame component of $R$ not of the form $R_{3a3b}$, then
\[ \nabs{\psi}_{L^2\paren{\mc{N}_\tau}} \lesssim 1 \text{.} \]

\begin{remark}
The same result could also be obtained for $R$ by using instead the Bel-Robinson tensor field.
This was done in \cite{kl_rod:rin, kl_rod:bdc} in the E-V setting.
\end{remark}

Next, in the E-S case, Proposition \ref{thm.null_flux} implies that only the components $D_{3 \alpha} \phi$ of $D^2 \phi$ are not controlled.
Since $D^2 \phi$ is symmetric, then in fact, only $D_{33} \phi$ cannot be controlled; for any other null frame component $\psi$ of $D^2 \phi$, then
\[ \nabs{\psi}_{L^2\paren{\mc{N}_\tau}} \lesssim 1 \text{.} \]
Finally, in the E-M case, Proposition \ref{thm.null_flux} implies that only the components $D_3 F_{\alpha\beta}$ are not controlled.
By rearranging components via the Maxwell equations, we can easily establish that the only truly uncontrollable components of $D F$ are those of the form $D_3 F_{3a}$; for any other null frame component $\psi$, then
\[ \nabs{\psi}_{L^2\paren{\mc{N}_\tau}} \lesssim 1 \text{.} \]

\begin{remark}
The second fundamental form $k$ also satisfies a tensorial covariant wave equation.
One can also use this equation to derive analogous global and local energy estimates for $k$.
Such global energy estimates are equivalent to those obtained from the elliptic estimates of \eqref{eq.ell_est_1}, while the local estimates are a new result.
\end{remark}

\subsection{Preliminaries for Elliptic Estimates}\label{sec.ell_pre}

The last class of a priori estimates we will establish are elliptic estimates on the $\Sigma_\tau$'s.
Before beginning this task, we first focus on some general concepts which will be used in these derivations.

A primary idea behind these elliptic estimates is the relations that exist between corresponding horizontal and spacetime objects, e.g., the curvatures $\mc{R}$ of the $\Sigma_\tau$'s and $R$ of $M$.
Our next step is to briefly describe such relations in an informal schematic manner.
A more detailed account can be found throughout \cite{shao:bdc_nv}.

For any $A \in \Gamma \mc{T} M$, we let $\Pi A \in \Gamma \ul{\mc{T}} M$ denote its projection to the $\Sigma_\tau$'s.
In particular, if $A$ is a vector field, then $\Pi A$ is its orthogonal projection onto the tangent bundles of the $\Sigma_\tau$'s.
If $A$ is fully covariant, then $\Pi A$ is given trivially by
\[ \Pi A\paren{X_1, \ldots, X_n} = A\paren{X_1, \ldots, X_n} \text{,} \qquad X_1, \ldots, X_n \in \ul{\mf{X}}\paren{M} \text{.} \]
Furthermore, if $A$ is scalar, then $\Pi A = A$.

For simplicity, we will adopt an informal schematic language to describe tensorial expressions in an inexact form.
We describe the conventions here merely by example; for a more exact description of the schematic notation, see \cite{shao:bdc_nv}.
Consider the Gauss equations \eqref{eq.gauss_tf}, which yields the informal schematic relation
\begin{equation}\label{eq.sch_gauss} \mc{R} \approx \Pi R + k \cdot k \text{.} \end{equation}
Here, the symbol $\cdot$ represents a tensor product followed by a finite number of contractions and metric contractions.
Moreover, the multiple terms of the form $k \cdot k$ in \eqref{eq.gauss_tf} are compacted into a single term in the informal equation \eqref{eq.sch_gauss}.

We take advantage of such schematic relations by using the following lemma:

\begin{lemma}\label{thm.sch_D_tf}
If $\Phi \in \Gamma \mc{T} M$ is fully covariant, then we have the schematic relation
\begin{equation}\label{eq.sch_D_tf} \nabla \Pi \Phi \approx \Pi D \Phi + k \cdot \Pi \paren{\Phi \cdot T} \text{.} \end{equation}
\end{lemma}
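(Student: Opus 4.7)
The plan is to verify the identity slot-by-slot, evaluating both sides on horizontal vector fields and tracking the discrepancy introduced by the passage from the spacetime connection $D$ to the horizontal connection $\nabla$. Fix $X, Y_1, \ldots, Y_k \in \ul{\mf{X}}(M)$. Since $\Phi$ is fully covariant, the projection $\Pi \Phi$ satisfies $\Pi\Phi(Y_1, \ldots, Y_k) = \Phi(Y_1, \ldots, Y_k)$, and similarly $\Pi D\Phi(X; Y_1, \ldots, Y_k) = D_X\Phi(Y_1, \ldots, Y_k)$. Expanding via the Leibniz-type formula \eqref{eq.hor_conn_gen} for $\nabla_X \Pi\Phi$ and via the analogous identity for the Levi-Civita $D_X\Phi$ on a product of slots, and subtracting, the $X[\Phi(\ldots)]$ terms cancel and we are left with
\[
\paren{\nabla_X \Pi\Phi - \Pi D_X \Phi}\paren{Y_1, \ldots, Y_k} = \sum_{i=1}^k \Phi\paren{Y_1, \ldots, D_X Y_i - \nabla_X Y_i, \ldots, Y_k}\text{.}
\]

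The central computation, then, is the defect $D_X Y_i - \nabla_X Y_i$. Since $\nabla_X Y_i$ is by definition the horizontal projection of $\ol{D}_X Y_i = D_X Y_i$ onto the $\Sigma_\tau$'s, and the normal complement is spanned by $T$ with $g(T,T) = -1$, we have the orthogonal decomposition $D_X Y_i = \nabla_X Y_i - g(D_X Y_i, T)\, T$. Differentiating the identity $g(Y_i, T) \equiv 0$ along $X$ and applying the definition \eqref{eq.sff} of the second fundamental form yields
\[
g\paren{D_X Y_i, T} = -g\paren{Y_i, D_X T} = k\paren{X, Y_i}\text{,}
\]
so that $D_X Y_i - \nabla_X Y_i = -k(X, Y_i)\, T$.

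Substituting this back into the earlier identity gives
\[
\paren{\nabla_X \Pi\Phi - \Pi D_X \Phi}\paren{Y_1, \ldots, Y_k} = -\sum_{i=1}^k k\paren{X, Y_i}\, \Phi\paren{Y_1, \ldots, \stackrel{i}{T}, \ldots, Y_k}\text{,}
\]
where the $i$-th entry on the right is $T$. Each summand on the right is precisely a contraction of $k$ against a slot of $\Pi(\Phi \cdot T)$, i.e., the horizontal projection of the $(k-1)$-rank tensor obtained by inserting $T$ into the $i$-th slot of $\Phi$. Collecting all $k$ such terms into a single schematic expression of the form $k \cdot \Pi(\Phi \cdot T)$ delivers \eqref{eq.sch_D_tf}.

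The computation is entirely mechanical once one identifies the single geometric ingredient—the formula $g(D_X Y_i, T) = k(X, Y_i)$ relating the tangential defect of $D$ to the second fundamental form—so the only mild obstacle is bookkeeping: verifying that the $k$ summands, each involving a different distinguished slot for $T$, indeed collapse to the single schematic term $k \cdot \Pi(\Phi \cdot T)$ under the schematic conventions (in which multiple analogous tensor products with contractions are written as a single representative term). With that convention in place, no further estimates or analytic input are needed.
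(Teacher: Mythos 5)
Your proof is correct and follows essentially the same route as the paper's: both reduce the statement to the defect $D_X Y - \nabla_X Y = -k(X,Y)\,T$ obtained from the orthogonal decomposition along $T$ and the definition \eqref{eq.sff}, and then absorb the resulting slot-by-slot terms into the single schematic expression $k \cdot \Pi(\Phi \cdot T)$. Your version merely spells out in full the computation the paper relegates to a footnote.
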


\begin{proof}
Suppose $\Phi \in \Gamma T^0_r M$, $X \in \ul{\mf{X}}(M)$, and $Z \in \Gamma \ul{T}^r M$.
Then, by definition,
\[ \nabla_X \paren{\Pi \Phi}\paren{Z} = X \paren{\Phi\paren{Z}} - \Phi\paren{\nabla_X Z} = D_X \Phi\paren{Z} + \Phi\paren{D_X Z - \nabla_X Z} \text{.} \]
The first term on the right is simply $\Pi D \Phi$ applied to $X \otimes Z$.
Moreover, the quantity $D_X Z - \nabla_X Z$ can be expressed as a sum of terms of the form $k \cdot X \cdot Z \cdot T$.
\footnote{For example, if $Z \in \ul{\mf{X}}(M)$, then $D_X Z - \nabla_X Z = -g(D_X Z, T) T = -k(X, Z) T$.}
The desired equation \eqref{eq.sch_D_tf} follows as a result of the above observations.
\end{proof}

For instance, considering \eqref{eq.sch_gauss}, if we wish to control $\nabla \mc{R}$, then applying Lemma \ref{thm.sch_D_tf} with $\Phi \approx R$, we need only control terms of types $D R$, $k \cdot R \cdot T$, and $\nabla k \cdot k$.
\footnote{Note that $|\Pi \Phi| \leq |\Phi|$ for any $\Phi \in \Gamma \mc{T} M$.}
As a result of the above, we obtain the derivative bound
\begin{equation}\label{eq.sch_gauss_est} \abs{\nabla \mc{R}} \lesssim \abs{D R} + \abs{R} \abs{k} + \abs{\nabla k} \abs{k} \text{.} \end{equation}
With a little more care, we can iterate the above argument using \eqref{eq.sch_gauss} in order to estimate higher derivatives of $\mc{R}$.
In fact, such estimates for $\nabla^2 \mc{R}$ will be necessary later when we establish higher-order elliptic estimates.

In addition to Lemma \ref{thm.sch_D_tf}, we will also make use of the following general elliptic and B\"ochner estimates.
For any fully symmetric $\xi \in \Gamma \ul{T}^s M$, $s > 0$, we define
\begin{align*}
\nabla \cdot \xi \in \Gamma \ul{T}^{s-1} M \text{,} &\qquad \paren{\nabla \cdot \xi}_I = \nabla^a \xi_{aI} \text{,} \\
\nabla \times \xi \in \Gamma \ul{T}^{s+1} M \text{,} &\qquad \paren{\nabla \times \xi}_{abI} = \nabla_a \xi_{bI} - \nabla_b \xi_{aI} \text{,} \\
\trace \xi \in \Gamma \ul{T}^{s-2} M \text{,} &\qquad \begin{cases} \paren{\trace \xi}_I = \gamma^{ab} \xi_{abI} & s > 1 \text{,} \\ 0 & s = 1 \text{,} \end{cases}
\end{align*}
i.e., the divergence, curl, and trace of $\xi$, respectively.

\begin{lemma}\label{thm.ell_estimate}
If $\xi \in \Gamma \ul{\mc{T}} M$ is fully symmetric and nonscalar, then
\begin{equation}\label{eq.ell_first_est} \int_{\Sigma_\tau} \abs{\nabla \xi}^2 \lesssim \int_{\Sigma_\tau} \paren{\abs{\nabla \cdot \xi}^2 + \frac{1}{2} \abs{\nabla \times \xi}^2 + \abs{\mc{R}} \abs{\xi}^2} \text{.} \end{equation}
Moreover, for any $\Psi \in \Gamma \ul{\mc{T}} M$ and $\phi \in C^\infty(M)$,
\begin{align}
\label{eq.ell_bochner_est} \int_{\Sigma_\tau} \abs{\nabla^2 \Psi}^2 &\lesssim \int_{\Sigma_\tau} \paren{\abs{\lapl \Psi}^2 + \abs{\mc{R}} \abs{\nabla \Psi}^2 + \frac{1}{2} \abs{\mc{R}}^2 \abs{\Psi}^2} \text{,} \\
\notag \int_{\Sigma_\tau} \abs{\nabla^2 \phi}^2 &\lesssim \int_{\Sigma_\tau} \paren{\abs{\lapl \phi}^2 + \abs{\mc{R}} \abs{\nabla \phi}^2} \text{.}
\end{align}
\end{lemma}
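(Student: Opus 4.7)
Both inequalities are proved by the same template on the compact timeslice $\Sigma_\tau$: establish a pointwise algebraic identity, integrate by parts, and convert commutators of covariant derivatives into pointwise curvature expressions via the Ricci identity $[\nabla_a,\nabla_b]\Psi = \mc{R}\cdot\Psi$ (schematically, with one curvature contraction per index of $\Psi$). All boundary terms vanish because $\Sigma_\tau$ is closed.

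For \eqref{eq.ell_first_est}, expanding the definition of $\nabla\times\xi$ yields the pointwise identity
\[ |\nabla\times\xi|^2 = 2|\nabla\xi|^2 - 2(\nabla^a \xi^{bI})(\nabla_b \xi_{aI}) . \]
Integrating the crossed term by parts gives
\[ \int_{\Sigma_\tau}(\nabla^a \xi^{bI})(\nabla_b \xi_{aI}) = -\int_{\Sigma_\tau}\xi^{bI}\,\nabla^a\nabla_b \xi_{aI} . \]
Commuting $\nabla^a\nabla_b = \nabla_b\nabla^a + [\nabla^a,\nabla_b]$, using the full symmetry of $\xi$ to identify the contraction $\nabla^a\xi_{aI}$ with $(\nabla\cdot\xi)_I$, and integrating the principal term by parts once more produces the term $\int|\nabla\cdot\xi|^2$. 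The commutator piece is controlled pointwise by $|\mc{R}||\xi|^2$, and rearranging yields the desired estimate.

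For the B\"ochner estimates, I start from $\int(\lapl\phi)^2$, integrate by parts on $\nabla^a$, commute the outer derivatives using $\nabla^a\lapl\phi = \lapl\nabla^a\phi + \ric^{ab}\nabla_b\phi$, and integrate by parts once more to extract $\int|\nabla^2\phi|^2$, producing the scalar B\"ochner identity
\[ \int_{\Sigma_\tau}|\nabla^2\phi|^2 = \int_{\Sigma_\tau}(\lapl\phi)^2 + \int_{\Sigma_\tau}\ric(\nabla\phi,\nabla\phi) . \]
Since on a three-dimensional Riemannian manifold $|\ric|\lesssim|\mc{R}|$, this gives the scalar bound. For a tensor field $\Psi$, the same manipulations produce one commutator term applied to $\nabla\Psi$, bounded pointwise by $|\mc{R}||\nabla\Psi|^2$, plus a second commutator term of the form $\nabla^b([\nabla^a,\nabla_b]\Psi)$ arising because the commutator does not annihilate $\Psi$ itself. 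Rather than distributing $\nabla^b$ (which would introduce an uncontrolled $\nabla\mc{R}$ factor), I integrate by parts once more to reach a quantity schematically of the form $\int\nabla^2\Psi\cdot\mc{R}\cdot\Psi$; Cauchy-Schwarz followed by Young's inequality then absorbs a small multiple of $\int|\nabla^2\Psi|^2$ back into the left-hand side, generating the extra term $\int|\mc{R}|^2|\Psi|^2$ on the right.

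The main technical obstacle is precisely this last step in the tensor case: one must integrate by parts before the differentiation $\nabla^b$ falls on the curvature factor, since the estimate \eqref{eq.ell_bochner_est} allows only $|\mc{R}|$ and $|\mc{R}|^2$ on the right-hand side and no $|\nabla\mc{R}|$. Everything else reduces to routine commutator bookkeeping: each index of $\xi$ or $\Psi$ contributes its own Riemann contraction under $[\nabla_a,\nabla_b]$, but all such contractions are pointwise bounded by $|\mc{R}|$ times the relevant tensor norm, which is all that is needed.
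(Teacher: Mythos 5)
Your proposal is correct and is essentially the argument behind the paper's citation to \cite[Lemma 2.3, Cor. 2.5]{shao:bdc_nv} and \cite[Ch. 2]{chr_kl:stb_mink}: the div-curl identity with one integration by parts and a Ricci-identity commutator for \eqref{eq.ell_first_est}, and the B\"ochner identity with the second commutator term integrated by parts once more and absorbed via Cauchy--Schwarz and Young to produce the $\abs{\mc{R}}^2\abs{\Psi}^2$ term. In particular you correctly identify the one nontrivial point in the tensorial case, namely that one must integrate by parts before the derivative falls on the curvature factor so that no $\abs{\nabla\mc{R}}$ term appears.
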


\begin{proof}
See \cite[Lemma 2.3, Cor. 2.5]{shao:bdc_nv}; see also \cite[Ch. 2]{chr_kl:stb_mink}.
\end{proof}

In particular, Lemma \ref{thm.ell_estimate} will be useful for elliptic estimates on $k$ and $n$.
Variants of Lemma \ref{thm.ell_estimate} will also be essential for controlling the local null geometry.

\subsection{Elliptic Estimates}\label{sec.ell}

Recall that our primary task in proving Theorem \ref{thm.bdc} is to control the horizontal energy quantities $\mf{K}(\tau)$, $\mf{R}(\tau)$, and $\mf{f}(\tau)$ defined in \eqref{eq.bdc_energy_KR}-\eqref{eq.bdc_energy_fem}.
To accomplish this, we must apply various elliptic estimates in conjunction with energy estimates derived using EMTs, for example, Propositions \ref{thm.e1_est} and \ref{thm.e1_lest}.
In addition, we must control $L^2$-norms of derivatives of the lapse $n$, as well as obtain an $L^\infty$-bound for $\mc{L}_t n$.
These will be necessary in order to control the geometry of null cones and derive higher-order elliptic estimates.

We begin here by deriving lower-order a priori elliptic estimates for $k$ and $\mc{R}$, as well as the horizontal formulation $\mf{f}$ of the matter field.
These estimates are the E-S and E-M analogues for the elliptic estimates of \cite[Sec. 8]{kl_rod:bdc} in the vacuum case.
The main difference here is that we must also bound the matter field.

\begin{remark}
The proofs of \cite[Sec. 8]{kl_rod:bdc} were adapted for maximal foliations.
Here, we simplify the process by taking advantage of the volume bounds of Proposition \ref{thm.cmc_bound}.
\end{remark}

Recall that the ``horizontal matter field quantities" $\mf{f}$ are given as follows:
\begin{itemize}
\item In the E-S setting, this consists of the scalar field $\phi$ itself and $\mc{L}_t \phi$.

\item In the E-M setting, this consists of the pair $E, H \in \ul{\mf{X}}(M)$ representing the electromagnetic decomposition of $F$.
\end{itemize}

\begin{remark}
Recall that given a set orientation of $M$ (or equivalently, of $\Sigma_{\tau_0}$), the electromagnetic decomposition of $F$ is given by the formulas
\begin{equation}\label{eq.electromagnetic} E_i = F_{\alpha i} T^\alpha \text{,} \qquad H_i = {}^\star F_{\alpha i} T^\alpha = \frac{1}{2} \epsilon_{\alpha i}{}^{\beta\gamma} F_{\beta\gamma} T^\alpha \text{.} \end{equation}
\end{remark}

First, by Proposition \ref{thm.cmc_bound} and \eqref{eq.bdc}, we have the following trivial bounds:

\begin{lemma}\label{thm.e0_est_ell}
For any $1 \leq p \leq \infty$ and $\tau_0 \leq \tau < t_1$, we have the estimates
\begin{align*}
\nabs{k}_{L^p\paren{\Sigma_\tau}} + \nabs{\nabla \phi}_{L^p\paren{\Sigma_\tau}} + \nabs{\mc{L}_t \phi}_{L^p\paren{\Sigma_\tau}} &\lesssim 1 \text{,} \\
\nabs{k}_{L^p\paren{\Sigma_\tau}} + \nabs{E}_{L^p\paren{\Sigma_\tau}} + \nabs{H}_{L^p\paren{\Sigma_\tau}} &\lesssim 1
\end{align*}
in the E-S and E-M cases, respectively.
\end{lemma}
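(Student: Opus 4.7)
The plan is to observe that all the quantities appearing in the statement are already uniformly bounded in $L^\infty(M_+)$ by the breakdown hypothesis \eqref{eq.bdc}, so that $L^p$ bounds follow immediately from the uniform volume bound $V(\Sigma_\tau) \simeq 1$ established in Proposition \ref{thm.cmc_bound}. In particular, once each integrand is shown to be pointwise $\lesssim 1$, H\"older's inequality gives
\[
\nabs{\Psi}_{L^p\paren{\Sigma_\tau}} \leq \nabs{\Psi}_{L^\infty\paren{\Sigma_\tau}} V\paren{\Sigma_\tau}^{\frac{1}{p}} \lesssim 1 \text{,}
\]
so it suffices to treat the case $p = \infty$.

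The bound on $k$ is immediate from \eqref{eq.bdc}. For the E-S case, first note that $\nabla \phi$ on $\Sigma_\tau$ is merely the horizontal projection of $D \phi$, so that $\abs{\nabla \phi} \leq \abs{D \phi} \leq C_0$ by \eqref{eq.bdc} (with $\mf{F} = D \phi$). For $\mc{L}_t \phi$, recall from Section \ref{sec.nt} that on scalars the normal Lie derivative coincides with $Z \phi$, where $Z = n T$; hence $\mc{L}_t \phi = n \cdot T \phi$, and Proposition \ref{thm.norm_prod_tf} combined with the uniform bound $\abs{n} \simeq 1$ from Proposition \ref{thm.cmc_bound} gives $\abs{\mc{L}_t \phi} \lesssim \abs{D \phi} \lesssim 1$.

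For the E-M case, the formulas \eqref{eq.electromagnetic} express $E$ and $H$ as $g$-contractions of $F$ (or its Hodge dual ${}^\star F$) with $T$. Since $T$ has unit $h$-norm and the Hodge star preserves pointwise norms up to a constant, Proposition \ref{thm.norm_prod_tf} yields $\abs{E} + \abs{H} \lesssim \abs{F}$, which is uniformly bounded by \eqref{eq.bdc} (with $\mf{F} = F$). Combining these pointwise bounds with the $L^\infty$-to-$L^p$ reduction above completes the proof. No substantial obstacle is expected: all three ingredients (the breakdown criterion, the compatibility bounds from Proposition \ref{thm.norm_prod_tf}, and the volume comparison from Proposition \ref{thm.cmc_bound}) have already been established.
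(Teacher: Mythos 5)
Your proposal is correct and follows exactly the route the paper intends: the paper presents these as "trivial bounds" following from Proposition \ref{thm.cmc_bound} and \eqref{eq.bdc}, and you have simply filled in the details (reduction of $L^p$ to $L^\infty$ via the volume comparison, and the pointwise bounds on $\nabla\phi$, $\mc{L}_t\phi$, $E$, $H$ in terms of $\mf{F}$, $n$, and $T$).
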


The next batch of elliptic estimates are consequences of various schematic relations between horizontal and spacetime objects.

\begin{proposition}\label{thm.e1_est_ell}
For any $\tau_0 \leq \tau < t_1$, we have the elliptic estimates
\begin{equation}\label{eq.ell_est_1} \nabs{\mc{R}}_{L^2\paren{\Sigma_\tau}} + \nabs{\nabla k}_{L^2\paren{\Sigma_\tau}} \lesssim 1 \text{.} \end{equation}
Moreover, for any $\tau_0 \leq \tau < t_1$, the estimates
\begin{align}
\label{eq.ell_est_f1} \nabs{\nabla^2 \phi}_{L^2\paren{\Sigma_\tau}} + \nabs{\nabla \paren{\mc{L}_t \phi}}_{L^2\paren{\Sigma_\tau}} &\lesssim 1 \text{,} \\
\notag \nabs{\nabla E}_{L^2\paren{\Sigma_\tau}} + \nabs{\nabla H}_{L^2\paren{\Sigma_\tau}} &\lesssim 1
\end{align}
hold in the E-S and E-M cases, respectively.
\end{proposition}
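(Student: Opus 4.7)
The plan is to combine the Gauss equation \eqref{eq.gauss_tf}, the Codazzi equations \eqref{eq.codazzi_tf}, the elliptic inequality of Lemma \ref{thm.ell_estimate}, and the schematic projection calculus of Lemma \ref{thm.sch_D_tf}, using as inputs the global energy bound $\mc{E}^1(\tau) \lesssim 1$ from Proposition \ref{thm.e1_est}, the pointwise bounds \eqref{eq.cond_T0} and \eqref{eq.bdc_lemma}, the zeroth-order $L^p$-estimates of Lemma \ref{thm.e0_est_ell}, and the volume comparison of Proposition \ref{thm.cmc_bound}. The estimates must be carried out in a specific order, since the $\nabla k$-estimate depends on the $\mc{R}$-estimate through the zero-order term in \eqref{eq.ell_first_est}.

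First I would dispatch the bound on $\mc{R}$. The Gauss equation \eqref{eq.gauss_tf} yields the pointwise domination $|\mc{R}| \lesssim |R| + |k|^2$, so taking $L^2$-norms on $\Sigma_\tau$ together with $\nabs{R}_{L^2(\Sigma_\tau)} \leq \mc{E}^1(\tau) \lesssim 1$ and $\nabs{|k|^2}_{L^2(\Sigma_\tau)} \leq \nabs{k}_{L^\infty(M_+)} \nabs{k}_{L^2(\Sigma_\tau)} \lesssim 1$ gives $\nabs{\mc{R}}_{L^2(\Sigma_\tau)} \lesssim 1$. Next I apply Lemma \ref{thm.ell_estimate} to $\xi = k$. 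The CMC condition $\trace k = t$ kills the $\nabla(\trace k)$ term in \eqref{eq.codazzi_tf}, so the divergence satisfies $|\nabla \cdot k| = |R_{\alpha i} T^\alpha|$; by \eqref{eq.einstein_ric} with \eqref{eq.es_emt} or \eqref{eq.em_emt}, this is an algebraic contraction quadratic in $\mf{F}$, hence pointwise $\lesssim 1$ by \eqref{eq.bdc}, giving $\nabs{\nabla \cdot k}_{L^2(\Sigma_\tau)} \lesssim V(\Sigma_\tau)^{1/2} \lesssim 1$. The curl satisfies $|\nabla \times k| \lesssim |R|$ by the second Codazzi identity in \eqref{eq.codazzi_tf} together with Proposition \ref{thm.norm_prod_tf}, so $\nabs{\nabla \times k}_{L^2(\Sigma_\tau)} \lesssim 1$. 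The zeroth-order term is absorbed using H\"older and the $\mc{R}$-bound already derived: $\int_{\Sigma_\tau}|\mc{R}||k|^2 \leq \nabs{k}_{L^\infty(M_+)}^2 V(\Sigma_\tau)^{1/2} \nabs{\mc{R}}_{L^2(\Sigma_\tau)} \lesssim 1$. Lemma \ref{thm.ell_estimate} then yields $\nabs{\nabla k}_{L^2(\Sigma_\tau)} \lesssim 1$.

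For the matter field I would use Lemma \ref{thm.sch_D_tf} to trade horizontal derivatives for spacetime derivatives plus lower-order terms weighted by $k$ or $\pi$. In the E-S case, applying Lemma \ref{thm.sch_D_tf} with $\Phi = D\phi$ gives the schematic bound $|\nabla^2 \phi| \lesssim |D^2 \phi| + |k||D\phi|$, whose $L^2$-norm is controlled by $\mc{E}^1(\tau) + \nabs{k}_{L^\infty(M_+)} \nabs{D\phi}_{L^\infty(M_+)} V(\Sigma_\tau)^{1/2} \lesssim 1$. For $\mc{L}_t \phi = n \, T\phi$, differentiating with $|DT| \lesssim |\pi|$ from \eqref{eq.dft_bound} and $\nabs{\nabla n}_{L^\infty(M_+)} \lesssim 1$ from \eqref{eq.cond_T0} yields $|\nabla(\mc{L}_t \phi)| \lesssim |D^2\phi| + (|\pi| + |\nabla n|)|D\phi|$, which is $L^2$-bounded by the same inputs. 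In the E-M case, \eqref{eq.electromagnetic} expresses $E$ and $H$ as horizontal projections of contractions of $F$ with $T$ and the parallel volume form; Lemma \ref{thm.sch_D_tf} together with \eqref{eq.dft_bound} then gives $|\nabla E|, |\nabla H| \lesssim |DF| + (|\pi| + |k|)|F|$, whose $L^2$-norm is bounded by $\mc{E}^1(\tau) + 1 \lesssim 1$.

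The main technical obstacle is the zeroth-order $|\mc{R}||k|^2$ term in Lemma \ref{thm.ell_estimate}: because no a priori $L^\infty$-control on $\mc{R}$ is available, it must be absorbed using the $L^2$-bound on $\mc{R}$ derived one step earlier, together with the $L^\infty$-bound on $k$ from \eqref{eq.bdc} and the volume bound from Proposition \ref{thm.cmc_bound}. This forces the $\mc{R}$-estimate to precede, rather than be coupled with, the $\nabla k$-estimate. The remaining work is bookkeeping within the schematic calculus of Lemma \ref{thm.sch_D_tf}, and there is no circularity, since each matter-field estimate only draws on $\mc{E}^1(\tau)$ and the pointwise bounds in \eqref{eq.bdc}, \eqref{eq.cond_T0}, and \eqref{eq.bdc_lemma}.
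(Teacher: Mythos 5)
Your proposal is correct and follows essentially the same route as the paper: the Gauss equation for $\mc{R}$, the Codazzi relations plus Lemma \ref{thm.ell_estimate} for $\nabla k$ (with the zero-order term $\int_{\Sigma_\tau}|\mc{R}||k|^2$ absorbed via the $L^\infty$-bound on $k$ and the $L^2$-bound on $\mc{R}$), and the schematic projection calculus of Lemma \ref{thm.sch_D_tf} applied to $D\phi$, $D\phi\cdot T$, $F\cdot T$ for the matter-field estimates, all fed by Proposition \ref{thm.e1_est}. The only cosmetic difference is that you bound $\nabla\cdot k = R_{\alpha i}T^\alpha$ pointwise via the Einstein equations and \eqref{eq.bdc}, whereas the paper simply dominates it by $|R|$ and closes with the $L^2$-curvature bound; both are valid.
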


\begin{proof}
From \eqref{eq.gauss_tf}, we immediately obtain
\[ \nabs{\mc{R}}_{L^2\paren{\Sigma_\tau}} \lesssim \nabs{R}_{L^2\paren{\Sigma_\tau}} + \nabs{k}_{L^\infty\paren{\Sigma}} \nabs{k}_{L^2\paren{\Sigma_\tau}} \lesssim 1 + \nabs{R}_{L^2\paren{\Sigma_\tau}} \text{.} \]
To bound $\nabla k$, we appeal to \eqref{eq.ell_first_est}.
From \eqref{eq.codazzi_tf} and the CMC gauge condition, we obtain $|\nabla \cdot k| + |\nabla \times k| \lesssim |R|$, hence by \eqref{eq.ell_first_est}, Lemma \ref{thm.e0_est_ell}, and the above,
\[ \nabs{\nabla k}_{L^2\paren{\Sigma_\tau}} \lesssim \nabs{R}_{L^2\paren{\Sigma_\tau}} + \paren{\int_{\Sigma_\tau} \abs{\mc{R}} \abs{k}^2}^\frac{1}{2} \lesssim 1 + \nabs{R}_{L^2\paren{\Sigma_\tau}} \text{.} \]
Combining the above and applying Proposition \ref{thm.e1_est} yields \eqref{eq.ell_est_1}.

For the matter field estimates of \eqref{eq.ell_est_f1}, we appeal to the informal schematic language.
In the E-S setting, we have the schematic relations
\begin{equation}\label{eq.sch_sf} \nabla \phi \approx \Pi D \phi \text{,} \qquad n^{-1} \mc{L}_t \phi \approx \Pi \paren{D \phi \cdot T} \text{.} \end{equation}
Similarly, in the E-M setting,
\begin{equation}\label{eq.sch_mw} E \approx \Pi \paren{F \cdot T} \text{,} \qquad H \approx \Pi \paren{V \cdot F \cdot T} \text{,} \end{equation}
where $V$ is the volume form for $M$ (given an orientation of $M$).
The estimates \eqref{eq.ell_est_f1} then follow immediately after applying Lemma \ref{thm.sch_D_tf} and Proposition \ref{thm.e1_est} to the schematic equations \eqref{eq.sch_sf} and \eqref{eq.sch_mw}.
For details, see \cite[Lemma 6.5]{shao:bdc_nv}.
\end{proof}

We can use the lapse equation \eqref{eq.lapse} in order to derive some higher-order a priori bounds for $n$.
These are described in the next two propositions:

\begin{proposition}\label{thm.lapse_est_ell}
For any $\tau_0 \leq \tau < t_1$, we have the estimates
\begin{align}
\label{eq.lapse0_est_ell} \nabs{\nabla^2 n}_{L^2\paren{\Sigma_\tau}} &\lesssim 1 \text{,} \\
\label{eq.lapse1_est_lemma} \nabs{-\lapl \paren{\mc{L}_t n} + \brak{\abs{k}^2 + \ric\paren{T, T}} \mc{L}_t n}_{L^2\paren{\Sigma_\tau}} &\lesssim 1 \text{,} \\
\label{eq.lapse1_est_ell} \nabs{\nabla \paren{\mc{L}_t n}}_{L^2\paren{\Sigma_\tau}} + \nabs{\mc{L}_t n}_{L^2\paren{\Sigma_\tau}} &\lesssim 1 \text{.}
\end{align}
\end{proposition}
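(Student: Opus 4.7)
The plan is to use the lapse equation \eqref{eq.lapse} as the only structural input, propagate it through one time derivative, and close via a coercivity argument powered by the CMC gauge together with the strong energy condition. First, for \eqref{eq.lapse0_est_ell}, I would apply the scalar B\"ochner inequality of Lemma \ref{thm.ell_estimate} to $\phi = n$, yielding
\[
\int_{\Sigma_\tau} \abs{\nabla^2 n}^2 \lesssim \int_{\Sigma_\tau} \paren{\abs{\lapl n}^2 + \abs{\mc{R}} \abs{\nabla n}^2}.
\]
From \eqref{eq.lapse}, \eqref{eq.cond_T0}, and \eqref{eq.bdc_lemma} one has $\abs{\lapl n} \lesssim 1$ pointwise, so the first term is bounded by $V(\Sigma_\tau) \simeq 1$; for the second, H\"older together with $\nabs{\nabla n}_{L^\infty} \lesssim 1$ (from \eqref{eq.cond_T0}) and $\nabs{\mc{R}}_{L^2(\Sigma_\tau)} \lesssim 1$ (from \eqref{eq.ell_est_1}) suffices.

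Next, for \eqref{eq.lapse1_est_lemma}, I would apply $\mc{L}_t$ to \eqref{eq.lapse}. Using $\mc{L}_t \gamma^{ab} = 2 n k^{ab}$ from \eqref{eq.dt_met} and the scalar commutator
\[
\mc{L}_t\paren{\nabla_a \nabla_b n} = \nabla_a \nabla_b\paren{\mc{L}_t n} - \paren{\mc{L}_t \Gamma^c_{ab}} \nabla_c n,
\]
with $\mc{L}_t \Gamma \sim \nabla(n k)$ schematically, the differentiated equation rearranges to
\[
-\lapl(\mc{L}_t n) + \brak{\abs{k}^2 + \ric(T,T)} \mc{L}_t n = 2 n k^{ab} \nabla_{ab} n - \gamma^{ab}\paren{\mc{L}_t \Gamma^c_{ab}} \nabla_c n - n \mc{L}_t\brak{\abs{k}^2 + \ric(T,T)}.
\]
I would then control each right-hand term in $L^2(\Sigma_\tau)$: the first via \eqref{eq.lapse0_est_ell} and \eqref{eq.cond_T0}; the second by expressing $\mc{L}_t \Gamma$ as a sum of terms like $n \nabla k$, $k \nabla n$ controlled through \eqref{eq.ell_est_1} and \eqref{eq.cond_T0}; and for the third, I would expand $\mc{L}_t \abs{k}^2$ via \eqref{eq.dt_sff} (producing $\nabla^2 n$, $R$, and cubic $k$ contributions, each $L^2$-bounded by \eqref{eq.lapse0_est_ell}, Proposition \ref{thm.e1_est}, and \eqref{eq.cond_T0}) and rewrite $\ric(T,T)$ via the matter identities \eqref{eq.es_emt} or \eqref{eq.em_emt}, so that its $\mc{L}_t$-derivative reduces to expressions at most quadratic in $(D\phi, D^2\phi)$ resp.\ $(F, DF)$, controlled by \eqref{eq.bdc} and Proposition \ref{thm.e1_est}.

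Finally, \eqref{eq.lapse1_est_ell} will follow by coercivity. The CMC condition $\trace k \equiv \tau$ with $\tau_0 > 2 t_1$ forces $\abs{k}^2 \geq (\trace k)^2 / 3 \geq t_1^2 / 3 > 0$ uniformly on $M_+$, while $\ric(T, T) \geq 0$ holds by the strong energy condition. Setting $u = \mc{L}_t n$ and letting $f$ denote the left side of \eqref{eq.lapse1_est_lemma}, I would test $-\lapl u + [\abs{k}^2 + \ric(T,T)] u = f$ against $u$ and integrate by parts, obtaining
\[
\int_{\Sigma_\tau} \abs{\nabla u}^2 + \int_{\Sigma_\tau} \brak{\abs{k}^2 + \ric(T,T)} u^2 \leq \nabs{u}_{L^2(\Sigma_\tau)} \nabs{f}_{L^2(\Sigma_\tau)},
\]
so the uniform bound $\abs{k}^2 + \ric(T,T) \gtrsim 1$ gives $\nabs{u}_{L^2} \lesssim \nabs{f}_{L^2} \lesssim 1$, and feeding this back yields $\nabs{\nabla u}_{L^2} \lesssim 1$. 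The main obstacle lies in step \eqref{eq.lapse1_est_lemma}: a naive expansion of $\mc{L}_t \ric(T,T)$ would introduce derivatives of $R$ that are not yet a priori controlled, and the decisive step is to eliminate them by substituting the matter-field identities for $\ric$, reducing the analysis to the matter-field energy already bounded by Proposition \ref{thm.e1_est}.
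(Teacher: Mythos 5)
Your proposal is correct and follows essentially the same route as the paper: the scalar B\"ochner estimate \eqref{eq.ell_bochner_est} plus the $L^\infty$-bound on $\lapl n$ for \eqref{eq.lapse0_est_ell}; differentiating the lapse equation, commuting $\mc{L}_t$ with $\lapl$, expanding $\mc{L}_t\abs{k}^2$ via \eqref{eq.dt_sff}, and reducing $\mc{L}_t\brak{\ric(T,T)}$ to matter-field quantities controlled by \eqref{eq.bdc} and Proposition \ref{thm.e1_est} for \eqref{eq.lapse1_est_lemma}; and the same coercivity argument (CMC gauge giving $\abs{k}^2 \geq t_1^2/3$ plus the strong energy condition) for \eqref{eq.lapse1_est_ell}. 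The only cosmetic difference is that the paper absorbs the $u$-term via a weighted Cauchy inequality rather than dividing through by the coercivity constant, which is equivalent.
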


\begin{proof}
Equations \eqref{eq.bdc}, \eqref{eq.lapse}, and Proposition \ref{thm.cmc_bound} imply $\|\lapl n\|_{L^\infty(\Sigma_\tau)} \lesssim 1$.
Then,
\[ \int_{\Sigma_\tau} \abs{\nabla^2 n}^2 \lesssim \int_{\Sigma_\tau} \paren{\abs{\lapl n}^2 + \abs{\mc{R}} \abs{\nabla n}^2} \lesssim 1 + \int_{\Sigma_\tau} \abs{\mc{R}}^2 \text{,} \]
by \eqref{eq.ell_bochner_est} and Lemma \ref{thm.e0_est_ell}; applying \eqref{eq.ell_est_1} results in \eqref{eq.lapse0_est_ell}.

The proofs of \eqref{eq.lapse1_est_lemma} and \eqref{eq.lapse1_est_ell} are a bit more involved.
Recall that $\mc{L}_t$ applied to a scalar $\phi$ is the same as $n T \phi$.
We begin by commuting $\mc{L}_t$ with $\lapl$ to obtain
\[ \abs{\lapl \mc{L}_t n - \mc{L}_t \lapl n} \lesssim \abs{n} \abs{k} \abs{\nabla^2 n} + \paren{\abs{\nabla n \cdot k} + \abs{n \cdot \nabla k}} \abs{\nabla n} \text{,} \]
where we also used \eqref{eq.dt_met}.
Applying \eqref{eq.bdc}, Proposition \ref{thm.e1_est_ell}, and \eqref{eq.lapse0_est_ell} yields
\begin{equation}\label{eql.lapse1_est_ell_1} \nabs{\lapl \mc{L}_t n - \mc{L}_t \lapl n}_{L^2\paren{\Sigma_\tau}} \lesssim 1 \text{.} \end{equation}

Next, differentiating \eqref{eq.lapse}, we have
\[ \mc{L}_t \lapl n = \brak{\abs{k}^2 + \ric\paren{T, T}} \mc{L}_t n + n \cdot \mc{L}_t \abs{k}^2 + n \cdot \mc{L}_t \brak{\ric\paren{T, T}} \text{.} \]
Moreover, applying \eqref{eq.dt_sff} and \eqref{eq.cond_T0}, we obtain
\[ \abs{n \cdot \mc{L}_t \abs{k}^2} \lesssim 1 + \abs{R} + \abs{\nabla^2 n} \text{.} \]
If we let $\mf{F}$ denote $D \phi$ in the E-S case and $F$ in the E-M case, then by \eqref{eq.einstein_ric} and \eqref{eq.bdc}, we see that $|D \ric| \lesssim |D \mf{F}|$.
As a result,
\[ \abs{n \cdot \mc{L}_t \brak{\ric\paren{T, T}}} \lesssim \abs{D \ric} + \abs{D_T T} \abs{\ric} \lesssim 1 + \abs{D \mf{F}} \text{.} \]
Combining Proposition \ref{thm.e1_est}, \eqref{eq.lapse0_est_ell}, \eqref{eql.lapse1_est_ell_1}, and the above, we obtain \eqref{eq.lapse1_est_lemma}.

Define now the scalar
\[ P = -\lapl \mc{L}_t n + \brak{\abs{k}^2 + \ric\paren{T, T}} \mc{L}_t n \text{.} \]
Multiplying $P$ by $\mc{L}_t n$, integrating by parts, and decomposing $|k|^2$, then
\begin{align*}
\int_{\Sigma_\tau} \paren{\abs{\nabla \mc{L}_t n}^2 + \frac{1}{3} t_1^2 \abs{\mc{L}_t n}^2} &\leq \int_{\Sigma_\tau} \brak{\abs{\nabla \mc{L}_t n}^2 + \paren{\frac{1}{3} \tau^2 + \abs{\hat{k}}^2 + \ric\paren{T, T}} \abs{\mc{L}_t n}^2} \\
&= \int_{\Sigma_\tau} \abs{P} \abs{\mc{L}_t n} \text{,}
\end{align*}
where we have also used the strong energy condition satisfied by both the E-S and E-M models.
Applying a weighted Cauchy inequality to the right-hand side along with \eqref{eq.lapse1_est_lemma} yields \eqref{eq.lapse1_est_ell}, as desired.
\end{proof}

\begin{proposition}\label{thm.lapse_unif_est_ell}
For any $\tau_0 \leq \tau < t_1$,
\[ \nabs{\nabla^2 \mc{L}_t n}_{L^2\paren{\Sigma_\tau}} + \nabs{\mc{L}_t n}_{L^\infty\paren{\Sigma_\tau}} \lesssim 1 \text{.} \]
\end{proposition}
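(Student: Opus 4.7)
The plan is to establish the $L^2$-bound for $\nabla^2 \mc{L}_t n$ first via the Bochner-type estimate of Lemma \ref{thm.ell_estimate}, applied to the scalar $\psi = \mc{L}_t n$, and then to deduce the $L^\infty$-bound on $\psi$ by Sobolev embedding. The key input will be the second-order PDE satisfied by $\psi$, already packaged in \eqref{eq.lapse1_est_lemma}, combined with the $H^1$-control of \eqref{eq.lapse1_est_ell} and the $L^2$-bound on the intrinsic curvature $\mc{R}$ from Proposition \ref{thm.e1_est_ell}.

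First I would rewrite \eqref{eq.lapse1_est_lemma} in the form $\lapl \psi = [\abs{k}^2 + \ric(T,T)] \psi - P$ with $\nabs{P}_{L^2(\Sigma_\tau)} \lesssim 1$. Since the potential $\abs{k}^2 + \ric(T,T)$ is uniformly bounded by \eqref{eq.cond_T0} and \eqref{eq.bdc_lemma}, and $\nabs{\psi}_{L^2} \lesssim 1$ by Proposition \ref{thm.lapse_est_ell}, this gives $\nabs{\lapl \psi}_{L^2(\Sigma_\tau)} \lesssim 1$. Plugging into the scalar Bochner estimate in \eqref{eq.ell_bochner_est}, I obtain
\[ \nabs{\nabla^2 \psi}_{L^2(\Sigma_\tau)}^2 \lesssim 1 + \int_{\Sigma_\tau} \abs{\mc{R}} \abs{\nabla \psi}^2. \]

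The main obstacle is the curvature term on the right, since $\mc{R}$ is only controlled in $L^2$, not $L^\infty$; a naive H\"older estimate does not close. The trick is to use a Gagliardo--Nirenberg--type interpolation available through \eqref{eq.sob}. By Cauchy--Schwarz, $\int \abs{\mc{R}}\abs{\nabla \psi}^2 \leq \nabs{\mc{R}}_{L^2} \nabs{\nabla \psi}_{L^4}^2 \lesssim \nabs{\nabla \psi}_{L^4}^2$ using Proposition \ref{thm.e1_est_ell}. Next, interpolating $L^4$ between $L^2$ and $L^6$ and using \eqref{eq.sob} gives
\[ \nabs{\nabla \psi}_{L^4}^2 \lesssim \nabs{\nabla \psi}_{L^2}^{1/2} \nabs{\nabla \psi}_{L^6}^{3/2} \lesssim \bigl(\nabs{\nabla^2 \psi}_{L^2} + \nabs{\nabla \psi}_{L^2}\bigr)^{3/2} \lesssim \bigl(\nabs{\nabla^2 \psi}_{L^2} + 1\bigr)^{3/2}, \]
where in the last step I appeal once more to \eqref{eq.lapse1_est_ell}. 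Substituting back yields the self-improving inequality $\nabs{\nabla^2 \psi}_{L^2}^2 \lesssim 1 + (\nabs{\nabla^2 \psi}_{L^2} + 1)^{3/2}$, and a standard Young/absorption argument closes this to $\nabs{\nabla^2 \psi}_{L^2(\Sigma_\tau)} \lesssim 1$.

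Finally, to pass from the $H^2$-bound to the $L^\infty$-bound, I apply the Sobolev inequality \eqref{eq.sob_2h} to the scalar $\psi$, viewed as a trivial mixed tensor field, obtaining
\[ \nabs{\mc{L}_t n}_{L^\infty(\Sigma_\tau)} \lesssim \nabs{\nabla^2 \psi}_{L^2(\Sigma_\tau)} + \nabs{\psi}_{L^2(\Sigma_\tau)} \lesssim 1, \]
which completes the proof. The whole argument is essentially the standard elliptic $H^2$-regularity bootstrap, and the only subtlety is the Gagliardo--Nirenberg interpolation used to treat the $\mc{R}$-term in the Bochner identity, which is forced on us by the borderline $L^2$-control of the intrinsic curvature.
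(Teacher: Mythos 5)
Your proposal is correct and takes essentially the same route as the paper's proof: the scalar B\"ochner estimate \eqref{eq.ell_bochner_est} with $\nabs{\lapl \mc{L}_t n}_{L^2}$ controlled via \eqref{eq.lapse1_est_lemma} and \eqref{eq.lapse1_est_ell}, a Gagliardo--Nirenberg bound on $\nabs{\nabla \mc{L}_t n}_{L^4}$ in terms of $\nabs{\nabla^2 \mc{L}_t n}_{L^2}$ with subcritical exponent followed by absorption, and then \eqref{eq.sob_2h} for the $L^\infty$ bound. The only cosmetic difference is that the paper quotes the interpolation inequality directly from \cite[Cor. 2.7]{kl_rod:bdc}, whereas you rederive the equivalent estimate from Lebesgue interpolation between $L^2$ and $L^6$ together with \eqref{eq.sob}.
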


\begin{proof}
By \eqref{eq.bdc}, \eqref{eq.ell_bochner_est}, and \eqref{eq.ell_est_1},
\begin{align*}
\nabs{\nabla^2 \mc{L}_t n}_{L^2\paren{\Sigma_\tau}} &\lesssim \nabs{\lapl \mc{L}_t n}_{L^2\paren{\Sigma_\tau}} + \nabs{\mc{R}}_{L^2\paren{\Sigma_\tau}}^\frac{1}{2} \nabs{\nabla \mc{L}_t n}_{L^4\paren{\Sigma_\tau}} \\
&\lesssim \nabs{\mc{L}_t n}_{L^2\paren{\Sigma_\tau}} + \nabs{-\lapl \mc{L}_t n + \brak{\abs{k}^2 + \ric\paren{T, T}} \mc{L}_t n}_{L^2\paren{\Sigma_\tau}} \\
&\qquad + \nabs{\nabla \mc{L}_t n}_{L^4\paren{\Sigma_\tau}} \text{.}
\end{align*}
Applying Proposition \ref{thm.lapse_est_ell}, then
\[ \nabs{\nabla^2 \mc{L}_t n}_{L^2\paren{\Sigma_\tau}} \lesssim 1 + \nabs{\nabla \mc{L}_t n}_{L^4\paren{\Sigma_\tau}} \text{.} \]
Next, by the Sobolev estimate \cite[Cor 2.7]{kl_rod:bdc}, along with \eqref{eq.lapse1_est_ell}, then
\begin{align*}
\nabs{\nabla^2 \mc{L}_t n}_{L^2\paren{\Sigma_\tau}} &\lesssim 1 + \nabs{\nabla^2 \mc{L}_t n}_{L^2\paren{\Sigma_\tau}}^\frac{3}{4} \nabs{\nabla \mc{L}_t n}_{L^2\paren{\Sigma_\tau}}^\frac{1}{4} + \nabs{\nabla \mc{L}_t n}_{L^2\paren{\Sigma}} \\
&\lesssim \paren{1 + \epsilon^{-4}} + \epsilon^\frac{4}{3} \nabs{\nabla^2 \mc{L}_t n}_{L^2\paren{\Sigma_\tau}}
\end{align*}
for any $\epsilon > 0$.
Taking $\epsilon$ sufficiently small, we obtain the desired bound for $\nabla^2 \mc{L}_t n$.
Lastly, we can bound $\mc{L}_t n$ in $L^\infty$ by the Sobolev bound \eqref{eq.sob_2h} and the above.
\end{proof}

The uniform bound for $\mc{L}_t n$ in Proposition \ref{thm.lapse_unif_est_ell}, omitted in the earliest versions of the works in the E-V setting, is essential for controlling null injectivity radii.

\section{Local Regularity of Null Cones}

In this section, we will summarize what is the most technically demanding step in the proof of Theorem \ref{thm.bdc}: control of the geometry of regular past null cones.
This step will be crucial for applying the generalized representation formula for tensor wave equations, which will be needed in order to obtain the higher-order energy bounds required to complete the proof of Theorem \ref{thm.bdc}.

Throughout this section, we fix an arbitrary point $p \in M_+$, and we normalize and foliate the regular past null cone $\mc{N}^-(p)$ by $T|_p$ and $t_p = t(p) - t$, as usual.
In addition, we fix a constant $0 < \delta_0 \leq \min(\mf{i}(p), 1)$, and we define the segment
\[ \mc{N} = \mc{N}^-\paren{p; \delta_0} = \brac{z \in \mc{N}^-\paren{p} \mid t_p\paren{z} < \delta_0} \text{.} \]

The main result of this section states roughly the following:
\begin{itemize}
\item The past null injectivity radius $\mf{i}(p)$ of $p$ is bounded below by some constant $\delta > 0$ depending only on the fundamental constants.

\item For small enough $\delta_0$, depending only on the fundamental constants, the Ricci coefficients $\chi$, $\ul{\chi}$, $\zeta$, $\ul{\eta}$ of $\mc{N}^-(p)$ (see Section \ref{sec.rc}) can be controlled on $\mc{N}$ in various norms by the fundamental constants.
\end{itemize}
The explicit result is stated in Theorem \ref{thm.nc}, and a detailed proof of the theorem is available in \cite[Ch. 7-8]{shao:bdc_nv}.
\footnote{See also \cite{kl_rod:cg, kl_rod:stt, kl_rod:rin, parl:bdc, wang:cg, wang:cgp} for earlier results in Einstein-vacuum settings.}
Due to the prohibitive length and the level of technical detail of the argument, we omit the proof of this theorem in this paper and refer the reader to \cite{shao:bdc_nv} for details.
\footnote{In fact, a complete accounting of the details of the proof of Theorem \ref{thm.nc} would more than double the length of this paper!}
On the other hand, we do provide a brief outline of the main components of the proof at the end of this section.
Our focus, however, will be on the consequences of Theorem \ref{thm.nc} in relation to the breakdown problem.

\subsection{Motivation and Past Results}

In Riemannian geometry, one generally requires $L^\infty$-bounds on the curvature in order to derive uniform lower bounds for the injectivity radius.
Similarly, for the Lorentzian case, given an $L^\infty$-bound for $R$, we can control the Ricci coefficients $\chi$, $\ul{\chi}$, $\zeta$, and $\ul{\eta}$ without too much effort.
Uniform bounds for the null injectivity radius would also follow.

Unfortunately, we will only have the local flux bounds of Proposition \ref{thm.e1_lest} here, which makes our task tremendously more difficult.
Indeed, bounding the Ricci coefficients using $L^2$ rather than $L^\infty$-estimates will necessitate the use of sharp trace estimates along the generators of null cones, which then requires the geometric Littlewood-Paley theory of \cite{kl_rod:glp} and the resulting Besov estimates.
This process is responsible for much of the technical difficulties in the breakdown problem.

Such $L^2$-curvature results were first obtained by S. Klainerman and I. Rodnianski in \cite{kl_rod:cg, kl_rod:stt, kl_rod:rin}.
The results of \cite{kl_rod:cg, kl_rod:stt}, however, applied only to truncated null cones and hence were not directly applicable to the breakdown problem.
To address this issue, Q. Wang, in \cite{wang:cg, wang:cgp}, extended the estimates in \cite{kl_rod:cg, kl_rod:stt} to past null cones with vertex initial data by keeping track of scaling factors at every step.
\footnote{The works \cite{wang:cg, wang:cgp} also addressed an error in \cite{kl_rod:cg}, which assumed a Besov bound that does not hold.  In fact, a significant amount of effort in \cite{wang:cg, wang:cgp} is dedicated to addressing this issue.}
Later, D. Parlongue, in \cite{parl:bdc}, revisited this argument for time-foliated null cones.

In all the above, an essential assumption is that the spacetime is vacuum.
The primary contribution of this section is the extension of this family of results to the E-S and E-M settings.
We offer now a more detailed comparison between the current results and its predecessors:
\begin{itemize}
\item The papers \cite{kl_rod:cg, kl_rod:stt, parl:bdc} all considered the case of truncated regular null cones with prescribed spherical initial data.
In contrast, we adopt the setting of \cite{wang:cg, wang:cgp} and consider the case of regular null cones with initial data given by a vertex point.
As mentioned before, this is the type of result needed by the breakdown problems in this text as well as in \cite{kl_rod:bdc, parl:bdc}.

\item Unlike \cite{kl_rod:cg, kl_rod:stt, wang:cg, wang:cgp}, which bounded the Ricci coefficients with respect to the geodesic foliation, we obtain these bounds in terms of the time foliation, like in \cite{parl:bdc}.
This has the advantage of being able to interface directly with the breakdown problem.
In this ``gauge", we will need the regularity properties of the time foliation in order to bound the Ricci coefficients.
However, this also results in improved estimates for both $\ul{\eta}$ and $\ul{\chi}$.

\item While \cite{kl_rod:cg, kl_rod:stt, kl_rod:rin, parl:bdc, wang:cg, wang:cgp} dealt exclusively with the Einstein-vacuum setting, here we extend the results to E-S and E-M spacetimes.
In particular, the ``curvature flux" defined in \cite{kl_rod:cg, kl_rod:rin, parl:bdc, wang:cg, wang:cgp} is replaced by an analogous flux quantity involving both the curvature and the matter field; see Proposition \ref{thm.e1_lest}.
Furthermore, several of the structure equations governing the Ricci coefficients now contain additional terms reflecting the contributions from the nontrivial matter field.

\item The works \cite{kl_rod:cg, kl_rod:stt, parl:bdc, wang:cg, wang:cgp} all dealt only with the special case of very small curvature flux and initial values, over a unit interval along (possibly truncated) null cones.
This is, however, not directly applicable to the breakdown problem, since in this setting we can only stipulate bounded, not small, curvature flux.
In our case, Propositions \ref{thm.e1_est} and \ref{thm.e1_lest} only established the existence of some possibly large a priori energy and flux bounds.
On the other hand, we can in the breakdown problem work in only a very small interval along null cones.
Heuristically, one can see that this ``large flux, small interval" case relates to the ``small flux, unit interval" case via a rescaling argument.
However, we handle the ``large flux, small interval" setting directly in the statement and proof of Theorem \ref{thm.nc}.

\item The only paper in the existing literature to address the null injectivity radius is \cite{kl_rod:rin}, which was separated from the remaining components \cite{kl_rod:cg, kl_rod:stt} in the overall proof.
In the time foliation case, however, the interplay between the ``Ricci coefficients" and the ``null injectivity radius" portions of the proof is more subtle and must be addressed in tandem.
\end{itemize}

\subsection{Integral Norms}

In order to state the main result, we must first define the relevant integral norms used within.
We begin with natural integral norms on $\mc{N}$ (or $\mc{N}^-(p)$): for any $1 \leq q < \infty$ and $\Psi \in \Gamma \ul{\mc{T}} \ol{\mc{T}} \mc{N}^-(p)$, we define
\begin{equation}\label{eq.pnc_norm_Lp} \nabs{\Psi}_{L^q\paren{\mc{N}}} = \paren{\int_{\mc{N}} \abs{\Psi}^q}^\frac{1}{q} \text{,} \qquad \nabs{\Psi}_{L^\infty\paren{\mc{N}}} = \sup_{z \in \mc{N}} \valat{\abs{\Psi}}_z \text{,} \end{equation}
where mixed tensors are normed with respect to $h$ and $\lambda$, and where the above integral over $\mc{N}$ is defined using the formula \eqref{eq.pnc_int}.
In addition, we will make use of the following ``null Sobolev" norm on $\mc{N}$:
\begin{equation}\label{eq.pnc_norm_sob} \nabs{\Psi}_{\mc{H}^1\paren{\mc{N}}} = \nabs{\onasla_{t_p} \Psi}_{L^2\paren{\mc{N}}} + \nabs{\onasla \Psi}_{L^2\paren{\mc{N}}} + \nabs{t_p^{-1} \Psi}_{L^2\paren{\mc{N}}} \text{.} \end{equation}

Given $1 \leq q < \infty$ and $1 \leq r \leq \infty$, we also define the following iterated norms:
\begin{align}
\label{eq.pnc_norm_iter} \nabs{\Psi}_{L^q_t L^r \paren{\mc{N}}} &= \paren{\int_0^{\delta_0} \nabs{\Psi}_{L^r\paren{\mc{S}_v}}^q dv}^\frac{1}{q} \text{,} \\
\notag \nabs{\Psi}_{L^\infty_t L^r \paren{\mc{N}}} &= \sup_{0 < v < \delta_0} \nabs{\Psi}_{L^r\paren{\mc{S}_v}} \text{.}
\end{align}
As before, we use $\mc{S}_v$ to denote the level set $\{z \in \mc{N} \mid t_p(z) = v\}$, while $L^r(\mc{S}_v)$ refers to the tensorial $L^r$-norm on $\mc{S}_v$ with respect to the induced metric $\lambda$ on $\mc{S}_v$.
Since $\vartheta \simeq 1$ on $\mc{N}$ due to Corollary \ref{thm.nl_bound}, then by \eqref{eq.pnc_int},
\[ \nabs{\Psi}_{L^q_t L^q \paren{\mc{N}}} \simeq \nabs{\Psi}_{L^q\paren{\mc{N}}} \text{,} \qquad 1 \leq q \leq \infty \text{,} \]
i.e., the $L^q_t L^q$ and $L^q$-norms on $\mc{N}$ are equivalent.

Recall that $\mc{N}$ can be parametrized by the $t_p$-value $0 < v < \delta_0$ and a spherical parameter $\omega \in \Sph^2$.
We can then define additional iterated norms with respect to this parametrization: for any $1 \leq q, r < \infty$, we define
\begin{align}
\label{eq.pnc_norm_param} \nabs{\Psi}_{L^q_t L^r_\omega \paren{\mc{N}}} &= \brak{\int_0^{\delta_0} \paren{\int_{\Sph^2} \valat{\abs{\Psi}^r}_{\paren{v, \omega}} d\omega}^\frac{q}{r} dv}^\frac{1}{q} \text{,} \\
\notag \nabs{\Psi}_{L^\infty_t L^r_\omega \paren{\mc{N}}} &= \sup_{0 < v < \delta_0} \paren{\int_{\Sph^2} \valat{\abs{\Psi}^{r}}_{\paren{v, \omega}} d\omega}^\frac{1}{r} \text{,} \\
\notag \nabs{\Psi}_{L^q_t L^\infty_\omega \paren{\mc{N}}} &= \brak{\int_0^{\delta_0} \paren{\sup_{\omega \in \Sph^2} \valat{\abs{\Psi}}_{\paren{v, \omega}}}^q dv}^\frac{1}{q} \text{.}
\end{align}
We can reverse the order of integration to obtain additional useful norms:
\begin{align}
\label{eq.pnc_norm_rparam} \nabs{\Psi}_{L^r_\omega L^q_t \paren{\mc{N}}} &= \brak{\int_{\Sph^2} \paren{\int_0^{\delta_0} \valat{\abs{\Psi}^q}_{\paren{v, \omega}} dv}^\frac{r}{q} d\omega}^\frac{1}{r} \text{,} \\
\notag \nabs{\Psi}_{L^r_\omega L^\infty_t \paren{\mc{N}}} &= \brak{\int_{\Sph^2} \paren{\sup_{0 < v < \delta_0} \valat{\abs{\Psi}}_{\paren{v, \omega}}}^r d\omega}^\frac{1}{r} \text{,} \\
\notag \nabs{\Psi}_{L^\infty_\omega L^q_t \paren{\mc{N}}} &= \sup_{\omega \in \Sph^2} \paren{\int_0^{\delta_0} \valat{\abs{\Psi}}_{\paren{v, \omega}}^q d\omega}^\frac{1}{q} \text{.}
\end{align}
Lastly, the $L^\infty_t L^\infty_\omega (\mc{N})$ and $L^\infty_\omega L^\infty_t (\mc{N})$-norms coincide with the $L^\infty(\mc{N})$-norm.

Later, we will see that the parameter $L^q_t L^r_\omega$-norms are comparable to rescalings of the natural iterated $L^q_t L^r$-norms.
Of the reverse parametrized norms, the most important will be the ``trace" $L^\infty_\omega L^2_t$-norms and $L^q_\omega L^\infty_t$-norms.

\subsection{The Main Result}

With all of the appropriate norms defined, we can now state the main result of this section.

\begin{theorem}\label{thm.nc}
For any $q \in M_+$, the following hold:
\begin{itemize}
\item There exists a constant $\delta_0 > 0$, depending only on the fundamental constants, such that $\mf{i}(q) > \min(\delta_0, t(q) - \tau_0)$.

\item Letting $\mc{N}_q = \mc{N}^-(q; \min(\delta_0, t(q) - \tau_0))$, then the following estimates hold:
\begin{align}
\label{eq.pnc_main_est} \nabs{\vartheta \paren{\trace \chi} - \frac{2}{t_q}}_{L^\infty_\omega L^2_t \paren{\mc{N}_q}} + \nabs{\hat{\chi}}_{L^\infty_\omega L^2_t \paren{\mc{N}_q}} + \nabs{\zeta}_{L^\infty_\omega L^2_t \paren{\mc{N}_q}} &\lesssim 1 \text{,} \\
\notag \nabs{\vartheta \paren{\trace \chi} - \frac{2}{t_q}}_{\mc{H}^1\paren{\mc{N}_q}} + \nabs{\hat{\chi}}_{\mc{H}^1\paren{\mc{N}_q}} + \nabs{\zeta}_{\mc{H}^1\paren{\mc{N}_q}} &\lesssim 1 \text{,}
\end{align}
where $\chi$ and $\zeta$ refer to the corresponding Ricci coefficients of $\mc{N}^-(q)$.

\item In addition, for $\mc{N}_q$ as above, we have
\begin{align}
\label{eq.pnc_main_est_alt} \nabs{\vartheta \paren{\trace \chi} - \frac{2}{t_q}}_{L^\infty\paren{\mc{N}_q}} + \nabs{t_q^\frac{3}{2} \nasla \paren{\trace \chi}}_{L^2_\omega L^\infty_t \paren{\mc{N}_q}} &\lesssim 1 \text{,} \\
\notag \nabs{t_q^\frac{3}{2} \mu}_{L^2_\omega L^\infty_t \paren{\mc{N}_q}} + \nabs{\mu}_{L^2\paren{\mc{N}_q}} &\lesssim 1 \text{,}
\end{align}
where $\mu$ is the mass aspect function on $\mc{N}_q$ defined in \eqref{eq.maf}.
\end{itemize}
\end{theorem}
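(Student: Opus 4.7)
The plan is to prove Theorem \ref{thm.nc} by a large continuous bootstrap argument along $\mc{N}^-(q)$. I would fix $q \in M_+$ and let $B$ denote the set of $0 < \delta \leq \min(\mf{i}(q), t(q) - \tau_0)$ for which all the estimates of \eqref{eq.pnc_main_est}--\eqref{eq.pnc_main_est_alt}, with $\mc{N}_q$ replaced by $\mc{N}^-(q;\delta)$, hold with constants twice their eventual target values. By the initial value limits of Proposition \ref{thm.pnc_init} and Corollary \ref{thm.int_pnc_init}, $B$ is nonempty near the vertex, and it is trivially closed by continuity. The main content is then to show that on $B$ every estimate can be improved to the target constants, and that $B$ is not cut off by a cut locus or conjugate point before $t_p$-value $\delta_0$. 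This would yield the desired uniform lower bound on $\mf{i}(q)$ together with the stated a priori bounds on $\mc{N}_q$.

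Next I would propagate the Ricci coefficients along the null generators using their structure equations, closing them by means of the flux bound of Proposition \ref{thm.e1_lest}. The expansion $\trace \chi$ satisfies the Raychaudhuri equation $L(\trace\chi) + \tfrac12(\trace\chi)^2 + |\hat\chi|^2 = -R_{44}$, whose right-hand side in the E-S and E-M cases picks up new $\ric$ contributions $(L\phi)^2$ or $|F_{4a}|^2$, both a priori bounded in $L^\infty$ by \eqref{eq.bdc}; after rescaling by $\vartheta$, subtracting $2/t_q$, and integrating from the vertex data \eqref{eq.pnc_init_chi}, this would yield the $L^\infty$ bound in \eqref{eq.pnc_main_est_alt}, modulo bootstrap control on $\hat\chi$. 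The shear $\hat\chi$ obeys a transport equation sourced by $\alpha_{ab} = R_{4a4b}$, which by Proposition \ref{thm.null_flux} is a ``good'' curvature component controlled by the flux. For the torsion $\zeta$ I would combine its transport equation with the Hodge system on each $\mc{S}_v$ (divergence and curl both sourced by the mass aspect and good curvature components), and apply a Bochner identity of the type \eqref{eq.ell_first_est} to convert these into the $\mc{H}^1$ bound. The mass aspect $\mu$ itself is then controlled via its own transport equation, which in turn requires the sharp trace bound on $\nasla(\trace\chi)$ from \eqref{eq.pnc_main_est_alt}.

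The $L^\infty$ bound on $\vartheta(\trace\chi) - 2/t_q$ rules out conjugate points inside $\mc{N}^-(q;\delta_0)$, since the formation of such a point is equivalent to $\trace\chi$ blowing up along a null generator. To exclude cut locus points I would follow \cite{kl_rod:rin}: quantitative near-injectivity of the null exponential map is deduced from the $L^\infty$ control on $\vartheta(\trace\chi) - 2/t_q$ together with the $L^2$ control on $\hat\chi$ and $\zeta$ and the $L^\infty$ bound on $\mc{L}_t n$ from Proposition \ref{thm.lapse_unif_est_ell}, which governs the regularity of the time foliation. Unlike the geodesic-foliation treatment of \cite{kl_rod:cg, kl_rod:rin}, in the time-foliated setting these injectivity estimates must be closed together with the Ricci coefficient estimates in one simultaneous bootstrap rather than in separate stages.

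The hard part will be the sharp trace bounds $\|\cdot\|_{L^\infty_\omega L^2_t}$ on the Ricci coefficients and the $L^2_\omega L^\infty_t$ bounds on $\nasla(\trace\chi)$ and $\mu$. These cannot be obtained by direct spherical integration: one must decompose fields using the geometric tensorial Littlewood-Paley projectors of \cite{kl_rod:glp} on each $\mc{S}_v$, establish Besov-type estimates, and then pass from Besov control to the desired trace norms. Two further difficulties appear in the E-S and E-M settings beyond the vacuum technology of \cite{kl_rod:cg, kl_rod:stt, wang:cg, wang:cgp, parl:bdc}. First, the structure equations for $\zeta$ and $\mu$ acquire matter source terms involving $D\phi$ or $F$; in particular, the $R_{43}$ and $R_{4343}$ terms in \eqref{eq.maf} now contain null components of the matter field, some of which are among the uncontrolled components identified at the end of Section \ref{sec.lest}, and must be rewritten using \eqref{eq.scalar_field} or \eqref{eq.maxwell} or absorbed using the $L^\infty$ bound on $\mf{F}$ from \eqref{eq.bdc}. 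Second, the coupling between curvature and matter forces the curvature flux and matter flux to be estimated concurrently throughout the bootstrap, so one cannot simply invoke vacuum estimates as a black box. This matter-field analysis, layered on top of the full vacuum Besov and sharp-trace machinery, accounts for the majority of the technical work and motivates deferring the complete proof to \cite[Ch. 7--8]{shao:bdc_nv}.
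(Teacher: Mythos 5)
Your outline follows essentially the same strategy as the paper's: a bootstrap closed through the structure equations and the flux bound of Proposition \ref{thm.e1_lest}, Hodge systems on the $\mc{S}_v$'s, the geometric Littlewood--Paley/Besov machinery for the trace norms, and the argument of \cite{kl_rod:rin} for the cut locus, with the details deferred to \cite{shao:bdc_nv} exactly as the paper does. Three points of divergence are worth flagging. First, the paper runs the bootstrap over \emph{all} vertices $q \in M_+$ simultaneously (assuming \ass{N0}{q, \delta_0, \Delta_0} everywhere and improving $\Delta_0$ to $\Delta_0/2$), not as a continuity argument in $\delta$ along a single cone; this matters because the cut-locus argument of \cite{kl_rod:rin} minimizes over pairs of doubly-intersecting null geodesic segments with varying vertices, so it genuinely needs uniform control over the whole family of cones. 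Second, the bootstrapped quantities must include $\mc{K} - \vartheta^{-2} t_p^{-2}$ in the $L^\infty_t H^{-1/2}$-norm: this is what makes the operators $\mc{D}_1, \mc{D}_2$ invertible with the bounds of Lemma \ref{thm.pnc_hodge}, and it is precisely because only this weak control on $\mc{K}$ is available that no tensorial B\"ochner estimate holds on the $\mc{S}_v$'s --- your appeal to ``a Bochner identity of the type \eqref{eq.ell_first_est}'' is fine at first order but cannot be upgraded, which is why the $L^\infty_\omega L^2_t$ bounds instead go through the sharp trace lemma requiring a decomposition $\nasla \hat{\chi}, \nasla \zeta = \nasla_{t_p} P + E$, produced by combining inverse Hodge operators with null Bianchi equations and an infinite sequence of renormalizations of bad commutator terms; ``pass from Besov control to the desired trace norms'' elides this, which is the technical crux. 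Third, your concern that $R_{43}$ and $R_{4343}$ in \eqref{eq.maf} involve uncontrolled matter components is misplaced: $R_{43}$ is a Ricci component, hence bounded in $L^\infty$ by \eqref{eq.bdc_lemma}, and $R_{4343}$ is not of the uncontrolled form $R_{3a3b}$, so it is controlled by the flux; the genuine matter-field complication arises instead in the renormalization step, where the higher-order Ricci terms require their own decompositions via null Bianchi-type equations for $F$.
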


\begin{remark}
An analogue of Theorem \ref{thm.nc} holds for future null cones $\mc{N}^+(p)$.
\end{remark}

With greater care throughout the proof of Theorem \ref{thm.nc}, we could in fact be more precise about the bounds for the Ricci coefficients in \eqref{eq.pnc_main_est} and \eqref{eq.pnc_main_est_alt}, and how they are affected by the fundamental constants.
Such a task is significantly simpler in the geodesically foliated E-V settings examined in \cite{kl_rod:cg, wang:cg, wang:cgp}, since in these cases, the only external parameter in the problems is the curvature flux.

Again, the details of the proof of Theorem \ref{thm.nc} are left to \cite[Ch. 7-8]{shao:bdc_nv} due to its length and the amount of technical background involved.
A brief summary of the proof is given at the end of this section, after first discussing the consequences of Theorem \ref{thm.nc} pertaining to the current breakdown problem.

\subsection{Some Basic Consequences}\label{sec.pnc_cor}

We now list some basic consequences of Theorem \ref{thm.nc} which will be useful in upcoming analyses involving null cones.
Recall that the induced metrics on the $\mc{S}_v$'s are denoted by $\lambda$.
We define the ``rescaled" metrics on these level sets by $\bar{\lambda} = (t_p)^{-2} \lambda$.
This rescaling is essential, since in the case of null cones with vertex initial data, the uniformities occur with respect to $\bar{\lambda}$ rather than $\lambda$.
For example, from Proposition \ref{thm.met_pnc_init}, we observe that $\bar{\lambda}$ tends toward the standard Euclidean metric on $\Sph^2$ (rescaled by a factor of $n(p)^2$) as $t_p \searrow 0$.

One consequence of Proposition \ref{thm.met_pnc_init} and Theorem \ref{thm.nc}, then, is that as long as the timespan $\delta_0$ of $\mc{N}$ is sufficiently small, then $\bar{\lambda}$ does not differ much from the standard Euclidean metric on $\Sph^2$.
Another related result is that for similarly small $\delta_0$, then the volume forms $\bar{V}$ on the $\mc{S}_v$'s with respect to $\bar{\lambda}$ do not differ much from that of the standard Euclidean metric on $\Sph^2$.
Precise statements, in terms of transported coordinate systems, can be found in \cite[Lemma 7.1, Lemma 7.3]{shao:bdc_nv}.

This latter property immediately implies the following integral comparisons:

\begin{proposition}\label{thm.pnc_vol}
Suppose $\delta_0 \leq \mf{i}(p)$ is sufficiently small with respect to the fundamental constants.
Then, for every $0 < v < \delta_0$ and $\Psi \in \Gamma \ul{\mc{T}} \ol{\mc{T}} \mc{N}$, we have
\[ \int_{\mc{S}_v} \abs{\Psi} \simeq v^2 \int_{\Sph^2} \valat{\abs{\Psi}}_{\paren{v, \omega}} d\omega \text{.} \]
In particular, for any $1 \leq q, r \leq \infty$, we also have
\[ \nabs{\Psi}_{L^q_t L^r \paren{\mc{N}}} \simeq \nabs{t_p^\frac{2}{r} \Psi}_{L^q_t L^r_\omega \paren{\mc{N}}} \text{,} \qquad \nabs{\Psi}_{L^q\paren{\mc{N}}} \simeq \nabs{\Psi}_{L^q_t L^q \paren{\mc{N}}} \simeq \nabs{t_p^\frac{2}{q} \Psi}_{L^q_t L^q_\omega \paren{\mc{N}}} \text{.} \]
\end{proposition}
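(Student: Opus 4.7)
The plan is to reduce everything to integration against the standard Euclidean measure $d\omega$ on $\Sph^2$ through the rescaled metric $\ol{\lambda} = (t_p)^{-2} \lambda$, and then to rearrange iterated integrals in the obvious way.

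First, I would establish that the rescaled volume element $d\ol{V}_v$ on $\mc{S}_v$ (with respect to $\ol{\lambda}$) is uniformly comparable to the Euclidean volume form $d\omega$ on $\Sph^2$, provided $\delta_0$ is small enough in terms of the fundamental constants. The initial limit for this is supplied by Proposition \ref{thm.met_pnc_init}, which gives $\ol{\lambda}|_{(v,\omega)} \to n(p)^2 \lambda_0$ as $v \searrow 0$, where $\lambda_0$ is the Euclidean metric. To propagate this forward in $v$, one integrates the first variation of the area form along the null generators, which is controlled by $\trace \chi$; using the decomposition $\vartheta (\trace \chi) - 2 t_p^{-1}$ together with the estimates \eqref{eq.pnc_main_est}--\eqref{eq.pnc_main_est_alt} of Theorem \ref{thm.nc} (applied generator-by-generator via the trace norm $L^\infty_\omega L^2_t$), a Gr\"onwall argument in $v$ yields the uniform comparison $d\ol{V}_v \simeq d\omega$ on $\mc{S}_v$, for every $0 < v < \delta_0$. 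This is the content of \cite[Lemma 7.3]{shao:bdc_nv}.

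Second, since by definition $\lambda = v^2 \ol{\lambda}$ on $\mc{S}_v$, the induced volume forms satisfy $dV_{\mc{S}_v} = v^2 d\ol{V}_v \simeq v^2 d\omega$. Integrating $|\Psi|$ against $dV_{\mc{S}_v}$ and applying this comparison immediately yields
\[
\int_{\mc{S}_v} \abs{\Psi} \simeq v^2 \int_{\Sph^2} \valat{\abs{\Psi}}_{(v,\omega)} d\omega \text{,}
\]
which is the first claim of the proposition.

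Third, applying the same comparison to $|\Psi|^r$ in place of $|\Psi|$ gives
\[
\nabs{\Psi}_{L^r(\mc{S}_v)} \simeq v^{2/r} \paren{\int_{\Sph^2} \valat{\abs{\Psi}^r}_{(v,\omega)} d\omega}^{1/r} \text{,}
\]
valid for $1 \leq r < \infty$ (and with the obvious sup-modification for $r = \infty$, since for $\delta_0$ small the comparison $d\ol V_v \simeq d\omega$ implies $\mc{S}_v$ and $\Sph^2$ have comparable diameters and the $L^\infty$-norms on each coincide pointwise). Taking the $L^q$-norm in the $v$-variable on both sides gives the iterated comparison $\nabs{\Psi}_{L^q_t L^r(\mc{N})} \simeq \nabs{t_p^{2/r} \Psi}_{L^q_t L^r_\omega(\mc{N})}$. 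Finally, the equivalence $\nabs{\Psi}_{L^q(\mc{N})} \simeq \nabs{\Psi}_{L^q_t L^q(\mc{N})}$ follows from the definition \eqref{eq.pnc_int} of integration on $\mc{N}$ together with Corollary \ref{thm.nl_bound}, which gives $\vartheta \simeq 1$ on $\mc{N}$; combined with the previous comparison (with $r = q$), this closes the remaining chain.

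The only step that requires real input from the preceding machinery is the first one — uniformly comparing $d\ol{V}_v$ to $d\omega$ for all $v \in (0, \delta_0)$ — since it is precisely where the initial value computations and the $L^\infty_\omega L^2_t$-control of the Ricci coefficients from Theorem \ref{thm.nc} enter. Once that comparison is in hand, the remainder of the proof is just bookkeeping with the definitions \eqref{eq.pnc_int}, \eqref{eq.pnc_norm_Lp}, \eqref{eq.pnc_norm_iter}, and \eqref{eq.pnc_norm_param}.
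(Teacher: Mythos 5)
Your proposal is correct and follows essentially the same route as the paper: the key input is the uniform comparison of the rescaled volume forms $\bar{V}$ on the $\mc{S}_v$'s with the Euclidean volume form on $\Sph^2$ (obtained from Proposition \ref{thm.met_pnc_init} at the vertex and propagated via $\trace\chi$ using Theorem \ref{thm.nc}, i.e., \cite[Lemma 7.3]{shao:bdc_nv}), after which the integral and iterated-norm comparisons are definition-unwinding together with $\vartheta \simeq 1$ from Corollary \ref{thm.nl_bound}. No gaps.
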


In other words, the natural iterated $L^q_t L^r$-norms are equivalent to the $(v, \omega)$-parametrized norms.
The latter representation is especially important, since it is expressed as an integral over the product $(0, \delta_0) \times \Sph^2$, and hence the order of integration can be easily reversed.
These reversed $L^r_\omega L^p_t$-norms will be essential in numerous estimates, especially within the proof of Theorem \ref{thm.nc}.

The regularity of the metric $\bar{\lambda}$, as described in \cite[Lemma 7.3]{shao:bdc_nv}, can be used to derive uniform first and second-order Sobolev estimates in a process analogous to that of Proposition \ref{thm.sobolev_tf}.
The results are given below:

\begin{proposition}\label{thm.pnc_sob}
Suppose $\delta_0 \leq \mf{i}(p)$ is sufficiently small with respect to the fundamental constants.
Then, for every $0 < v < \delta_0$, $2 < r < \infty$, and $\Psi \in \Gamma \ul{\mc{T}} \mc{N}$,
\begin{align*}
\nabs{\Psi}_{L^\infty\paren{\mc{S}_v}} &\lesssim v^{1 - \frac{2}{r}} \nabs{\nasla \Psi}_{L^r\paren{\mc{S}_v}} + v^{-\frac{2}{r}} \nabs{\Psi}_{L^r\paren{\mc{S}_v}} \text{,} \\
\nabs{\Psi}_{L^r\paren{\mc{S}_v}} &\lesssim \nabs{\nasla \Psi}_{L^2\paren{\mc{S}_v}}^{1 - \frac{2}{r}} \nabs{\Psi}_{L^2\paren{\mc{S}_v}}^\frac{2}{r} + v^{\frac{2}{r} - 1} \nabs{\Psi}_{L^2\paren{\mc{S}_v}} \\
&\lesssim v^\frac{2}{r} \nabs{\nasla \Psi}_{L^2\paren{\mc{S}_v}} + v^{\frac{2}{r} - 1} \nabs{\Psi}_{L^2\paren{\mc{S}_v}} \text{,} \\
\nabs{\Psi}_{L^\infty\paren{\mc{S}_v}} &\lesssim \nabs{\nasla^2 \Psi}_{L^2\paren{\mc{S}_v}}^\frac{1}{2} \nabs{\Psi}_{L^2\paren{\mc{S}_v}}^\frac{1}{2} + v^{-1} \nabs{\Psi}_{L^2\paren{\mc{S}_v}} \\
&\lesssim v \nabs{\nasla^2 \Psi}_{L^2\paren{\mc{S}_v}} + v^{-1} \nabs{\Psi}_{L^2\paren{\mc{S}_v}} \text{.}
\end{align*}
In the first two estimates, the constants of the inequalities also depend on $r$.
\end{proposition}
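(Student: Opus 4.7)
The plan is to pull back standard Sobolev inequalities from the round sphere $(\Sph^2,\lambda_0)$ to each cross-section $(\mc{S}_v,\lambda)$ by working through the rescaled metric $\bar{\lambda}=t_p^{-2}\lambda$, and then to absorb the resulting $v$-factors. Because the conformal factor $v^{-2}$ is constant along each $\mc{S}_v$, the Levi-Civita connections of $\lambda$ and $\bar{\lambda}$ coincide, so for a horizontal rank-$k$ tensor $\Psi$ one has the pointwise identity $|\nasla^j\Psi|_{\bar{\lambda}}=v^{k+j}|\nasla^j\Psi|_\lambda$ together with $d\bar{V}=v^{-2}\,dV$. These combine to the scaling identity
\[
\nabs{\nasla^j\Psi}_{L^p(\mc{S}_v,\bar{\lambda})} = v^{\,k+j-2/p}\nabs{\nasla^j\Psi}_{L^p(\mc{S}_v,\lambda)}
\]
for every $1\leq p\leq\infty$, with the usual convention at $p=\infty$.

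The main step is to establish the three required Sobolev inequalities on $(\mc{S}_v,\bar{\lambda})$ with constants independent of $v$. By \cite[Lemma 7.3]{shao:bdc_nv}, if $\delta_0$ is small in terms of the fundamental constants then in a fixed finite coordinate atlas on $\Sph^2$, transported to each $\mc{S}_v$, the components of $\bar{\lambda}$ are uniformly elliptic with constants depending only on the fundamental constants. Following the derivation of Proposition \ref{thm.sobolev_tf} and \cite[Lemma 2.1]{shao:bdc_nv}, a subordinate partition of unity together with the standard Euclidean embedding $W^{1,r}\hookrightarrow L^\infty$ for $r>2$, the two-dimensional Gagliardo-Nirenberg inequality $\nabs{\psi}_{L^r}\lesssim\nabs{d\psi}_{L^2}^{1-2/r}\nabs{\psi}_{L^2}^{2/r}+\nabs{\psi}_{L^2}$, and the critical second-order Sobolev embedding on $\Sph^2$ all transfer uniformly in $v$ to scalar inequalities on $(\mc{S}_v,\bar{\lambda})$. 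Applying them to the scalar function $|\Psi|_{\bar{\lambda}}$ and using Kato's inequality $|\nasla|\Psi|_{\bar{\lambda}}|\leq|\nasla\Psi|_{\bar{\lambda}}$ then upgrades these to the required tensorial versions.

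Substituting the scaling identity into each $\bar{\lambda}$-inequality and dividing through by $v^k$ produces the corresponding $v$-weighted $\lambda$-inequality in the statement, with the rank-dependent powers cancelling exactly. Finally, the unweighted secondary forms $\nabs{\Psi}_{L^r}\lesssim v^{2/r}\nabs{\nasla\Psi}_{L^2}+v^{2/r-1}\nabs{\Psi}_{L^2}$ and $\nabs{\Psi}_{L^\infty}\lesssim v\nabs{\nasla^2\Psi}_{L^2}+v^{-1}\nabs{\Psi}_{L^2}$ follow from the interpolated estimates via the weighted Young inequality $a^{1-\theta}b^\theta\leq\epsilon\,a+C_\theta\,\epsilon^{-(1-\theta)/\theta}\,b$ applied with $\epsilon=v^{2/r}$ and $\epsilon=v$, respectively. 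The genuine obstacle is the second step: ensuring that \cite[Lemma 7.3]{shao:bdc_nv} furnishes ellipticity constants for $\bar{\lambda}$ in the transported atlas that are truly independent of $v\in(0,\delta_0)$, so that the Euclidean Sobolev constants transfer without $v$-dependent loss. Once that uniformity is in hand, everything else reduces to the scaling bookkeeping sketched above.
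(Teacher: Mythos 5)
Your approach is essentially the one the paper intends: the paper's ``proof'' is a citation to the thesis, but the surrounding discussion says exactly what you do --- use the uniform regularity of the rescaled metric $\bar{\lambda} = t_p^{-2}\lambda$ in transported coordinates (from \cite[Lemma 7.3]{shao:bdc_nv}) to transfer Euclidean Sobolev inequalities to $(\mc{S}_v, \bar{\lambda})$ with $v$-independent constants, in a process analogous to Proposition \ref{thm.sobolev_tf}, and then unwind the conformal scaling. Your scaling identities $|\nasla^j \Psi|_{\bar{\lambda}} = v^{k+j}|\nasla^j\Psi|_\lambda$, $d\bar{V} = v^{-2}\,dV$ are correct (the connections do coincide since $v$ is constant on each $\mc{S}_v$), the bookkeeping producing the $v$-weights checks out in all three cases, and you correctly identify that all the real analytic content is concentrated in the $v$-uniform ellipticity of $\bar{\lambda}$, which is precisely what the smallness of $\delta_0$ and the null-geometry bootstrap supply.

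The one step that does not work as literally written is the reduction of the \emph{second-order} tensorial estimate to a scalar one via Kato's inequality. Kato gives $|\nasla|\Psi||\leq|\nasla\Psi|$ and so handles the two first-order inequalities, but there is no analogue at second order: schematically $|\nabla^2|\Psi||\lesssim|\nasla^2\Psi| + |\nasla\Psi|^2/|\Psi|$, and the second term is not controlled. The standard fix --- and what the paper does for the analogous timeslice estimate \eqref{eq.sob_2h} --- is to chain the two first-order inequalities (e.g.\ apply the $L^\infty$--$L^4$ estimate to $\Psi$ and then the $L^4$--$L^2$ estimate to $\Psi$ and $\nasla\Psi$); to recover the sharper multiplicative form $\|\nasla^2\Psi\|_{L^2}^{1/2}\|\Psi\|_{L^2}^{1/2}$ stated in the proposition one additionally uses the integration-by-parts interpolation $\|\nasla\Psi\|_{L^2(\mc{S}_v)}^2 \leq \|\nasla^2\Psi\|_{L^2(\mc{S}_v)}\|\Psi\|_{L^2(\mc{S}_v)}$ on the closed surface $\mc{S}_v$. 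With that substitution your argument is complete.
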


\begin{proof}
See \cite[Sec. 2.1, Sec. 7.2]{shao:bdc_nv}.
\end{proof}

\begin{remark}
The powers of $v$ present throughout the estimates of Proposition \ref{thm.pnc_sob} are consequences of the fact that the uniformities of this setting occur with respect to the rescaled metrics $\bar{\lambda}$ rather than the induced metric $\lambda$.
\end{remark}

In addition to the Sobolev inequalities of Proposition \ref{thm.pnc_sob}, which are estimates on the individual $\mc{S}_v$'s, we have the following first-order ``null Sobolev" inequality, which is in contrast a Sobolev-type estimate on all of $\mc{N}$.

\begin{proposition}\label{thm.pnc_nsob}
Suppose $\delta_0 \leq \mf{i}(p)$ is sufficiently small with respect to the fundamental constants.
Then, for any $\Psi \in \Gamma \ul{\mc{T}} \mc{N}$, the following estimate holds:
\[ \nabs{t_p^\frac{1}{2} \Psi}_{L^4_\omega L^\infty_t \paren{\mc{N}}} + \nabs{\Psi}_{L^6\paren{\mc{N}}} + \nabs{t_p^\frac{1}{2} \Psi}_{L^2_\omega L^\infty_t \paren{\mc{N}}} \lesssim \nabs{\Psi}_{\mc{H}^1\paren{\mc{N}}} \text{.} \]
As a consequence, the following estimates hold for any $2 \leq d \leq \infty$:
\[ \nabs{\Psi}_{L^\infty_t L^4 \paren{\mc{N}}} \lesssim \nabs{\Psi}_{\mc{H}^1\paren{\mc{N}}} \text{,} \qquad \nabs{t_p^{-\frac{1}{2} - \frac{1}{d}} \Psi}_{L^d_{t_p} L^2 \paren{\mc{N}}} \lesssim \nabs{\Psi}_{\mc{H}^1\paren{\mc{N}}} \text{.} \]
\end{proposition}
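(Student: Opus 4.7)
The plan is to establish the three estimates on the left-hand side in an order that avoids circular bootstrapping, then to derive the two stated consequences by direct comparisons. All three main estimates rest on two ingredients: the fundamental theorem of calculus along a null generator $\gamma_\omega$, which converts $L^\infty_t$-quantities into integrals in $v$ by exploiting the vanishing of $t_p^\alpha|\Psi|^\beta$ at the vertex, and the volume-form comparison of Proposition \ref{thm.pnc_vol}, which converts $\int_{\Sph^2}\cdot\,d\omega$ into $t_p^{-2}\int_{\mc{S}_v}\cdot$, allowing a transition to the $\mc{H}^1(\mc{N})$-norm. Throughout I would use $\vartheta\simeq 1$ from Corollary \ref{thm.nl_bound} and the fact that $\onasla$ is horizontal-metric-compatible (because $\nabla_X\gamma\equiv 0$ for all $X\in\mf{X}(N)$ by construction), so that $\partial_{t_p}|\Psi|^q = q|\Psi|^{q-2}\lambda(\Psi,\onasla_{t_p}\Psi)$ with no spurious metric corrections.

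I would start with the easiest bound, $\|t_p^{1/2}\Psi\|_{L^2_\omega L^\infty_t}$. Set $f = t_p|\Psi|^2$, observe $f(v,\omega)\to 0$ as $v\searrow 0$ (justified by approximating $\Psi$ in $\mc{H}^1$ by fields vanishing near the vertex), apply the fundamental theorem of calculus to get $\sup_v f \leq \int_0^{\delta_0}|\partial_{t_p}f|\,dv$, integrate over $\Sph^2$, and convert to an integral over $\mc{N}$ via Proposition \ref{thm.pnc_vol}. A Cauchy-Schwarz then yields
\[ \|t_p^{1/2}\Psi\|_{L^2_\omega L^\infty_t}^2 \lesssim \int_{\mc{N}}\bigl[t_p^{-2}|\Psi|^2 + t_p^{-1}|\Psi||\onasla_{t_p}\Psi|\bigr] \lesssim \|\Psi\|_{\mc{H}^1}^2. \]
The identical template with $f = t_p^2|\Psi|^4$ produces
\[ \|t_p^{1/2}\Psi\|_{L^4_\omega L^\infty_t}^4 \lesssim \int_{\mc{N}}\bigl[t_p^{-1}|\Psi|^4 + |\Psi|^3|\onasla_{t_p}\Psi|\bigr], \]
but now the right-hand side is naturally dominated, via Hölder with exponents $(6,6,6,2)$, by $\|\Psi\|_{L^6(\mc{N})}^3\bigl(\|t_p^{-1}\Psi\|_{L^2}+\|\onasla_{t_p}\Psi\|_{L^2}\bigr)$, forcing the $L^4_\omega L^\infty_t$ estimate to be coupled with the $L^6$ bound.

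For the $L^6(\mc{N})$ bound I would write $\|\Psi\|_{L^6(\mc{N})}^6 = \int_0^{\delta_0}\|\Psi\|_{L^6(\mc{S}_v)}^6\,dv$ and split $\|\Psi\|_{L^6(\mc{S}_v)}^6 \leq \|\Psi\|_{L^\infty(\mc{S}_v)}^2\|\Psi\|_{L^4(\mc{S}_v)}^4$; the $L^4(\mc{S}_v)$-factor is estimated by the sphere-Sobolev inequality of Proposition \ref{thm.pnc_sob}, while the $L^\infty(\mc{S}_v)$-factor is absorbed into the $L^\infty_t L^4$-type quantity already controlled by $\|t_p^{1/2}\Psi\|_{L^4_\omega L^\infty_t}$ (via the $L^\infty_t L^4$ consequence, proved first — see below). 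Arranging the exponents so that the $\|t_p^{1/2}\Psi\|_{L^4_\omega L^\infty_t}$ factor enters the right-hand side of the $L^6$ bound with a total power strictly less than one, after substituting into the $q = 4$ trace estimate and reabsorbing, closes the coupled system and yields both $\|\Psi\|_{L^6(\mc{N})}\lesssim\|\Psi\|_{\mc{H}^1}$ and $\|t_p^{1/2}\Psi\|_{L^4_\omega L^\infty_t}\lesssim\|\Psi\|_{\mc{H}^1}$ simultaneously.

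The two consequences follow immediately. By Proposition \ref{thm.pnc_vol}, for each $v$,
\[ \|\Psi\|_{L^4(\mc{S}_v)}^4 \simeq v^2\!\int_{\Sph^2}|\Psi|^4\,d\omega \leq \int_{\Sph^2}\sup_{v'}t_p^2|\Psi|^4\,d\omega = \|t_p^{1/2}\Psi\|_{L^4_\omega L^\infty_t}^4, \]
and supping in $v$ gives $\|\Psi\|_{L^\infty_t L^4}\lesssim\|\Psi\|_{\mc{H}^1}$. For the weighted $L^d_{t_p}L^2$ bound, the endpoint $d = 2$ is trivial (the weight $t_p^{-1}$ appears in $\mc{H}^1$), the endpoint $d = \infty$ follows from the same Proposition \ref{thm.pnc_vol} maneuver applied to $|\Psi|^2$ together with the $L^2_\omega L^\infty_t$ bound, and the intermediate exponents follow by Hölder in $v$ (log-convexity). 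The main obstacle is the $L^4_\omega L^\infty_t$--$L^6$ coupling: one must track the exact powers in the spherical Sobolev interpolation of Proposition \ref{thm.pnc_sob} with the $v$-weights present in that proposition, choosing the split so that reabsorption succeeds. The secondary technicality, establishing the vertex-zero trace condition needed to apply the fundamental theorem, is handled by density of smooth fields supported away from $p$ in the $\mc{H}^1$-completion.
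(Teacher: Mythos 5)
Your overall architecture is the right one, and it is essentially the argument behind the cited \cite[Lemma 7.8]{shao:bdc_nv}: integrate $\partial_v(t_p^\alpha|\Psi|^{2\alpha})$ along each null generator from the vertex, convert $\int_{\Sph^2}d\omega$ to $t_p^{-2}\int_{\mc{S}_v}$ via Proposition \ref{thm.pnc_vol} and $\vartheta\simeq 1$, obtain $\nabs{t_p^{1/2}\Psi}_{L^2_\omega L^\infty_t}\lesssim\nabs{\Psi}_{\mc{H}^1}$ directly and $\nabs{t_p^{1/2}\Psi}_{L^4_\omega L^\infty_t}^4\lesssim\nabs{\Psi}_{L^6}^3\nabs{\Psi}_{\mc{H}^1}$, and then close the $L^6$/$L^4_\omega L^\infty_t$ loop by absorption. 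The derivation of the two consequences is also correct.

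The genuine gap is in your $L^6$ step. The split $\nabs{\Psi}_{L^6(\mc{S}_v)}^6\leq\nabs{\Psi}_{L^\infty(\mc{S}_v)}^2\nabs{\Psi}_{L^4(\mc{S}_v)}^4$ introduces a factor $\nabs{\Psi}_{L^\infty(\mc{S}_v)}$ that cannot be controlled by the $\mc{H}^1(\mc{N})$ data: the $L^\infty$ estimates of Proposition \ref{thm.pnc_sob} require either $\nasla\Psi\in L^r(\mc{S}_v)$ with $r>2$ or $\nasla^2\Psi\in L^2(\mc{S}_v)$, neither of which is available, and your suggestion to ``absorb the $L^\infty(\mc{S}_v)$-factor into the $L^\infty_t L^4$ quantity'' conflates a supremum over the sphere with a supremum over $v$ --- these are different objects and the substitution is not legitimate. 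The correct interpolation is the Gagliardo--Nirenberg form $\nabs{\Psi}_{L^6(\mc{S}_v)}\lesssim\nabs{\nasla\Psi}_{L^2(\mc{S}_v)}^{1/3}\nabs{\Psi}_{L^4(\mc{S}_v)}^{2/3}+v^{-1/6}\nabs{\Psi}_{L^4(\mc{S}_v)}$ (same family as Proposition \ref{thm.pnc_sob}, with the stated $v$-weights), which after raising to the sixth power and integrating in $v$ gives $\nabs{\Psi}_{L^6(\mc{N})}^6\lesssim\nabs{\Psi}_{L^\infty_t L^4(\mc{N})}^4\bigl(\nabs{\nasla\Psi}_{L^2(\mc{N})}^2+\int_0^{\delta_0}v^{-1}\nabs{\Psi}_{L^4(\mc{S}_v)}^2\,dv\bigr)$; the residual integral is then bounded by $\nabs{\Psi}_{\mc{H}^1}^2$ using $v^{-1}\nabs{\Psi}_{L^4(\mc{S}_v)}^2\lesssim\nabs{\nasla\Psi}_{L^2(\mc{S}_v)}^2+v^{-2}\nabs{\Psi}_{L^2(\mc{S}_v)}^2$. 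Combined with your $L^4_\omega L^\infty_t$ inequality this closes the loop as you intended. Relatedly, your phrase ``arranging the exponents so that the factor enters with total power strictly less than one'' is exactly where the lower-order, $v$-weighted terms must actually be checked; as written you have deferred the only nontrivial bookkeeping in the proof.
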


\begin{proof}
See \cite[Lemma 7.8]{shao:bdc_nv}.
\end{proof}

In Theorem \ref{thm.nc}, we have sufficiently controlled the Ricci coefficients $\chi$ and $\zeta$.
The remaining coefficients $\ul{\chi}$ and $\ul{\eta}$ can also be controlled by observing their relationships with $\chi$, $\zeta$, and the quantities $k$ and $n$ derived from the time foliation.

\begin{proposition}\label{thm.pnc_tf}
Suppose $\delta_0 \leq \mf{i}(p)$ is sufficiently small with respect to the fundamental constants.
Then, the following bounds hold on $\mc{N}$:
\[ \nabs{\ul{\eta}}_{L^\infty\paren{\mc{N}}} + \nabs{\nasla \vartheta}_{L^\infty_\omega L^2_t \paren{\mc{N}}} + \nabs{\vartheta \paren{\trace \ul{\chi}} + \frac{2}{t_p}}_{L^\infty\paren{\mc{N}}} + \nabs{\hat{\ul{\chi}}}_{L^\infty_\omega L^2_t \paren{\mc{N}}} \lesssim 1 \text{.} \]
\end{proposition}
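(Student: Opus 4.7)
The plan is to control each of the four terms by combining the algebraic identities \eqref{eq.eta_tfol}, \eqref{eq.chib_tfol}, \eqref{eq.null_torsion} relating $\ul{\chi}$, $\ul{\eta}$, $\nasla\vartheta$ to the time-foliation data $k$, $n$, $N$ and to the already-controlled null Ricci coefficients $\chi$, $\zeta$ of Theorem \ref{thm.nc}, together with the pointwise bounds \eqref{eq.cond_T0}, \eqref{eq.bdc_lemma} and the comparison $\vartheta \simeq 1$ from Corollary \ref{thm.nl_bound}.

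Three of the four bounds are essentially algebraic. First, the identity \eqref{eq.eta_tfol} immediately yields $|\ul{\eta}| \lesssim |k| + |\nabla(\log n)| \lesssim 1$ pointwise on $\mc{N}$. Next, from the pointwise estimate $|\hat{\ul{\chi}}| \lesssim |\hat{\chi}| + |k|$, obtained by taking the $\lambda$-traceless part of \eqref{eq.chib_tfol}, together with Theorem \ref{thm.nc},
\begin{equation*}
\nabs{\hat{\ul{\chi}}}_{L^\infty_\omega L^2_t(\mc{N})} \lesssim \nabs{\hat{\chi}}_{L^\infty_\omega L^2_t(\mc{N})} + \delta_0^{1/2}\nabs{k}_{L^\infty(M)} \lesssim 1.
\end{equation*}
Finally, \eqref{eq.null_torsion} gives $\nasla\vartheta = \vartheta(\ul{\eta} + \zeta)$, so combining Corollary \ref{thm.nl_bound}, the bound on $\ul{\eta}$ just proved, and the $L^\infty_\omega L^2_t$-bound on $\zeta$ from Theorem \ref{thm.nc} yields $\nabs{\nasla\vartheta}_{L^\infty_\omega L^2_t(\mc{N})} \lesssim 1$.

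The main obstacle is the uniform bound on $\vartheta\trace\ul{\chi} + 2/t_p$. Taking the $\lambda$-trace of \eqref{eq.chib_tfol} and inserting $\pm 2/t_p$ gives the decomposition
\begin{equation*}
\vartheta\trace\ul{\chi} + \frac{2}{t_p} = -n^{-2}\vartheta^2\paren{\vartheta\trace\chi - \frac{2}{t_p}} + \frac{2\paren{1 - n^{-2}\vartheta^2}}{t_p} + 2n^{-1}\vartheta^2\trace_\lambda k,
\end{equation*}
in which the first summand is controlled by \eqref{eq.pnc_main_est_alt} and the third is trivial. The crux is therefore the auxiliary estimate $|1 - n^{-2}\vartheta^2| \lesssim t_p$ on $\mc{N}$. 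To prove it, I reintroduce the quantity $\varphi = g(L, T)$ used in the proof of Proposition \ref{thm.null_flux}; by \eqref{eq.nv_tllb} this equals $n/\vartheta$. Since the null generators are normalized by $g(\gamma'_\omega(0), T|_p) = 1$, we have $\varphi|_{v=0} \equiv 1$; and since $D_L L = 0$,
\begin{equation*}
L\varphi = g(L, D_L T) = \tfrac{1}{2}\pi(L, L),
\end{equation*}
which satisfies $|L\varphi| \lesssim |\pi|\,|L|^2 \lesssim 1$ by \eqref{eq.bdc_lemma}, \eqref{eq.nv_norm}, and Corollary \ref{thm.nl_bound}. Integrating along each null generator (using $ds/dt_p = \vartheta \simeq 1$) gives $|\varphi - 1| \lesssim t_p$; combined with $n, \vartheta \simeq 1$ this yields $|\vartheta - n| \lesssim t_p$ and hence $|1 - n^{-2}\vartheta^2| \lesssim t_p$, closing the argument.
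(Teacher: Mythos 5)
Your proof is correct and follows essentially the same route as the paper: the bounds for $\ul{\eta}$, $\nasla\vartheta$, and $\hat{\ul{\chi}}$ come straight from \eqref{eq.eta_tfol}, \eqref{eq.null_torsion}, \eqref{eq.chib_tfol} together with \eqref{eq.bdc} and Theorem \ref{thm.nc}, exactly as in the text. The only difference is that the paper cites the key pointwise inequality $\abs{\vartheta\paren{\trace\ul{\chi}} + 2/t_p} \lesssim 1 + \abs{\vartheta\paren{\trace\chi} - 2/t_p}$ from \cite[Cor.~4.3]{shao:bdc_nv}, whereas you supply its proof in full, correctly isolating the singular term $2\paren{1 - n^{-2}\vartheta^2}/t_p$ and controlling it via the transport equation $L\varphi = \tfrac{1}{2}\pi(L,L)$ for $\varphi = g(T,L)$, which quantitatively sharpens the $\varphi \simeq 1$ argument already used in Proposition \ref{thm.null_flux}.
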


\begin{proof}
The $L^\infty$-bound for $\ul{\eta}$ follows immediately from \eqref{eq.bdc} and \eqref{eq.eta_tfol}, and the trace bound for $\nasla \vartheta$ follows from this due to \eqref{eq.null_torsion} and \eqref{eq.pnc_main_est}.
To handle the estimates for $\ul{\chi}$, we resort to the identity \eqref{eq.chib_tfol}, from which one can derive the bounds
\[ \abs{\vartheta \paren{\trace \ul{\chi}} + \frac{2}{t_p}} \lesssim 1 + \abs{\vartheta \paren{\trace \chi} - \frac{2}{t_p}} \text{,} \qquad \abs{\hat{\ul{\chi}}} \lesssim 1 + \abs{\hat{\chi}} \]
on $\mc{N}$.
\footnote{See \cite[Cor. 4.3]{shao:bdc_nv}.}
The desired estimates for $\ul{\chi}$ now follow from \eqref{eq.pnc_main_est} and \eqref{eq.pnc_main_est_alt}.
\end{proof}

\begin{remark}
It is also possible to obtain estimates for $\ul{\eta}$, $\nasla \vartheta$, and $\ul{\chi}$ in the $\mc{H}^1$-norm analogous to those for $\chi$ and $\zeta$.
Such estimates are in fact important within the proof of Theorem \ref{thm.nc} itself.
However, we will not need these results in this paper.
\end{remark}

\begin{remark}
If $\mc{K}$ is the Gauss curvatures of the $\mc{S}_v$'s, then the quantity $\mc{K} - \vartheta^{-2} t_p^{-2}$ also satisfies estimates in both the $L^2(\mc{N})$-norm and the $L^\infty_t H^{-1/2} (\mc{N})$-norm.
The latter bound is particularly essential to the proof of Theorem \ref{thm.nc}, since it validates numerous elliptic estimates on the $\mc{S}_v$'s that play a fundamental role in the proof; see \cite[Sec. 2.3, Sec. 7.2]{shao:bdc_nv}.
Since these bounds play no role outside of the proof of Theorem \ref{thm.nc}, however, we omit serious discussions of this topic within this paper.
\end{remark}

In the remainder of this section, we will sketch the proof of Theorem \ref{thm.nc}.
For the sake of clarity, this will be divided into multiple steps.
Again, the reader is referred to \cite{shao:bdc_nv} for a detailed account of the proof.

\subsection{Proof Outline I: Bootstrap Assumptions}

The main argument is a ``bootstrap", in which we assume much of what we are trying to prove and proceed to derive even better estimates.
We can then establish via a standard continuity argument that these estimates hold even without the assumptions.

To be more specific, we define the following two conditions, which correspond to the conclusions of Theorem \ref{thm.nc}.
Let $p \in M_+$, and let $0 < \delta_0 \leq 1$ and $\Delta_0 \geq 1$ be fixed constants, whose values will be determined later.
\begin{itemize}
\item \ass{N0}{p, \delta_0, \Delta_0}: If $\delta = \min(\delta_0, t(p) - \tau_0, \mf{i}(p))$ and $\mc{N}_0 = \mc{N}^-(p; \delta)$, then
\begin{align}
\label{eq.bs_N} \Delta_0 &\geq \nabs{\vartheta \paren{\trace \chi} - \frac{2}{t_p}}_{L^\infty_\omega L^2_t \paren{\mc{N}_0}} + \nabs{\vartheta \paren{\trace \chi} - \frac{2}{t_p}}_{\mc{H}^1\paren{\mc{N}_0}} \\
\notag &\qquad + \nabs{\hat{\chi}}_{L^\infty_\omega L^2_t \paren{\mc{N}_0}} + \nabs{\hat{\chi}}_{\mc{H}^1\paren{\mc{N}_0}} + \nabs{\zeta}_{L^\infty_\omega L^2_t \paren{\mc{N}_0}} + \nabs{\zeta}_{\mc{H}^1\paren{\mc{N}_0}} \\
\notag &\qquad + \nabs{\mc{K} - \frac{1}{\vartheta^2 t_p^2}}_{L^\infty_t H^{-\frac{1}{2}} \paren{\mc{N}_0}} + \nabs{\mc{K} - \frac{1}{\vartheta^2 t_p^2}}_{L^2\paren{\mc{N}_0}} \text{.}
\end{align}

\item \ass{N1}{p, \delta_0, \Delta_0}: If $\delta = \min(\delta_0, t(p) - \tau_0)$, then $\mf{i}(p) > \delta$, and the inequality \eqref{eq.bs_N} holds for the null cone segment $\mc{N}_0 = \mc{N}^-(p; \delta)$.
\end{itemize}
Note that the \ass{N1}{p, \delta_0, \Delta_0} condition implies the \ass{N0}{p, \delta_0, \Delta_0} condition.
Furthermore, the conclusions of Theorem \ref{thm.nc} imply that the \ass{N1}{q, \delta_0, \Delta_0} condition holds over all $q \in M_+$ for some $0 < \delta_0 \leq 1$ and $\Delta_0 \geq 1$ depending only on the fundamental constants.
The \ass{N0}{p, \delta_0, \Delta_0} condition, on the other hand, corresponds to the conclusions of Theorem \ref{thm.nc} except the null injectivity radius bound.

We begin the proof of Theorem \ref{thm.nc} by assuming the following:
\begin{itemize}
\item[] {\it The conditions \ass{N0}{q, \delta_0, \Delta_0} holds for every $q \in M_+$, with the associated parameters $0 < \delta_0 \leq 1$ ``sufficiently small" and $\Delta_0 \geq 1$ ``sufficiently large", both depending only on the fundamental constants.}
\end{itemize}
This is the explicit bootstrap assumption we use for our proof.
The precise requirements for $\delta_0$ and $\Delta_0$ in our bootstrap assumption is determined within the proof.
Our goal is then to show the following estimates:
\begin{itemize}
\item[] {\it The conditions \ass{N0}{q, \delta_0, \Delta_0/2} hold for every $q \in M_+$.}
\end{itemize}
Once we establish this, we can conclude that the conditions \ass{N0}{q, \delta_0, \Delta_0} hold even without the bootstrap assumptions.
\footnote{We of course also need the initial value results of Proposition \ref{thm.pnc_init}.}
This is essentially the bootstrap argument.

To carry out this process, we require numerous \emph{auxiliary estimates}, by which we mean estimates that are consequences of the \ass{N0}{p, \delta_0, \Delta_0} condition.
This is fundamental to the proof, as many of these auxiliary estimates imply sufficient regularity on the $\mc{S}_v$'s so that the relevant analysis tools can be applied.

The primary examples of auxiliary estimates are the properties described in Section \ref{sec.pnc_cor}.
Note that these properties, in particular the bounds and comparisons of Propositions \ref{thm.pnc_vol}-\ref{thm.pnc_tf}, depend only on the \ass{N0}{p, \delta_0, \Delta_0} condition and not on the validity of Theorem \ref{thm.nc} itself.
For example, the Sobolev estimates of Propositions \ref{thm.pnc_sob} and \ref{thm.pnc_nsob} are vitally important for various basic geometric Littlewood-Paley estimates.

Another important class of auxiliary estimates are the symmetric Hodge-elliptic estimates on the $\mc{S}_v$'s.
These arguments are essentially variations of those originally introduced in \cite[Ch. 2]{chr_kl:stb_mink}.
First, we define the following operators:
\begin{itemize}
\item Given a horizontal $1$-form $\xi \in \ul{\mf{X}} (\mc{N})$, we define
\[ \mc{D}_1 \xi \in C^\infty \paren{\mc{N}} \times C^\infty\paren{\mc{N}} \text{,} \qquad \mc{D}_1 \xi = \paren{ \lambda^{ab} \nasla_a \xi_b, \epsilon^{ab} \nasla_a \xi_b } \text{,} \]
where $\epsilon$ is the associated volume forms on the $\mc{S}_v$'s.

\item Given a symmetric traceless $\xi \in \ul{T}^2 \mc{N}$, we define
\[ \mc{D}_2 \xi \in \ul{\mf{X}} \paren{\mc{N}} \text{,} \qquad \mc{D}_2 \xi_a = \lambda^{bc} \nasla_b \xi_{ca} \text{.} \]

\item Given $(\xi_1, \xi_2) \in C^\infty (\mc{N}) \times C^\infty (\mc{N})$, we define
\[ \mc{D}_1^\ast \xi \in \ul{\mf{X}} \paren{\mc{N}} \text{,} \qquad \mc{D}_1^\ast \paren{\xi_1, \xi_2}_a = - \nasla_a \xi_1 - \epsilon_a{}^b \nasla_b \xi_2 \text{.} \]
\end{itemize}
The notation $\mc{D}_1^\ast$ is justified by the fact that $\mc{D}_1^\ast$ is the $L^2$-adjoint of $\mc{D}_1$.

Here is where the $L^\infty_t H^{-1/2}$-estimate for $\mc{K} - \vartheta^{-2} t_p^{-2}$ becomes essential.
From this assumption, we can prove that $\mc{D}_1$ and $\mc{D}_2$ are one-to-one, and hence we can define via projections their $L^2$-bounded ``inverses" $\mc{D}_1^{-1}$ and $\mc{D}_2^{-1}$.
Although $\mc{D}_1^\ast$ clearly fails to be injective, we can still define a viable ``inverse" $(\mc{D}_1^\ast)^{-1}$ by ``modding out" constant functions and mapping to the canonical element with zero mean.
For detailed explanations of these procedures, consult \cite[Sec. 2.3]{shao:bdc_nv}.
The upshot of this development is the following set of auxiliary Hodge-elliptic estimates:

\begin{lemma}\label{thm.pnc_hodge}
If the \ass{N0}{p, \delta_0, \Delta_0} condition holds for sufficiently small $\delta_0$ (with respect to $\Delta_0$ and the fundamental constants), then the following estimates hold:
\begin{align*}
\nabs{\nasla \mc{D}^{-1} \xi}_{L^2 (\mc{S}_v)} + v^{-1} \nabs{\mc{D}^{-1} \xi}_{L^2 (\mc{S}_v)} \lesssim \nabs{\xi}_{L^2 (\mc{S}_v)} \text{,} \qquad \mc{D} \in \brac{\mc{D}_1, \mc{D}_2, \mc{D}_1^\ast} \text{.}
\end{align*}
\end{lemma}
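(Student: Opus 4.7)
The plan is to combine classical Bochner-type integration by parts identities on the closed $2$-surface $\mc{S}_v$ with an $H^{-1/2}$-$H^{1/2}$ duality argument that absorbs the Gauss curvature error controlled by the bootstrap hypothesis \ass{N0}{p, \delta_0, \Delta_0}. For each of $\mc{D}_1$, $\mc{D}_2$, and $\mc{D}_1^\ast$ one has a standard integration by parts identity on $\mc{S}_v$ relating its $L^2$-norm to the full covariant derivative of $\xi$ plus a Gauss curvature correction; schematically,
\begin{equation*}
\int_{\mc{S}_v} |\mc{D}_1 \xi|^2 = \int_{\mc{S}_v} |\nasla \xi|^2 + \int_{\mc{S}_v} \mc{K}\, |\xi|^2,
\end{equation*}
together with an analogous relation carrying a factor of $2$ in front of the curvature term for $\mc{D}_2$ on symmetric traceless tensors, and the curvature-free identity $\int_{\mc{S}_v} |\mc{D}_1^\ast \eta|^2 = \int_{\mc{S}_v} (|\nasla \eta_1|^2 + |\nasla \eta_2|^2)$, in which the mixed term integrates to zero because $\mc{S}_v$ is closed.

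Next, I would split $\mc{K} = (\mc{K} - \vartheta^{-2} t_p^{-2}) + \vartheta^{-2} t_p^{-2}$. On $\mc{S}_v$ one has $t_p \equiv v$ and $\vartheta \simeq 1$ by Corollary \ref{thm.nl_bound}, so the second summand contributes exactly the desired weighted quantity $\simeq v^{-2} \nabs{\xi}_{L^2(\mc{S}_v)}^2$, with the favorable positive sign for both $\mc{D}_1$ and $\mc{D}_2$. The residual $\int_{\mc{S}_v} (\mc{K} - \vartheta^{-2} t_p^{-2}) |\xi|^2$ is estimated by $H^{-1/2}$-$H^{1/2}$ duality on $\mc{S}_v$: the first factor is controlled by the $L^\infty_t H^{-1/2}(\mc{N})$ assumption from \ass{N0}{p, \delta_0, \Delta_0}, while $\nabs{|\xi|^2}_{H^{1/2}(\mc{S}_v)}$ is handled via a product estimate combined with the scaled Sobolev and null-Sobolev inequalities of Propositions \ref{thm.pnc_sob} and \ref{thm.pnc_nsob}. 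Careful tracking of $v$-powers, aided by the comparison $\bar\lambda \simeq \lambda_{\Sph^2}$ from Proposition \ref{thm.pnc_vol}, should produce a bound of the form
\begin{equation*}
\Bigl| \int_{\mc{S}_v} (\mc{K} - \vartheta^{-2} t_p^{-2}) |\xi|^2 \Bigr| \lesssim \delta_0^{\alpha}\, \Delta_0 \bigl( \nabs{\nasla \xi}_{L^2(\mc{S}_v)}^2 + v^{-2} \nabs{\xi}_{L^2(\mc{S}_v)}^2 \bigr)
\end{equation*}
for some $\alpha > 0$, with implicit constant depending only on the fundamental constants.

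Choosing $\delta_0$ sufficiently small relative to $\Delta_0$ then allows absorption of the right-hand side into the principal terms, yielding
\begin{equation*}
\nabs{\nasla \xi}_{L^2(\mc{S}_v)}^2 + v^{-2} \nabs{\xi}_{L^2(\mc{S}_v)}^2 \lesssim \nabs{\mc{D} \xi}_{L^2(\mc{S}_v)}^2
\end{equation*}
for $\mc{D} \in \{\mc{D}_1, \mc{D}_2\}$ and $\xi$ in the orthogonal complement of $\ker \mc{D}$. For $\mc{D}_1^\ast$, the curvature-free identity together with the Poincar\'e inequality $v^{-1} \nabs{\eta}_{L^2(\mc{S}_v)} \lesssim \nabs{\nasla \eta}_{L^2(\mc{S}_v)}$ on mean-zero scalars (valid because $\bar\lambda$ is uniformly close to the round metric on $\Sph^2$ by the auxiliary results of Section \ref{sec.pnc_cor}) gives the same inequality directly. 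Applying these estimates with $\xi = \mc{D}^{-1} \zeta$ and recalling that $\mc{D}^{-1}$ is defined as the inverse on the orthogonal complement of $\ker \mc{D}$ yields the claimed bound in each case.

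The principal obstacle is the curvature error estimate: one must realize $\nabs{|\xi|^2}_{H^{1/2}(\mc{S}_v)}$ in a way whose $v$-scaling, paired against the $L^\infty_t H^{-1/2}$ bound on $\mc{K} - \vartheta^{-2} t_p^{-2}$, yields a genuine $\delta_0^{\alpha}$ smallness rather than a bound merely uniform in $\delta_0$. This scaling sensitivity is precisely what the auxiliary estimates of Section \ref{sec.pnc_cor} are designed to handle; in particular, the equivalence $\bar\lambda \simeq \lambda_{\Sph^2}$ and the scaled Sobolev inequalities of Proposition \ref{thm.pnc_sob} furnish the uniform, scaling-consistent harmonic analysis on $\mc{S}_v$ needed to close the absorption step.
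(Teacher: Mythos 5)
Your proposal is correct and follows essentially the same route the paper intends: the cited argument (a variation of the Christodoulou--Klainerman Hodge identities, as the surrounding text states) rests precisely on the integration-by-parts identities you write down, the splitting $\mc{K} = (\mc{K} - \vartheta^{-2} t_p^{-2}) + \vartheta^{-2} t_p^{-2}$, and the $H^{-1/2}$--$H^{1/2}$ duality against the bootstrap bound, with the $\delta_0^{1/2}$ smallness coming from the $v$-scaling of the $H^{1/2}$-norm of $|\xi|^2$ on $\mc{S}_v$. Your identification of the scaling of the curvature error as the delicate point, and your treatment of $(\mc{D}_1^\ast)^{-1}$ via the mean-zero Poincar\'e inequality, both match the intended argument.
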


\begin{proof}
See \cite[Prop. 2.28, (7.29)]{shao:bdc_nv}.
\end{proof}

From Lemma \ref{thm.pnc_hodge} and Proposition \ref{thm.pnc_sob}, we can obtain additional estimates for the inverse Hodge operators; see \cite[Prop. 2.29, (7.30)]{shao:bdc_nv} for details.

\subsection{Proof Outline II: The Sharp Trace Estimate}

The next batch of auxiliary estimates are the basic geometric Littlewood-Paley estimates and the corresponding Besov estimates on both $\mc{N}$ and the $\mc{S}_v$'s.
These were listed in \cite[Sec. 7.3]{shao:bdc_nv}, while much of the background material is within \cite[Ch. 2]{shao:bdc_nv}.

The main issue is that we are required to make use of Besov norms within the proof of Theorem \ref{thm.nc}.
Recall that in $\R^2$, although the Sobolev embedding $H^1 \hookrightarrow L^\infty$ fails, the corresponding Besov space $B^1_{2, 1}$, on the other hand, does embed into $L^\infty$.
In fact, we will need an analogous Besov embedding property on the $\mc{S}_v$'s.
This will be necessary in particular for the $L^\infty_\omega L^2_t$-type estimates for $\hat{\chi}$ and $\zeta$.

To begin with, one must make sense of Besov norms.
This task was accomplished in \cite{kl_rod:glp} via the construction of a geometric tensorial Littlewood-Paley (abbreviated \emph{L-P}) theory.
\footnote{See also \cite[Sec. 2.2]{shao:bdc_nv} for additional remarks and clarifications on this construction.}
In terms of our current setting, we can systematically define ``L-P projections" $P_k$, $k \in \mathbb{Z}$, which are used to dyadically decompose horizontal tensor fields on $\mc{N}$.
These $P_k$'s are defined explicitly using the heat flow operators intrinsic to the $\mc{S}_v$'s; see \cite[Sec. 5]{kl_rod:glp} and \cite[Sec. 2.2]{shao:bdc_nv} for the exact definitions.
\footnote{The idea of constructing Littlewood-Paley theories from the heat flow, or more generally from diffusion semigroups or martingales, originated from \cite{st:ha_lp}.}

One can proceed to show that these geometric L-P operators satisfy many of the same properties as the classical L-P projections on Euclidean spaces.

\begin{lemma}\label{thm.pnc_glp}
Let $\Psi \in \Gamma \ul{\mc{T}} \mc{N}$, $k \geq 0$, and $0 < v < \delta_0$.
\begin{itemize}
\item The following estimate holds for each $1 \leq p \leq \infty$:
\begin{equation}\label{eq.glp_bdd} \nabs{P_k \Psi}_{L^2 (\mc{S}_v)} \lesssim \nabs{\Psi}_{L^2 (\mc{S}_v)} \text{.} \end{equation}

\item The following ``finite band" estimates hold for each $1 \leq p \leq \infty$:
\begin{align}
\label{eq.glp_fbl} \nabs{\lasl P_k \Psi}_{L^p (\mc{S}_v)} = \nabs{P_k \lasl \Psi}_{L^p (\mc{S}_v)} &\lesssim 2^{2k} \nabs{\Psi}_{L^p (\mc{S}_v)} \text{,} \\
\notag \nabs{P_k \Psi}_{L^p (\mc{S}_v)} &\lesssim 2^{-2k} \nabs{\lasl \Psi}_{L^p (\mc{S}_v)} \text{.}
\end{align}

\item The following ``finite band" estimates hold:
\begin{align}
\label{eq.glp_fb} \nabs{\nasla P_k \Psi}_{L^2 (\mc{S}_v)} + \nabs{P_k \nasla \Psi}_{L^2 (\mc{S}_v)} &\lesssim 2^k \nabs{\Psi}_{L^2 (\mc{S}_v)} \text{,} \\
\notag \nabs{P_k \Psi}_{L^2 (\mc{S}_v)} &\lesssim 2^{-k} \nabs{\nasla \Psi}_{L^2 (\mc{S}_v)} \text{.}
\end{align}

\item If the \ass{N0}{p, \delta_0, \Delta_0} condition holds for sufficiently small $\delta_0$ (with respect to $\Delta_0$ and the fundamental constants), then the following ``weak Bernstein inequalities" hold for any $2 \leq p < \infty$ and $1 < q \leq 2$:
\begin{equation}\label{eq.glp_bernstein} \nabs{P_k \Psi}_{L^p (\mc{S}_v)} \lesssim 2^{ \paren{1 - \frac{2}{p}} k} \nabs{\Psi}_{L^2 (\mc{S}_v)} \text{,} \qquad \nabs{P_k \Psi}_{L^2 (\mc{S}_v)} \lesssim 2^{ \paren{\frac{2}{q} - 1} k} \nabs{\Psi}_{L^q (\mc{S}_v)} \text{.} \end{equation}
\end{itemize}
\end{lemma}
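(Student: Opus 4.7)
The plan is to adopt the construction of the geometric L-P projectors from \cite[Sec. 5]{kl_rod:glp} (see also \cite[Sec. 2.2]{shao:bdc_nv}), in which each $P_k$ is represented as
\[ P_k = \int_0^\infty m_k(\tau) \, U(\tau) \, d\tau, \]
where $U(\tau) = e^{\tau \lasl}$ is the horizontal heat semigroup on $\mc{S}_v$ acting on horizontal tensors of the appropriate rank and $m_k$ is a fixed smooth profile concentrated at $\tau \sim 2^{-2k}$ with suitable moment normalizations. All four groups of estimates then reduce to corresponding statements for $U(\tau)$.

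First, for \eqref{eq.glp_bdd} and the $L^2$ cases of \eqref{eq.glp_fbl} and \eqref{eq.glp_fb}, I would rely on the energy identity
\[ \partial_\tau \nabs{U(\tau)\Psi}_{L^2(\mc{S}_v)}^2 = -2 \nabs{\nasla U(\tau)\Psi}_{L^2(\mc{S}_v)}^2 + (\text{curvature terms}), \]
where the curvature correction, present only in the tensorial case, is controlled using the bootstrap bounds on $\mc{K}-\vartheta^{-2} t_p^{-2}$. This gives both the $L^2$-contraction $\nabs{U(\tau)\Psi}_{L^2} \lesssim \nabs{\Psi}_{L^2}$ and, upon integration in $\tau$, the smoothing bound $\nabs{\nasla U(\tau)\Psi}_{L^2} \lesssim \tau^{-1/2}\nabs{\Psi}_{L^2}$; iterating yields $\nabs{\lasl U(\tau)\Psi}_{L^2} \lesssim \tau^{-1}\nabs{\Psi}_{L^2}$. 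Since $P_k$ is a function of $\lasl$ it commutes with $\lasl$, giving the identity in \eqref{eq.glp_fbl}. Inserting the smoothing bounds into the defining integral of $P_k$ with $\tau \sim 2^{-2k}$ yields the forward finite-band estimates, while the backward bounds (e.g.\ $\nabs{P_k \Psi}_{L^2} \lesssim 2^{-2k}\nabs{\lasl \Psi}_{L^2}$) follow by writing $m_k = \partial_\tau \tilde m_k$ with $\int|\tilde m_k| \lesssim 2^{-2k}$ and integrating by parts: $P_k \Psi = -\int \tilde m_k \, U(\tau) \lasl \Psi \, d\tau$. The estimate \eqref{eq.glp_fb} for $\nasla$ then follows from the identity $\nabs{\nasla P_k \Psi}_{L^2}^2 = -\int \langle P_k \Psi, \lasl P_k \Psi \rangle$, Cauchy-Schwarz, and \eqref{eq.glp_fbl}.

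For the $L^p$ versions of \eqref{eq.glp_fbl} with $p \ne 2$, the key input is the $L^p$-boundedness $\nabs{U(\tau)\Psi}_{L^p(\mc{S}_v)} \lesssim \nabs{\Psi}_{L^p(\mc{S}_v)}$ of the heat semigroup, which follows from standard semigroup theory on a compact Riemannian surface with bounded curvature; in the tensor case one commutes $\nasla$ past $U(\tau)$ and absorbs the resulting curvature errors using the bootstrap assumption and the auxiliary bounds of Propositions \ref{thm.pnc_vol}--\ref{thm.pnc_tf} and Lemma \ref{thm.pnc_hodge}. Given this, the same $\partial_\tau U = \lasl U$ and integration-by-parts trick above converts $\tau$-derivatives of the kernel into factors of $\lasl$, yielding the $2^{2k}$ gain in $L^p$.

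Finally, for the weak Bernstein inequalities \eqref{eq.glp_bernstein}, I would combine the Sobolev estimates of Proposition \ref{thm.pnc_sob} with the $L^2$-smoothing of $U(\tau)$ to deduce the heat-kernel bound
\[ \nabs{U(\tau) \Psi}_{L^p(\mc{S}_v)} \lesssim \tau^{-\frac{1}{2}(1 - 2/p)} \nabs{\Psi}_{L^2(\mc{S}_v)}, \qquad 2 \le p < \infty, \]
for $0 < \tau \lesssim 1$, and then insert this into the defining integral of $P_k$ centered at $\tau \sim 2^{-2k}$. The second inequality in \eqref{eq.glp_bernstein} follows by duality, writing $\langle P_k \Psi, \Phi \rangle = \langle \Psi, P_k^\ast \Phi \rangle$ and applying the first inequality to $P_k^\ast$, which admits an analogous heat-kernel representation with a modified profile. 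The main obstacle, and the reason the bootstrap assumption enters only at this step, is that both Proposition \ref{thm.pnc_sob} and the heat-kernel $L^p$-bounds demand \emph{uniform} control of the geometry of $\mc{S}_v$ as $v$ varies; this forces $\delta_0$ to be chosen small enough relative to $\Delta_0$ that Proposition \ref{thm.pnc_vol}, Lemma \ref{thm.pnc_hodge}, and the scaled Sobolev comparisons of Proposition \ref{thm.pnc_sob} all apply uniformly. A detailed technical account in the vacuum case appears in \cite[Sec. 5]{kl_rod:glp}; the adaptations needed here are carried out in \cite[Sec. 2.2, Sec. 7.3]{shao:bdc_nv}.
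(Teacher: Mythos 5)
Your proposal follows essentially the same route as the paper, which for this lemma simply defers to the heat-flow construction of \cite{kl_rod:glp}: represent $P_k$ as $\int m_k(\tau) U(\tau)\,d\tau$ with $U(\tau) = e^{\tau\lasl}$, prove semigroup bounds, and convert $\tau$-integrations by parts into powers of $\lasl$. One point where your sketch diverges from both the statement and the cited proof: the first three bullets of the lemma are asserted \emph{without} the \ass{N0}{p, \delta_0, \Delta_0} hypothesis, and indeed in \cite{kl_rod:glp} the $L^2$-boundedness and finite-band properties are deliberately geometry-free. Since $\lasl = \gamma^{ab}\nasla_{ab}$ is the connection (rough) Laplacian, the energy identity is exactly $\partial_\tau \nabs{U(\tau)\Psi}_{L^2}^2 = -2\nabs{\nasla U(\tau)\Psi}_{L^2}^2$ with no curvature correction even for tensors, and the $L^p$-contraction of $U(\tau)$ follows from the pointwise Kato-type inequality $\lasl\abs{F}^2 \geq 2\brcmp{F, \lasl F}$ together with the scalar maximum principle, again with no curvature input; your alternative of commuting $\nasla$ past $U(\tau)$ and absorbing curvature errors via the bootstrap would make \eqref{eq.glp_bdd}--\eqref{eq.glp_fb} conditional on \ass{N0}{p, \delta_0, \Delta_0}, which is weaker than what is claimed and used. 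You correctly identify that the geometric control (via the Sobolev inequalities of Proposition \ref{thm.pnc_sob}, hence the bootstrap) is genuinely needed only for the weak Bernstein inequalities \eqref{eq.glp_bernstein}, and there your argument (heat-kernel $L^2\to L^p$ smoothing plus duality, noting that $P_k$ is in fact self-adjoint) matches the cited proof.
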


\begin{proof}
See \cite{kl_rod:glp}.
\end{proof}

In particular, similar to the classical L-P projections, the $P_k$'s convert covariant derivatives into multiplication by a scaling factor (in the weaker sense of $L^p$-norms).
Several other basic properties for the $P_k$'s can be found in \cite{kl_rod:glp, shao:bdc_nv}.

We can now define (geometric tensorial) Besov norms using the operators $P_k$.
The Besov-type norms we will need for Theorem \ref{thm.nc} are the following:
\begin{align*}
\nabs{\Psi}_{\mc{B}^0 (\mc{N})} &= \sum_{k \geq 0} \nabs{P_k \Psi}_{L^\infty_t L^2 (\mc{N})} + \nabs{\Psi}_{L^\infty_t L^2 (\mc{N})} \text{,} \\
\nabs{\Psi}_{\mc{P}^0 (\mc{N})} &= \sum_{k \geq 0} \nabs{P_k \Psi}_{L^2 (\mc{N})} + \nabs{\Psi}_{L^2 (\mc{N})} \text{.}
\end{align*}
For example, one can prove the following sharp Besov embedding estimate:
\begin{equation}\label{eq.pnc_besov_embed} \nabs{\phi}_{L^\infty (\mc{N})} \lesssim \nabs{\nasla \phi}_{\mc{B}^0 (\mc{N})} + \nabs{t_p^{-1} \phi}_{L^\infty_t L^2 (\mc{N})} \text{,} \qquad \phi \in C^\infty \paren{\mc{N}} \text{.} \end{equation}

\begin{remark}
In fact, one of the main reasons why this line of reasoning is so highly technical is because the inequality \eqref{eq.pnc_besov_embed} and other sharp $L^\infty$-$L^2$ and $L^2$-$L^1$ estimates cannot be established for nonscalar tensorial quantities.
This is due to the lack of a satisfactory tensorial ``B\"ochner estimate" controlling the second covariant derivative $\nasla^2$ in $L^2$-norms by $\lasl$.
The reason for this is that we only have a very weak $L^\infty_t H^{-1/2}$-type control on the Gauss curvatures of the $\mc{S}_v$'s.
\end{remark}

The primary tool we will require from our Besov norms is the following auxiliary estimate, which we refer to as the ``sharp trace theorem".

\begin{lemma}\label{thm.pnc_stt}
Assume the \ass{N0}{p, \delta_0, \Delta_0} condition holds for sufficiently small $\delta_0$ and sufficiently large $\Delta_0$ (with respect to the fundamental constants).
If $\Psi, P, E \in \Gamma \ul{\mc{T}} \mc{N}$, and if $\nasla \Psi = \nasla_{t_p} P + E$, then the following ``sharp trace" estimate holds:
\[ \nabs{\Psi}_{L^\infty_\omega L^2_t \paren{\mc{N}}} \lesssim \nabs{\Psi}_{\mc{H}^1\paren{\mc{N}}} + \nabs{P}_{\mc{H}^1\paren{\mc{N}}} + \nabs{E}_{\mc{P}^0\paren{\mc{N}}} \text{.} \]
\end{lemma}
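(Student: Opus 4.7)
The plan is to prove the sharp trace inequality via a geometric Littlewood-Paley decomposition on the null cone and to exploit the structural equation $\nasla\Psi = \nasla_{t_p} P + E$ as a replacement for a naive $1$-dimensional trace along generators (which would fail for merely $\mc{H}^1$-class fields in general). Writing $\Psi = P_{<0}\Psi + \sum_{k\geq 0} P_k\Psi$, where $P_{<0}$ denotes the low-frequency remainder, I would aim to control $\nabs{P_k\Psi}_{L^\infty_\omega L^2_t(\mc{N})}$ for each dyadic piece by a quantity summable in $k$. The low-frequency part is handled directly by the null Sobolev estimates of Proposition \ref{thm.pnc_nsob}, which already produce trace bounds of $L^\infty_\omega L^2_t$-type at the price of $\nabs{\Psi}_{\mc{H}^1\paren{\mc{N}}}$.

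For the dyadic pieces, first I would commute $P_k$ past $\nasla$ and $\nasla_{t_p}$ to obtain the analogous identity
\[ \nasla\paren{P_k\Psi} = \nasla_{t_p}\paren{P_k P} + P_k E + \bigl(\brak{P_k,\nasla}\Psi - \brak{P_k,\nasla_{t_p}}P\bigr). \]
On a fixed null generator $\gamma_\omega$ the function $v\mapsto \abs{P_k\Psi\paren{v,\omega}}^2$ satisfies a transport identity whose right-hand side, after integrating in $v$ and pairing with angular averaging, reduces to a bilinear expression of the form $\nabs{P_k\Psi}_{L^2\paren{\mc{N}}}$ times $\nabs{\nasla_{t_p}\paren{P_k P}}_{L^2\paren{\mc{N}}} + \nabs{P_k E}_{L^2\paren{\mc{N}}}$ plus commutator contributions. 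Combining this with the finite-band and weak Bernstein inequalities from Lemma \ref{thm.pnc_glp} absorbs the dyadic factor $2^k$ into the angular derivatives of $\Psi$ and $P$; summing in $k$ produces the Sobolev-type norms $\nabs{\Psi}_{\mc{H}^1\paren{\mc{N}}} + \nabs{P}_{\mc{H}^1\paren{\mc{N}}}$ on the right, while the Besov-type $\mc{P}^0$-norm of $E$ arises from the $\ell^1$-summation of the dyadic pieces of $E$.

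The main obstacle will be the delicate control of the commutators $\brak{P_k,\nasla_{t_p}}$ acting on horizontal tensors. Since the $P_k$'s are built from the heat semigroup intrinsic to each $\mc{S}_v$, they do not commute cleanly with transport along $Z = \vartheta L$; the error is governed by $\chi$ and the variation of the induced metric $\lambda$ along $t_p$, both of which are only controlled in mixed Besov-type norms supplied by the bootstrap condition \ass{N0}{p,\delta_0,\Delta_0}. A parallel technical difficulty is the unavailability of a full tensorial B\"ochner inequality on the $\mc{S}_v$'s, which forbids any argument that would bound $\nasla^2 P_k\Psi$ directly in $L^2$. Consequently, whenever a second angular derivative threatens to appear, I must reroute through the Hodge identities of Lemma \ref{thm.pnc_hodge} or re-use the hypothesis $\nasla\Psi = \nasla_{t_p} P + E$ one more time to trade an angular derivative for a transport derivative plus a lower-order error that is absorbable into the $\mc{P}^0$-norm of $E$.
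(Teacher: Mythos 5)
Your overall instinct --- that the hypothesis $\nasla\Psi = \nasla_{t_p}P + E$ must be exploited through a geometric Littlewood--Paley decomposition and Besov estimates --- is correct, but the architecture you propose has two gaps that the actual argument is built precisely to avoid. First, you decompose the \emph{tensor} $\Psi$ dyadically and aim to bound $\nabs{P_k\Psi}_{L^\infty_\omega L^2_t\paren{\mc{N}}}$ for each $k$. Passing from an $L^2$-type transport identity for $\abs{P_k\Psi}^2$ to an $L^\infty_\omega$ bound requires a sharp Bernstein or B\"ochner inequality at the endpoint $p=\infty$ for horizontal tensors, and --- as you yourself note, and as the remark following \eqref{eq.pnc_besov_embed} emphasizes --- this is exactly what is unavailable: only the weak Bernstein inequalities \eqref{eq.glp_bernstein} for $p<\infty$ hold, because the Gauss curvature is controlled only in $L^\infty_t H^{-1/2}$. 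Rerouting through Lemma \ref{thm.pnc_hodge} does not repair this, since the Hodge estimates are $L^2$-based. The proof circumvents the obstruction by \emph{scalarizing first}: its first step is to apply the sharp scalar Besov embedding \eqref{eq.pnc_besov_embed} to the scalar function $\omega\mapsto\int_0^{\delta_0}\valat{\abs{\Psi}^2}_{\paren{v,\omega}}\,dv$ on $\Sph^2$, whose $L^\infty_\omega$-norm is $\nabs{\Psi}_{L^\infty_\omega L^2_t\paren{\mc{N}}}^2$. Differentiating under the integral and substituting the hypothesis then reduces matters to bounding, in the Besov norm $\mc{B}^0$, integrals along null generators of the bilinear quantities $\brcmp{\nasla_{t_p}P,\Psi}$ and $\brcmp{E,\Psi}$.

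Second, your treatment of the resulting bilinear term does not close. Estimating the contribution of $\nasla_{t_p}\paren{P_k P}$ by $\nabs{P_k\Psi}_{L^2\paren{\mc{N}}}\cdot\nabs{\nasla_{t_p}\paren{P_k P}}_{L^2\paren{\mc{N}}}$ and summing over $k$ diverges: one has $\nabs{P_k\nasla_{t_p}P}_{L^2\paren{\mc{N}}}\lesssim\nabs{P}_{\mc{H}^1\paren{\mc{N}}}$ uniformly in $k$ but with no decay, and the transport derivative $\nasla_{t_p}$ --- unlike $\nasla$ --- cannot be converted into a factor $2^k$ by the finite-band inequalities \eqref{eq.glp_fb}, since the $P_k$'s are built from the heat flow intrinsic to the $\mc{S}_v$'s. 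Controlling $\nabs{\int\brcmp{\nasla_{t_p}P,\Psi}\,dv}_{\mc{B}^0}$ is exactly the content of the bilinear trace theorems (\cite[Thm. 4.9]{kl_rod:stt}, \cite[Prop. 5.4]{wang:cg}, \cite[Lemma 7.20]{shao:bdc_nv}): one integrates by parts in $v$ to redistribute the transport derivative and runs a low-high/high-low/high-high trichotomy in the angular frequencies, together with the $\brak{P_k,\nasla_{t_p}}$ commutator estimates you correctly flag as delicate. Without the scalarization step and the bilinear trace input, the dyadic sum cannot be closed, so the gap is structural rather than merely technical.
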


\begin{proof}
See \cite[Lemma 7.22]{shao:bdc_nv} and \cite{kl_rod:stt, wang:cg}.
\footnote{In fact, the first step is an application of \eqref{eq.pnc_besov_embed}.}
\end{proof}

The goal will be to apply Lemma \ref{thm.pnc_stt} to $\hat{\chi}$ and $\zeta$.
The proof of Lemma \ref{thm.pnc_stt} requires an extensive number of Littlewood-Paley decompositions, integrations by parts, and Besov estimates.
This is a significant technical undertaking in its own right; in fact, all of \cite{kl_rod:stt} is dedicated to establishing these estimates.
The main ingredient of this proof is the ``bilinear trace theorems", in which one bounds integrals along null generators of bilinear tensorial quantities; these were stated and proved for the vacuum case in \cite[Thm. 4.9]{kl_rod:stt} and \cite[Prop. 5.4]{wang:cg}.
For the E-S and E-M cases, the proofs of these estimates remain essentially unchanged; see \cite[Lemma 7.20]{shao:bdc_nv}.

\subsection{Proof Outline III: The Null Injectivity Radius}

Another one of the most important auxiliary estimates is that of bounding the null injectivity radii from below.
Such an estimate would state roughly the following:

\begin{lemma}\label{thm.pnc_rin}
Assume the bootstrap assumptions, i.e., that \ass{N0}{q, \delta_0, \Delta_0} holds for any $q \in M_+$.
If $\delta_0$ is sufficiently small with respect to the fundamental constants, then the \ass{N1}{q, \delta_0, \Delta_0} condition also holds for every $q \in M_+$.
In particular, this implies a uniform lower bound for the null injectivity radii $\mf{i}(q)$, $q \in M_+$.
\end{lemma}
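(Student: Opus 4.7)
The plan is to argue by contradiction: fix $\Delta_0$ and $\delta_0$ as in the bootstrap, and suppose for some $q_\star \in M_+$ that a terminal point of $\mc{N}^-(q_\star)$ occurs at some $t_{q_\star}$-value $v_\star \le \delta$, where $\delta \le \min(\delta_0, t(q_\star) - \tau_0)$ is to be chosen uniformly in the fundamental constants. Since $\mf{i} = \min(\mf{s}, \mf{l})$, either a past null conjugate point or a cut locus point of $q_\star$ occurs before $t_{q_\star}$-value $\delta$, and I will rule out each case separately. The conjugacy step is the easier one: examining the Raychaudhuri equation for $\trace\chi$ along a past null generator from $q_\star$, and combining the initial limit $\vartheta \trace\chi \sim 2/t_{q_\star}$ from Proposition \ref{thm.pnc_init} with the bootstrap control of $\vartheta \trace\chi - 2/t_{q_\star}$ in the $L^\infty_\omega L^2_t$ and $\mc{H}^1$ norms from \eqref{eq.bs_N}, the null Sobolev embedding of Proposition \ref{thm.pnc_nsob}, the comparison $\vartheta \simeq 1$ from Corollary \ref{thm.nl_bound}, and the $L^\infty$-bound on $\ric$ from \eqref{eq.bdc_lemma} to absorb the nonvacuum source term $R_{44}$, one sees that the perturbation $\vartheta \trace\chi - 2/t_{q_\star}$ cannot overwhelm the singular term $2/t_{q_\star}$ provided $\delta$ is sufficiently small. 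Thus $\trace\chi$ remains strictly positive and of order $t_{q_\star}^{-1}$ on $[0,\delta]$, the area element of $\mc{S}_v$ cannot collapse, and the null exponential map $\ul{\exp}_{q_\star}$ stays nonsingular, yielding $\mf{s}(q_\star) > \delta$.

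Bounding the cut locus radius $\mf{l}(q_\star)$ from below will be the main obstacle, and I plan to follow the strategy of \cite{kl_rod:rin}, adapted to the integrated trace norms available here. If two distinct past null generators $\gamma_{\omega_1}$, $\gamma_{\omega_2}$ from $q_\star$ met at a common point $r$ with $t_{q_\star}(r) = v_\star \le \delta$, the non-conjugacy established above combined with the rescaled metric comparison of Proposition \ref{thm.pnc_vol} would allow me to track a ``spherical separation'' function between the two generators on each $\mc{S}_v$ smoothly up to $v_\star$. This separation evolves along the $t_{q_\star}$-direction according to a Jacobi-type ODE driven by the Ricci coefficients $\hat{\chi}$ and $\zeta$; applying the sharp trace estimate of Lemma \ref{thm.pnc_stt} to these coefficients and performing a Gr\"onwall-type argument in the trace $L^\infty_\omega L^2_t$-norm rules out coalescence once $\delta$ is sufficiently small, yielding the required contradiction.

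The essential technical difficulty is that the bootstrap control on $\hat{\chi}$ and $\zeta$ is in integrated rather than pointwise norms, so the Gr\"onwall-type estimate on the generator separation must be carried out entirely within these trace-type norms. This is precisely where the sharp trace theorem of Lemma \ref{thm.pnc_stt}, the Hodge-elliptic estimates of Lemma \ref{thm.pnc_hodge}, and the Sobolev estimates of Propositions \ref{thm.pnc_sob}--\ref{thm.pnc_nsob} become indispensable; they ensure that the Jacobi-type ODE for the separation can be closed in the trace norm despite only having $L^2$-type curvature information. Once $\mf{i}(q_\star) > \delta$ is established uniformly for $q_\star \in M_+$, the bootstrap hypothesis \ass{N0}{q_\star, \delta_0, \Delta_0} immediately upgrades to \ass{N1}{q_\star, \delta_0, \Delta_0}, completing the lemma.
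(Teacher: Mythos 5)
Your treatment of the conjugacy radius is essentially the paper's: finiteness of $\trace\chi$ (obtained from the Raychaudhuri equation, the initial limits, and the bootstrap bound on $\vartheta\trace\chi - 2/t_{q_\star}$) keeps the area elements of the $\mc{S}_v$'s from collapsing, so the null exponential map stays nonsingular and $\mf{s}(q_\star)>\min(\mf{l}(q_\star),\delta_0)$. That half is fine.

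The cut locus half has a genuine gap. A cut locus point is not an infinitesimal phenomenon: two generators $\gamma_{\omega_1},\gamma_{\omega_2}$ with $\omega_1,\omega_2$ far apart on $\Sph^2$ can intersect in $M$ while the exponential map remains nonsingular and the intrinsic geometry of the (now merely immersed) cone remains perfectly regular. The Ricci coefficients $\hat\chi$, $\zeta$ and the induced metrics $\lambda$ control the \emph{derivative} of $\ul{\exp}_{q_\star}$ and hence intrinsic separations on the immersed spheres $\mc{S}_v$; they say nothing about whether two finitely separated generators collide in the ambient manifold. So a Jacobi-type ODE for a ``spherical separation'' closed by Gr\"onwall in the $L^\infty_\omega L^2_t$-norm would at best reprove non-conjugacy; it cannot detect, let alone preclude, a failure of injectivity. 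This is exactly why the argument in the paper (following \cite{kl_rod:rin}) is structured differently: one first extracts, by a variational argument, a minimizing pair of doubly intersecting null geodesic segments and shows that at a first intersection occurring before conjugate points the two generators must meet at angle $\pi$, i.e.\ be opposite at both intersection points; one then shows that generators opposite at the vertex cannot re-intersect before a fixed timespan $\delta_\ast$ by trapping the regular cone between two Minkowski coordinate cones in an ``almost Minkowski'' coordinate system. That last comparison rests on the regularity of the time foliation from the a priori estimates of Section \ref{sec.a_priori} — including the uniform bound on $\mc{L}_t n$ from Proposition \ref{thm.lapse_unif_est_ell}, which the paper singles out as essential precisely for this step — and not on the sharp trace theorem or the Hodge-elliptic machinery, which play no role in the injectivity-radius argument. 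Your proposal is missing both the opposite-angle/minimizing-pair input and the ambient coordinate comparison, and without them the cut locus bound does not close.
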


The proof of Lemma \ref{thm.pnc_rin} (in the E-S and E-M cases) is essentially the same as the vacuum case, originally presented in \cite{kl_rod:rin}.
Only a slight adaptation of this was required for our settings; this was outlined in \cite[Sec. 7.4]{shao:bdc_nv}.

For completeness, we provide a short summary of the argument behind Lemma \ref{thm.pnc_rin}.
We must control both the null conjugacy radius $\mf{s}(q)$ and the null cut locus radius $\mf{l}(q)$ of each point $q \in M_+$.
\footnote{Recall that $\mf{s}(p)$ is the timespan before which there are no past null conjugate points of $p$, while $\mf{l}(p)$ is the timespan before which no two distinct null generators of $\mc{N}^-(p)$ can intersect.}
First, we can control the null conjugacy radii by appealing to the following ``breakdown principle" for regular null cones:
\begin{itemize}
\item[] {\it A null conjugate point cannot occur along $\mc{N}^-(p)$ if $\trace \chi$ remains finite.}
\end{itemize}
This is true because uniform control on $\trace \chi$ implies control on the volume forms of the $\mc{S}_v$'s, and hence the $(v, \omega)$-parametrization of $\mc{N}^-(p)$ can be shown to be nonsingular.
As a result of this, we obtain the following:

\begin{lemma}\label{thm.pnc_rcn}
If \ass{N0}{p, \delta_0, \Delta_0} holds, then $\mf{s}(p) > \min(\mf{l}(p), \delta_0)$.
\end{lemma}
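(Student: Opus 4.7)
The plan is to argue by contradiction. Suppose $\mf{s}(p) \leq v_\ast := \min\paren{\mf{l}(p), \delta_0}$. By the definitions of the null conjugacy and cut locus radii, no two distinct null generators of $N^-(p)$ meet and no past null conjugate points of $p$ occur on $(0, \mf{s}(p))$, so $\mf{i}(p) = \mf{s}(p)$ and the segment $\mc{N}^-\paren{p; \mf{s}(p)}$ is a smooth null hypersurface. Thus the bootstrap condition \ass{N0}{p, \delta_0, \Delta_0} applies to this segment, and the $L^\infty_\omega L^2_t$-piece of \eqref{eq.bs_N} supplies, for each $\omega \in \Sph^2$,
\[ \nabs{\vartheta \paren{\trace \chi} - \frac{2}{t_p}}_{L^2_t\paren{\gamma_\omega}} \leq \Delta_0 \text{.} \]

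The next step is to convert this trace bound into a pointwise nondegeneracy statement for the Jacobi density along each null generator. Fix a coordinate patch $(y^1, y^2)$ on $\Sph^2$, extend it along $Z = \vartheta L$ to a transported coordinate system $(v, y^1, y^2)$ on $\mc{N}^-\paren{p; \mf{s}(p)}$ with $v = t_p$, and set $j(v, y) = \sqrt{\det \lambda_{ij}(v, y)}$. From $\mc{L}_L \lambda = 2 \chi$ and the identity $\partial_v = \vartheta L$ in these coordinates, a short calculation gives
\[ \partial_v \log \paren{v^{-2} j(v, y)} = \vartheta \paren{\trace \chi} - \frac{2}{t_p} \text{.} \]
The initial-value formula from Proposition \ref{thm.met_pnc_init} identifies $v^{-2} j(v, y) \to n^2(p) \sqrt{\det \paren{\lambda_0}_{ij}(y)}$ as $v \searrow 0$. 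Integrating the displayed ODE over $(0, v)$ and applying Cauchy--Schwarz to the right-hand side then produces
\[ \abs{\log \frac{j(v, y)}{v^2 n^2(p) \sqrt{\det \paren{\lambda_0}_{ij}(y)}}} \leq \sqrt{v} \cdot \Delta_0 \leq \sqrt{\delta_0}\,\Delta_0 \text{,} \]
so $j(v, y) \simeq v^2$ uniformly on $(0, \mf{s}(p)) \times \Sph^2$, with comparison constants depending only on $n(p)$, $\lambda_0$, $\delta_0$, and $\Delta_0$.

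Finally, $j(v, y)$ is, up to a factor determined by the frame at $p$ and the chosen coordinates on $\Sph^2$, the Jacobian of the null exponential map $\ul{\exp}_p$ in $(v, y)$ parameters. The uniform lower bound $j(v, y) \gtrsim v^2$ on $(0, \mf{s}(p)) \times \Sph^2$ shows that $d\ul{\exp}_p$ remains of full rank throughout this range and, by continuity, cannot degenerate at $v = \mf{s}(p)$. This contradicts the definition of $\mf{s}(p)$ as the first $t_p$-value at which a past null conjugate point of $p$ occurs, so $\mf{s}(p) > \min\paren{\mf{l}(p), \delta_0}$ as claimed. The main obstacle is that \ass{N0}{p, \delta_0, \Delta_0} controls $\vartheta \paren{\trace \chi} - 2/t_p$ only in $L^2$ along each generator rather than pointwise, forcing the volume-density comparison to be extracted by integrating a logarithmic derivative and invoking Cauchy--Schwarz; it is precisely the generator-by-generator $L^\infty_\omega L^2_t$ part of \eqref{eq.bs_N}, as opposed to the weaker $L^2\paren{\mc{N}_0}$ information, that makes this step work.
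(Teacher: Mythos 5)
Your argument is correct and follows essentially the same route the paper sketches for this lemma: the $L^\infty_\omega L^2_t$ control of $\vartheta\paren{\trace\chi} - 2/t_p$ supplied by \ass{N0}{p, \delta_0, \Delta_0} is integrated along each null generator (via Cauchy--Schwarz) to show that the rescaled volume density $v^{-2}\sqrt{\det\lambda}$ stays comparable to its initial value $n^2(p)\sqrt{\det\lambda_0}$, so the $(v,\omega)$-parametrization cannot degenerate before $\min(\mf{l}(p), \delta_0)$. The paper defers the details to \cite[Lemma 7.23]{shao:bdc_nv}, but the ``breakdown principle'' it states --- control of $\trace\chi$ controls the volume forms of the $\mc{S}_v$'s and hence the nonsingularity of the null exponential map --- is exactly what you have implemented.
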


\begin{proof}
See \cite[Lemma 7.23]{shao:bdc_nv}.
\end{proof}

It remains to control the null cut locus radii $\mf{l}(q)$, $q \in M_+$.
The argument for this is identical to that found in \cite{kl_rod:rin}.
The main ingredients are the following:
\begin{itemize}
\item The regularity properties of the time foliation, established from the a priori estimates of Section \ref{sec.a_priori}.
In particular, this includes \eqref{eq.bdc} and Proposition \ref{thm.e1_est}.

\item The following null cone comparison property:
  \begin{itemize}
  \item[] {\it About any $q \in M$, there exists a sufficiently large ``almost Minkowski" coordinate system about $q$, within which the regular past null cone is contained within two Minkowski coordinate cones.}
  \end{itemize}
For details, consult \cite[Prop. 4.11, Prop. 4.12]{shao:bdc_nv}.
\end{itemize}
As a result of the above, we can make the following observations:
\begin{itemize}
\item The $\mc{S}_v$'s must be ``comparable" to rescaled Euclidean spheres.

\item If the first point of intersection between two null generators occurs before null conjugate points, then the generators must intersect with angle $\pi$.
For a proof of this property, see \cite[Lemma 3.1]{kl_rod:rin}.

\item If two null generators of $\mc{N}^-(p)$ are opposite at $p$, then they cannot intersect until after a fixed timespan $\delta_\ast$, which depends on the regularity of the time foliation, and hence on the fundamental constants; see \cite[Prop. 3.4]{kl_rod:rin}.
\end{itemize}

We can show that the above observations suffice to imply a lower bound on the $\mf{l}(q)$'s.
The idea is roughly the following.
Using \cite[Prop. 3.5]{kl_rod:rin}, we can generate a pair of null geodesic segments that intersect at two points $p_0, q_0$ such that:
\begin{itemize}
\item The timespan between $p_0$ and $q_0$ is at most that of any other pair of null geodesic segments that intersect twice.

\item The two segments connecting $p_0$ and $q_0$ are opposite at both $p_0$ and $q_0$.
\end{itemize}
Then, the preceding observations, with the help of Lemma \ref{thm.pnc_rcn}, will suffice to control this minimal timespan between $p_0$ and $q_0$.
As a result of this argument, then Lemma \ref{thm.pnc_rin} is properly established.
For the details, see \cite[Sec. 3]{kl_rod:rin}.

As a result, we obtain a lower bound on the null injectivity radii as well as the \ass{N1}{q, \delta_0, \Delta_0} conditions.
This will be important in the remainder of the proof of Theorem \ref{thm.nc}, since it is used to derive trace estimates, which bound quantities on the $\mc{S}_v$'s by quantities on the timeslices $\Sigma_\tau$; see \cite[Prop. 7.3, Cor. 7.4, Cor. 7.5]{shao:bdc_nv}.

\begin{remark}
We remark that as stated in Lemma \ref{thm.pnc_rin}, the lower bound for the null injectivity radius is contingent on the bootstrap assumptions.
However, after completing the bootstrap argument and hence proving that the \ass{N0}{q, \delta_0, \Delta_0} properties hold without preconditions, then we can apply Lemma \ref{thm.pnc_rin} again.
This controls the null injectivity radii without requiring the bootstrap assumptions a priori.
\end{remark}

\subsection{Proof Outline IV: The Improved Estimates}

With the necessary auxiliary estimates established, we now turn to deriving improved estimates for the Ricci coefficients and the Gauss curvatures $\mc{K}$.
This would complete our main bootstrap argument.
The details sketched here are performed in detail in \cite[Ch. 8]{shao:bdc_nv}.

The Ricci coefficients are related to each other and to the spacetime curvature $R$ via a family of evolutionary (along null generators) and elliptic identities, known as the \emph{structure equations}.
For the full list of structure equations, see \cite[Sec. 3.2]{shao:bdc_nv}.
These are adaptations of the analogous equations for vacuum spacetimes found in \cite{chr_kl:stb_mink, kl_rod:cg, parl:bdc, wang:cg, wang:cgp}.
In particular, in the nonvacuum settings discussed here, there are numerous additional terms in the structure equations corresponding to the Ricci curvature; of course, these terms vanish in the vacuum setting.

Our desired improved estimates will be a consequence of the forms of these structure equations.
Let $\Gamma$ denote one of the quantities bounded by the \ass{N0}{p, \delta_0, \Delta_0} condition, i.e., $\Gamma \leq \Delta_0$.
Our goal, in general, will be to integrate the various structure equations in order to derive estimates roughly of the form
\[ \Gamma \lesssim \delta_0^\frac{1}{2} \Delta_0^2 + 1 \text{,} \]
where the constant of the inequality depends only on the fundamental constants.
In general, the bound $\delta_0^{1/2} \Delta_0^2$ is a consequence of terms in the structure equations that are quadratic (and occasionally cubic) in the Ricci coefficients, while ``$1$" reflects those terms which can be controlled using a priori estimates.

If we choose $\Delta_0$ to be sufficiently large with respect to the fundamental constants, and we then choose $\delta_0$ to be sufficiently small with respect to $\Delta_0$ and the fundamental constants, then we obtain the schematic bound
\[ \Gamma \leq C \paren{\delta_0^\frac{1}{2} \Delta_0^2 + 1} \leq \frac{\Delta_0}{2} \text{.} \]
In particular, this means we have proved the \ass{N0}{p, \delta_0, \Delta_0/2} condition.
As a result, the bootstrap argument would be complete.
We will now briefly describe how the above schematic bound can be obtained for the possible values of $\Gamma$.

For example, consider the following structure equation:
\begin{equation}\label{eq.raych} \nasla_{t_p} \paren{\trace \chi} = -\frac{1}{2} \vartheta \paren{\trace \chi}^2 - \vartheta \abs{\hat{\chi}}^2 - \vartheta R_{44} \text{.} \end{equation}
This is the \emph{Raychaudhuri equation}, which is perhaps the most well-known of the structure equations.
By integrating a slight variant of the Raychaudhuri equation and taking an $L^\infty_\omega L^2_t$-norm of the result, we derive
\[ \nabs{\vartheta \paren{\trace \chi} - \frac{2}{t_p}}_{L^\infty_\omega L^2_t (\mc{N})} \lesssim \delta_0^\frac{1}{2} \Delta_0^2 + 1 \text{.} \]
The quantity $\Delta_0^2$ follows from the bootstrap assumption $\Gamma \leq \Delta_0$, while the small factor $\delta_0^{1/2}$ appears because of the integrations involved.
The Ricci curvature term in \eqref{eq.raych} can be controlled using a priori estimates from the previous section.

We can also control the quantity $\nasla_{t_p} [\vartheta (\trace \chi) - 2 t_p^{-1}]$ in the $L^2(\mc{N})$-norm using \eqref{eq.raych}.
Similar bounds for the $L^2$-norms of $\nasla_{t_p} \hat{\chi}$ and $\nasla_{t_p} \zeta$ can be derived using the corresponding evolution equations for $\hat{\chi}$ and $\zeta$.

At this point, we can also prove analogous estimates for the quantities in the left-hand side of \eqref{eq.pnc_main_est_alt} by integrating the appropriate evolutionary structure equations.
These estimates will in fact function like the auxiliary estimates:
\begin{itemize}
\item These bounds will be essential for completing the bootstrap argument.

\item Once the bootstrap argument is completed, and \eqref{eq.pnc_main_est} is established, then \eqref{eq.pnc_main_est_alt} will immediately follow from \eqref{eq.pnc_main_est} and these estimates.
\end{itemize}

\begin{remark}
The evolutionary structure equations are listed in \cite[Prop. 3.8]{shao:bdc_nv}.
\end{remark}

Next, we turn to the elliptic structure equations, cf. \cite[Prop. 3.9]{shao:bdc_nv}, from which we can control $\nasla \hat{\chi}$ and $\nasla \zeta$ in the $L^2$-norms.
These equations are of the forms
\[ \mc{D}_2 \hat{\chi} = \check{R} + \nasla (\trace \chi) + B \text{,} \qquad \mc{D}_1 \zeta = \check{R} + \mu + B \text{,} \]
where $\check{R}$ represents certain components of the spacetime Riemann and Ricci curvatures, and $B$ represents lower-order terms.
Since the right-hand sides of these elliptic equations can be easily controlled in the $L^2(\mc{N})$-norm, then direct applications of Lemma \ref{thm.pnc_hodge} yield the desired improved estimates for $\nasla \hat{\chi}$ and $\nasla \zeta$.
These are once again of the familiar form $\Gamma \lesssim \delta_0^{1/2} \Delta_0^2 + 1$.

Furthermore, using the structure equation
\[ \mc{K} = -\frac{1}{4} \paren{\trace \chi} \paren{\trace \ul{\chi}} + \frac{1}{2} \hat{\chi}^{ab} \hat{\ul{\chi}}_{ab} - \frac{1}{4} R_{4343} + \frac{1}{2} R_a{}^a + \frac{1}{2} R_{43} \text{,} \]
we can proceed to derive the following improved estimates:
\[ \nabs{\mc{K} - \frac{1}{\vartheta^2 t_p^2}}_{L^\infty_t H^{-\frac{1}{2}} (\mc{N})} \lesssim \delta_0^\frac{1}{2} \Delta_0^2 + 1 \text{,} \qquad \nabs{\mc{K} - \frac{1}{\vartheta^2 t_p^2}}_{L^2 (\mc{N})} \lesssim \delta_0^\frac{1}{2} \Delta_0^2 + 1 \text{.} \]
The latter bound is straightforward, but the former requires much more effort due to the presence of the fractional Sobolev $H^{-1/2}$-norm.
\footnote{See \cite{kl_rod:glp} and \cite[Sec. 2.2]{shao:bdc_nv} for the definition of the $H^s$-norms.}
Indeed, for the $L^\infty_t H^{-1/2}$-bound, we need a technical estimate for the commutator $[\Lambda^{-1/2}, \nasla_{t_p}]$, where $\Lambda^{-1/2}$ is the fractional Sobolev operator on the $\mc{S}_v$'s, defined in the standard fashion in terms of $\lasl$.
For details, see \cite[Lemma 8.8]{shao:bdc_nv}, as well as \cite{kl_rod:cg, wang:cg}.

It remains only to establish improved estimates for the $L^\infty_\omega L^2_t$-norms of $\hat{\chi}$ and $\zeta$.
As mentioned earlier, the strategy is to utilize Lemma \ref{thm.pnc_stt}.
In particular, we must show that $\nasla \hat{\chi}$ and $\nasla \zeta$ have decompositions of the form $\nasla_{t_p} P + E$, where the quantities $P$ and $E$ have sufficient bounds, as specified in Lemma \ref{thm.pnc_stt}.
This is a priori not obvious; the argument in fact spans nearly all of \cite[Sec. 8.2-8.4]{shao:bdc_nv}.

The main ideas and steps for deriving these decompositions are the following:
\begin{itemize}
\item First, since the structure equations only provide relations for $\mc{D}_2 \hat{\chi}$ and $\mc{D}_1 \zeta$, not for $\nasla \hat{\chi}$ and $\nasla \zeta$, we must apply the inverse Hodge operators, e.g.,
\[ \nasla \hat{\chi} = \nasla \mc{D}_2^{-1} \mc{D}_2 \hat{\chi} = \nasla \mc{D}_2^{-1} \check{R} + \nasla \mc{D}_2^{-1} \nasla \paren{\trace \chi} + \nasla \mc{D}_2^{-1} B \text{.} \]

\item In order to generate the components $\nasla_{t_p} P$ in the desired decomposition, we must take advantage of additional relations satisfied by $\check{R}$.
For instance, for $\zeta$, this is a ``null Bianchi equation" of the form
\[ \mc{D}_1^\ast \check{R} = \nasla_{t_p} \check{R} + D \text{,} \]
where $D$ denotes lower-order terms.
From this, we see that we will also need to work with the ``inverse" operator $(\mc{D}_1^\ast)^{-1}$.

\item Next, combining the previous two steps, we see that we must commute $\nasla_{t_p}$ with $\nasla$ and the inverse Hodge operators.
In particular, this requires multiple commutator estimates in both Besov and lower-order norms.

\item Unfortunately, there exist ``bad" commutator terms which cannot be adequately controlled.
In fact, these terms must themselves be further decomposed by adopting arguments analogous to the above, which yields an additional level of ``good" and ``bad" terms.
\footnote{The ``good" terms will be contribute to both $\nasla_{t_p} P$ and $E$.}
This ultimately results in an infinite sequence of decompositions, which can be shown to converge.
Only in this fashion can the ``bad" commutator terms be treated.

\item After the infinite sequence of renormalizations, we finally obtain the desired decompositions for $\nasla \hat{\chi}$ and $\nasla \zeta$.
We must also show that components of these decompositions can be controlled as required by Lemma \ref{thm.pnc_stt}.
\end{itemize}
Once all of the above is complete, then Lemma \ref{thm.pnc_stt} yields the desired improved estimates for both $\hat{\chi}$ and $\zeta$ in the $L^\infty_\omega L^2_t$-norm.

\begin{remark}
For the most part, the Ricci curvature terms in the structure equations (which are unique to the nonvacuum settings) are harmless, as they generally have better bounds than the Riemann curvature terms due to \eqref{eq.bdc_lemma} and Proposition \ref{thm.e1_lest}.
The main exception to this is for the above decomposition argument, for which the higher-order Ricci curvature terms require their own unique decompositions.
In particular, for the E-M case, we must make use of analogous null Bianchi equations for components of $F$.
For details, see \cite[Sec. 8.3]{shao:bdc_nv}.
\end{remark}

As a result of all of the above, we obtain the improved estimates $\Gamma \leq \Delta_0/2$, which proves the \ass{N0}{q, \delta_0, \Delta_0/2} condition for all $q \in M_+$ and hence completes the bootstrap process.
By a standard continuity argument, we obtain that the \ass{N0}{q, \delta_0, \Delta_0} condition holds for all $q \in M_+$ without the a priori bootstrap assumption.

Finally, by Lemma \ref{thm.pnc_rin}, the conditions \ass{N1}{q, \delta_0, \Delta_0} also hold for all $q \in M_+$ without prior assumptions.
This implies the desired past null injectivity radius bounds, along with \eqref{eq.pnc_main_est}.
The remaining estimates \eqref{eq.pnc_main_est_alt} now follow from \eqref{eq.pnc_main_est}, as discussed before.
This completes the proof of Theorem \ref{thm.nc}.

\section{Higher-Order Energy Estimates}

Now that we have established both a priori estimates and local null geometry estimates, we can proceed to derive higher-order energy estimates and then complete the proof of Theorem \ref{thm.bdc}.
Our main goal is higher-order $L^2$-estimates on $k$, $\mc{R}$, and the horizontal formulation of the matter field; these are the objects represented by the quantities $\mf{K}(\tau)$, $\mf{R}(\tau)$, $\mf{f}(\tau)$ in \eqref{eq.bdc_energy_KR}-\eqref{eq.bdc_energy_fem}.
Like for the a priori estimates, we obtain these by first deriving higher-order energy inequalities for the spacetime quantities $R$ and the matter field $\Phi$, and then deriving elliptic estimates.

Unlike for the a priori estimates, we will also require $L^\infty$-bounds for $R$ and the matter field ($D^2 \phi$ in the E-S case, $D F$ in the E-M case) in order to complete the higher-order energy estimates.
For this task, we will make use of the generalized representation formula for covariant tensorial wave equations discussed in \cite{shao:bdc_nv, shao:ksp}.
This is an extension of the formula of S. Klainerman and I. Rodnianski in \cite{kl_rod:ksp}, which was applied in an analogous fashion in the E-V case.
However, the formula of \cite{kl_rod:ksp} fails in the E-M case due to additional first-order terms.

\subsection{Covariant Wave Relations}

We have previously made use of the fact that the matter field ($D \phi$ in the E-S case, and $F$ in the E-M case) satisfies a covariant wave equation.
We now list similar wave equations for $R$, $D R$, and higher derivatives of the matter field.
We can then derive energy inequalities from these relations using the same EMT techniques as in Propositions \ref{thm.e1_est} and \ref{thm.e1_lest}.

To begin with, by taking the divergence of the Bianchi identities \eqref{eq.bianchi} for $R$ and then commuting derivatives, we obtain the covariant wave equation
\[ \square_g R_{\alpha\beta\gamma\delta} = - D_\alpha{}^\mu R_{\beta\mu\gamma\delta} - D_\beta{}^\mu R_{\mu\alpha\gamma\delta} + R_\alpha{}^\mu \brak{R}_{\beta\mu\gamma\delta} + R_\beta{}^\mu \brak{R}_{\mu\alpha\gamma\delta} \text{.} \]
Applying either \eqref{eq.curv_divg_es} or \eqref{eq.curv_divg_em}, depending on the matter model at hand, and expanding the terms quadratic in $R$, we obtain the nonlinear wave equation
\begin{align}
\label{eq.curv_wave} \square_g R &= \tilde{d}^2 \ric + R \ast R \text{,} \\
\notag \paren{R \ast R}_{\alpha\beta\gamma\delta} &= \mf{A}_{\alpha\beta} \brac{R_\alpha{}^\lambda R_{\lambda\beta\gamma\delta}} + 2 R_\alpha{}^\lambda{}_\beta{}^\mu R_{\mu\lambda\gamma\delta} - 2 \mf{A}_{\alpha\beta} \brac{R_\alpha{}^\mu{}_\gamma{}^\lambda R_{\beta\mu\delta\lambda}} \text{.}
\end{align}
The term $\tilde{d}^2 \ric$ has the following expansions in the E-S and E-M cases, respectively:
\begin{align*}
\paren{\tilde{d}^2 \ric}_{\alpha\beta\gamma\delta} &= -2 \mf{A}_{\alpha\beta} \brac{D_{\alpha\gamma} \phi D_{\beta\delta} \phi} - \mf{A}_{\alpha\beta} \mf{A}_{\gamma\delta} \brac{D_\lambda \phi D_\gamma \phi R_{\alpha\delta\beta}{}^\lambda} \text{,} \\
\paren{\tilde{d}^2 \ric}_{\alpha\beta\gamma\delta} &= \mf{A}_{\alpha\beta} \brac{F_\alpha{}^\mu D_{\beta\mu} F_{\gamma\delta}} + \mf{A}_{\gamma\delta} \brac{F_\gamma{}^\mu D_{\delta\mu} F_{\alpha\beta}} - \mf{A}_{\alpha\beta} \mf{A}_{\gamma\delta} \brac{D_\alpha F_{\gamma\mu} D_\delta F_\beta{}^\mu} \\
\notag &\qquad + D^\mu F_{\alpha\beta} D_\mu F_{\gamma\delta} + \mf{A}_{\alpha\beta} \mf{A}_{\gamma\delta} \brac{F^{\lambda\mu} F_{\gamma\mu} R_{\alpha\lambda\beta\delta} + F_{\gamma \mu} F_{\alpha\lambda} R_{\beta\delta}{}^{\mu\lambda}} \\
\notag &\qquad - \frac{1}{2} \mf{A}_{\alpha\beta} \mf{A}_{\gamma\delta} \brac{g_{\alpha\gamma} \paren{F^{\mu\lambda} D_{\beta\delta} F_{\mu\lambda} + D_\beta F_{\mu\lambda} D_\delta F^{\mu\lambda}}} \text{,}
\end{align*}
Recall that $\mf{A}$, defined in \eqref{eq.index_antisym}, denotes anti-symmetrizations of the indices.

In the E-S case, by differentiating \eqref{eq.es_wave0} and commuting derivatives, we can derive a wave equation for $D^2 \phi$.
More specifically, we can compute
\begin{align*}
\square_g D_{\alpha\beta} \phi &= D^\mu{}_{\alpha\mu\beta} \phi + D^\mu \paren{R_\beta{}^\lambda{}_{\mu\alpha} D_\lambda \phi} \\
&= D_\alpha \square_g D_\beta \phi + R_\mu{}^{\lambda\mu}{}_\alpha D_{\lambda\beta} \phi + R_\beta{}^{\lambda\mu}{}_\alpha D_{\mu\lambda} \phi \\
&\qquad + D^\mu R_{\mu\alpha\beta}{}^\lambda D_\lambda \phi + R_{\mu\alpha\beta}{}^\lambda D^\mu{}_\lambda \phi \text{.}
\end{align*}
By \eqref{eq.curv_divg_es} and \eqref{eq.es_wave0}, then
\begin{align}
\label{eq.es_wave} \square_g D_{\alpha\beta} \phi &= 2 D^\lambda \phi D_\lambda \phi D_{\alpha\beta} \phi + D_\alpha \phi D^\lambda \phi D_{\beta\lambda} \phi \\
\notag &\qquad + D_\beta \phi D^\lambda \phi D_{\alpha\lambda} \phi - 2 R_\alpha{}^\mu{}_\beta{}^\lambda D_{\mu\lambda} \phi \text{.}
\end{align}
By a similar process, in the E-M case, we have the wave relation
\begin{align}
\label{eq.em_wave} \square_g D_\gamma F_{\alpha\beta} &= 2 F^{\lambda\mu} D_\gamma R_{\alpha\lambda\mu\beta} + \mf{A}_{\alpha\beta} \brac{R_\alpha{}^\lambda{}_\gamma{}^\mu D_\mu F_{\beta\lambda}} + 2 R_\alpha{}^{\lambda\mu}{}_\beta D_\gamma F_{\lambda\mu} \\
\notag &\qquad + 2 F_\alpha{}^\mu F_\beta{}^\lambda D_\gamma F_{\mu\lambda} + 2 \mf{A}_{\alpha\beta} \brac{F^{\mu\lambda} F_{\alpha\mu} D_\gamma F_{\beta\lambda}} \\
\notag &\qquad - F^{\mu\lambda} F_{\alpha\beta} D_\gamma F_{\mu\lambda} - \frac{3}{4} F^{\mu\lambda} F_{\mu\lambda} D_\gamma F_{\alpha\beta} \\
\notag &\qquad - F^{\lambda\mu} F_{\gamma\mu} D_\lambda F_{\alpha\beta} - \mf{A}_{\alpha\beta} \brac{ F_\alpha{}^\lambda F_\gamma{}^\mu D_\mu F_{\beta\lambda} } \\
\notag &\qquad - \mf{A}_{\alpha\beta} \brak{F_\alpha{}^\lambda \mf{A}_{\beta\lambda} \paren{F_\beta{}^\mu D_\lambda F_{\gamma\mu} - \frac{1}{2} g_{\beta\gamma} F^{\mu\nu} D_\lambda F_{\mu\nu}}} \text{.}
\end{align}

In addition to the above, we will need covariant wave equations for one higher derivative of $R$, $\phi$, and $F$.
Fortunately, we will require only the schematic forms, not the exact equations.
In the E-S case, we have
\begin{align}
\label{eq.curv_wave1_es} \square_g D R &\cong D^2 \phi \cdot D^3 \phi + D \phi \cdot D^2 \phi \cdot R + \paren{D \phi}^2 \cdot D R + R \cdot D R \text{,} \\
\label{eq.es_wave1} \square_g D^3 \phi &\cong \paren{D \phi}^2 \cdot D^3 \phi + D \phi \cdot \paren{D^2 \phi}^2 + R \cdot D^3 \phi + DR \cdot D^2 \phi \text{.}
\end{align}
Similarly, in the E-M case,
\begin{align}
\label{eq.curv_wave1_em} \square_g D R &\cong F \cdot D^3 F + D F \cdot D^2 F + F \cdot D F \cdot R + F^2 \cdot D R + R \cdot D R \text{,} \\
\label{eq.em_wave1} \square_g D^2 F &\cong F \cdot D^2 R + D F \cdot D R + R \cdot D^2 F + F \cdot \paren{D F}^2 + F^2 \cdot D^2 F \text{.}
\end{align}

\subsection{Energy Inequalities}

With the wave relations \eqref{eq.curv_wave}-\eqref{eq.em_wave1} in hand, we can now apply general EMT methods to derive preliminary energy inequalities for the curvature and the matter field.
Like for the a priori estimates of Proposition \ref{thm.e1_est}, we must bound the curvature and the matter field concurrently.

For convenience, in the E-S case, we define the quantities
\[ \mc{E}^2\paren{\tau} = \nabs{D R}_{L^2\paren{\Sigma_\tau}}^2 + \nabs{D^3 \phi}_{L^2\paren{\Sigma_\tau}}^2 \text{,} \qquad \mc{E}^3\paren{\tau} = \nabs{D^2 R}_{L^2\paren{\Sigma_\tau}}^2 + \nabs{D^4 \phi}_{L^2\paren{\Sigma_\tau}}^2 \text{,} \]
while in the E-M case, we define the analogous quantities
\[ \mc{E}^2\paren{\tau} = \nabs{D R}_{L^2\paren{\Sigma_\tau}}^2 + \nabs{D^2 F}_{L^2\paren{\Sigma_\tau}}^2 \text{,} \qquad \mc{E}^3\paren{\tau} = \nabs{D^2 R}_{L^2\paren{\Sigma_\tau}}^2 + \nabs{D^3 F}_{L^2\paren{\Sigma_\tau}}^2 \text{.} \]

\begin{proposition}\label{thm.e2_est}
For any $\tau_0 \leq \tau_1 < \tau < t_1$, the energy inequalities
\begin{align*}
\mc{E}^2\paren{\tau} &\lesssim \mc{E}^2\paren{\tau_1} + \int_{\tau_1}^\tau \paren{1 + \nabs{R}_{L^\infty\paren{\Sigma_{\tau^\prime}}}^2 + \nabs{D^2 \phi}_{L^\infty\paren{\Sigma_{\tau^\prime}}}^2} d\tau^\prime \text{,} \\
\mc{E}^2\paren{\tau} &\lesssim \mc{E}^2\paren{\tau_1} + \int_{\tau_1}^\tau \paren{1 + \nabs{R}_{L^\infty\paren{\Sigma_{\tau^\prime}}}^2 + \nabs{D F}_{L^\infty\paren{\Sigma_{\tau^\prime}}}^2} d\tau^\prime
\end{align*}
hold in the E-S and E-M cases, respectively.
\end{proposition}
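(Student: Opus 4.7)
The plan is to extend the energy-momentum tensor argument of Proposition~\ref{thm.e1_est} to one derivative higher. Specifically, in the E-S case I would apply the wave-type energy inequality \eqref{eq.emt_wave_est_pre} twice: once with $U = R$, so that $\nabs{DU}_{L^2(\Sigma_\tau)}^2$ supplies the curvature contribution to $\mc{E}^2(\tau)$, and once with $U = D^2\phi$, supplying the scalar-field contribution. The source $V = \Box_g U$ is read off from \eqref{eq.curv_wave} and \eqref{eq.es_wave} respectively, leaving the curvature commutator $R[U]$ as the only remaining algebraic object to bound. The E-M case is organized identically, now with $U \in \brac{R,\, DF}$ and the sources supplied by \eqref{eq.curv_wave} and \eqref{eq.em_wave}.

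The heart of the proof is then the pointwise-in-$\tau'$ estimate
\[
\int_{\Sigma_{\tau'}}\paren{\abs{V}^2 + \abs{R\brak{U}}^2} \lesssim 1 + \nabs{R}_{L^\infty(\Sigma_{\tau'})}^2 + \nabs{D^2\phi}_{L^\infty(\Sigma_{\tau'})}^2
\]
(with $D^2\phi$ replaced by $DF$ in the E-M case). Each term in \eqref{eq.curv_wave}, \eqref{eq.es_wave}, and \eqref{eq.em_wave}, together with the commutator contribution $\abs{R\brak{U}}^2 \lesssim \abs{R}^2\abs{U}^2$, is a polynomial in the fields $R$, $D\phi$, $D^2\phi$ (or $F$, $DF$, $D^2F$). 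For each such product I would place the \emph{highest-order} factor in $L^2$, a \emph{mid-order} factor ($R$, $D^2\phi$, or $DF$) in $L^\infty$, and dispose of the \emph{lowest-order} factors using the uniform bounds $\nabs{D\phi}_{L^\infty(M)}\lesssim 1$ and $\nabs{F}_{L^\infty(M)}\lesssim 1$ guaranteed by the breakdown criterion \eqref{eq.bdc}. The surviving $L^2$-factors are then bounded uniformly by Proposition~\ref{thm.e1_est}, yielding the displayed estimate.

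Summing the two applications of \eqref{eq.emt_wave_est_pre} and observing that the $\abs{DU}^2$ term appearing on their right-hand side is already controlled by $\mc{E}^2(\tau')$, one obtains an inequality of Grönwall form
\[
\mc{E}^2(\tau) \lesssim \mc{E}^2(\tau_1) + \int_{\tau_1}^{\tau}\paren{\mc{E}^2(\tau') + 1 + \nabs{R}_{L^\infty(\Sigma_{\tau'})}^2 + \nabs{D^2\phi}_{L^\infty(\Sigma_{\tau'})}^2}d\tau',
\]
and a standard Grönwall application on the finite interval $[\tau_1,\tau] \subseteq [\tau_0,t_1)$ absorbs the $\mc{E}^2(\tau')$ term into the implicit constant, yielding the claim. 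The principal obstacle, albeit a minor one, lies in verifying that no $L^\infty$-norm of a \emph{top-order} field ($DR$, $D^3\phi$, or $D^2F$) is ever forced onto the right-hand side. An inspection of \eqref{eq.curv_wave}, \eqref{eq.es_wave}, and \eqref{eq.em_wave} confirms that each term is at most linear in the top-order fields, so the Hölder allocation described above always succeeds. This is the only nontrivial check; the remainder is a direct application of the EMT machinery of Section~\ref{sec.emt} and the Grönwall strategy already used in Proposition~\ref{thm.e1_est}.
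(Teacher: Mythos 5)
Your proposal is correct and follows essentially the same route as the paper: apply the wave-EMT inequality \eqref{eq.emt_wave_est_pre} with $U = R$ and $U = D^2\phi$ (resp.\ $U = DF$), estimate the quadratic and cubic source terms by placing one factor in $L^\infty$ and the remaining factors in $L^2$ or $L^\infty$ where Proposition \ref{thm.e1_est} and the breakdown bounds on $D\phi$, $F$ apply, and close with Gr\"onwall. One caveat: in the E-M case the first-order source terms of schematic form $F \cdot D^2 F$ and $F \cdot D R$ in \eqref{eq.curv_wave} and \eqref{eq.em_wave} force an additional $\mc{E}^2\paren{\tau^\prime}$ onto the right-hand side of your intermediate pointwise display (their top-order factors are controlled only by $\mc{E}^2$, not by Proposition \ref{thm.e1_est}), but since your final Gr\"onwall-form inequality already carries the $\int \mc{E}^2\paren{\tau^\prime}\, d\tau^\prime$ term, the conclusion is unaffected.
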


\begin{proof}
In the E-S case, applying \eqref{eq.emt_wave_est_pre} with $\Phi = R$ along with \eqref{eq.curv_wave} yields
\[ \nabs{D R}_{L^2\paren{\Sigma_\tau}}^2 \lesssim \nabs{D R}_{L^2\paren{\Sigma_{\tau_1}}}^2 + \int_{\tau_1}^\tau \int_{\Sigma_{\tau^\prime}} \paren{\abs{D R}^2 + \abs{R}^4 + \abs{D^2 \phi}^4 + \abs{R}^2} d\tau^\prime \text{.} \]
Similarly, applying \eqref{eq.emt_wave_est_pre} with $\Phi = D^2 \phi$ along with \eqref{eq.es_wave} yields
\[ \nabs{D^3 \phi}_{L^2\paren{\Sigma_\tau}}^2 \lesssim \nabs{D^3 \phi}_{L^2\paren{\Sigma_{\tau_1}}}^2 + \int_{\tau_1}^\tau \int_{\Sigma_{\tau^\prime}} \paren{\abs{D^3 \phi}^2 + \abs{R}^2 \abs{D^2 \phi}^2 + \abs{D^2 \phi}^2} d\tau^\prime \text{.} \]
In these inequalities, we have used the a priori uniform bounds on $k$, $n$, and $D \phi$.
Summing the above and applying Proposition \ref{thm.e1_est} yields
\[ \mc{E}^2\paren{\tau} \lesssim \mc{E}^2\paren{\tau_1} + \int_{\tau_1}^\tau \paren{1 + \nabs{R}_{L^\infty\paren{\Sigma_{\tau^\prime}}}^2 + \nabs{D^2 \phi}_{L^\infty\paren{\Sigma_{\tau^\prime}}}^2} d\tau^\prime + \int_{\tau_1}^\tau \mc{E}^2\paren{\tau^\prime} d\tau^\prime \text{.} \]
Applying Gr\"onwall's inequality results in the desired bound.

The E-M case proceeds analogously; apply \eqref{eq.emt_wave_est_pre} with $\Phi = R$ and $\Phi = D F$.
\end{proof}

The process behind the proof of Proposition \ref{thm.e2_est} can be repeated for the wave equations \eqref{eq.curv_wave1_es}-\eqref{eq.em_wave1} in order to obtain the following:

\begin{proposition}\label{thm.e3_est}
For any $\tau_0 \leq \tau_1 < \tau < t_1$, the energy inequalities
\begin{align*}
\mc{E}^3\paren{\tau} &\lesssim \mc{E}^3\paren{\tau_1} + \int_{\tau_1}^\tau \paren{1 + \nabs{R}_{L^\infty\paren{\Sigma_{\tau^\prime}}}^2 + \nabs{D^2 \phi}_{L^\infty\paren{\Sigma_{\tau^\prime}}}^2} \brak{1 + \mc{E}^2\paren{\tau^\prime}} d\tau^\prime \text{,} \\
\mc{E}^3\paren{\tau} &\lesssim \mc{E}^3\paren{\tau_1} + \int_{\tau_1}^\tau \paren{1 + \nabs{R}_{L^\infty\paren{\Sigma_{\tau^\prime}}}^2 + \nabs{D F}_{L^\infty\paren{\Sigma_{\tau^\prime}}}^2} \brak{1 + \mc{E}^2\paren{\tau^\prime}} d\tau^\prime
\end{align*}
hold in the E-S and E-M cases, respectively.
\end{proposition}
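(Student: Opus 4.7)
The plan is to mimic the proof of Proposition \ref{thm.e2_est} verbatim, one derivative higher. In the E-S case, I would apply the wave energy inequality \eqref{eq.emt_wave_est_pre} twice on the slab $\Sigma_{[\tau_1,\tau]}$: once with $U = DR$, using \eqref{eq.curv_wave1_es} as the expression for $V=\square_g U$, and once with $U = D^3\phi$, using \eqref{eq.es_wave1}. The commutator terms $R[U]$ are controlled pointwise by $\abs{R[U]} \lesssim \abs{R}\abs{U}$, and the a priori uniform bounds on $n$, $k$, $\pi$ (equation \eqref{eq.cond_T0}) ensure that the constants in \eqref{eq.emt_wave_est_pre} depend only on the fundamental constants. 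The E-M case is handled the same way, with $U\in\brac{DR, D^2F}$ and \eqref{eq.curv_wave1_em}--\eqref{eq.em_wave1}.

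The bulk of the work is bounding $\int_{\Sigma_{\tau'}} \abs{\square_g U}^2$. Each term in the schematic wave equations is a product of three factors; the rule is to place all factors that are already uniformly bounded (namely $D\phi$ in the E-S case or $F$ in the E-M case, by \eqref{eq.bdc}) in $L^\infty$, and then place one of the remaining factors in $L^\infty$ and the other in $L^2$. For example, in the E-S case one gets
\[
\int_{\Sigma_{\tau'}} \abs{D^2\phi \cdot D^3\phi}^2 \lesssim \nabs{D^2\phi}_{L^\infty(\Sigma_{\tau'})}^2\,\mc{E}^2(\tau'), \qquad \int_{\Sigma_{\tau'}} \abs{R\cdot DR}^2 \lesssim \nabs{R}_{L^\infty(\Sigma_{\tau'})}^2\,\mc{E}^2(\tau'),
\]
and similarly for $\abs{DR\cdot D^2\phi}^2$ appearing in $\square_g D^3\phi$. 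Cubic terms with two a priori bounded factors, such as $\abs{D\phi\cdot D^2\phi\cdot R}^2$, reduce to $\nabs{R}_{L^\infty}^2\nabs{D^2\phi}_{L^2}^2 \lesssim \nabs{R}_{L^\infty}^2$ via Proposition \ref{thm.e1_est}. Terms like $\abs{(D\phi)^2\cdot DR}^2$ contribute only $\mc{E}^2(\tau') \lesssim 1 + \mc{E}^2(\tau')$.

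Summing the two inequalities and collecting factors yields
\[
\mc{E}^3(\tau) \lesssim \mc{E}^3(\tau_1) + \int_{\tau_1}^\tau \mc{E}^3(\tau')\,d\tau' + \int_{\tau_1}^\tau \paren{1 + \nabs{R}_{L^\infty(\Sigma_{\tau'})}^2 + \nabs{D^2\phi}_{L^\infty(\Sigma_{\tau'})}^2}\brak{1 + \mc{E}^2(\tau')}\,d\tau',
\]
at which point Gr\"onwall's inequality absorbs the $\mc{E}^3$ term on the right and produces the stated bound. The E-M case follows the same template, using \eqref{eq.curv_wave1_em}--\eqref{eq.em_wave1} and replacing $\nabs{D^2\phi}_{L^\infty}$ by $\nabs{DF}_{L^\infty}$; here one must additionally observe that $\abs{F \cdot D^2 R}^2$ is harmless since $F$ is a priori bounded, so it contributes only a $\mc{E}^3(\tau')$ term to the Gr\"onwall argument.

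The only real subtlety, and where the factor $[1 + \mc{E}^2(\tau')]$ in the conclusion originates, is the mixed cubic terms that couple a level-$\mc{E}^3$ field with a level-$\mc{E}^2$ field (for example $DR\cdot D^2\phi$ in $\square_g D^3\phi$, or $F\cdot D^3F$ and $DF\cdot D^2F$ in $\square_g DR$ in the E-M case). One cannot place the $\mc{E}^3$-level factor in $L^\infty$ -- no such bound is available a priori -- so the $\mc{E}^2$-level factor must be placed in $L^\infty$, which is exactly why the $L^\infty$ norms $\nabs{D^2\phi}_{L^\infty}$ and $\nabs{DF}_{L^\infty}$ (which will later be controlled via the Kirchhoff-Sobolev parametrix together with Theorem \ref{thm.nc}) must appear in the hypotheses, and why the resulting $\mc{E}^2(\tau')$ factor appears in the integrand rather than being absorbed into a constant.
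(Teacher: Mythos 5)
Your proposal is correct and follows essentially the same route as the paper: apply the wave-EMT energy inequality \eqref{eq.emt_wave_est_pre} to $DR$ together with $D^3\phi$ (resp. $D^2F$) using the schematic wave equations \eqref{eq.curv_wave1_es}--\eqref{eq.em_wave1}, estimate each source term by putting the a priori bounded and $L^\infty$-controlled factors outside and keeping one factor in $L^2$ as an $\mc{E}^2$ or $\mc{E}^3$ quantity, and close with Gr\"onwall. The only quibble is cosmetic: in your final paragraph $F\cdot D^3F$ is not an $\mc{E}^3$--$\mc{E}^2$ coupling but a level-$0$--level-$3$ product, which (as you note elsewhere) is Gr\"onwall-absorbed rather than a source of the $[1+\mc{E}^2]$ factor.
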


\begin{proof}
In the E-S case, apply \eqref{eq.emt_wave_est_pre}, \eqref{eq.curv_wave1_es}, and \eqref{eq.es_wave1} with $\Phi = D R$ and $\Phi = D^3 \phi$.
In the E-M case, apply \eqref{eq.emt_wave_est_pre}, \eqref{eq.curv_wave1_em}, and \eqref{eq.em_wave1} with $\Phi = D R$ and $\Phi = D^2 F$.
\end{proof}

\begin{remark}
We can obtain identical energy bounds for $D R$ and $D^2 R$, as well as $D F$, $D^2 F$, and $D^3 F$ in the E-M case, using the generalized Maxwell EMTs and \eqref{eq.emt_maxwell_est_pre}.
\end{remark}

Lastly, we will need a local variant of Proposition \ref{thm.e2_est} involving flux bounds on regular past null cones.
This can be derived in the same manner that Proposition \ref{thm.e1_lest} was obtained from Proposition \ref{thm.e1_est}.
Fix $p \in M_+$ and a time value $\tau_0 \leq \tau < t(p)$, with $t(p) - \tau < \mf{i}(p)$; we normalize and foliate $\mc{N}^-(p)$ using $T|_p$ and $t_p$.

Define the sets $\mc{N}_\tau$, $\mc{I}_\tau$, and $\Sigma^p_\tau$ as in Section \ref{sec.lest}, and define the flux densities
\begin{align*}
\rho^2\paren{p, \tau} &= - Q_w\brak{R}\paren{T, L} - Q_w\brak{D^2 \phi}\paren{T, L} \text{,} \\
\rho^2\paren{p, \tau} &= - Q_w\brak{R}\paren{T, L} - Q_w\brak{D F}\paren{T, L} \text{,}
\end{align*}
in the E-S and E-M settings, respectively.
We also define the corresponding flux
\[ \mc{F}^2\paren{p; \tau} = \int_{\mc{N}_\tau} \rho^2\paren{p, \tau} \text{.} \]
Recall that the notation $Q_w[U]$ was defined in Section \ref{sec.emt}.

\begin{proposition}\label{thm.e2_lest}
The following hold in the E-S and E-M settings, respectively:
\begin{align*}
\mc{F}^2\paren{p; \tau} &\lesssim \mc{E}^2\paren{\tau} + \int_\tau^{t\paren{p}} \paren{1 + \nabs{R}_{L^\infty\paren{\Sigma_{\tau^\prime}}}^2 + \nabs{D^2 \phi}_{L^\infty\paren{\Sigma_{\tau^\prime}}}^2} d\tau^\prime \text{,} \\
\mc{F}^2\paren{p; \tau} &\lesssim \mc{E}^2\paren{\tau} + \int_\tau^{t\paren{p}} \paren{1 + \nabs{R}_{L^\infty\paren{\Sigma_{\tau^\prime}}}^2 + \nabs{D F}_{L^\infty\paren{\Sigma_{\tau^\prime}}}^2} d\tau^\prime \text{.}
\end{align*}
\end{proposition}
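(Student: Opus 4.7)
The strategy is a direct higher-order adaptation of the passage from Proposition~\ref{thm.e1_est} to Proposition~\ref{thm.e1_lest}. I would apply the wave-type local energy identity \eqref{eq.emt_wave_lest_pre} of Lemma~\ref{thm.emt_lest_pre} twice, with $U = R$ and then $U = D^2 \phi$ (resp.\ $U = DF$ in the E-M case), feeding in the wave equations \eqref{eq.curv_wave} and \eqref{eq.es_wave} (resp.\ \eqref{eq.em_wave}) as the source. Adding the two resulting inequalities produces $\mc{F}^2(p,\tau)$ on the left, since by definition $\rho^2 = -Q_w[R](T,L) - Q_w[D^2 \phi](T,L)$; the boundary term on the right is bounded by $\mc{E}^2(\tau)$, and the remaining task is to dispatch the spacetime bulk integrals.

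With $U = R$, \eqref{eq.emt_wave_lest_pre} produces the bulk integrand $|D R|^2 + |\Box_g R|^2 + |R[R]|^2$. The first piece contributes $\int_\tau^{t(p)} \|D R\|_{L^2(\Sigma_\sigma)}^2 d\sigma \leq \int_\tau^{t(p)} \mc{E}^2(\sigma) d\sigma$. The commutator satisfies $|R[R]|^2 \lesssim |R|^4$, and splitting one factor in $L^\infty(\Sigma_\sigma)$ and keeping the other in $L^2(\Sigma_\sigma)$, combined with the a~priori bound $\|R\|_{L^2(\Sigma_\sigma)} \lesssim 1$ from Proposition~\ref{thm.e1_est}, yields $\int |R|^4 \lesssim \int \|R\|_{L^\infty(\Sigma_\sigma)}^2 d\sigma$. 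The source $|\Box_g R|^2$ splits into an $|R \cdot R|^2$ term handled as above and a $|\tilde d^2 \ric|^2$ term which in the E-S case has the schematic form $|D^2\phi|^4 + |D\phi|^4 |R|^2$; using \eqref{eq.bdc} to control $|D\phi|$ in $L^\infty$, the same one-$L^\infty$/one-a-priori-$L^2$ split gives $\int |\tilde d^2 \ric|^2 \lesssim \int (\|D^2\phi\|_{L^\infty(\Sigma_\sigma)}^2 + \|R\|_{L^\infty(\Sigma_\sigma)}^2) d\sigma$. The symmetric application with $U = D^2\phi$ and \eqref{eq.es_wave} proceeds identically, every term on the right of \eqref{eq.es_wave} being a product of $D^2\phi$ or $R$ with factors controlled a~priori in $L^\infty$. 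The E-M analogue uses \eqref{eq.em_wave} in place of \eqref{eq.es_wave}, together with the E-M form of $\tilde d^2 \ric$ whose higher-order pieces $F \cdot D^2 F$, $(DF)^2$, and $F^2 \cdot R$ admit the same decomposition with $F$ playing the role of the a~priori bounded matter field.

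Summing the two local identities therefore produces
\[ \mc{F}^2(p,\tau) \lesssim \mc{E}^2(\tau) + \int_\tau^{t(p)} \mc{E}^2(\sigma) d\sigma + \int_\tau^{t(p)} \paren{1 + \nabs{R}_{L^\infty(\Sigma_\sigma)}^2 + \nabs{D^2 \phi}_{L^\infty(\Sigma_\sigma)}^2} d\sigma \]
in the E-S case, and the corresponding inequality in E-M. The middle integral is converted by Proposition~\ref{thm.e2_est} (applied with starting time $\tau$) into $\lesssim (t(p) - \tau)\,\mc{E}^2(\tau) + (t(p) - \tau) \int_\tau^{t(p)} (1 + \|R\|_{L^\infty(\Sigma_\sigma)}^2 + \|D^2 \phi\|_{L^\infty(\Sigma_\sigma)}^2) d\sigma$; since $t(p) - \tau < \mf{i}(p) \leq 1$, this is absorbed into the two desired terms and the stated inequality follows. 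The only genuine difficulty is the bookkeeping verification that \emph{every} term on the right-hand side of \eqref{eq.curv_wave}, \eqref{eq.es_wave}, and \eqref{eq.em_wave} admits the ``one factor in $L^\infty$, one a~priori factor in $L^2$'' decomposition; the fully expanded E-M form of $\tilde d^2 \ric$, with its cubic interactions among $F$, $DF$, $D^2 F$, and $R$, is the most cumbersome but not essentially harder than the E-S case.
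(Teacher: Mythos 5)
Your argument is correct and follows essentially the same route as the paper: apply \eqref{eq.emt_wave_lest_pre} with $U = R$ and $U = D^2\phi$ (resp.\ $U = DF$), sum, control the quartic and source terms by putting one factor in $L^\infty(\Sigma_\sigma)$ and the other in the a priori $L^2$ bound of Proposition~\ref{thm.e1_est}, and dispose of the residual $\int_\tau^{t(p)} \mc{E}^2(\sigma)\,d\sigma$ term via Proposition~\ref{thm.e2_est}. The paper's proof is exactly this, so no changes are needed.
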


\begin{proof}
For the E-S case, we apply \eqref{eq.emt_wave_lest_pre} with $\Phi = R$ and $\Phi = D^2 \phi$ to obtain
\begin{align*}
- \int_{\mc{N}_\tau} Q_w\brak{R}\paren{T, L} &\lesssim \nabs{D R}_{L^2\paren{\Sigma_\tau}}^2 \\
&\qquad + \int_\tau^{t\paren{p}} \int_{\Sigma_{\tau^\prime}} \paren{\abs{D R}^2 + \abs{R}^4 + \abs{D^2 \phi}^4 + \abs{R}^2} d\tau^\prime \text{,} \\
- \int_{\mc{N}_\tau} Q_w\brak{D^2 \phi}\paren{T, L} &\lesssim \nabs{D^3 \phi}_{L^2\paren{\Sigma_\tau}}^2 \\
&\qquad + \int_\tau^{t\paren{p}} \int_{\Sigma_{\tau^\prime}} \paren{\abs{D^3 \phi}^2 + \abs{R}^2 \abs{D^2 \phi}^2 + \abs{D^2 \phi}^2} d\tau^\prime \text{.}
\end{align*}
Summing the above bounds and applying Proposition \ref{thm.e1_est}, we obtain
\[ \mc{F}^2\paren{p; \tau} \lesssim \mc{E}^2\paren{\tau} + \int_\tau^{t\paren{p}} \brak{1 + \nabs{R}_{L^\infty\paren{\Sigma_{\tau^\prime}}}^2 + \nabs{D^2 \phi}_{L^\infty\paren{\Sigma_{\tau^\prime}}}^2 + \mc{E}^2\paren{\tau^\prime}} d\tau^\prime \text{.} \]
The proof is completed by applying Proposition \ref{thm.e2_est}.

The E-M case is analogous, except we apply \eqref{eq.emt_wave_lest_pre} with $\Phi = R$ and $\Phi = D F$.
\end{proof}

\subsection{The Generalized Kirchhoff-Sobolev Parametrix} 

From Propositions \ref{thm.e2_est} and \ref{thm.e3_est}, we see that in order to control the higher-order energy quantities $\mc{E}^2(\tau)$ and $\mc{E}^3(\tau)$, uniformly for $\tau_0 \leq \tau < t_1$, we must first obtain a bound for the $L^\infty$-norms of $R$ and the matter field: $D^2 \phi$ in the E-S case, and $D F$ in the E-M case.
As mentioned before, we will accomplish this by using the wave equations \eqref{eq.curv_wave}, \eqref{eq.es_wave}, and \eqref{eq.em_wave}, along with the generalized Kirchhoff-Sobolev parametrix of \cite{shao:ksp}.

In the vacuum analogue of \cite{kl_rod:bdc}, one needed an $L^\infty$-bound for $R$, which was obtained using the Kirchhoff-Sobolev parametrix of \cite{kl_rod:ksp}.
Although this parametrix also suffices in the E-S setting, it fails in the E-M case due to first-order terms present in \eqref{eq.curv_wave} and \eqref{eq.em_wave}.
Such terms must be handled differently than allowed in \cite{kl_rod:ksp}; this is the primary motivation behind the generalized formula of \cite{shao:ksp}.

Consider the scalar wave equation in the Minkowski spacetime $\R^{1+3}$:
\begin{equation}\label{eq.wave_eq_mink} \square \phi = \psi \text{,} \qquad \valat{\phi}_{t = 0} = \alpha_0 \text{,} \qquad \valat{\partial_t \phi}_{t = 0} = \alpha_1 \text{.} \end{equation}
From standard theory, cf. \cite[Sec. 2.4]{ev:pde}, at a point $(t, x) \in (0, \infty) \times \R^3$, we can express $\phi(t, x)$ explicitly in terms of $\psi$, $\alpha_0$, and $\alpha_1$.
Moreover, the above can be written as an integral along the past null cone segment $N$ from the vertex $(t, x)$ to the timeslice $t = 0$, along with an ``initial data" integral on the set $N \cap \{t = 0\}$.

The goals of both \cite{shao:ksp} and its predecessor \cite{kl_rod:ksp} are to provide a local first-order extension of the standard formula to arbitrary curved spacetimes.
In particular, the parametrices of \cite{kl_rod:ksp, shao:ksp} enjoy the following features:
\begin{itemize}
\item These formulas treat tensorial wave equations in a completely covariant fashion, without first expressing them as scalar equations.

\item The parametrices are supported entirely on past null cones.

\item The parametrices are valid only on regular (i.e., smooth) portions of past null cones.
In the setting of this paper, this means that the formulas are only applicable up to the null injectivity radius.

\item The parametrices contain ``error terms", expressed as integrals along regular past null cones.
These are a result of the nontrivial geometry.

\item The parametrices depend only on quantities defined on the past null cones, i.e., it is independent of the extensions of such quantities off the cones.

\item The parametrices can be systematically generalized to covariant wave equations on arbitrary vector bundles over an arbitrary curved spacetime.
\footnote{In this extended setting, one also needs a bundle metric and a compatible connection.}
\end{itemize}

The parametrix of \cite{shao:ksp} differs from that of \cite{kl_rod:ksp} in the following respects:
\begin{itemize}
\item The formula of \cite{shao:ksp} directly handles systems of tensor wave equations with additional first-order terms.
The first-order terms are treated by altering the transport equation associated with the parametrix.

\item While the proofs in \cite{kl_rod:ksp} make heavy use of distributions on manifolds in an informal fashion, the proofs in \cite{shao:ksp} instead remain entirely at the level of rigorous calculus operations on null cones.

\item The computations in \cite{kl_rod:ksp} generated terms not supported on the null cone, which must all be meticulously cancelled in later steps.
In \cite{shao:ksp}, on the other hand, the entire derivation was performed on the null cone.
More specifically, the extra terms in \cite{kl_rod:ksp} could be avoided altogether if one integrates by parts only the derivatives tangential to the null cone.

\item As a consequence of the above modifications, one can slightly weaken the assumptions listed in \cite{kl_rod:ksp} for the parametrix to be valid.
\footnote{In fact, this weakening of the assumptions applies to both the formulas of \cite{kl_rod:ksp} and \cite{shao:ksp}, since the results in \cite{kl_rod:ksp} are strictly special cases of those in \cite{shao:ksp}.}
\end{itemize}
For more details on the preceding discussions, see \cite{shao:ksp}, or \cite[Ch. 5]{shao:bdc_nv}.

We now state a special case of the main result of \cite{shao:ksp}, adapted to the setting of this paper.
More specifically, we state the representation formula only in the case of time foliations of the null cone, while \cite{shao:ksp} also considered arbitrary foliating functions.
For the general version, see \cite[Thm. 7]{shao:ksp}.

\begin{theorem}\label{thm.ksp_gen}
Assume the following:
\begin{itemize}
\item Let $n$ be a positive integer, let $\sss{r}{1}, \ldots, \sss{r}{n}$ be nonnegative integers, and suppose for each $1 \leq m, c \leq n$, we have defined tensor fields
\[ \sss{\Phi}{m} \in \Gamma T^{\sss{r}{m}} M \text{,} \qquad \sss{\Psi}{m} \in \Gamma T^{\sss{r}{m}} M \text{,} \qquad \sss{P}{mc} \in \Gamma T^{1 + \sss{r}{m} + \sss{r}{c}} M \text{.} \]

\item Suppose the $\sss{\Phi}{m}$'s, $\sss{\Psi}{m}$'s, and $\sss{P}{mc}$'s satisfy the system
\begin{equation}\label{eq.tensor_wave_system} \Box_g \sss{\Phi}{m}_I + \sum_{c = 1}^n \sss{P}{mc}_{\mu I}{}^J D^\mu \sss{\Phi}{c}_J = \sss{\Psi}{m}_I \text{,} \qquad 1 \leq m \leq n \end{equation}
of tensor wave equations, where $I$ and $J$ in \eqref{eq.tensor_wave_system} are collections of $\sss{r}{m}$ and $\sss{r}{c}$ spacetime indices, respectively.

\item Fix $p \in M$, and suppose $\mc{N}^-(p)$ is normalized and foliated by $T|_p$ and $t_p$.

\item Let $v_0$ be a constant such that $0 < v_0 \leq \mf{i}(p)$.

\item For each $1 \leq m \leq n$, we define the extrinsic tensor fields
\[ \sss{B}{m} \in \Gamma \ol{T}^{\sss{r}{m}} \mc{N}^-\paren{p} \text{,} \qquad \sss{A}{m} = t_p^{-1} \sss{B}{m} \in \Gamma \ol{T}^{\sss{r}{m}} \mc{N}^-\paren{p} \text{,} \]
along with a tensor $\sss{J}{m}$ of rank $\sss{r}{m}$ at $p$, such that the system
\begin{equation}\label{eq.transport_pre} \onasla_{t_p} \sss{B}{m}^I = -\frac{1}{2} \brak{\vartheta \paren{\trace \chi} - \frac{2}{t_p}} \sss{B}{m}^I + \frac{\vartheta}{2} \sum_{c = 1}^n \sss{P}{cm}_{4J}{}^I \sss{B}{c}^J \text{,} \qquad 1 \leq m \leq n \end{equation}
of transport equations is satisfied, along with the initial conditions
\begin{equation}\label{eq.transport_init} \valat{\sss{B}{m}}_p = \sss{J}{m} \text{,} \qquad 1 \leq m \leq n \text{.} \end{equation}

\item Define the ``error coefficients"
\[ \sss{\nu}{cm} \in \Gamma \ol{T}^{\sss{r}{c} + \sss{r}{m}} \mc{N}^-\paren{p} \text{,} \qquad 1 \leq m, c \leq n \]
by the formulas
\begin{align}
\label{eq.mass_asp_P} \sss{\nu}{cm}_J{}^I &= - \onasla^a \sss{P}{cm}_{aJ}{}^I + \frac{1}{2} \onasla_4 \sss{P}{cm}_{3J}{}^I + \zeta^a \sss{P}{cm}_{aJ}{}^I + \frac{1}{4} \paren{\trace \ul{\chi}} \sss{P}{cm}_{4J}{}^I \\
\notag &\qquad + \frac{1}{4} \paren{\trace \chi} \sss{P}{cm}_{3J}{}^I + \frac{1}{2} \sum_{d = 1}^n \sss{P}{cd}_{4J}{}^K \sss{P}{dm}_{3K}{}^I \text{.}
\end{align}
\end{itemize}
Then, we have the representation formula
\begin{equation}\label{eq.ksp_gen} 4 \pi \cdot n\paren{p} \cdot \sum_{m = 1}^n \sss{J}{m}^I \valat{\sss{\Phi}{m}_I}_p = \mf{F}\paren{p; v_0} + \mf{E}^1\paren{p; v_0} + \mf{E}^2\paren{p; v_0} + \mf{I}\paren{p; v_0} \text{,} \end{equation}
where:
\begin{itemize}
\item The ``fundamental solution term" $\mf{F}(p; v_0)$ is given by
\begin{equation}\label{eq.ksp_gen_F} \mf{F}\paren{p; v_0} = - \sum_{m = 1}^n \int_{\mc{N}^-\paren{p; v_0}} \sss{A}{m}^I \sss{\Psi}{m}_I \text{.} \end{equation}

\item The ``principal error terms" $\mf{E}^1(p; v_0)$ are given by
\begin{align}
\label{eq.ksp_gen_E1_app} \mf{E}^1\paren{p; v_0} &= - \sum_{m = 1}^n \int_{\mc{N}^-\paren{p; v_0}} \onasla^a \sss{A}{m}^I \onasla_a \sss{\Phi}{m}_I \\
\notag &\qquad + \sum_{m = 1}^n \int_{\mc{N}^-\paren{p; v_0}} \paren{\zeta^a - \ul{\eta}^a} \onasla_a \sss{A}{m}^I \sss{\Phi}{m}_I \text{.}
\end{align}

\item The remaining ``error terms" $\mf{E}^2(p; v_0)$ are given by
\begin{align}
\label{eq.ksp_gen_E2} \mf{E}^2\paren{p; v_0} &= \sum_{m = 1}^n \int_{\mc{N}^-\paren{p; v_0}} \mu \cdot \sss{A}{m}^I \sss{\Phi}{m}_I \\
\notag &\qquad + \frac{1}{2} \sum_{m = 1}^n \int_{\mc{N}^-\paren{p; v_0}} \sss{A}{m}^I R_{43}\brak{\sss{\Phi}{m}}_I \\
\notag &\qquad - \sum_{m, c = 1}^n \int_{\mc{N}^-\paren{p; v_0}} \sss{P}{cm}_{aJ}{}^I \onasla^a \sss{A}{c}^I \sss{\Phi}{m}_I \\
\notag &\qquad + \sum_{m, c = 1}^n \int_{\mc{N}^-\paren{p; v_0}} \sss{\nu}{cm}_J{}^I \cdot \sss{A}{c}^J \sss{\Phi}{m}_I \text{.}
\end{align}

\item The ``initial value terms" $\mf{I}(p; v_0)$ are given by
\begin{align}
\label{eq.ksp_gen_I} \mf{I}\paren{p; v_0} &= -\frac{1}{2} \sum_{m = 1}^n \int_{\mc{S}_{v_0}} \paren{\trace \ul{\chi}} \sss{A}{m}^I \sss{\Phi}{m}_I - \sum_{m = 1}^n \int_{\mc{S}_{v_0}} \sss{A}{m}^I D_3 \sss{\Phi}{m}_I \\
\notag &\qquad - \frac{1}{2} \sum_{m, c = 1}^n \int_{\mc{S}_{v_0}} \sss{P}{cm}_{3J}{}^I \sss{A}{c}^J \sss{\Phi}{m}_I \text{.}
\end{align}
\end{itemize}
Here, we have indexed with respect to arbitrary null frames $L, \ul{L}, e_1, e_2$ adapted to the $t_p$-foliation.
The capital letters $I, J$ refer to collections of extrinsic indices.
The symbols $\chi$, $\ul{\chi}$, $\zeta$, and $\ul{\eta}$ refer to the Ricci coefficients of $\mc{N}^-(p)$.
\end{theorem}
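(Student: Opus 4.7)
The plan is to obtain \eqref{eq.ksp_gen} by multiplying the wave system \eqref{eq.tensor_wave_system} by the transport tensors $\sss{A}{m}$ and integrating by parts tangentially along $\mc{N}^-(p; v_0)$, with the vertex limit handled via the initial-value results of Section \ref{sec.nfol}. More precisely, I would start from the identity
\begin{equation*}
  \sum_{m=1}^n \int_{\mc{N}^-(p; v_0)} \sss{A}{m}^I \sss{\Psi}{m}_I
  = \sum_{m=1}^n \int_{\mc{N}^-(p; v_0)} \sss{A}{m}^I \paren{\Box_g \sss{\Phi}{m}_I + \sum_{c=1}^n \sss{P}{mc}_{\mu I}{}^J D^\mu \sss{\Phi}{c}_J}
\end{equation*}
and then rewrite the wave operator in an adapted null frame. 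The key decomposition is $\Box_g \sss{\Phi}{m} = -\onasla_4 \onasla_3 \sss{\Phi}{m} + \lasl \sss{\Phi}{m} + (\text{Ricci coefficient}) \cdot \onasla \sss{\Phi}{m} + \frac{1}{2} R_{43}[\sss{\Phi}{m}]$, where the $\onasla_3$-derivative is the only one transverse to $\mc{N}^-(p)$. All tangential derivatives will be integrated by parts on the cone; the transverse $\onasla_3$-derivative is never moved, which is why we obtain the single boundary term $\int_{\mc{S}_{v_0}} \sss{A}{m}^I D_3 \sss{\Phi}{m}_I$ appearing in \eqref{eq.ksp_gen_I}.

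Next, I would organize the integration by parts on $\mc{N}^-(p; v_0)$ by the coarea-type identity \eqref{eq.pnc_int} and the Leibniz rule for the mixed connection $\onasla$. Integrating by parts in $\onasla_4 = \vartheta^{-1} \onasla_{t_p}$ produces a boundary integral on $\mc{S}_{v_0}$ (together with a contribution at the vertex) plus a bulk term where the derivative lands on $\sss{A}{m} = t_p^{-1} \sss{B}{m}$; the transport equation \eqref{eq.transport_pre} has been engineered precisely so that this derivative on $\sss{B}{m}$ cancels the $-\frac{1}{2}(\trace \chi - 2 t_p^{-1}) \sss{A}{m}$ defect as well as the contribution coming from $\sum_c \sss{P}{cm}_{4 J}{}^I \sss{A}{c}^J$ produced by the first-order terms in \eqref{eq.tensor_wave_system}. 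Integrating by parts in the tangential $\onasla_a$-derivatives of $\lasl$ and of the horizontal components of $\sss{P}{mc}_{aJ}{}^I D^a \sss{\Phi}{c}_J$ then generates the principal error terms \eqref{eq.ksp_gen_E1_app}, while the commutators with $\onasla_4$ and $\onasla_3$ (which carry the cone's extrinsic geometry) produce, after careful bookkeeping, the mass aspect term $\mu \cdot \sss{A}{m}^I \sss{\Phi}{m}_I$, the curvature term $\frac{1}{2} R_{43}[\sss{\Phi}{m}]$, and the ``error coefficient'' terms $\sss{\nu}{cm}$ of \eqref{eq.mass_asp_P} in \eqref{eq.ksp_gen_E2}.

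The remaining step is the vertex analysis. After shrinking the inner boundary $\mc{S}_{\varepsilon}$ to zero, I would show that all bulk integrands are integrable near $p$ (the singular factor $t_p^{-1}$ in $\sss{A}{m}$ is compensated by the area growth $|\mc{S}_v| \simeq v^2$), and that the inner boundary integrals collapse to the left-hand side of \eqref{eq.ksp_gen}. This is where the initial limits of Section \ref{sec.nfol} are used: by the transport initial condition \eqref{eq.transport_init}, $\sss{B}{m}|_p = \sss{J}{m}$; by Proposition \ref{thm.pnc_init} and \eqref{eq.pnc_init_chi}, $v \vartheta(\trace \ul\chi) \to -2$ and $v\vartheta (\trace \chi) \to 2$ as $v \searrow 0$; and by Corollary \ref{thm.int_pnc_init}, $v^{-2}\int_{\mc{S}_v} \sss{A}{m}^I \sss{\Phi}{m}_I \to n(p)^2 \cdot 4\pi \cdot n(p)^{-1} \sss{J}{m}^I (\sss{\Phi}{m})_I|_p$ after dividing by the $t_p^{-1}$ in $\sss{A}{m}$ and using $|\ul L|^2 = 2 n^{-2} \vartheta^2 \to 2$. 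Summing the two boundary contributions at the vertex (from the $\onasla_4$ integration by parts and from the $(\trace \ul\chi) \sss{A}{m} \sss{\Phi}{m}$ term) gives the factor $4\pi \cdot n(p)$ on the left of \eqref{eq.ksp_gen}.

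The main obstacle will be the vertex limit: one must show simultaneously that every bulk integrand is absolutely integrable (using Proposition \ref{thm.pnc_init_D} to control the rate at which $\onasla \sss{\Phi}{m}$, $\nasla \sss{A}{m}$ and the Ricci coefficients blow up along null generators) and that only the intended boundary terms survive as $\varepsilon \searrow 0$. The rest of the calculation is a fairly mechanical integration-by-parts exercise once the null-frame decomposition of $\Box_g$ and the transport equation \eqref{eq.transport_pre} are in place; the novelty over \cite{kl_rod:ksp} lies precisely in the extra $\frac{\vartheta}{2}\sum_c \sss{P}{cm}_{4J}{}^I \sss{B}{c}^J$ term in \eqref{eq.transport_pre}, which is what makes the first-order $\sss{P}{mc}$ contributions cancel rather than propagate as uncontrolled bulk terms.
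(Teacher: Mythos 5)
The paper does not actually prove Theorem \ref{thm.ksp_gen}: it is imported as a special case of \cite[Thm.\ 7]{shao:ksp}, and the only proof-related content in the paper is the description of that reference's strategy --- namely, that the entire derivation is performed intrinsically on the null cone, that only derivatives tangential to the cone are integrated by parts, and that the first-order terms are handled by altering the transport equation. Your sketch reconstructs exactly that strategy: the null-frame decomposition of $\Box_g$ leaving a single transverse derivative $\onasla_3 \sss{\Phi}{m}$ (which therefore survives only as the boundary term $\int_{\mc{S}_{v_0}} \sss{A}{m}^I D_3 \sss{\Phi}{m}_I$); the transport equation \eqref{eq.transport_pre} engineered so that the bulk coefficient of $\onasla_3 \sss{\Phi}{m}$ --- which collects the $\trace\chi$ contribution from the evolving area element together with the $-\frac{1}{2}\sss{P}{cm}_{4}$-contraction of the first-order terms (since $\sss{P}{mc}_\mu D^\mu \sss{\Phi}{c} = -\frac{1}{2}\sss{P}{mc}_4 D_3 \sss{\Phi}{c} - \frac{1}{2}\sss{P}{mc}_3 D_4 \sss{\Phi}{c} + \sss{P}{mc}_a D^a \sss{\Phi}{c}$, only the first piece is transverse) --- vanishes identically; and the vertex limit via the asymptotics of Propositions \ref{thm.met_pnc_init}--\ref{thm.pnc_init}. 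So this is essentially the same approach as the cited proof, not an alternative one. The only imprecision worth flagging is in the vertex bookkeeping: the boundary contribution $\int_{\mc{S}_\varepsilon} \sss{A}{m}^I D_3 \sss{\Phi}{m}_I$ from the $\onasla_4$ integration by parts scales like $\varepsilon^{-1}\cdot\varepsilon^2$ and vanishes as $\varepsilon \searrow 0$, so the entire left-hand side of \eqref{eq.ksp_gen} arises from the single surviving term $\frac{1}{2}\int_{\mc{S}_\varepsilon} \paren{\trace\ul{\chi}} \sss{A}{m}^I \sss{\Phi}{m}_I$ via $\vartheta \paren{\trace\ul{\chi}} \rightarrow -2 t_p^{-1}$, $\vartheta \rightarrow n(p)$, and Corollary \ref{thm.int_pnc_init}, rather than from ``two boundary contributions'' as you suggest.
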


\begin{remark}
It is easy to see that Theorem \ref{thm.ksp_gen} is a special case of \cite[Thm. 7]{shao:ksp}.
The function $t_p$ is clearly a foliating function in the sense of \cite[Sec. 2.2]{shao:ksp}, and the associated null lapse $\vartheta$ has initial value $n(p)$ at $p$.
Moreover, both $\mc{N}^-(p)$ and $\mc{N}^-(p; v_0)$, as given in Theorem \ref{thm.ksp_gen}, are regular portions of the full null cone $N^-(p)$.
\end{remark}

\begin{remark}
Although the representation formula was stated in \eqref{eq.ksp_gen}-\eqref{eq.ksp_gen_I} in index notation, this was done only as a matter of convenience.
It is easy to see that these expressions can in fact be described invariantly.
\end{remark}

\begin{remark}
In particular, we can use \eqref{eq.ksp_gen} to examine the value of any $\sss{\Phi}{m}|_p$ individually by setting $\sss{J}{c} = 0$ for all $c \neq m$.
\end{remark}

Lastly, we note that the $\sss{A}{m}$'s in Theorem \ref{thm.ksp_gen} satisfy the transport equations
\begin{equation}\label{eq.transport} \onasla_L \sss{A}{m}^I = -\frac{1}{2} \paren{\trace \chi} \sss{A}{m}^I + \frac{1}{2} \sum_{c = 1}^n \sss{P}{cm}_{4J}{}^I \sss{A}{c}^J \text{,} \qquad 1 \leq m \leq n \text{.} \end{equation}

\subsection{Applying the Parametrix}\label{sec.ksp_app}

We now describe how Theorem \ref{thm.ksp_gen} is applied.
Let $0 < \delta_0 \leq 1$ be sufficiently small such that Theorem \ref{thm.nc} is satisfied.
\footnote{More explicitly, we assume that given any $q \in M_+$, then $\mf{i}(q) \geq \delta_1 = \min(\delta_0, t(q) - \tau_0)$, and the estimates \eqref{eq.pnc_main_est} and \eqref{eq.pnc_main_est_alt} hold on $\mc{N}_q = \mc{N}^-(q; \delta_1)$.}
Fix a point $p \in M_+$, fix another constant $0 < \delta \leq \min(\delta_0, t(p) - \tau_0)$, and define $\mc{N} = \mc{N}^-(p; \delta)$, normalized and foliated by $T|_p$ and $t_p$, as usual.

In the E-S case, we wish to find $L^\infty$-bounds for both $R$ and $D^2 \phi$ at $p$.
As a result, appropriating the notations of Theorem \ref{thm.ksp_gen}, we set $n = 2$, $\sss{\Phi}{1} = R$, and $\sss{\Phi}{2} = D^2 \phi$.
For the corresponding system of wave equations comprising \eqref{eq.tensor_wave_system}, we take \eqref{eq.curv_wave} and \eqref{eq.es_wave}.
Moreover, the right-hand sides of \eqref{eq.curv_wave} and \eqref{eq.es_wave} determine the $\sss{\Psi}{m}$'s and $\sss{P}{cm}$'s.
In particular, in the E-S case, the wave equations contain no first-order terms, that is, the $\sss{P}{cm}$'s vanish entirely.
\footnote{Consequently, the parametrix of \cite{kl_rod:ksp} suffices in the E-S setting.}

The E-M case is analogous, except that we must obtain $L^\infty$-bounds for $R$ and $D F$.
We apply Theorem \ref{thm.ksp_gen} by setting $n = 2$, $\sss{\Phi}{1} = R$, and $\sss{\Phi}{2} = D F$, and we adopt \eqref{eq.curv_wave} and \eqref{eq.em_wave} as the system of wave equations.
In contrast to the E-S setting, the first-order coefficients $\sss{P}{cm}$ are no longer trivial.
In particular, we see from the right-hand sides of \eqref{eq.curv_wave} and \eqref{eq.em_wave} that $\sss{P}{11}$ and $\sss{P}{22}$ vanish, while $\sss{P}{12}$ and $\sss{P}{21}$ are sums of terms, each of which can be expressed as tensor products and contractions of $F$ with instances of $g$.
More specifically, the $\sss{P}{12}$ and $\sss{P}{21}$ terms arise from the first, second, and seventh terms of \eqref{eq.curv_wave} in the E-M expansion of $\tilde{d}^2 \ric$, and from the first term of \eqref{eq.em_wave}.
The remaining terms on the right-hand sides of \eqref{eq.curv_wave} and \eqref{eq.em_wave} comprise the $\sss{\Psi}{m}$'s.

Now that we have determined the $\sss{P}{cm}$'s, we can determine their a priori bounds on $\mc{N}$. 
By the breakdown criterion \eqref{eq.bdc}, we have the uniform bounds
\begin{equation}\label{eq.bdc_bound_P} \nabs{\sss{P}{cm}}_{L^\infty\paren{\mc{N}}} \lesssim 1 \text{,} \qquad 1 \leq m, c \leq 2 \end{equation}
in both the E-S and E-M settings.
Furthermore, it follows from Propositions \ref{thm.e1_lest} and \ref{thm.null_flux} that in both the E-S and E-M cases,
\begin{equation}\label{eq.bdc_bound_DP} \nabs{\onasla \sss{P}{cm}}_{L^2\paren{\mc{N}}} + \nabs{\onasla_{t_p} \sss{P}{cm}}_{L^2\paren{\mc{N}}} \lesssim 1 \text{,} \qquad 1 \leq m, c \leq 2 \text{.} \end{equation}
Of course, in the E-S case, the bounds \eqref{eq.bdc_bound_P} and \eqref{eq.bdc_bound_DP} hold trivially.

Finally, we choose arbitrary tensors $\sss{J}{1}$ and $\sss{J}{2}$ at $p$, of the same ranks as $\sss{\Phi}{1}$ and $\sss{\Phi}{2}$, respectively.
For convenience, we adopt the abbreviation
\[ \abs{J} = \abs{\sss{J}{1}} + \abs{\sss{J}{2}} \text{.} \]
Applying Theorem \ref{thm.ksp_gen}, we obtain the expansion \eqref{eq.ksp_gen} for the quantity
\[ 4 \pi \cdot n\paren{p} \cdot \paren{\sss{J}{1}{}^I \valat{\sss{\Phi}{1}{}_I}_p + \sss{J}{2}^I \valat{\sss{\Phi}{2}{}_I}_p} \text{.} \]
Each term of this expansion is either an integral over $\mc{N}$ (i.e., the terms $\mf{F}(p; \delta)$, $\mf{E}^1(p; \delta)$, and $\mf{E}^2(p; \delta)$) or an ``initial value" integral over the $t_p$-level set $\mc{S}_\delta$ (i.e., the terms $\mf{I}(p; \delta)$).
Thus, in order to bound $\sss{\Phi}{1}$ and $\sss{\Phi}{2}$ at $p$, as desired, we must bound each of the integral terms mentioned above.

Before we can accomplish this, though, we must first control the fields $\sss{B}{m}$ and $\sss{A}{m}$ associated with the system of transport equations \eqref{eq.transport_pre}, \eqref{eq.transport_init}, and \eqref{eq.transport}.
This is the content of the subsequent proposition, which is the E-S and E-M analogue of the estimates \cite[Prop. 7.1, Prop. 7.3]{kl_rod:bdc} in the vacuum case.

\begin{proposition}\label{thm.transport_bound_bdc}
In both the E-S and E-M settings, we have
\[ \sum_{m = 1}^2 \nabs{\sss{B}{m}}_{L^\infty\paren{\mc{N}}} \lesssim \abs{J} \text{,} \qquad \sum_{m = 1}^2 \nabs{\onasla \sss{A}{m}}_{L^2\paren{\mc{N}}} \lesssim \abs{J} \text{,} \]
as long as $\delta_0$ is sufficiently small with respect to the fundamental constants.
\end{proposition}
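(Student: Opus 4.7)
The plan is to establish the $L^\infty$ estimate on $\sss{B}{m}$ by integrating the transport equation \eqref{eq.transport_pre} along individual null generators, and then to derive the $L^2$ estimate on $\onasla\sss{A}{m}$ via an energy-type argument after commuting angular derivatives through the simplified transport for $\sss{A}{m}$.

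For the first estimate, I would fix $\omega\in\Sph^2$ and parametrize the null generator $\gamma_\omega$ by $v = t_p$, viewing \eqref{eq.transport_pre} as a coupled system of ODEs with initial data \eqref{eq.transport_init}. Setting $B(v,\omega) = \sum_{m=1}^{2} \abs{\sss{B}{m}}(v,\omega)$ and using Corollary \ref{thm.nl_bound} ($\vartheta\simeq 1$) together with the uniform bound \eqref{eq.bdc_bound_P} on the $\sss{P}{cm}$'s gives
\[ \frac{dB}{dv} \leq \paren{\tfrac{1}{2}\abs{\vartheta\paren{\trace\chi} - 2/v} + C} B. \]
Cauchy--Schwarz in $v$, combined with the $L^\infty_\omega L^2_t$ bound \eqref{eq.pnc_main_est} from Theorem \ref{thm.nc}, yields $\int_0^{\delta_0}\abs{\vartheta\paren{\trace\chi} - 2/v}dv \lesssim \delta_0^{1/2}$, so Grönwall together with $B(0,\omega)\leq\abs{J}$ (interpreting the initial data in the limit sense) gives $B(v,\omega) \lesssim \abs{J}$ uniformly on $\mc{N}$, provided $\delta_0$ is small enough.

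For the second estimate, substituting $\sss{A}{m} = t_p^{-1}\sss{B}{m}$ into \eqref{eq.transport_pre} and using the cancellation $-t_p^{-1} - \tfrac{1}{2}\brak{\vartheta\paren{\trace\chi} - 2/t_p} = -\tfrac{1}{2}\vartheta\paren{\trace\chi}$ produces the cleaner transport
\[ \onasla_{t_p}\sss{A}{m}^I = -\tfrac{1}{2}\vartheta\paren{\trace\chi}\sss{A}{m}^I + \tfrac{\vartheta}{2}\sum_{c=1}^{2} \sss{P}{cm}_{4J}{}^I \sss{A}{c}^J. \]
Applying $\onasla_a$ and using the schematic commutator $\brak{\onasla_{t_p},\onasla_a} \cong \vartheta\chi\cdot\onasla + (\text{Ricci coeffs., curvature})$ from \cite[Sec. 3.2]{shao:bdc_nv} produces an evolution equation of the form
\[ \onasla_{t_p}\onasla\sss{A}{m} + \tfrac{1}{2}\vartheta\paren{\trace\chi}\onasla\sss{A}{m} = \mc{M}\cdot\onasla\sss{A} + \mc{F}\cdot\sss{A}, \]
where $\mc{M}$ consists of $\chi$, $\sss{P}{cm}_4$, Ricci coefficients, and $\mc{F}$ gathers $\onasla\brak{\vartheta\paren{\trace\chi}}$, $\onasla\brak{\vartheta \sss{P}{cm}_4}$, and curvature contributions from the commutator. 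Contracting against $\onasla\sss{A}{m}$, integrating over $\mc{N}$ via Proposition \ref{thm.coarea_tf}, and absorbing the principal $\vartheta\paren{\trace\chi}\abs{\onasla\sss{A}}^2$ term against the $\chi$-contribution from $\onasla_{t_p}\lambda$ yields an energy identity. The remaining forcing is controlled using: the pointwise bound $\abs{\sss{A}{m}}\lesssim t_p^{-1}\abs{J}$ from the first step; the $\mc{H}^1(\mc{N})$ bounds on $\vartheta\paren{\trace\chi} - 2/t_p$, $\hat\chi$, $\zeta$ from \eqref{eq.pnc_main_est}; the $L^2(\mc{N})$ bounds on $\onasla\sss{P}{cm}$ from \eqref{eq.bdc_bound_DP}; and the $L^\infty$ and $L^\infty_\omega L^2_t$ bounds on $\ul{\eta}$, $\hat{\ul{\chi}}$, $\nasla\vartheta$ from Proposition \ref{thm.pnc_tf}. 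Vertex boundary contributions at $t_p = 0$ vanish by Proposition \ref{thm.pnc_init_D}, since $\sss{B}{m}|_p = \sss{J}{m}$ forces $v^k\abs{\nasla^k \sss{B}{m}}\to 0$. A Grönwall argument in $v$ then closes the estimate, with the quadratic-in-Ricci-coefficient forcing absorbed thanks to $\delta_0^{1/2}$ factors from Cauchy--Schwarz in the $v$-variable.

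The main obstacle is the borderline forcing $\onasla\brak{\vartheta\paren{\trace\chi} - 2/t_p}\cdot\sss{A}{m}$: naively pairing the $L^2(\mc{N})$-bound on the first factor with the $t_p^{-1}$-singular pointwise bound on $\sss{A}{m}$ produces a divergent weight $\int_0^{\delta_0} v^{-2}\,dv$. The resolution is to invoke the sharper estimate $\nabs{t_p^{3/2}\nasla\paren{\trace\chi}}_{L^2_\omega L^\infty_t(\mc{N})}\lesssim 1$ from \eqref{eq.pnc_main_est_alt}, interpolate with the pointwise $L^\infty$ bound on $\sss{A}{m}$, and extract a $\delta_0^{1/2}$ smallness factor permitting absorption. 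Analogous care is required for the commutator terms involving $\nasla\vartheta$ and the $L^\infty_\omega L^2_t$ portions of $\hat\chi$, $\zeta$.
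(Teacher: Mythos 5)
Your $L^\infty$ bound on the $\sss{B}{m}$'s is correct and is essentially the paper's argument; the paper uses the $L^\infty(\mc{N})$ bound on $\vartheta\paren{\trace\chi} - 2t_p^{-1}$ from \eqref{eq.pnc_main_est_alt} where you use Cauchy--Schwarz on the $L^\infty_\omega L^2_t$ bound, but either version closes the Gr\"onwall argument.

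The second estimate, however, has a genuine gap, located exactly at the term you yourself flag as the main obstacle, and the fix you propose does not work. In your unrenormalized energy identity the borderline forcing contributes $\int_{\mc{N}} \abs{\nasla I}\,\abs{\sss{A}{m}}\,\abs{\onasla \sss{A}{m}}$ with $I = \vartheta\paren{\trace\chi} - 2t_p^{-1}$ and $\abs{\sss{A}{m}} \lesssim t_p^{-1}\abs{J}$; to close via Cauchy--Schwarz you need $\nabs{t_p^{-1}\nasla I}_{L^2(\mc{N})} \lesssim 1$, which by Proposition \ref{thm.pnc_vol} is the \emph{unweighted} bound $\int_{\Sph^2}\int_0^\delta \abs{\nasla I}^2\,dv\,d\omega \lesssim 1$ --- a full power of $t_p$ stronger than the available $\mc{H}^1$ bound $\int_{\Sph^2}\int_0^\delta v^2\abs{\nasla I}^2\,dv\,d\omega \lesssim 1$. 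The estimate $\nabs{t_p^{3/2}\nasla\paren{\trace\chi}}_{L^2_\omega L^\infty_t} \lesssim 1$ that you invoke only gives $\abs{\nasla\paren{\trace\chi}} \lesssim G(\omega)\, v^{-3/2}$ with $G \in L^2_\omega$, and $\int_0^\delta v^{-3}\,dv$ diverges, so no interpolation with the pointwise bound on $\sss{A}{m}$ recovers the missing power of $t_p$. The same one-power deficit afflicts the curvature term produced by the commutator $[\onasla_{t_p}, \onasla_a]$, since the flux of Proposition \ref{thm.e1_lest} controls the relevant components of $R$ only in $L^2(\mc{N})$, not in $L^2$ with a $t_p^{-2}$ weight.

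The paper's resolution is a renormalization rather than an interpolation: one works with $\sss{U}{m} = t_p^2\,\onasla\sss{A}{m}$, so that in the differentiated transport equation every dangerous coefficient ($\nasla I$, flux-controlled curvature components, $\nasla\vartheta\cdot\sss{P}{cm}$, $\onasla\sss{P}{cm}$, collected into a quantity $\mc{Q}_2$ with $\nabs{\mc{Q}_2}_{L^2(\mc{N})} \lesssim 1$) appears multiplied by $t_p\,\sss{B}{m}$ with $\sss{B}{m}$ \emph{bounded} by the first part. Integrating along each generator gives $v^2\abs{\onasla\sss{A}{m}} \lesssim \int_0^v w^2(1+\mc{Q}_1)\abs{\onasla\sss{A}{m}}\,dw + \abs{J}\int_0^v w\,\mc{Q}_2\,dw$; dividing by $v$, taking $L^2_t L^2_\omega$ norms, and applying Hardy's inequality converts the forcing into $\abs{J}\,\nabs{\mc{Q}_2}_{L^2(\mc{N})}$ with exactly the right weights, after which the remaining term is absorbed for $\delta_0$ small. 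If you wish to keep an energy-type argument, you must run it for $\sss{U}{m}$ (equivalently, for the weighted quantity $t_p\,\onasla\sss{B}{m}$ up to lower order), not for $\onasla\sss{A}{m}$ directly.
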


\begin{proof}
By standard calculus computations analogous to those of Proposition \ref{thm.norm_power_rule_tf},
\begin{align*}
\onasla_{t_p} \abs{\sss{B}{m}}^2 &\lesssim \abs{\sss{B}{m}} \abs{\onasla_{t_p} \sss{B}{m}} + \abs{\onasla_{t_p} h} \abs{\sss{B}{m}}^2 \\
&\lesssim \abs{\sss{B}{m}} \brak{\abs{I} \abs{\sss{B}{m}} + \sum_{c = 1}^2 \abs{\sss{P}{cm}} \abs{\sss{B}{c}}} + \abs{\sss{B}{m}}^2 \text{,}
\end{align*} 
where $I = \vartheta (\trace \chi) - 2 t_p^{-1}$ and $h$ is the induced Riemannian metric on $M$.
Here, we have applied \eqref{eq.dft_bound} and \eqref{eq.transport_pre}.
Summing over $m$ and applying \eqref{eq.pnc_main_est_alt} and \eqref{eq.bdc_bound_P}, then
\[ \onasla_{t_p} \paren{\sum_{m = 1}^2 \abs{\sss{B}{m}}^2} \lesssim \paren{1 + \abs{I} + \sum_{c, m = 1}^2 \abs{\sss{P}{cm}}} \sum_{m = 1}^2 \abs{\sss{B}{m}}^2 \lesssim \sum_{m = 1}^2 \abs{\sss{B}{m}}^2 \text{.} \]
Applying Gr\"onwall's inequality to the above while taking into account the initial conditions \eqref{eq.transport_init}, we obtain the first desired estimate.

Next, we define
\[ \sss{U}{m} = t_p^2 \cdot \onasla \sss{A}{m} \in \Gamma \ul{T}^1 \ol{T}^{\sss{r}{m}} \mc{N}^-\paren{p} \text{,} \qquad \abs{V} = \max\paren{\abs{\onasla \sss{A}{1}}, \abs{\onasla \sss{A}{2}}} \text{.} \]
Differentating the transport equations \eqref{eq.transport} and commuting derivatives (see the commutation formula \cite[Prop. 3.12]{shao:bdc_nv}), we obtain the transport equations
\begin{align}
\label{eql.transport_D_bound_0} \onasla_{t_p} \sss{U}{m}_{aK} &= -I \cdot \sss{U}{m}_{aK} - \vartheta \hat{\chi}_a{}^b \sss{U}{m}_{bK} + \frac{1}{2} \vartheta \sum_{c = 1}^2 \sss{P}{cm}_4{}^C{}_K \sss{U}{c}_{aC} \\
\notag &\qquad - \frac{1}{2} t_p \nasla_a I \cdot \sss{B}{m}_K + t_p \vartheta \sum_{i=1}^{\sss{r}{m}} R_{\gamma_i}{}^\mu{}_{4a} \sss{B}{m}_{K^i_\mu} \\
\notag &\qquad + \frac{1}{2} t_p \sum_{c = 1}^2 \paren{\nasla_a \vartheta \sss{P}{cm}_4{}^C{}_K + \vartheta \onasla_a \sss{P}{cm}_4{}^C{}_K} \sss{B}{c}_C \text{,}
\end{align}
where $K = (\gamma_1, \ldots, \gamma_{\sss{r}{m}})$ and $C$ denote collections of $\sss{r}{m}$ and $\sss{r}{c}$ extrinsic indices, respectively, and where $K^i_\mu$ denotes $K$ except with the $i$-th index replaced by $\mu$.
In addition, we index both the horizontal and the extrinsic components of the $\sss{U}{m}$'s using local Fermi transported null frames.
\footnote{By this, we mean null frames $L$, $\ul{L}$, $e_1$, $e_2$ satisfying the conditions $\nasla_L e_a \equiv 0$.}
If we also (locally) define
\[ \mc{C} = \abs{\ol{D}_L e_1} + \abs{\ol{D}_L e_2} + \abs{\ol{D}_L L} + \abs{\ol{D}_L \ul{L}} \text{,} \]
then $\mc{C} \lesssim |\ul{\eta}| \lesssim 1$ by Proposition \ref{thm.pnc_tf}, and
\begin{equation}\label{eql.transport_D_bound_1} \abs{\onasla_{t_p} \paren{ \sss{U}{m}_{aK} }} \lesssim \abs{\onasla_{t_p} \sss{U}{m}} + \mc{C} \abs{\sss{U}{m}} \lesssim \abs{\onasla_{t_p} \sss{U}{m}} + \abs{\sss{U}{m}} \text{,} \end{equation}
where the left-hand side denotes $\onasla_{t_p}$ applied to the scalar quantity $\sss{U}{m}_{aK}$.

With consideration of the coefficients present in \eqref{eql.transport_D_bound_0}, we define
\begin{align*}
\mc{Q}_1 &= \abs{I} + \abs{\hat{\chi}} + \sum_{c, m = 1}^2 \abs{\sss{P}{cm}} \text{,} \\
\mc{Q}_2 &= \abs{\nasla I} + \abs{\nasla \vartheta} \sum_{c, m = 1}^2 \abs{\sss{P}{cm}} - Q_m\brak{R}\paren{T, L} + \sum_{c, m = 1}^2 \abs{\onasla \sss{P}{cm}} \text{.}
\end{align*}
In addition, by Proposition \ref{thm.e1_lest}, \eqref{eq.pnc_main_est}, Proposition \ref{thm.pnc_tf}, \eqref{eq.bdc_bound_P}, and \eqref{eq.bdc_bound_DP}, we obtain
\begin{equation}\label{eql.transport_D_bound_2} \nabs{\mc{Q}_1}_{L^\infty_\omega L^2_{t_p} \paren{\mc{N}}} \lesssim 1 \text{,} \qquad \nabs{\mc{Q}_2}_{L^2\paren{\mc{N}}} \lesssim 1 \text{.} \end{equation}

We now integrate \eqref{eql.transport_D_bound_0} along each null generator of $\mc{N}$.
By also noting Corollary \ref{thm.nl_bound} and \eqref{eql.transport_D_bound_1}, then we obtain the inequality
\begin{align*}
v^2 \valat{\abs{V}}_{\paren{v, \omega}} &\lesssim \int_0^v w^2 \valat{\paren{1 + \mc{Q}_1} \abs{V}}_{\paren{w, \omega}} dw + \abs{J} \int_0^v w \valat{\mc{Q}_2}_{\paren{w, \omega}} dw \\
&\lesssim \paren{\int_0^v w^4 \valat{\abs{V}^2}_{\paren{w, \omega}} dw}^\frac{1}{2} + \abs{J} \int_0^v w \valat{\mc{Q}_2}_{\paren{w, \omega}} dw \text{,}
\end{align*}
for each $0 < v < \delta$ and $\omega \in \Sph^2$, where we also applied the preceding bound $|\sss{B}{m}| \lesssim |J|$ and the estimate \eqref{eql.transport_D_bound_2} for $\mc{Q}_1$.
Dividing both sides of the above by $v$, taking $L^2_t L^2_\omega$-norms of the resulting inequality (cf. \eqref{eq.pnc_norm_param}), and then recalling the norm comparisons of Proposition \ref{thm.pnc_vol}, we obtain
\begin{align*}
\nabs{\abs{V}}_{L^2\paren{\mc{N}}}^2 &\lesssim \int_0^\delta \int_{\Sph^2} \paren{\int_0^v w^2 \valat{\abs{V}^2}_{\paren{w, \omega}} dw} d\omega dv \\
&\qquad + \abs{J}^2 \int_{\Sph^2} \int_0^\delta \paren{v^{-1} \int_0^v w \valat{\mc{Q}_2}_{\paren{w, \omega}} dw}^2 dv d\omega \\
&\lesssim \delta_0 \nabs{\abs{V}}_{L^2\paren{\mc{N}}}^2 + \abs{J}^2 \int_{\Sph^2} \int_0^\delta v^2 \valat{\mc{Q}_2}_{\paren{v, \omega}} dv d\omega \\
&\lesssim \delta_0 \nabs{\abs{V}}_{L^2\paren{\mc{N}}}^2 + \abs{J}^2 \text{.}
\end{align*}
where we applied Proposition \ref{thm.pnc_vol} to the $V$-term, and where we applied Hardy's inequality and \eqref{eql.transport_D_bound_2} to the $\mc{Q}_2$-term.
Shrinking $\delta_0$ if necessary (still depending only on the fundamental constants), then $\||V|\|_{L^2(\mc{N})} \lesssim |J|$, completing the proof.
\end{proof}

\subsection{The Uniform Bounds}

We described in Section \ref{sec.ksp_app} how Theorem \ref{thm.ksp_gen} is applied to the breakdown problem.
With the preliminary work in place, we must now bound each of the resulting terms $\mf{F}(p, \delta)$, $\mf{E}^1(p, \delta)$, $\mf{E}^2(p, \delta)$, and $\mf{I}(p, \delta)$.
For convenience, let $\Theta$ denote $D^2 \phi$ in the E-S case and $D F$ in the E-M case.
In addition, we define for every $\tau_0 \leq \tau < t_1$ the uniform norm
\[ \mc{S}\paren{\tau} = \nabs{R}_{L^\infty\paren{\Sigma_\tau}} + \nabs{\Theta}_{L^\infty\paren{\Sigma_\tau}} \text{.} \]

We start with the principal ``fundamental solution" term $\mf{F}(p; \delta)$, given by \eqref{eq.ksp_gen_F}.
First of all, we decompose the integrands $\sss{A}{m}^I \sss{\Psi}{m}_I$ using local null frames.
Since $t_p \sss{A}{m} = \sss{B}{m}$ is uniformly bounded by Proposition \ref{thm.transport_bound_bdc}, then the main challenge is to bound the integral of every component $t_p^{-1} \sss{\Psi}{m}_I$ along $\mc{N}$.
The primary observation is the following, which is also the cornerstone of \cite{ea_mo:g_ymh, kl_rod:bdc}.
From \eqref{eq.curv_wave}-\eqref{eq.em_wave}, we see that each term of any component $\sss{\Psi}{m}_I$ is of the schematic form $W^2 \cdot Q$ or $Q^2$, where $\|W\|_{L^\infty(\mc{N})} \lesssim 1$, and where $Q$ denotes a tensor field of the form $R$ or $\Theta$.
\footnote{Recall that in the E-M case, the first-order terms (corresponding to the $\sss{P}{cm}$'s) are omitted from the $\sss{\Psi}{m}$'s.  In fact, the reason that the original parametrix of \cite{kl_rod:ksp} fails here is precisely because these first-order terms are not of the above forms $W^2 \cdot Q$ or $Q^2$.}

The first form $W^2 \cdot Q$ can be sufficiently controlled as follows:
\[ \int_{\mc{N}} \abs{\sss{A}{m}} \abs{W}^2 \abs{Q} \lesssim \abs{J} \int_{\mc{N}} t_p^{-1} \abs{Q} \lesssim \delta^2 \abs{J} \sup_{t\paren{p} - \delta \leq \tau \leq t\paren{p}} \mc{S}\paren{\tau} \text{.} \]
Note in particular that we have applied Proposition \ref{thm.transport_bound_bdc} to control the $\sss{A}{m}$'s.
The terms of type $Q^2$ require a more detailed null frame decomposition.
The main observation is the following: for each component of every $Q^2$-term, one of the $Q$-factors is bounded by the flux density $[\rho^1(p, t(p) - \delta)]^{1/2}$, which can be controlled using Proposition \ref{thm.e1_lest}.
We shall further elaborate on this point later in this section.

Assuming for now the above observation, then
\[ \int_{\mc{N}} \sss{A}{m} \cdot Q^2 \lesssim \abs{J} \brak{\mc{F}^1\paren{p; t\paren{p} - \delta}}^\frac{1}{2} \paren{\int_{\mc{N}} t_p^{-2} \abs{Q}^2}^\frac{1}{2} \lesssim \delta^\frac{1}{2} \abs{J} \sup_{t\paren{p} - \delta \leq \tau \leq t\paren{p}} \mc{S}\paren{\tau} \]
by Proposition \ref{thm.e1_lest}.
Combining the above, we obtain the estimate
\begin{equation}\label{eq.bdc_ksp_F} \mf{F}\paren{p; \delta} \lesssim \delta^\frac{1}{2} \abs{J} \sup_{t\paren{p} - \delta \leq \tau \leq t\paren{p}} \mc{S}\paren{\tau} \lesssim \delta_0^\frac{1}{2} \abs{J} \sup_{t\paren{p} - \delta \leq \tau \leq t\paren{p}} \mc{S}\paren{\tau} \text{.} \end{equation}

\begin{remark}
Recall that $\delta_0 > 0$ is the constant, depending only on the fundamental constants, for which Theorem \ref{thm.nc} holds, while $0 < \delta \leq \min(\delta_0, t(p) - \tau_0)$.
\end{remark}

We now discuss handling the $Q^2$-terms in further detail.
The process is analogous to the E-V case of \cite{kl_rod:bdc}, except we must consider several additional terms involving the matter field.
Recall that the exact null frame components controlled by the flux density $\rho^1(p, t(p) - \delta)$ were determined in detail in Proposition \ref{thm.null_flux} and in the subsequent discussions.
More specifically, the only components of $R$ and $\Theta$ that are not controlled are those of the forms $R_{3a3b}$, $D_{33} \phi$ (in the E-S case), and $D_3 F_{3a}$ (in the E-M case).
As a result, we must determine that for every $Q^2$-term in the $\sss{\Psi}{m}$'s, at least one of the $Q$'s is not an ``invalid" component.

We give a few examples here demonstrating the reasoning described above:
\begin{itemize}
\item Consider the term $I_{\alpha\beta\gamma\delta} = \mf{A}_{\alpha\beta} [D_{\alpha\gamma} \phi D_{\beta\delta} \phi]$ in the expansion of \eqref{eq.curv_wave} in the E-S case.
By the above characterizations, the only invalid component, for which neither factor is controlled, is $D_{33} \phi D_{33} \phi$.
To obtain this component, we require $\alpha = \beta = \gamma = \delta = 3$; but, by the antisymmetry between $\alpha$ and $\beta$, the component $I_{3333}$ vanishes.
Consequently, any component of $I_{\alpha\beta\gamma\delta}$ can be bounded by $-Q_w[D \phi](T, L) \cdot |D^2 \phi| \lesssim [\rho^1(p, t(p) - \delta)]^{1/2} \cdot |D^2 \phi|$.

\item The term $I_{\alpha\beta\gamma\delta} = D^\mu F_{\alpha\beta} D_\mu F_{\gamma\delta}$ in the E-M expansion of \eqref{eq.curv_wave} can be handled similarly.
Because of the contraction involving $\mu$ and the properties of null frames, for each term of the summation, one of the $\mu$'s must not be ``$3$".
As a result, one $D F$ factor can always be bounded by $-Q_w[F](T, L)$.

\item The terms $R_\alpha{}^\lambda{}_\beta{}^\mu R_{\mu\lambda\gamma\delta}$ and $\mf{A}_{\alpha\beta} [R_\alpha{}^\mu{}_\gamma{}^\lambda R_{\beta\mu\delta\lambda}]$ in \eqref{eq.curv_wave} correspond with the $R^2$-terms present in the E-V setting in \cite{kl_rod:bdc}.
They can be handled using signature considerations, as in \cite[Sec. 5.4]{kl_rod:bdc}, or directly using the same reasoning as for the previous two examples.
More explicitly, we see that at least one of the $R$'s is controlled by $-Q_m[R](T, L)$.

\item For the term $R_\alpha{}^\mu{}_\beta{}^\lambda D_{\mu\lambda} \phi$ in \eqref{eq.es_wave}, due to the contractions, for each term in this summation, the ``$D^2 \phi$" factor can be bounded by $-Q_w[D \phi](T, L)$ unless $\mu = \lambda = 3$ in $D^2 \phi$.
If this is true, however, then $\mu = \lambda = 4$ in the ``$R$" factor, which can hence be bounded by $-Q_m[R](T, L)$.
\end{itemize}
The remaining $Q^2$-terms in \eqref{eq.curv_wave}-\eqref{eq.em_wave} can be handled using similar reasoning.

The next task is to bound the ``error terms" $\mf{E}^1(p; \delta)$ and $\mf{E}^2(p; \delta)$, defined in \eqref{eq.ksp_gen_E1_app} and \eqref{eq.ksp_gen_E2}.
First, by Propositions \ref{thm.e2_lest} and \ref{thm.transport_bound_bdc}, we have
\begin{align*}
\int_{\mc{N}} \abs{\onasla \sss{A}{m}} \abs{\onasla \sss{\Phi}{m}} &\lesssim \nabs{\onasla \sss{A}{m}}_{L^2\paren{\mc{N}}} \nabs{\onasla \sss{\Phi}{m}}_{L^2\paren{\mc{N}}} \\
&\lesssim \abs{J} \brak{\mc{E}^2\paren{t\paren{p} - \delta}^\frac{1}{2} + \delta^\frac{1}{2} \sup_{t\paren{p} - \delta \leq \tau \leq t\paren{p}} \mc{S}\paren{\tau}} \text{.}
\end{align*}
Similarly, by \eqref{eq.pnc_main_est} and Proposition \ref{thm.transport_bound_bdc},
\begin{align*}
\int_{\mc{N}} \abs{\zeta - \ul{\eta}} \abs{\onasla \sss{A}{m}} \abs{\sss{\Phi}{m}} &\lesssim \nabs{\zeta - \ul{\eta}}_{L^2\paren{\mc{N}}} \nabs{\onasla \sss{A}{m}}_{L^2\paren{\mc{N}}} \nabs{\sss{\Phi}{m}}_{L^\infty\paren{\mc{N}}} \\
&\lesssim \delta \abs{J} \sup_{t\paren{p} - \delta \leq \tau \leq t\paren{p}} \mc{S}\paren{\tau} \text{.}
\end{align*}
Therefore, we obtain the bound
\begin{equation}\label{eq.bdc_ksp_E1} \mf{E}^1\paren{p; \delta} \lesssim \brak{\mc{E}^2\paren{t\paren{p} - \delta}}^\frac{1}{2} + \delta_0^\frac{1}{2} \abs{J} \sup_{t\paren{p} - \delta \leq \tau \leq t\paren{p}} \mc{S}\paren{\tau} \text{.} \end{equation}

Next, for the first term of \eqref{eq.ksp_gen_E2},
\begin{align*}
\int_{\mc{N}} \abs{\mu} \abs{\sss{A}{m}} \abs{\sss{\Phi}{m}} &\lesssim \nabs{\mu}_{L^2\paren{\mc{N}}} \nabs{\sss{A}{m}}_{L^2\paren{\mc{N}}} \nabs{\sss{\Phi}{m}}_{L^\infty\paren{\mc{N}}} \\
&\lesssim \delta^\frac{1}{2} \abs{J} \sup_{t\paren{p} - \delta \leq \tau \leq t\paren{p}} \mc{S}\paren{\tau} \text{,}
\end{align*}
where we appealed to \eqref{eq.pnc_main_est_alt} to bound $\mu$.
For the second term, by expanding $R_{43}[\sss{\Phi}{m}]$ using null frames, we see that each term of this expansion is bounded by $-Q_m[R](T, L) \cdot |\sss{\Phi}{m}|$.
As a result,
\[ \int_{\mc{N}} \abs{\sss{A}{m}} \abs{R_{43}\brak{\sss{\Phi}{m}}} \lesssim \delta^\frac{1}{2} \abs{J} \sup_{t\paren{p} - \delta \leq \tau \leq t\paren{p}} \mc{S}\paren{\tau} \text{.} \]
By \eqref{eq.bdc_bound_P} and Proposition \ref{thm.transport_bound_bdc}, we can bound the third term:
\begin{align*}
\int_{\mc{N}} \abs{\sss{P}{cm}} \abs{\onasla \sss{A}{c}} \abs{\sss{\Phi}{m}} &\lesssim \nabs{\sss{P}{cm}}_{L^2\paren{\mc{N}}} \nabs{\onasla \sss{A}{c}}_{L^2\paren{\mc{N}}} \nabs{\sss{\Phi}{m}}_{L^\infty\paren{\mc{N}}} \\
&\lesssim \delta^\frac{3}{2} \abs{J} \sup_{t\paren{p} - \delta \leq \tau \leq t\paren{p}} \mc{S}\paren{\tau} \text{.}
\end{align*}

For the last term, we will need the following simple estimate:

\begin{lemma}\label{thm.maf_P_bdc}
The coefficients $\sss{\nu}{cm}$, as defined in \eqref{eq.mass_asp_P}, satisfy
\[ \nabs{\sss{\nu}{cm}}_{L^2\paren{\mc{N}}} \lesssim 1 \text{,} \qquad c, m \in \brac{1, 2} \text{.} \]
\end{lemma}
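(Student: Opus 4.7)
The plan is to bound each of the six terms in the definition \eqref{eq.mass_asp_P} of $\sss{\nu}{cm}$ separately in the $L^2(\mc{N})$-norm, using the a priori and null-geometric estimates established earlier in the paper. I would split them as follows.

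First, I would handle the two derivative terms $\onasla^a \sss{P}{cm}_{aJ}{}^I$ and $\frac{1}{2} \onasla_4 \sss{P}{cm}_{3J}{}^I$ directly using the flux-type bound \eqref{eq.bdc_bound_DP}, which uniformly controls $\|\onasla \sss{P}{cm}\|_{L^2(\mc{N})}$ and $\|\onasla_{t_p} \sss{P}{cm}\|_{L^2(\mc{N})}$; since $\vartheta \simeq 1$ by Corollary \ref{thm.nl_bound} and $\onasla_L = \vartheta^{-1} \onasla_{t_p}$, both contributions are $\lesssim 1$.

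Next, for the two ``expansion'' terms $\tfrac{1}{4}(\trace \ul{\chi}) \sss{P}{cm}_{4J}{}^I$ and $\tfrac{1}{4}(\trace \chi)\sss{P}{cm}_{3J}{}^I$, I would apply the uniform bounds $|\vartheta(\trace\chi) - 2 t_p^{-1}| + |\vartheta(\trace\ul{\chi}) + 2 t_p^{-1}| \lesssim 1$ from \eqref{eq.pnc_main_est_alt} and Proposition \ref{thm.pnc_tf} together with $\vartheta \simeq 1$ to get the pointwise bound $|\trace\chi| + |\trace\ul{\chi}| \lesssim t_p^{-1} + 1$. Combined with $\|\sss{P}{cm}\|_{L^\infty(\mc{N})} \lesssim 1$ from \eqref{eq.bdc_bound_P}, control of these terms reduces to $\|t_p^{-1}\|_{L^2(\mc{N})}$, which by the definition \eqref{eq.pnc_int} and Corollary \ref{thm.nl_bound} evaluates to $\int_0^{\delta} v^{-2} \cdot v^2\, dv \lesssim \delta_0 \lesssim 1$ after integrating over $\Sph^2$ and applying Proposition \ref{thm.pnc_vol}.

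The torsion term $\zeta^a \sss{P}{cm}_{aJ}{}^I$ is handled by the $L^\infty$-$L^2$ split: from \eqref{eq.bdc_bound_P}, $\|\sss{P}{cm}\|_{L^\infty(\mc{N})} \lesssim 1$, while \eqref{eq.pnc_main_est} yields $\|\zeta\|_{\mc{H}^1(\mc{N})} \lesssim 1$ and in particular $\|t_p^{-1} \zeta\|_{L^2(\mc{N})} \lesssim 1$, which gives $\|\zeta\|_{L^2(\mc{N})} \lesssim \delta_0 \lesssim 1$. The bilinear term $\tfrac{1}{2}\sum_d \sss{P}{cd}_{4J}{}^K \sss{P}{dm}_{3K}{}^I$ is trivial: it is bounded pointwise by $\|\sss{P}{cd}\|_{L^\infty(\mc{N})} \|\sss{P}{dm}\|_{L^\infty(\mc{N})} \lesssim 1$, and the total ``volume'' $\int_{\mc{N}} 1 \lesssim \delta_0^3$ is finite by \eqref{eq.pnc_int} and Proposition \ref{thm.pnc_vol}.

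Summing the six contributions yields $\|\sss{\nu}{cm}\|_{L^2(\mc{N})} \lesssim 1$, uniform in the fundamental constants. There is no real obstacle here: the lemma is a bookkeeping consequence of the existing a priori bounds on $\sss{P}{cm}$ from the breakdown criterion (namely \eqref{eq.bdc_bound_P}, \eqref{eq.bdc_bound_DP}), combined with the Ricci-coefficient estimates of Theorem \ref{thm.nc} and Proposition \ref{thm.pnc_tf}. The only mild subtlety is to recognize that the trace-$\chi$ and trace-$\ul{\chi}$ terms introduce a $t_p^{-1}$ singularity that is nonetheless square-integrable on $\mc{N}$ against the area element $v^2\, dv\, d\omega$.
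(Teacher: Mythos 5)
Your argument is correct and follows essentially the same term-by-term strategy as the paper's proof, which invokes exactly the same ingredients: \eqref{eq.bdc_bound_DP} (with $\vartheta \simeq 1$) for the two derivative terms; Theorem \ref{thm.nc}, Proposition \ref{thm.pnc_tf}, and \eqref{eq.bdc_bound_P} for the $\zeta$- and trace-terms (which the paper likewise bounds by $\delta_0^{1/2}$, matching your $\|t_p^{-1}\|_{L^2(\mc{N})}$ computation); and \eqref{eq.bdc_bound_P} alone for the bilinear term. The only nit is that $\int_0^{\delta} v^{-2}\cdot v^2\,dv$ is the \emph{square} of $\|t_p^{-1}\|_{L^2(\mc{N})}$ rather than the norm itself, which does not affect the conclusion.
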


\begin{proof}
We bound each term on the right-hand side of \eqref{eq.mass_asp_P}.
The first two terms are trivially bounded using \eqref{eq.bdc_bound_DP}; by Theorem \ref{thm.nc}, Proposition \ref{thm.pnc_tf}, and \eqref{eq.bdc_bound_P}, the third, fourth, and fifth terms are bounded by $\delta_0^{1/2}$.
The final term on the right-hand side of \eqref{eq.mass_asp_P} is trivially bounded using \eqref{eq.bdc_bound_P}.
\end{proof}

Finally, applying Lemma \ref{thm.maf_P_bdc}, then
\begin{align*}
\int_{\mc{N}} \abs{\sss{\nu}{cm}} \abs{\sss{A}{c}} \abs{\sss{\Phi}{m}} &\lesssim \nabs{\sss{\nu}{cm}}_{L^2\paren{\mc{N}}} \nabs{\sss{A}{c}}_{L^2\paren{\mc{N}}} \nabs{\sss{\Phi}{m}}_{L^\infty\paren{\mc{N}}} \\
&\lesssim \delta^\frac{1}{2} \abs{J} \sup_{t\paren{p} - \delta \leq \tau \leq t\paren{p}} \mc{S}\paren{\tau} \text{.}
\end{align*}
As a result,
\begin{equation}\label{eq.bdc_ksp_E2} \mf{E}^2\paren{p; \delta} \lesssim \delta_0^\frac{1}{2} \abs{J} \sup_{t\paren{p} - \delta \leq \tau \leq t\paren{p}} \mc{S}\paren{\tau} \text{.} \end{equation}

It remains only to bound the initial value terms $\mf{I}(p; \delta)$ in \eqref{eq.ksp_gen_I}.
The first term on the right-hand side of \eqref{eq.ksp_gen_I} can be handled using Proposition \ref{thm.pnc_vol} and \ref{thm.pnc_tf}:
\[ \int_{\mc{S}_\delta} \paren{\trace \ul{\chi}} \abs{\sss{A}{m}} \abs{\sss{\Phi}{m}} \lesssim \delta^{-2} \abs{J} \int_{\mc{S}_\delta} \abs{\sss{\Phi}{m}} \lesssim \abs{J} \nabs{\sss{\Phi}{m}}_{L^\infty\paren{\Sigma_{t\paren{p} - \delta}}} \text{.} \]
Applying \eqref{eq.sob_2e} and Proposition \ref{thm.e1_est}, then the above is bounded by
\[ \abs{J} \brak{1 + \mc{E}^2\paren{t\paren{p} - \delta} + \mc{E}^3\paren{t\paren{p} - \delta}}^\frac{1}{2} \text{.} \]
The third term of \eqref{eq.ksp_gen_I} can be bounded similarly:
\[ \int_{\mc{S}_\delta} \abs{\sss{P}{cm}} \abs{\sss{A}{c}} \abs{\sss{\Phi}{m}} \lesssim \delta \abs{J} \brak{1 + \mc{E}^2\paren{t\paren{p} - \delta} + \mc{E}^3\paren{t\paren{p} - \delta}}^\frac{1}{2} \text{.} \]

Lastly, for the remaining term of \eqref{eq.ksp_gen_I}, we will need the following trace estimate:

\begin{lemma}\label{thm.trace_tf}
If $Z \in \Gamma \ol{\mc{T}} \mc{N}^-(p)$ and $W \in \Gamma \mc{T} M$ are of the same rank, then
\[ \abs{\int_{\mc{S}_\delta} Z^I W_I} \lesssim \delta^\frac{3}{2} \nabs{Z}_{L^\infty\paren{\mc{S}_\delta}} \paren{\nabs{\ol{\nabla} W}_{L^2\paren{\Sigma_{t\paren{p} - \delta}}} + \nabs{W}_{L^2\paren{\Sigma_{t\paren{p} - \delta}}}} \text{.} \]
\end{lemma}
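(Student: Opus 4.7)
The strategy is to reduce the tensorial integral to a scalar one, apply a rescaled trace theorem from the geodesic ball in $\Sigma_{t(p)-\delta}$ bounded by $\mc{S}_\delta$, and then defeat the localised $L^2$ term using the Sobolev embedding $H^1 \hookrightarrow L^6$ on the whole timeslice.

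First, Proposition \ref{thm.norm_prod_tf} gives $|Z^I W_I| \lesssim |Z| |W|$ pointwise, so pulling out $\nabs{Z}_{L^\infty(\mc{S}_\delta)}$ reduces the problem to proving
\[ \int_{\mc{S}_\delta} |W| \, d\sigma \lesssim \delta^\frac{3}{2} \paren{\nabs{\ol{\nabla} W}_{L^2(\Sigma_{t(p)-\delta})} + \nabs{W}_{L^2(\Sigma_{t(p)-\delta})}} \text{.} \]
I would then apply the Kato-type inequality of Proposition \ref{thm.norm_power_rule_tf} to the scalar $f = |W|$; combined with the a priori bound $|\pi| \lesssim 1$ from \eqref{eq.bdc_lemma}, this gives $|\nabla f| \lesssim |\ol{\nabla} W| + |W|$, so any $L^2$ gradient bound on $|W|$ is automatically a bound by $\ol{\nabla} W$ plus $W$.

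Next, the null cone comparison property \cite[Prop. 4.11, Prop. 4.12]{shao:bdc_nv}, together with the a priori foliation control from Section \ref{sec.a_priori}, implies that for $\delta$ sufficiently small, $\mc{S}_\delta$ is the boundary of a geodesic ball $B_\delta \subset \Sigma_{t(p)-\delta}$ of radius $\simeq \delta$ whose geometry is uniformly comparable to a Euclidean ball. In particular $|\mc{S}_\delta| \lesssim \delta^2$ and $|B_\delta| \lesssim \delta^3$ uniformly in $p$ and $\delta$. Rescaling the classical trace theorem $H^1(B_1) \hookrightarrow L^4(\partial B_1)$ from the unit ball to $B_\delta$ then yields the scale-invariant bound
\[ \nabs{f}_{L^4(\mc{S}_\delta)} \lesssim \delta^{-1} \nabs{f}_{L^2(B_\delta)} + \nabs{\nabla f}_{L^2(B_\delta)} \text{,} \]
valid for any $f \in H^1(B_\delta)$, with a constant depending only on the fundamental constants.

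Combining this trace estimate with Hölder $\int_{\mc{S}_\delta} |W| \leq |\mc{S}_\delta|^{3/4} \nabs{W}_{L^4(\mc{S}_\delta)}$ produces
\[ \int_{\mc{S}_\delta} |W| \lesssim \delta^\frac{3}{2} \paren{\delta^{-1} \nabs{W}_{L^2(B_\delta)} + \nabs{\ol{\nabla} W}_{L^2(\Sigma_{t(p)-\delta})} + \nabs{W}_{L^2(\Sigma_{t(p)-\delta})}} \text{.} \]
The apparently troublesome local term is handled by a Hölder plus Sobolev upgrade: $\nabs{W}_{L^2(B_\delta)} \leq |B_\delta|^{1/3} \nabs{W}_{L^6(B_\delta)} \lesssim \delta \nabs{W}_{L^6(\Sigma_{t(p)-\delta})}$, and then the Sobolev embedding of Proposition \ref{thm.sobolev_tf} gives $\nabs{W}_{L^6(\Sigma)} \lesssim \nabs{\ol{\nabla} W}_{L^2(\Sigma)} + \nabs{W}_{L^2(\Sigma)}$, so the $\delta^{-1}$ factor is precisely cancelled. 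The claim follows.

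The principal obstacle is verifying the scale-invariant trace estimate with a constant depending only on the fundamental constants: one must check that the balls $B_\delta$, as $p$ varies over $M_+$ and $\delta$ varies in the admissible range, are diffeomorphic to $B_1$ via maps whose rescaled derivatives are uniformly bounded. This is exactly the content of Proposition \ref{thm.unif_ell_prop}, Proposition \ref{thm.vol_radius}, and the null cone comparison property; the remaining work is simply the careful tracking of uniform constants through the rescaling argument.
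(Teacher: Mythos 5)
Your analytic skeleton is correct and scale-consistent: H\"older on $\mc{S}_\delta$ with $|\mc{S}_\delta| \lesssim \delta^2$, a rescaled trace inequality on a region $B_\delta$ of diameter $\simeq \delta$, and the H\"older-plus-Sobolev upgrade $\delta^{-1}\nabs{W}_{L^2(B_\delta)} \lesssim \nabs{W}_{L^6(\Sigma_{t(p)-\delta})} \lesssim \nabs{\ol{\nabla} W}_{L^2} + \nabs{W}_{L^2}$ is exactly the mechanism that makes the local term harmless; this is the same arithmetic that underlies the paper's proof. The routes differ in how the trace inequality itself is obtained: you pull back to a Euclidean ball and invoke the classical $H^1(B_1) \hookrightarrow L^4(\partial B_1)$ embedding, whereas the paper (see \cite[Prop. 7.3]{shao:bdc_nv}) constructs a radial foliation of $\Sigma_{t(p)-\delta}$ by the level sets of a one-parameter family of null cones $\mc{N}^-(p_s)$ with vertices moving along the future of $p$, and derives the trace by integrating a transversal derivative across that foliation.

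The gap is in the step you dismiss as ``careful tracking of uniform constants.'' Propositions \ref{thm.unif_ell_prop} and \ref{thm.vol_radius} and the null cone comparison property give you containment of $\mc{S}_\delta$ between two coordinate spheres of radii $\simeq \delta$ and hence the volume bounds $|\mc{S}_\delta| \lesssim \delta^2$, $|B_\delta| \lesssim \delta^3$, but they do \emph{not} give the boundary regularity a trace theorem requires. A surface sandwiched in a thin coordinate annulus can still fail to be a uniformly Lipschitz graph after rescaling, and restriction of an $H^1$ function to a $2$-surface of bounded area is false without such regularity (or, equivalently, without a uniformly controlled transversal foliation near the surface). Controlling $\mc{S}_\delta$ as a hypersurface of $\Sigma_{t(p)-\delta}$ amounts to controlling its unit normal $N$ and second fundamental form, i.e., quantities built from $\chi$, $\ul{\chi}$, $k$, and the null lapse; this is precisely where Theorem \ref{thm.nc} enters in an essential way, and the paper's one-parameter family of null cones is the device that converts those Ricci-coefficient bounds into the transversal direction along which the surface integral is traded for a bulk integral. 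So your proof is not wrong, but the ``remaining work'' you defer is the actual content of the lemma, and the inputs you cite for it are insufficient as stated.
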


\begin{proof}
See \cite[Prop. 7.3]{shao:bdc_nv}.
The proof involves constructing a radial foliation of $\Sigma_{t(p) - \delta}$ using the level sets of a one-parameter family of null cones.
This relies heavily on Theorem \ref{thm.nc}, in particular the null injectivity radius bounds.
\end{proof}

We now apply Lemma \ref{thm.trace_tf} with $Z = \ul{L} \otimes \sss{A}{m}$ and $W = D \sss{\Phi}{m}$.
Since $Z$, as defined above, satisfies $\|Z\|_{L^\infty(\mc{S}_\delta)} \lesssim \delta^{-1} |J|$, then
\begin{align*}
\int_{\mc{S}_\delta} \ul{L}^\alpha \sss{A}{m}^I D_\alpha \sss{\Phi}{m}_I &\lesssim \delta^\frac{1}{2} \abs{J} \paren{\nabs{D^2 \sss{\Phi}{m}}_{L^2\paren{\Sigma_{t\paren{p} - \delta}}} + \nabs{D \sss{\Phi}{m}}_{L^2\paren{\Sigma_{t\paren{p} - \delta}}}} \\
&\lesssim \delta^\frac{1}{2} \abs{J} \brak{\mc{E}^2\paren{t\paren{p} - \delta} + \mc{E}^3\paren{t\paren{p} - \delta}}^\frac{1}{2} \text{.}
\end{align*}
As a result, we obtain
\begin{equation}\label{eq.bdc_ksp_I} \mf{I}\paren{p; \delta} \lesssim \abs{J} \brak{1 + \mc{E}^2\paren{t\paren{p} - \delta} + \mc{E}^3\paren{t\paren{p} - \delta}}^\frac{1}{2} \text{.} \end{equation}

Combining \eqref{eq.ksp_gen} and \eqref{eq.bdc_ksp_F}-\eqref{eq.bdc_ksp_I}, we obtain
\begin{align*}
\abs{\sum_{1 \leq a \leq 2} \sss{J}{a}^I \valat{\sss{\Phi}{a}_I}_p} &\lesssim \delta_0^\frac{1}{2} \abs{J} \sup_{t\paren{p} - \delta \leq \tau \leq t\paren{p}} \mc{S}\paren{\tau} \\
&\qquad + \abs{J} \brak{1 + \mc{E}^2\paren{t\paren{p} - \delta} + \mc{E}^3\paren{t\paren{p} - \delta}}^\frac{1}{2} \text{.}
\end{align*}
If we consider all possible values of $\sss{J}{1}$ and $\sss{J}{2}$, then we have
\begin{equation}\label{eq.bdc_ksp_pre} \valat{\abs{R}}_p + \valat{\abs{\Theta}}_p \lesssim \delta_0^\frac{1}{2} \sup_{t\paren{p} - \delta \leq \tau \leq t\paren{p}} \mc{S}\paren{\tau} + \brak{1 + \mc{E}^2\paren{t\paren{p} - \delta} + \mc{E}^3\paren{t\paren{p} - \delta}}^\frac{1}{2} \text{.} \end{equation}

Finally, if we fix $\tau$ such that $\tau_0 < \tau < t_1$ and $\tau - \delta \geq \tau_0$, and we apply \eqref{eq.bdc_ksp_pre} to every $p \in M_+$ satisfying $\tau - \delta \leq t(p) \leq \tau$, then we obtain
\[ \sup_{\tau - \delta \leq \tau^\prime \leq \tau} \mc{S}\paren{\tau^\prime} \lesssim \delta_0^\frac{1}{2} \sup_{\tau - \delta \leq \tau^\prime \leq \tau} \mc{S}\paren{\tau^\prime} + \brak{1 + \mc{E}^2\paren{\tau - \delta} + \mc{E}^3\paren{\tau - \delta}}^\frac{1}{2} \text{.} \]
By taking $\delta_0$ sufficiently small, we have proven the following:

\begin{proposition}\label{thm.bdc_unif}
Fix $\tau_0 < \tau < t_1$.
Suppose $\delta_0 > 0$ is sufficiently small, depending only on the fundamental constants, and let $0 < \delta \leq \delta_0$ such that $\tau - \delta \geq \tau_0$.
Then,
\[ \sup_{\tau - \delta \leq \tau^\prime \leq \tau} \mc{S}\paren{\tau^\prime}^2 \lesssim 1 + \mc{E}^2\paren{\tau - \delta} + \mc{E}^3\paren{\tau - \delta} \text{.} \]
\end{proposition}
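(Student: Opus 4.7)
The plan is to fix $\tau$ as in the statement and pointwise estimate $|R|_p + |\Theta|_p$ for every $p$ with $t(p) \in [\tau - \delta, \tau]$, using the generalized Kirchhoff--Sobolev parametrix of Theorem \ref{thm.ksp_gen} applied to the coupled system $\{\sss{\Phi}{1}, \sss{\Phi}{2}\} = \{R, \Theta\}$ on the past null segment $\mc{N} = \mc{N}^-(p;\, t(p) - (\tau - \delta))$, whose length is at most $\delta \leq \delta_0$. The coefficients $\sss{P}{cm}$ and sources $\sss{\Psi}{m}$ are read off from \eqref{eq.curv_wave}, \eqref{eq.es_wave}, \eqref{eq.em_wave} and satisfy the uniform bounds \eqref{eq.bdc_bound_P}--\eqref{eq.bdc_bound_DP}. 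For arbitrary test tensors $\sss{J}{1}, \sss{J}{2}$ at $p$ of matching ranks, with $|J| = |\sss{J}{1}| + |\sss{J}{2}|$, the representation \eqref{eq.ksp_gen} decomposes the pairing $\sum_m \sss{J}{m} \cdot \sss{\Phi}{m}|_p$ into $\mf{F}(p;\delta) + \mf{E}^1(p;\delta) + \mf{E}^2(p;\delta) + \mf{I}(p;\delta)$. The goal is to bound each summand by $|J|$ times either $\delta_0^{1/2} \sup_{\tau' \in [\tau - \delta, t(p)]} \mc{S}(\tau')$ or $[1 + \mc{E}^2(\tau - \delta) + \mc{E}^3(\tau - \delta)]^{1/2}$.

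First I invoke Proposition \ref{thm.transport_bound_bdc} to obtain $\sum_m \nabs{\sss{B}{m}}_{L^\infty(\mc{N})} \lesssim |J|$ and $\sum_m \nabs{\onasla \sss{A}{m}}_{L^2(\mc{N})} \lesssim |J|$, which requires $\delta_0$ small. The fundamental solution term $\mf{F}(p;\delta)$ is then estimated by splitting $\sss{\Psi}{m}$ into $W^2 \cdot Q$-type terms with $\nabs{W}_{L^\infty} \lesssim 1$ (which give $\delta^2 |J| \sup \mc{S}$ trivially) and $Q^2$-type terms with $Q \in \{R, \Theta\}$, where one $Q$-factor is paired against the local flux $\mc{F}^1 \lesssim 1$ from Proposition \ref{thm.e1_lest} and the other bounded by $\nabs{t_p^{-1} Q}_{L^2(\mc{N})}$, producing \eqref{eq.bdc_ksp_F}. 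The error terms $\mf{E}^1, \mf{E}^2$ follow from H\"older's inequality combined with Proposition \ref{thm.e2_lest}, the bound $\nabs{\mu}_{L^2(\mc{N})} \lesssim 1$ from \eqref{eq.pnc_main_est_alt}, and Lemma \ref{thm.maf_P_bdc}. The initial term $\mf{I}(p;\delta)$ is controlled by the volume comparison of Proposition \ref{thm.pnc_vol}, the Sobolev bound \eqref{eq.sob_2e}, and Proposition \ref{thm.e1_est} on the two scalar boundary terms, plus the sharp trace estimate of Lemma \ref{thm.trace_tf} with $Z = \ul{L} \otimes \sss{A}{m}$ and $W = D \sss{\Phi}{m}$ for the transversal derivative term, yielding \eqref{eq.bdc_ksp_I}.

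The main technical obstacle is the null-frame analysis of the $Q^2$-terms in $\mf{F}$. By the discussion following Proposition \ref{thm.null_flux}, the null components \emph{not} controlled by $\rho^1$ are exactly $R_{3a3b}$, $D_{33}\phi$, and $D_3 F_{3a}$, so I must inspect every term on the right-hand sides of \eqref{eq.curv_wave}, \eqref{eq.es_wave}, \eqref{eq.em_wave} and verify that no product of two such forbidden components ever survives after component expansion. The relevant mechanisms are the antisymmetries $\mf{A}_{\alpha\beta}, \mf{A}_{\gamma\delta}$ (which annihilate the all-$3$ configuration), the $g^{\mu\nu}$-contractions (which prevent all summed indices from being $3$ simultaneously), and signature bookkeeping as in \cite[Sec.\ 5.4]{kl_rod:bdc}. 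The representative cases displayed after \eqref{eq.bdc_ksp_F} illustrate the pattern, and the remaining terms in both the E-S and E-M systems admit an identical treatment.

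Assembling \eqref{eq.bdc_ksp_F}, \eqref{eq.bdc_ksp_E1}, \eqref{eq.bdc_ksp_E2}, \eqref{eq.bdc_ksp_I} through \eqref{eq.ksp_gen} and varying over all admissible $\sss{J}{1}, \sss{J}{2}$ produces the pointwise bound
\begin{equation*}
|R|_p + |\Theta|_p \lesssim \delta_0^{1/2} \sup_{\tau - \delta \leq \tau' \leq t(p)} \mc{S}(\tau') + [1 + \mc{E}^2(\tau - \delta) + \mc{E}^3(\tau - \delta)]^{1/2}
\end{equation*}
for every $p$ with $t(p) \in [\tau - \delta, \tau]$. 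Taking the supremum over such $p$ converts this into an inequality relating $\sup_{\tau' \in [\tau - \delta, \tau]} \mc{S}(\tau')$ to itself with a coefficient $C \delta_0^{1/2}$; shrinking $\delta_0$ so that $C \delta_0^{1/2} \leq 1/2$, with $\delta_0$ depending only on the fundamental constants, absorbs this term into the left-hand side, and squaring the remainder yields the claimed bound.
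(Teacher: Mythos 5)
Your proposal is correct and follows essentially the same route as the paper: the Kirchhoff--Sobolev representation for the coupled system $(R,\Theta)$, the transport bounds of Proposition \ref{thm.transport_bound_bdc}, the $W^2\cdot Q$ versus $Q^2$ splitting of $\mf{F}$ with the null-frame component analysis against the flux $\mc{F}^1$, H\"older plus Lemma \ref{thm.maf_P_bdc} for the error terms, Lemma \ref{thm.trace_tf} for the initial term, and absorption of the $\delta_0^{1/2}\sup\mc{S}$ term. The only (harmless, arguably cleaner) deviation is that you truncate each cone exactly at $\Sigma_{\tau-\delta}$ rather than going back a full $\delta$ from each vertex, which keeps the energy terms evaluated at $\tau-\delta$ throughout.
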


\subsection{Completion of the Proof}

With the uniform bounds of Proposition \ref{thm.bdc_unif} in place, we can now embark on the final stretch of the proof of the main theorem.
We begin by applying Proposition \ref{thm.bdc_unif} to the higher-order energy estimates of Propositions \ref{thm.e2_est} and \ref{thm.e3_est}.
This yields higher-order energy estimates.

\begin{proposition}\label{thm.eho_est}
For every $\tau_0 \leq \tau < t_1$, we have
\[ \mc{E}^2\paren{\tau} + \mc{E}^3\paren{\tau} \lesssim 1 \text{.} \]
\end{proposition}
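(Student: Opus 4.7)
The plan is to combine the energy inequalities of Propositions \ref{thm.e2_est} and \ref{thm.e3_est} with the uniform control on $\mc{S}$ furnished by Proposition \ref{thm.bdc_unif}, closing them by a finite induction over short sub-intervals of $[\tau_0, t_1)$. Let $\delta_0 > 0$ be the fundamental constant supplied by Proposition \ref{thm.bdc_unif}, partition $[\tau_0, t_1)$ into $N$ sub-intervals $I_k = [\sigma_k, \sigma_{k+1}]$, $k = 0, \ldots, N-1$, of common length $\delta \leq \delta_0$ (so that $N$ is itself determined by the fundamental constants), and set $G(\tau) = \mc{E}^2(\tau) + \mc{E}^3(\tau)$, together with $a_k = \sup_{\tau \in I_k} G(\tau)$.

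Fix $k$, and apply Proposition \ref{thm.bdc_unif} with base point $\sigma_{k+1}$ and parameter $\delta$ (both hypotheses $\delta \leq \delta_0$ and $\sigma_k \geq \tau_0$ hold by construction); this yields $\mc{S}(\tau')^2 \lesssim 1 + G(\sigma_k)$ uniformly on $I_k$. Substituting this bound into Proposition \ref{thm.e2_est} with $\tau_1 = \sigma_k$ gives
\[ \mc{E}^2(\tau) \lesssim \mc{E}^2(\sigma_k) + \delta \, [1 + G(\sigma_k)] \lesssim 1 + G(\sigma_k) \]
uniformly in $\tau \in I_k$. Feeding the resulting uniform bound on $1 + \mc{E}^2(\tau')$ into Proposition \ref{thm.e3_est} then produces
\[ \mc{E}^3(\tau) \lesssim \mc{E}^3(\sigma_k) + \delta \, [1 + G(\sigma_k)]^2 \lesssim [1 + G(\sigma_k)]^2, \]
and adding the two estimates gives the per-step recursion $a_k \lesssim [1 + G(\sigma_k)]^2$, with an implicit constant depending only on the fundamental constants. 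Since $G(\sigma_k) \leq a_{k-1}$ for $k \geq 1$, while $G(\sigma_0) = G(\tau_0) \lesssim 1$ by virtue of the data associated with the initial timeslice $\Sigma_{\tau_0}$, a finite induction over $k$ yields $a_k \lesssim 1$ for every $0 \leq k \leq N-1$, establishing the conclusion.

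The only genuine concern is the apparent blow-up character of the per-step inequality $a_k \lesssim (1 + a_{k-1})^2$, which is the discrete analogue of the ODE $g' \lesssim g^2$ that develops a singularity in finite time. What rescues the argument is that $N$ is not free: it is controlled a priori by $\lceil (t_1 - \tau_0)/\delta \rceil$, with both $t_1$ and $\delta_0$ belonging to the fundamental-constant list, so $N$ is itself a fundamental constant. Consequently, although iterating $a_k \leq C(1 + a_{k-1})^2$ produces a doubly-exponential final constant $a_{N-1}$, that constant still depends only on the fundamental constants, which is precisely the meaning of $\mc{E}^2(\tau) + \mc{E}^3(\tau) \lesssim 1$.
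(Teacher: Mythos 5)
Your proposal is correct and follows essentially the same route as the paper: one derives the one-step quadratic recursion $\mc{E}^2(\tau)+\mc{E}^3(\tau)\lesssim 1+[\mc{E}^2(\tau-\delta)+\mc{E}^3(\tau-\delta)]^2$ by feeding Proposition \ref{thm.bdc_unif} into Propositions \ref{thm.e2_est} and \ref{thm.e3_est}, and then iterates it over the finitely many intervals of length $\delta_0$ needed to cover $[\tau_0,t_1)$. Your explicit remark that the doubly-exponential growth of the constant is harmless precisely because the number of steps $N$ is itself a fundamental constant is the (implicit) heart of the paper's iteration as well.
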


\begin{proof}
Let $\tau_0 < \tau < t_1$ and $0 < \delta \leq \delta_0$ such that $\tau - \delta \geq \tau_0$, where $\delta_0$ was determined in Proposition \ref{thm.bdc_unif}.
Applying Proposition \ref{thm.bdc_unif} to Proposition \ref{thm.e2_est} yields
\begin{align*}
\mc{E}^2\paren{\tau} &\lesssim \mc{E}^2\paren{\tau - \delta} + \delta \brak{1 + \mc{E}^2\paren{\tau - \delta} + \mc{E}^3\paren{\tau - \delta}} \\
&\lesssim 1 + \mc{E}^2\paren{\tau - \delta} + \mc{E}^3\paren{\tau - \delta} \text{.}
\end{align*}
Combining the above with Propositions \ref{thm.e3_est} and \ref{thm.bdc_unif}, then
\begin{align*}
\label{eql.bdc_est_3} \mc{E}^3\paren{\tau} &\lesssim \mc{E}^3\paren{\tau - \delta} + \delta \brak{1 + \mc{E}^2\paren{\tau - \delta} + \mc{E}^3\paren{\tau - \delta}}^2 \\
\notag &\lesssim 1 + \brak{\mc{E}^2\paren{\tau - \delta} + \mc{E}^3\paren{\tau - \delta}}^2 \text{.}
\end{align*}
As a result, we have derived
\begin{equation}\label{eq.eho_est_iter} \mc{E}^2\paren{\tau} +  \mc{E}^3\paren{\tau} \lesssim 1 + \brak{\mc{E}^2\paren{\tau - \delta} + \mc{E}^3\paren{\tau - \delta}}^2 \text{.} \end{equation}

We can now iterate \eqref{eq.eho_est_iter}.
First, we let $\delta^\prime = \min(\delta_0, t_1 - \tau_0)$, and we apply \eqref{eq.eho_est_iter} with $0 < \delta < \delta^\prime$ and $\tau \leq \tau_0 + \delta^\prime$ to obtain
\[ \sup_{\tau_0 \leq \tau^\prime < \tau_0 + \delta^\prime} \brak{\mc{E}^2\paren{\tau^\prime} +  \mc{E}^3\paren{\tau^\prime}} \lesssim 1 \text{.} \]
If $\delta^\prime = t_1 - \tau_0$ (i.e., $t_1 - \tau_0 \leq \delta_0$), then the proof is complete.
On the other hand, if $\delta^\prime = \delta_0 < t_1 - \tau_0$, then \eqref{eq.eho_est_iter} also yields
\[ \mc{E}^2\paren{\tau_0 + \delta_0} + \mc{E}^3\paren{\tau_0 + \delta_0} \lesssim 1 \text{.} \]
We can now repeat the above procedure, but with $\tau_0$ replaced by $\tau_0 + \delta_0$.
By this process, we can reach the breakdown time $t_1$ within a finite number of iterations.
This implies the desired result and completes the proof.
\end{proof}

Now that the higher-order spacetime quantities are controlled, we must do the same for the corresponding horizontal quantities.
Similar to the a priori estimates, we accomplish this by relating the corresponding spacetime and horizontal quantities to each other and then applying elliptic estimates.
The process is analogous to that of Sections \ref{sec.ell_pre} and \ref{sec.ell}, except the computations are somewhat more involved.
Consequently, we only list the final results and omit the proofs.

\begin{proposition}\label{thm.ell_ho}
For any $\tau_0 \leq \tau < t_1$, the following inequalities hold:
\[ \mf{R}\paren{\tau} + \mf{K}\paren{\tau} + \mf{f}\paren{\tau} \lesssim 1 \text{.} \]
\end{proposition}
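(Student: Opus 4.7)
The plan is to run the argument of Section \ref{sec.ell} one differentiation order higher for each horizontal quantity, feeding in the spacetime bounds now provided by Propositions \ref{thm.eho_est} and \ref{thm.bdc_unif}. In particular, from the earlier results we have uniform control over $\|R\|_{L^\infty(\Sigma_\tau)}$, $\|\Theta\|_{L^\infty(\Sigma_\tau)}$, $\|DR\|_{L^2(\Sigma_\tau)}$, $\|D^2 R\|_{L^2(\Sigma_\tau)}$, and the corresponding $D^j\Phi$-norms for $j=2,3,4$ (E-S) or $j=1,2,3$ (E-M), together with the a priori $L^\infty$-bounds on $k$ and $\nabla(\log n)$ from \eqref{eq.cond_T0}. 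These should suffice to close the horizontal estimates.

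First I would bound $\mf{R}(\tau)$. The Gauss equation \eqref{eq.gauss_tf}, read schematically as $\mc{R} \approx \Pi R + k \cdot k$, already handled $\|\mc{R}\|_{L^2}$ in Proposition \ref{thm.e1_est_ell}. For $\nabla \mc{R}$, Lemma \ref{thm.sch_D_tf} applied with $\Phi \approx R$ produces $\nabla \mc{R} \approx \Pi DR + k \cdot \Pi(R \cdot T) + \nabla k \cdot k$, and each factor is in $L^2$ by Propositions \ref{thm.eho_est}, \ref{thm.e1_est_ell} (using H\"older together with $\|k\|_{L^\infty} \lesssim 1$). Iterating Lemma \ref{thm.sch_D_tf} gives a schematic form for $\nabla^2 \mc{R}$ involving $\Pi D^2 R$, $k \cdot \Pi(DR \cdot T)$, $\nabla k \cdot \Pi(R \cdot T)$, $k \cdot k \cdot R$, $\nabla^2 k \cdot k$, and $\nabla k \cdot \nabla k$, each of which admits an $L^2$-bound from Proposition \ref{thm.eho_est} and the $k$-estimates established below (combined with the uniform $L^\infty$-bound on $R$ for the quadratic terms).

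Next I would bound $\mf{K}(\tau)$. The Codazzi relations \eqref{eq.codazzi_tf} together with the CMC gauge yield $|\nabla \cdot k| + |\nabla \times k| \lesssim |R|$, so \eqref{eq.ell_first_est} produces $\|\nabla k\|_{L^2} \lesssim 1$ as in Proposition \ref{thm.e1_est_ell}. For $\|\nabla^2 k\|_{L^2}$ I would differentiate \eqref{eq.codazzi_tf} and apply the B\"ochner-type inequality \eqref{eq.ell_bochner_est} to the symmetric tensor $\nabla k$, producing control in terms of $\lapl \nabla k$, which after commuting and substituting becomes $\nabla \cdot (\nabla k)$ and $\nabla \times (\nabla k)$, each schematically equal to $\nabla R$ plus curvature-times-$k$ terms. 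These are again controllable via Proposition \ref{thm.eho_est} and the preceding bounds on $\mc{R}$. The top-order piece $\|\nabla^3 k\|_{L^2}$ is handled analogously by one more iteration: differentiate once more, invoke \eqref{eq.ell_bochner_est} twice (or the iterated version), and absorb the newly-generated terms of the form $\nabla^2 R$, $\nabla R \cdot k$, $\nabla k \cdot \mc{R}$ using $\mf{R}(\tau) \lesssim 1$ and the Sobolev estimates of Proposition \ref{thm.sobolev_tf}. Higher-order bounds on $n$ and $\mc{L}_t n$ (extending Propositions \ref{thm.lapse_est_ell}, \ref{thm.lapse_unif_est_ell}) are obtained similarly from iterated use of the lapse equation \eqref{eq.lapse} combined with the $L^\infty$-bound on $R$ via \eqref{eq.einstein_ric}.

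The matter-field piece $\mf{f}(\tau)$ is then derived by the same schematic strategy, starting from \eqref{eq.sch_sf} in the E-S case and \eqref{eq.sch_mw} in the E-M case, and iterating Lemma \ref{thm.sch_D_tf} to express each $\nabla^j$ of the horizontal formulation as $\Pi D^j \Phi$ plus a finite sum of products of lower-order horizontal quantities (already controlled by the bounds on $\mf{K}(\tau)$ and on $n$) with pulled-back spacetime derivatives of $\Phi$ of order $\leq j$ (controlled by Proposition \ref{thm.eho_est}). The main technical burden here is purely bookkeeping: one must verify that at each differentiation step the ``commutator'' terms produced by applying $\nabla$ rather than $\Pi D$ split into factors, the highest-order of which sits in $L^2(\Sigma_\tau)$ while the rest sit in $L^\infty$ or $L^4$, with H\"older pairings made consistent using Proposition \ref{thm.sobolev_tf}. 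I expect the main obstacle to be organizing the intricate schematic structure of the $\nabla^2 \mc{R}$ and $\nabla^3 k$ equations so that no term remains that requires control of a quantity stronger than what is already available --- specifically, ensuring that each nonlinear term admits a decomposition with at most one factor at top order, which is exactly the balance that the Gauss, Codazzi, and iterated Leibniz rules were shown to respect in \cite{shao:bdc_nv}.
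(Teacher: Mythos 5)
Your proposal follows essentially the same route as the paper, which itself only cites \cite[Lemma 9.4]{shao:bdc_nv} and states that the argument is the higher-order iteration of the elliptic-estimate machinery of Sections \ref{sec.ell_pre}--\ref{sec.ell} (Gauss--Codazzi plus Lemma \ref{thm.sch_D_tf}, the div-curl and B\"ochner estimates of Lemma \ref{thm.ell_estimate}, the Sobolev inequalities, and the spacetime bounds of Proposition \ref{thm.eho_est}). Your sketch correctly identifies all the needed ingredients, including the higher-order lapse estimates and the order in which the $k$-, $\mc{R}$-, and matter-field bounds must be interleaved to avoid circularity, so it is an accurate outline of the omitted proof.
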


\begin{proof}
This is a consequence of \cite[Lemma 9.4]{shao:bdc_nv} and Proposition \ref{thm.eho_est}.
\end{proof}

\begin{remark}
The quantities $\mf{R}(\tau)$, $\mf{K}(\tau)$, and $\mf{f}(\tau)$ were defined in \eqref{eq.bdc_energy_KR}-\eqref{eq.bdc_energy_fem}.
\end{remark}

We have now successfully controlled the required horizontal energy quantities.
From Section \ref{sec.outline}, it remains only to control the diameters and the injectivity radii of the $\Sigma_\tau$'s.
First of all, Propositions \ref{thm.sobolev_tf} and \ref{thm.ell_ho} imply uniform bounds for
\[ \nabs{\mc{R}}_{L^\infty\paren{\Sigma_\tau}} \text{,} \qquad \tau_0 \leq \tau < t_1 \text{.} \]
As a result, combining the above with Proposition \ref{thm.vol_radius}, we obtain the desired uniform control for the injectivity radii of the $\Sigma_\tau$'s.
\footnote{More specifically, the uniform lower bound on the injectivity radii follows from uniform bounds on the curvature and on the volume radii of geodesic balls; see, e.g., \cite{ch:fin, ch_gr_tay:fps}.}

The diameters of the $\Sigma_\tau$'s can be controlled by even cruder means using Proposition \ref{thm.unif_ell_prop}.
For example, let $p, q \in \Sigma_\tau$, where $\tau_0 \leq \tau < t_1$.
Consider the normal transports $p_0, q_0$ of $p, q$ to $\Sigma_{\tau_0}$, and fix a curve $\alpha_0$ in $\Sigma_{\tau_0}$ from $p_0$ to $q_0$ whose length is controlled by the diameter of $\Sigma_{\tau_0}$.
By using the transported coordinate systems of Proposition \ref{thm.unif_ell_prop} and the uniform ellipticity property of \eqref{eq.unif_ell}, then we can bound the length of the normal transport $\alpha$ of $\alpha_0$ to $\Sigma_\tau$.
This bound depends only on the fundamental constants, including the diameter of $\Sigma_{\tau_0}$.
This controls the diameters of the $\Sigma_\tau$'s and finally completes the proof of Theorem \ref{thm.bdc}.

\bibliographystyle{amsplain}
\bibliography{bib}

\providecommand{\bysame}{\leavevmode\hbox to3em{\hrulefill}\thinspace}
\providecommand{\MR}{\relax\ifhmode\unskip\space\fi MR }
\providecommand{\MRhref}[2]{%
  \href{http://www.ams.org/mathscinet-getitem?mr=#1}{#2}
}
\providecommand{\href}[2]{#2}
\begin{thebibliography}{10}

\bibitem{and:gr_geom}
M.~Anderson, \emph{On long-time evolution in general relativity and
  geometrization of {$3$}-manifolds}, Comm. Math. Phys. \textbf{222} (2001),
  533--567.

\bibitem{and_mo:gr_cmc}
L.~Andersson and V.~Moncrief, \emph{Elliptic-hyperbolic systems and the
  {Einstein} equations}, Ann. Henri Poincar\'e \textbf{4} (2003), 1--34.

\bibitem{be_ka_maj:bdc_eul}
J.~T. Beale, T.~Kato, and A.~Majda, \emph{Remarks on the breakdown of smooth
  solutions for the {3-D Euler} equations}, Comm. Math. Phys. \textbf{94}
  (1984), 61--66.

\bibitem{ch:fin}
J.~Cheeger, \emph{Finiteness theorems for {Riemannian} manifolds}, American J.
  Math \textbf{92} (1970), 61--74.

\bibitem{ch_gr_tay:fps}
J.~Cheeger, M.~Gromov, and M.~Taylor, \emph{Finite propagation speed, kernel
  estimates for functions of the {Laplace} operator, and the geometry of
  complete {Riemannian} manifolds}, J. Differential Geom. \textbf{17} (1982),
  no.~1, 15--53.

\bibitem{cb:le_einst}
Y.~Choqu\'et-Bruhat, \emph{Th\'eor\`eme d'existence pour certains syst\`emes
  d'\'equations aux d\'eriv\'ees partielles nonlin\'eaires}, Acta Math.
  \textbf{88} (1952), 141--225.

\bibitem{chr_kl:stb_mink}
D.~Christodoulou and S.~Klainerman, \emph{Global nonlinear stability of the
  {Minkowski} space}, Princeton University Press, 1993.

\bibitem{chru_sh:ym_curv}
P.~Chru\'sciel and J.~Shatah, \emph{Global existence of solutions of the
  {Yang-Mills} equations on globally hyperbolic four dimensional {Lorentzian}
  manifolds}, Asian J. Math \textbf{1} (1997), no.~3, 530--548.

\bibitem{ea_mo:g_ymh}
D.~M. Eardley and V.~Moncrief, \emph{The global existence of {Yang-Mills-Higgs}
  fields in $4$-dimensional {Minkowski} space. {II.} {Completion} of proof},
  Comm. Math. Phys. \textbf{83} (1982), 193--212.

\bibitem{ev:pde}
L.~C. Evans, \emph{Partial differential equations}, AMS, 2002.

\bibitem{fr:wv_eq}
F.~G. Friedlander, \emph{The wave equation on a curved spacetime}, Cambridge
  University Press, 1976.

\bibitem{kl_rod:cg}
S.~Klainerman and I.~Rodnianski, \emph{Causal geometry of {Einstein}-vacuum
  spacetimes with finite curvature flux}, Inventiones Math. \textbf{159}
  (2005), 437--529.

\bibitem{kl_rod:glp}
\bysame, \emph{A geometric approach to the {Littlewood-Paley} theory}, Geom.
  Funct. Anal. \textbf{16} (2006), no.~1, 126--163.

\bibitem{kl_rod:stt}
\bysame, \emph{Sharp trace theorems for null hypersurfaces on {Einstein}
  metrics with finite curvature flux}, Geom. Funct. Anal. \textbf{16} (2006),
  no.~3, 164--229.

\bibitem{kl_rod:ksp}
\bysame, \emph{A {Kirchhoff-Sobolev} parametrix for the wave equation and
  applications}, Journ. of Hyperbolic Equ. \textbf{4} (2007), no.~3, 401--433.

\bibitem{kl_rod:rin}
\bysame, \emph{On the radius of injectivity of null hypersurfaces}, J. Amer.
  Math. Soc. \textbf{21} (2008), no.~3, 775--795.

\bibitem{kl_rod:bdc}
\bysame, \emph{On the breakdown criterion in general relativity}, Preprint,
  2009.

\bibitem{on:srg}
B.~O'Neill, \emph{Semi-{Riemannian} geometry with applications to relativity},
  Academic Press, 1983.

\bibitem{parl:bdc}
D.~Parlongue, \emph{An integral breakdown criterion for {Einstein} vacuum
  equations in the case of asymptotically flat spacetimes}, Preprint, 2010.

\bibitem{selb:wave_eq}
S.~Selberg, \emph{Lecture notes. {Math} 632, {PDE}}, Lecture notes on nonlinear
  wave equations, Spring 2001, Johns Hopkins University.

\bibitem{shao:bdc_nv}
A.~Shao, \emph{Breakdown criteria for nonvacuum {Einstein} equations}, Ph.D.
  thesis, Princeton University, 2010.

\bibitem{shao:ksp}
\bysame, \emph{A generalized representation formula for systems of tensor wave
  equations}, arXiv:-1005.4509v1, 2010.

\bibitem{st:ha_lp}
E.~Stein, \emph{Topics in harmonic analysis related to {Littlewood}-{Paley}
  theory}, Annals of Mathematical Studies \textbf{63} (1970), 145 pages.

\bibitem{wang:cg}
Q.~Wang, \emph{Causal geometry of {Einstein} vacuum space-times}, Ph.D. thesis,
  Princeton University, 2006.

\bibitem{wang:cgp}
\bysame, \emph{On the geometry of null cones in {Einstein}-vacuum spacetimes},
  Ann. Henri Poincar\'e \textbf{26} (2009), no.~1, 285--328.

\bibitem{wang:ibdc}
\bysame, \emph{Improved breakdown criterion for solution of {Einstein} vacuum
  equation in {CMC} gauge}, arXiv:1004.2938v1, 2010.

\bibitem{wang:tbdc}
\bysame, \emph{On {Ricci} coefficients of null hypersurfaces with time
  foliation in {Einstein} vacuum space-time}, arXiv:1006.5963v1, 2010.

\end{thebibliography}

\end{document}